\pgfplotsset{compat=1.14}
\newcommand*\circled[1]{\tikz[baseline=(char.base)]{
          \node[shape=circle,draw,inner sep=2pt] (char) {#1};}}
\DeclareMathOperator{\cosech}{cosech}
\DeclareMathOperator{\sech}{sech}
\DeclareMathOperator{\csch}{csch}
\DeclareMathOperator{\cn}{div}
\theoremstyle{plain}
\newtheorem{theorem}{Theorem}[section]
\newtheorem*{theorem*}{Theorem}
\newtheorem{proposition}[theorem]{Proposition}
\newtheorem{lemma}[theorem]{Lemma}
\newtheorem{corollary}[theorem]{Corollary}
\newtheorem{definition}[theorem]{Definition}
\theoremstyle{definition}
\newtheorem{remark}[theorem]{Remark}
\newtheorem*{remark*}{Remark}
\newtheorem{notation}[theorem]{Notation}
\newtheorem*{notation*}{Notation}
\newcommand{\Til}{\mathcal T}
\newcommand{\R}{\mathbb R}
\newcommand{\HS}{S}
\newcommand{\HVS}{\mathbf S}
\newcommand{\HD}{H_D}
\newcommand{\HN}{H_N}
\newcommand{\Tilh}{\mc T_h}
\renewcommand{\P}{{\mathbf P}}
\newcommand{\ST}{\Big\arrowvert_0^T}
\newcommand{\dsigma}{\, d\sigma}
\newcommand{\dalpha}{\, d\alpha}
\newcommand{\dt}{\, d t}
\newcommand{\dx}{\, d x}
\newcommand{\dxdt}{\dx \dt}
\newcommand{\dy}{\, d y}
\newcommand{\dydx}{\, d y \, d x}
\newcommand{\dydxdt}{\, d y \, d x \, d t}
\newcommand\py{\partial_y}
\newcommand{\eps}{\varepsilon}
\newcommand{\evaluate}[1]{\widetilde{{#1}}}
\newcommand{\defn}{\mathrel{:=}}
\def\blA{\bigl\lVert}
\def\brA{\bigr\rVert}
\def\ra{\right\vert}
\def\rA{\right\Vert}
\def\la{\left\vert}
\def\lA{\left\Vert}
\newcommand{\mN}{\mathcal{N}}
\newcommand\mI{\mathcal{I}}
\def\xC{\mathbb{C}}
\def\xR{\mathbb{R}}
\def\xZ{\mathbb{Z}}
\def\uq{\frac{1}{4}}
\def\mez{\frac{1}{2}}
\newcommand{\mc}{\mathcal}
\newcommand{\mbb}{\mathbb}
\newcommand{\mf}{\mathfrak}
\renewcommand{\Re}{\operatorname{Re}}
\renewcommand{\Im}{\operatorname{Im}}
\newcommand{\Rl}{\mbb R}
\newcommand{\Z}{\mbb Z}
\newcommand{\Strip}{S}
\newcommand{\mfH}{{\mf H}}
\numberwithin{equation}{section}
\begin{document}

\title{A Morawetz inequality for water waves}
\author[]{Thomas Alazard, Mihaela Ifrim and Daniel Tataru}

\begin{abstract}
We consider gravity water waves in two space dimensions, with finite or infinite depth.
Assuming some uniform scale invariant Sobolev  bounds for the solutions, we prove local energy decay (Morawetz) estimates globally in time. 
Our result is uniform in the infinite depth limit.
\end{abstract}

\maketitle


\section{Introduction}\label{s:Introduction}

Our aim in this paper is to initiate the study of 
Morawetz inequalities for water waves. 
The water-wave equations describe the dynamics of the interface separating air from 
a perfect fluid. This is a system of two coupled equations: the incompressible Euler equation inside the fluid domain, and a kinematic equation 
describing the evolution of the interface. Assuming that the flow is irrotational, 
we thus have two unknowns: the velocity potential~$\phi$, whose gradient gives the velocity, and the free surface elevation~$\eta$, whose graph is the free surface. 

\smallbreak

We consider the 2D-gravity equations, 
without surface tension, and assume that the fluid domain has a flat bottom. 
Then, at time $t$, the 
fluid domain is of the form
$$
\Omega(t)= \{\, (x,y)\in \xR \times \xR\, : \, -h<y<\eta(t,x)\,\},
$$
where $h>0$ is the depth. 
Given a compactly supported bump function $\chi=\chi(x)$, 
we want to estimate the local energy
\[
\frac g 2 \int_0^T \!\!\!\int_\xR\chi(x-x_0) \eta^2(t,x)\, dx dt
+\mez \int_0^T \!\!\!\int_\xR\int_{-h}^{\eta(t,x)} \chi(x-x_0)
\la \nabla_{x,y}\phi(t,x,y) \ra^2 \, dydx dt,
\]
uniformly in time $T$ and space location $x_0$. 

\smallbreak

In the infinite depth case ($h=\infty$), neglecting all nonlinearities, the gravity water-wave equations 
can be written as a fractional Schr\"odinger equation
$$
\partial_t u+i\la D_x\ra^{\mez}u=0,\quad \text{with}\quad 
u=\eta+i\la D\ra^{\mez}\psi.
$$
For this equation, one can obtain a Morawetz inequality by using some standard dispersive tools. 
The first goal of this paper is to extend this linear analysis to the finite depth case, and prove an estimate which is uniform with respect to $h\ge 1$. This problem exhibits some very interesting difficulties at low frequencies, whose analysis requires a careful study of harmonic functions in a strip. 

\smallbreak

The second and main task of this paper is to obtain a 
Morawetz inequality for the nonlinear equations. Our main result extends the linear inequality; it holds provided that some scale invariant norms remain small enough uniformly in time. 
Our nonlinear analysis is highly non-perturbative, since it is a very delicate problem to estimate the nonlinearities by scale invariant norms 
(this can be seen by recalling that one does not even know the existence of weak-solutions in such scale invariant spaces). 

\smallbreak

The proofs combine multiple methods and  ideas in a novel way: $i)$ local conservation laws for the momentum conservation (inspired by Morawetz, and introducing a new momentum density for the water waves equations); 
$ii)$ a systematic use of conformal transformations, $iii)$ 
appropriate Littlewood-Paley decompositions and multilinear estimates to analyze the low-frequency component, $iv)$ a fully nonlinear normal form type modification of the momentum density  to handle the worst nonlinearities.  

In the Appendix, we also complement this analysis 
by showing a Morawetz inequality for possibly large solutions, but at the expense of loosing the uniformity in the depth as well as the control of the low-frequency component of the velocity potential.

\subsection{Morawetz estimates} 
Also known as \emph{local energy decay}, they were originally introduced in Morawetz's paper \cite{Morawetz1968}. In their original form they assert that, for solutions to the linear wave equation,
the local energy of the solutions is bounded, globally in time, by the initial energy. One may view this as a statement about the local decay 
of solutions which is invariant with respect to time translations.

Another interesting example is the Schr\"odinger equation. Unlike the wave equation, where one has a finite speed of propagation, in this case
the group velocity increases to infinity in the high frequency limit.
Because of this, the natural local energy measures a higher regularity
($1/2$ derivative more to be precise) than the initial data energy of the solutions; for this reason the  Morawetz estimates for the Schr\"odinger equation have been originally called \emph{local smoothing}, 
see~\cite{CS-ls},~\cite{Vega}.

 Up to the present time,  the Morawetz estimates have had a rich and complex history, which is too extensive to try to describe here.
 For further references we direct the reader to \cite{MT} for the wave 
 equation, \cite{MMT} for the Schr\"odinger equation and \cite{OR} for 
 other models.  Morawetz estimates  have been proved for linear and nonlinear models,
 and have been used as a key ingredient in many results concerning 
 the long time behavior of solutions in nonlinear dispersive flows.
 One other key development was the introduction of \emph{interaction
 Morawetz estimates} in \cite{CKSTT}, which has played a major role
 in the study of nonlinear Schr\"odinger equations.
 
 We turn our attention now to Morawetz estimates for water waves.
 Here additional challenges arise due to the fact that the equations 
 are not only fully nonlinear, but also nonlocal. Another striking difference is due to the fact 
 that in the high frequency limit the dispersive part of the group velocity goes to zero. Because of this,
 here we have the opposite phenomena to local smoothing, namely a loss
 of $1/4$ derivative in the local energy. Combined with the nonlinear
 and nonlocal character of the equations, this brings substantial difficulties in the low frequency  analysis.

\subsection{The water wave equations}
Consider the incompressible Euler equations for a potential flow 
in a $2D$-domain located between with a free surface and a flat bottom. At time $t$ the 
fluid domain is of the form
$$
\Omega(t)= \{\, (x,y)\in \xR \times \xR\, : \, -h<y<\eta(t,x)\,\},
$$
where $h>0$ is the depth and $\eta$, the free surface elevation, is an unknown function. 
The velocity field is the gradient of a harmonic potential function $\phi=\phi(t,x,y)$, satisfying the Bernoulli equation,
\begin{equation}\label{bernoulli}
\left\{
\begin{aligned}
&\Delta_{x,y}\phi=0\quad\text{in }\Omega(t)\\ 
&\partial_{t} \phi +\mez \la \nabla_{x,y}\phi\ra^2 +P +g y = 0 \quad\text{in }\Omega(t)\\
&\phi_y =0 \quad \text{on }y=-h,
\end{aligned}
\right.
\end{equation}
where $g>0$ is the acceleration of gravity, 
$P\colon \Omega\rightarrow\xR$ is the pressure, 
$\nabla_{x,y}=(\partial_x,\partial_y)$ and $\Delta_{x,y}=\partial_x^2+\partial_y^2$. 
Partial differentiations in space are denoted by suffixes so that $\phi_x=\partial_x\phi$ and $\phi_y=\partial_y \phi$.

The water-wave equations are given by two boundary conditions on the free surface: 
firstly an equation describing the deformations of the domain,
\begin{equation}\label{eta}
\partial_{t} \eta = \sqrt{1+\eta_x^2}\, \phi_n \arrowvert_{y=\eta}=\phi_y(t,x,\eta(t,x))-
\eta_x(t,x)\phi_x(t,x,\eta(t,x)),
\end{equation}
and secondly an equation expressing the balance of forces at the free surface. In the present article we consider pure gravity waves, so that this balance of forces reads
\begin{equation}\label{pressure}
P\arrowvert_{y=\eta}=0.
\end{equation}

One can give more explicit evolution equations 
by introducing the trace of the velocity potential at the free surface,
$$
\psi(t,x)=\phi(t,x,\eta(t,x)),
$$ 
as well as the Dirichlet to Neumann operator associated to the fluid domain $\Omega(t)$, defined by
$$
G(\eta)\psi=\sqrt{1+\eta_x^2}\, \phi_n \arrowvert_{y=\eta}=(\phi_y-\eta_x \phi_x)_{\arrowvert_{y=\eta}}.
$$
Then (see \cite{Zakharov1968}), 
with the above notations, the water-wave system reads
\begin{equation}\label{systemT}
\left\{
\begin{aligned}
&\partial_t \eta=G(\eta)\psi \\
&\partial_t \psi+g\eta +\mez \psi_x^2-\mez \frac{(G(\eta)\psi+\eta_x\psi_x)^2}{1+\eta_x^2}=0.
\end{aligned}
\right.
\end{equation}

\subsection{Symmetries and conservation laws}
Introduce the energy $\mathcal{H}$, defined by
\begin{equation}\label{Hamiltonian}
\mathcal{H}=\frac{g}{2}\int_\xR\eta^2\, dx
+\mez\int_\xR\int_{-h}^{\eta(t,x)}\la \nabla_{x,y}\phi \ra^2\, dydx.
\end{equation}
The energy is conserved. Furthermore, 
it is known since Zakharov (\cite{Zakharov1968}) that the water-wave 
system is Hamiltonian. Precisely, we have
\[
\frac{\partial\eta}{\partial t}=\frac{\delta \mathcal{H}}{\delta \psi},\qquad
\frac{\partial\psi}{\partial t}=-\frac{\delta \mathcal{H}}{\delta \eta}.
\]
 
A second conservation law arises by Noether's theorem from the invariance with respect to horizontal translations.
This is the horizontal momentum, which has the form 
\begin{equation}\label{momentum}
\mathcal M = \int_{\R} \eta \psi_x \, dx.
\end{equation}
Together with the energy, this will play a key role in what follows. 

Another symmetry is given by the scaling invariance which holds 
in the infinite depth case (that is when $h=\infty$). 
If $\psi$ and $\eta$ are solutions of the gravity water waves equations \eqref{systemT}, then $\psi_\lambda$ and $\eta_\lambda$ defined by
$$
\psi_\lambda(t,x)=\lambda^{-3/2} \psi (\sqrt{\lambda}t,\lambda x),\quad 
\eta_\lambda(t,x)=\lambda^{-1} \eta(\sqrt{\lambda} t,\lambda x),
$$
solve the same system of equations. The (homogeneous) Sobolev spaces invariant by this scaling correspond to $\eta$ 
in $\dot{H}^{3/2}(\xR)$ and $\psi$ in $\dot{H}^2(\xR)$. 

\subsection{The Cauchy problem} 
The energy, the momentum and the scale invariant norms are super-critical 
for the current local well-posedness results about the Cauchy problem. 
One does not even know the existence of weak-solutions for initial data such that 
these three quantities are finite. 

The local well-posedness for the Cauchy problem with initial data in Sobolev spaces has been extensively studied; we refer the reader to 
\cite{Nalimov,Wu97,Wu99,ChLi,LindbladAnnals,CS,LannesJAMS,SZ,ABZ3,HIT,LannesLivre,ABZ-memoir,Ai}.  The water wave equations are now known to be locally well-posed in suitable function spaces which are $1/2$-derivative\footnote{ Even slightly below that, see \cite{ABZ-memoir,Ai}.} more  regular than the scaling invariance, 
e.g.\ when initially
\[
\eta \in H^s(\xR), \qquad \psi - T_{\phi_y\vert_{ y=\eta} } \eta \in H^{s+\frac12}(\xR), \qquad s \geq 2 , 
\]
where $T_ab$ is the paraproduct decomposition of the product of two functions $a$ and $b$; 
it represents the portion which favours the ``low-high'' interaction when a low-frequency component of $a$ is multiplied with a high-frequency component of $b$. Here the expression $\psi - T_{\phi_y\vert_{ y=\eta}}\eta$ represents the so called \emph{good unknown} of Alinhac (\cite{Ali,Alipara,LannesJAMS,AM}), and is imposed by the non-diagonal quasilinear structure of the equations. Alternatively, one can re-express the second condition in terms of 
the gradient of the velocity potential, namely by requiring that 
$\nabla \phi\arrowvert_{y=\eta}$ belongs to $H^{s-\frac12}(\xR)$. 

Since we are interested in uniform in time estimates, let us recall that 
much less is known concerning the long time dynamics. For data of size $\epsilon$ it is known that solutions persist for at least a cubic  
lifespan $O(\epsilon^{-2})$, see\footnote{As  a historical note, the question of obtaining cubic lifespan bounds first arose in the work of Zakharov~\cite{Zakharov1968} in the context of the NLS approximation for deep water waves; see also \cite{TW} for more recent results in this direction.} \cite{AD-Sobolev,HIT} for the deep water case and 
\cite{H-GIT} for the finite depth case (see also \cite{Wang-3D-finite} for the 3D problem). 
For longer times it is not at all
clear what happens to the solutions, and the blow-up scenario 
in particular has not been excluded (see \cite{CCFGGS,CS-blowup} for large data blow-up). An exception to this is the case when the initial data is not only small but also localized, where there solutions are known to be global, see \cite{Wu2009,IoPu,AD-global,HIT, IT} and also similar results in three dimensions \cite{GMS,Wu2011}.

Rather than trying to study the size of the solutions for longer time, in the present article we take a different track, and assume that we have a solution which stays bounded (small) in a reasonable Sobolev norm on a time interval $[0,T]$, with no a-priori bound on $T$, and ask what can be said about the dispersive properties of the solutions. More precisely, our goal here is to initiate the study of 
Morawetz inequalities for water waves. We consider the case of gravity waves in the present article, 
and the case of gravity-capillary waves in a second article.

\subsection{Function spaces} 
In this paragraph we introduce three spaces: 
a space $E^0$ associated to the energy, 
a space $E^{\frac14}$ associated to 
the momentum, and a uniform in time control norm $\lA \cdot\rA_{X}$ 
which respects  the scaling invariance. 

The above energy $\mathcal{H}$ (Hamiltonian) corresponds to the energy space for $(\eta,\psi)$,
\[
E^0 = g^{-\frac12} L^2(\xR) \times \dot{H}^{\frac12}_h(\xR),
\]
with the depth dependent $H^{\frac12}_h(\xR)$ space defined as
\[
\dot{H}^{\frac12}_h(\xR) = \dot H^\frac12(\xR) + h^{-\frac12} \dot H^1(\xR).
\]

Similarly, in order to measure the momentum, we use the space
$E^{\frac14}$, which is the $h$-adapted linear $H^{\frac14}$-type norm for $(\eta,\psi)$ (which corresponds to the momentum),
\[
\begin{aligned}
&E^{\frac14} := g^{-\frac14} H^{\frac14}_h(\xR) \times g^\frac14   \dot H^{\frac34}_h(\xR)  \\
\end{aligned}
\]
with
\[
H^{\frac14}_h(\xR) := \dot H^\frac14(\xR) \cap  h^{\frac14} L^2(\xR), \qquad   \dot H^{\frac34}_h(\xR) = \dot{H}^\frac34 (\xR)+  h^{-\frac14} \dot H^1(\xR).
\]
For our uniform a-priori bounds for the solutions, ideally one would like to use 
a scale invariant norm, which would correspond to the following Sobolev bounds:
\[
\eta \in H^\frac32_h(\xR), \qquad \nabla \phi\arrowvert_{y=\eta} \in H^1_h(\xR).
\]
Our uniform control norm, denoted by $X$, nearly matches the above ideal scenario. Precisely, we define the homogeneous norm $X_0$ by
\[
X_0 := L^\infty_t H^{\frac{3}{2}}_{h} \times g^{-\mez}L^\infty_t H^1_h,
\]
and then set
\[
\|(\eta,\psi)\|_{X} := \lA P_{\leq h^{-1}}
(\eta,\nabla \phi\arrowvert_{y=\eta})\rA_{X_0}+
\sum_{h^{-1} \leq \lambda\in 2^\xZ} \|P_\lambda (\eta,\nabla \phi\arrowvert_{y=\eta})\|_{X_0}.
\]
Here we use a standard Littlewood-Paley decomposition beginning at frequency $1/h$,
\[
1 = P_{<1/h} + \sum_{1/h < \lambda \in 2^\Z} P_\lambda.
\]



Based on the expression \eqref{Hamiltonian} for the
energy, we introduce the following notations for the local energy. Fix an 
arbitrary compactly supported nonnegative function $\chi$. 
Then,  the local energy 
centered around a point $x_0$ is
\[
\| (\eta,\psi)\|_{LE_{x_0}}^2 :=  g \int_0^T \int_\xR\chi(x-x_0) \eta^2\,dx\, dt
+\int_0^T \int_\xR\int_{-h}^{\eta(t,x)} \chi(x-x_0) \la \nabla_{x,y}\phi \ra^2\, dy\, dx\, dt.
\]
It is also of interest to take the supremum over $x_0$,
\[
\| (\eta,\psi)\|_{LE}^2 :=  \sup_{x_0\in \xR} \| (\eta,\psi)\|_{LE_{x_0}}^2.
\]

\subsection{Main result}
Our main result for gravity water waves is as follows:

\begin{theorem}[Local energy decay for gravity waves]\label{ThmG1}
Let $s>5/2$. There exist $\epsilon_0$ and $C_{0}$ such that the following result holds. 
For all $T\in (0,+\infty)$, all 
$g\in (0,+\infty)$,  all $h\in [1,+\infty)$ and all 
solutions $(\eta,\psi)\in C^0([0,T];H^{s}(\R)\times H^s(\R))$ of the water-wave system \eqref{systemT} satisfying
\begin{equation}\label{uniform}
\| (\eta,\psi)\|_{X} \leq \epsilon_0
\end{equation}
the following estimate holds
\begin{equation}\label{nMg}
\begin{aligned}
\| (\eta,\psi)\|_{LE}^2  \leq C_0 ( \| (\eta, \psi) (0) \|^2_{E^{\frac14}}+ \| (\eta, \psi) (T) \|^2_{E^{\frac14}}).
\end{aligned}
\end{equation}
\end{theorem}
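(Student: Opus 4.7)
The plan is to carry out a Morawetz-type multiplier argument starting from a local form of the momentum conservation law. The motivation for this choice is that the conserved horizontal momentum $\mathcal{M} = \int_\R \eta\,\psi_x\, dx$ sits exactly at the $E^{1/4}$ level appearing on the right-hand side of \eqref{nMg}: indeed, by $H^{1/4}$--$H^{-1/4}$ duality, $|\mathcal M| \lesssim \|\eta\|_{H^{1/4}}\|\psi_x\|_{H^{-1/4}} \lesssim \|(\eta,\psi)\|_{E^{1/4}}^2$. So we should aim for a pointwise identity whose integrated form recovers the conservation of $\mathcal M$.

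First I would derive a local momentum identity of the form
$$
\partial_t \mc P + \partial_x \mc F = 0,
$$
where $\mc P$ is a momentum density with leading part $\eta\psi_x$, and $\mc F$ is a flux whose sign-definite leading part is, up to cubic-or-higher perturbative errors, comparable to the local energy density
$$
g \eta^2 + \int_{-h}^{\eta} \la \nabla_{x,y} \phi \ra^2\, dy .
$$
I would derive this identity by differentiating $\eta\psi_x$ in time using \eqref{systemT}, substituting the Bernoulli equation \eqref{bernoulli}, and integrating by parts inside the fluid domain $\Omega(t)$, exploiting harmonicity of $\phi$ and the bottom condition $\phi_y|_{y=-h}=0$. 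The outcome is a fluid-mechanical stress--energy identity, essentially the Noether current for $x$-translation invariance promoted from integrals to densities.

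Given this local identity, I would choose a multiplier $m = m(x)$ with $m'(x) = \chi(x-x_0)$, normalized so that $\|m\|_{L^\infty} \lesssim \|\chi\|_{L^1}$, and pair against the conservation law:
$$
\int_0^T \!\! \int_\R \chi(x-x_0)\, \mc F \, dx\, dt = \Big[\int_\R m(x)\,\mc P \, dx\Big]_0^T .
$$
The right-hand side is bounded by $\|(\eta,\psi)(0)\|_{E^{1/4}}^2 + \|(\eta,\psi)(T)\|_{E^{1/4}}^2$, uniformly in $x_0$, by the duality argument above, while the left-hand side, once the positive part of $\mc F$ is shown to dominate the local energy density, yields the $\| (\eta,\psi)\|_{LE_{x_0}}^2$ contribution. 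Taking the supremum over $x_0 \in \R$ then delivers the target bound on $\|(\eta,\psi)\|_{LE}^2$.

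The main obstacles are two. The first, more routine, is that the sign-definite part of $\mc F$ only dominates the local energy density modulo cubic-or-higher remainders; these must be absorbed using multilinear estimates bounded by the $X$ norm, yielding contributions of the form $\epsilon_0 \cdot \|(\eta,\psi)\|_{LE}^2$ that can be moved to the left-hand side under the smallness hypothesis \eqref{uniform}. The second, and genuinely serious, obstacle is the low-frequency analysis: the linear dispersion relation $\omega(\xi) = \sqrt{g\,\xi\tanh(h\xi)}$ degenerates near $|\xi|\sim 1/h$ from the dispersive regime $\sqrt{g|\xi|}$ to the non-dispersive shallow-water regime $\sqrt{gh}\,|\xi|$, so the flux $\mc F$ built naively from $\eta\psi_x$ does not dominate the local energy density uniformly in $h$ at those frequencies. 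This is where the three additional ingredients advertised in the introduction must come in: a nonlinear normal form modification of $\mc P$ designed to cancel the worst low-frequency interactions, a conformal-transformation setup to carry out the harmonic analysis on a flat strip, and a Littlewood--Paley decomposition at the threshold $1/h$ so that each frequency regime can be handled with tailored multilinear estimates.
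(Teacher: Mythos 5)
Your outline correctly identifies the framework: a Morawetz multiplier argument from local momentum conservation, the observation that $\mathcal{M}=\int\eta\psi_x\,dx$ sits exactly at the $E^{1/4}$ scale, the pairing against $m$ with $m'=\chi$, the need for a normal form correction, the holomorphic-coordinate setup, and the frequency threshold $1/h$. All of these match the paper.

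However, there is a genuine gap in the step where you assert that a density $\mathcal{P}$ with leading part $\eta\psi_x$ has a flux $\mathcal F$ whose sign-definite leading part dominates the full local energy density $g\eta^2 + \int|\nabla\phi|^2\,dy$. This fails: the flux $S_2$ associated to $I_2=\eta\psi_x$ (Lemma~\ref{l:I2}) contains $\tfrac12\int(\phi_x^2-\phi_y^2)\,dy$, which is sign-indefinite and cannot by itself control $\int|\nabla\phi|^2\,dy$ even at the linear level. The structural ingredient you are missing — and which the paper singles out as new — is a \emph{second} momentum density
\[
I_3 = \int_{-h}^{\eta}\nabla\theta\cdot\nabla q\,dy,
\]
built from the stream function $q$ (the harmonic conjugate of $\phi$) and from $\theta=H_D(\eta)$ (Lemma~\ref{l:I3}); its flux $S_3$ contains the coercive term $\tfrac12\int|\nabla\phi|^2\,dy$. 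The proof then uses the convex combination $\sigma I_2 + (1-\sigma)I_3$ with $\sigma$ slightly below $\tfrac12$, and even in the linearized finite-depth problem this does not close for free: the $I_3$ flux produces a bilinear form $Q_m(\eta)$ arising from the mismatch $H_D(\eta)\neq H_N(\eta)$, and one must prove a rather delicate positivity statement for the associated kernel $K$ (Proposition~\ref{p:B1-bd}, Lemma~\ref{lemma:Kpositive}) to make $LE_\eta$ positive uniformly in $h$. None of this is visible from your outline. Your diagnosis of the low-frequency obstruction as a degeneracy of the dispersion relation near $|\xi|\sim 1/h$ is also not quite the mechanism; the actual difficulty is the Dirichlet/Neumann mismatch on the bottom and the resulting kernel-positivity question. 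Finally, a smaller point: you present the cubic-and-higher remainders as uniformly absorbable by $\epsilon_0\cdot LE$, but the paper emphasizes that one cubic error ($Err_3$) is genuinely \emph{unbounded} (in both high and low frequency) and is handled only by the normal form correction, not by perturbation; you do anticipate the normal form but attribute it solely to low frequencies, which understates what it is doing.
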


We continue with several remarks concerning the choices of parameters/norms in the theorem.

\begin{remark}
One key feature of our result is 
that it is global in time (uniform in $T$) and uniform in $h \geq 1$. In particular our estimate is uniform in the infinite depth limit.
\end{remark}

\begin{remark}
Another feature of our result is that the statement of Theorem ~\ref{ThmG1} is invariant with respect to the following scaling law  (time associated scaling)  
\[
\begin{aligned}
(\eta (t,x), \psi(t,x)) &\rightarrow (\eta (\lambda t,x), \lambda \psi(\lambda t,x))\\
(g,h) &\rightarrow (\lambda^2 g, h).
\end{aligned}
\]
This implies that the value of $g$ is not important. By scaling one 
could simply set it to $1$ in all the proofs. We do not do that
in order to improve the readability of the article.
\end{remark}

\begin{remark}
As already explained, the uniform control norms in  \eqref{uniform} are below the current local well-posedness threshold for this problem, and are instead what one might view as the 
critical, scale invariant norms for this problem. The dependence on $h$ 
is natural as spatial scaling will also alter the depth $h$. In the infinite depth limit one recovers exactly the homogeneous Sobolev norms.
We also note that, by Sobolev embeddings, our smallness assumption
guarantees that 
\[
|\eta| \lesssim \epsilon_0 h, \qquad |\eta_x | \lesssim \epsilon_0.
\]
\end{remark}

\begin{remark}
The constraint $h \geq 1$ is due to the window size of $1$ in the local energy norm. Of course, once a local energy estimate is obtained
for a window size, the similar bound for all larger window sizes 
also follow. Then bounds for $h < 1$ or for smaller window sizes can also be achieved by scaling; however, the uniformity in $h$ will be lost.
\end{remark}

\begin{remark}
In Appendix~\ref{appendix:nonlinear}, we complement this result 
by showing a similar estimate 
for possibly large solutions (satisfying a smallness assumption which is milder than \eqref{uniform}), but at the expense of loosing the uniformity in the depth as well as the control of the low-frequency component of the velocity potential.
\end{remark}

As in Morawetz' s original paper (\cite{Morawetz1968}), we will obtain
these results by using the multiplier method, based on the momentum conservation law. When doing this, we encounter two difficulties:

\begin{itemize}
\item {\bf High frequency issues} which are due to the fact that 
our equations are quasilinear. 

\item{\bf Low frequency issues} which are due both to the fact that the 
equations are nonlocal, and that they have quadratic nonlinearities.
\end{itemize}

Of these, the low frequency issues are far more delicate. To approach
them we use both the Eulerian coordinates and the holomorphic coordinates. The latter will provide a better setting to understand the fine bilinear and multilinear structure of the equations.

\subsection{Plan of the paper}
In the next section, we review density flux pairs for the momentum.
The density $\eta\psi_x$ implicit in \eqref{momentum} only allows one to control the 
local potential energy, while for the local kinetic energy we introduce
an alternate density and the associated flux.

To exploit the density flux identities we need a good understanding of the Dirichlet problem in a strip, which in turn leads us to the 
holomorphic coordinates. This is discussed in the following section, which also provides the formulation of the equations in holomorphic coordinates and reviews the correspondence between the two settings.

In Section~\ref{s:linear} we use the quadratic versions of the 
above density flux pairs in order to prove the local energy bounds
for the corresponding linear flow. This will be later used to handle
the leading, quadratic part of the nonlinear identities.

Finally, in the last sections we use the nonlinear density flux pairs
to prove the local energy decay bounds in the theorem. Here we 
use the linear analysis for the main quadratic terms, and the bulk of the work is devoted to estimates for the cubic and higher terms.
It is there that a delicate analysis is required in order to handle both the low 
and high frequency contributions. 
In particular, the worst such contribution turns out to be unbounded. However, we discovered that this error term can be balanced using a carefully chosen nonlinear normal form type correction to the momentum density.

\subsection*{Acknowledgements} 
This research was initiated at IH{\'E}S in the spring 2016 during a Trimester on nonlinear waves.  The authors thank the organizers of this semester and the IH{\'E}S for their hospitality. The first author thanks the University of California-Berkeley and the last two authors thank the {\'E}cole Normale Sup{\'e}rieure de Cachan and the University of Orsay, 
where part of this research was carried out. 
The first author was partially supported by the grant ÒANA{\'E}Ó ANR-13-BS01-0010-03. The second author was partially supported by a Clare Boothe Luce Professorship. The third author was partially supported by the NSF grant  DMS-1266182 as well as by a Simons Investigator grant from the  Simons Foundation.

\section{Conservation of momentum and local conservation laws}\label{s:Cl}

This section contains several formal identities related to the conservation of momentum (these formal computations will be made rigorous later on). We prove that there are certain momentum densities that one can use in defining the horizontal momentum. Clever manipulations of such quantities will lead to control of the kinetic and potential components of the local energy.

The fact that the momentum is a conserved quantity comes from the fact
that the problem is invariant with respect to horizontal translation (see Benjamin
and Olver \cite{BO} for studies of the invariants and symmetries of
the water-wave equations). 
To exploit the conservation of the momentum we will use density flux pairs 
$(I,S)$ which by definition must satisfy 
\begin{equation}\label{density}
\mathcal{M}=\int I \, dx ,
\end{equation}
and also the conservation law
\begin{equation}\label{cl}
\partial_t I+\partial_x S=0.
\end{equation}
In what follows $m(x)$ is a positive increasing function. 
Multiplying the identity  \eqref{cl} by $m=m(x)$, integrating 
over $[0,T]\times \xR$ and then integrating by parts yields
\[
\iint_{[0,T]\times \xR} S(t,x) m_x \, dxdt =\int_\xR m(x)I(T,x)\, dx-\int_\xR m(x)I(0,x)\, dx.
\]
Since $m_x$ is nonnegative, the above identity is favorable provided 
that $S$ is also nonnegative.

We begin by writing the momentum as an integral over the whole water domain
\[
\mathcal{M}(t)=\int_\xR\int_{-h}^{\eta(t,x)}\phi_x (t,x,y)\, dy dx.
\]
The next lemma shows that the momentum is an invariant. As already
mentioned, this is a well-known result. For the sake of completeness,
we give, following Longuet-Higgins \cite{LH1983}, a formal
computational proof of the conservation of momentum which is linked to
other computations made below.  We also give three different expressions
for a possible choice of the momentum density. 

\begin{lemma}
We have
$$
\frac{d}{dt} \mathcal{M}=0.
$$
\end{lemma}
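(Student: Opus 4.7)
The plan is to compute $\frac{d}{dt}\mathcal{M}$ directly, reduce it to a single boundary integral on the free surface, and then show that integral vanishes by applying the divergence theorem to a harmonically-motivated divergence-free field.

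First I would view $\mathcal{M}(t) = \int_{\Omega(t)} \phi_x\, dy\, dx$ as the integral of $\phi_x$ over the moving domain and apply Reynolds' transport: since the bottom is fixed and the free surface moves with vertical speed $\partial_t \eta$,
$$
\frac{d}{dt}\mathcal{M} = \int_\xR \int_{-h}^{\eta} \partial_t \phi_x\, dy\, dx + \int_\xR \phi_x|_{y=\eta}\, \partial_t \eta\, dx.
$$
Differentiating Bernoulli in $x$ yields $\partial_t \phi_x = -\partial_x(\tfrac12|\nabla_{x,y}\phi|^2 + P)$ (the $gy$ term drops). Pulling $\partial_x$ out of the $y$-integral via Leibniz gives a full $x$-derivative (which integrates to zero assuming decay at infinity) plus a boundary contribution that, using $P|_{y=\eta}=0$, reduces to $\int_\xR \tfrac12 |\nabla_{x,y}\phi|^2|_{y=\eta}\,\eta_x\, dx$. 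Substituting the kinematic condition $\partial_t \eta = (\phi_y - \eta_x\phi_x)|_{y=\eta}$ in the other term and collecting leaves, after a brief cancellation,
$$
\frac{d}{dt}\mathcal{M} = \int_\xR \Big[\phi_x \phi_y - \tfrac{\eta_x}{2}(\phi_x^2 - \phi_y^2)\Big]\Big|_{y=\eta}\, dx.
$$

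The remaining point is that this boundary integral is zero. The key observation is that, thanks to $\Delta\phi = 0$, the vector field
$$
\mathbf{F} = \big(\tfrac12(\phi_x^2 - \phi_y^2),\ \phi_x \phi_y\big)
$$
is divergence-free on $\Omega(t)$ (equivalently, $(W')^2$ is holomorphic for the holomorphic completion $W' = \phi_x - i\phi_y$). Applying the divergence theorem to $\mathbf F$ on $\Omega(t)$: on the free surface the outward element $\mathbf n\, d\sigma = (-\eta_x,1)\, dx$ makes $\mathbf F\cdot \mathbf n\, d\sigma$ exactly the integrand above; on the bottom $y=-h$ the flat-bottom condition $\phi_y=0$ forces $\mathbf F\cdot(0,-1) = -\phi_x\phi_y = 0$; and the lateral contributions vanish under decay at infinity. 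Hence the surface integral vanishes and $\frac{d}{dt}\mathcal{M} = 0$.

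The main obstacle is really just spotting the right divergence-free field, i.e.\ realizing that $\tfrac12(W')^2$ governs the relevant boundary quadratic form; once this is identified, the flat-bottom condition precisely kills its normal component and the proof organizes itself. The alternative momentum densities promised alongside the lemma can then be read off for free: integrating by parts in $y$ with a harmonic conjugate gives $\int_{-h}^{\eta}\phi_x\, dy = \tilde\phi|_{y=\eta} - \tilde\phi|_{y=-h}$, while an $x$-integration by parts converts $-\int_\xR \psi\, \eta_x\, dx$ into $\int_\xR \eta\,\psi_x\, dx$, yielding the form \eqref{momentum}.
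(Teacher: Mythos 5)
Your proof is correct, but it follows a genuinely different route from the paper's. The paper first establishes the Lagrangian transport identity
\[
\frac{d}{dt}\iint_{\Omega(t)} f\,dy\,dx = \iint_{\Omega(t)}\bigl(\partial_t + \nabla_{x,y}\phi\cdot\nabla_{x,y}\bigr)f\,dy\,dx,
\]
applies it with $f=\phi_x$ so that the Bernoulli equation collapses the integrand to $-\partial_x P$, and then observes that $\iint_{\Omega(t)}\partial_x P = 0$ directly: the Leibniz boundary term $\eta_x P|_{y=\eta}$ vanishes because $P|_{y=\eta}=0$, and the remaining bulk term is a full $x$-derivative. The pressure never leaves center stage. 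You instead use the Eulerian (Reynolds) transport formula, push the $x$-derivative through in the same way, and are left with a pure free-surface quadratic form; you then kill it by recognizing the divergence-free field $\mathbf{F}=\bigl(\tfrac12(\phi_x^2-\phi_y^2),\,\phi_x\phi_y\bigr)$ and appealing to the flat-bottom condition $\phi_y|_{y=-h}=0$. Your route requires spotting the holomorphic structure $(\phi_x-i\phi_y)^2$, which the paper uses elsewhere (it is essentially the identity of Lemma~\ref{L3.3} in the appendix, and the flux $S_1$ contains $\tfrac12\int(\phi_x^2-\phi_y^2)\,dy$), but the paper's proof of this particular lemma avoids needing it by keeping $P$ around; that makes the paper's argument a bit shorter and also makes the role of the dynamic boundary condition $P|_{y=\eta}=0$ more visible. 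Your closing remark about recovering the alternative momentum densities is heuristic and not quite how the paper does it (the paper writes $I_1 = \partial_x\bigl(\int\phi\,dy - \eta\psi\bigr)+I_2$ rather than appealing to the stream function), but that is an aside and does not affect the correctness of the lemma's proof.
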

\begin{proof}
Set $\Omega(t)=\{(x,y)\ : \ -h<y<\eta(t,x)\}$. To prove this result we first check that, for any function $f=f(t,x,y)$, one has
$$
\frac{d}{dt} \iint_{\Omega(t)} f(t,x,y)\, dy dx
=\iint_{\Omega(t)} (\partial_t +\nabla_{x,y}\phi\cdot \nabla_{x,y})f \, dy dx.
$$
Indeed,
\begin{align*}
\int_\xR (\partial_t \eta)f(t,x,\eta)\, dx &=\int_\xR (\partial_n\phi) f(t,x,\eta) 
\sqrt{1+\eta_x^2}\, dx\\
&=\int_{\partial\Omega(t)}n \cdot (f\nabla_{x,y}\phi) \, d \sigma\\
&=\iint_{\Omega(t)} \cn_{x,y}(f\nabla_{x,y}\phi)\, dydx\\
&=\iint_{\Omega(t)}\nabla_{x,y}\phi\cdot \nabla_{x,y}f \, dydx.
\end{align*}
By applying the previous identity with $f=\phi_x$, we deduce that
$$
\frac{d}{dt} \mathcal{M}=\iint_{\Omega(t)}(\partial_t +\nabla_{x,y}\phi\cdot \nabla_{x,y})\phi_x\, dydx,
$$
so
$$
\frac{d}{dt} \mathcal{M}
=\iint_{\Omega(t)}\partial_x(\partial_t \phi+\mez\la \nabla_{x,y}\phi\ra^2)\, dydx
=-\iint_{\Omega(t)}\partial_x P\, dydx.
$$
Now, we have
$$
\iint_{\Omega(t)}\partial_x P\, dydx=\int_\xR\partial_x \Big( \int_{-h}^{\eta(t,x)}P\, dx \Big)\, dx
-\int \eta_x P\arrowvert_{y=\eta}\, dx=0,
$$
where we used the boundary condition $P\arrowvert_{y=\eta}=0$. This gives the wanted result. 
\end{proof}

In addition to the conservation of momentum, one has local
conservation laws of the form \eqref{cl},  which imply the conservation of momentum. The study of
these conservation laws for water waves was initiated by Benjamin and
we refer to his broad survey paper about impulse conservation in
\cite{Benjamin1984}. Here  we discuss density-flux pairs $(I,S)$ 
for the momentum. These are not unique, and 
in effect there are three such pairs that play a role in our work. The first two pairs are well known in fluid mechanics, but the third one is new 
to the best of our knowledge. 

\begin{lemma}
The expression 
\[
I_1(t,x)=\int_{-h}^{\eta(t,x)}  \phi_x(t,x,y) \, dy,
\]
is a density for the momentum, with associated density flux
\[
S_1(t,x)\defn -\int_{-h}^{\eta(t,x)}\partial_t \phi\, dy -\frac{g}{2}\eta^2+\mez \int_{-h}^{\eta(t,x)}(\phi_x^2-\phi_y^2)\, dy.
\]
\end{lemma}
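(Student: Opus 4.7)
The first assertion that $\int_\R I_1\, dx = \mathcal{M}$ is immediate from the definition of $\mathcal{M}$ as the integral of $\phi_x$ over $\Omega(t)$. The real task is to verify the local conservation identity $\partial_t I_1 + \partial_x S_1 = 0$, which I would do by direct computation, bringing in the governing equations only at the very end.

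First I would compute $\partial_t I_1$ and $\partial_x S_1$ using Leibniz's rule with the moving upper limit $y=\eta(t,x)$. The time derivative contributes a boundary term $\eta_t \phi_x\aeta$ plus the bulk integral $\int_{-h}^\eta \phi_{tx}\,dy$. The $x$-derivative of the first piece of $S_1$ produces $-\eta_x\phi_t\aeta - \int_{-h}^\eta \phi_{tx}\,dy$, which conveniently cancels the bulk integral from $\partial_t I_1$; the second piece yields $-g\eta\eta_x$; the third piece generates a boundary contribution $\tfrac12 \eta_x(\phi_x^2-\phi_y^2)\aeta$ plus the bulk integral $\int_{-h}^\eta(\phi_x\phi_{xx}-\phi_y\phi_{xy})\,dy$.

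The key algebraic observation is that the harmonicity of $\phi$ lets us rewrite this last bulk integrand as a $y$-divergence: since $\phi_{xx}=-\phi_{yy}$,
\[
\phi_x\phi_{xx} - \phi_y \phi_{xy} = -\phi_x\phi_{yy} - \phi_y\phi_{xy} = -\partial_y(\phi_x\phi_y).
\]
Integrating over $[-h,\eta]$ and using the flat-bottom condition $\phi_y|_{y=-h}=0$ collapses this to the single boundary term $-(\phi_x\phi_y)\aeta$. Collecting everything, the bulk contributions have cancelled and one is left with an expression evaluated entirely at $y=\eta$:
\[
\partial_t I_1 + \partial_x S_1 = \eta_t\phi_x\aeta - \eta_x \phi_t\aeta -g\eta\eta_x + \tfrac12 \eta_x(\phi_x^2-\phi_y^2)\aeta - (\phi_x\phi_y)\aeta.
\]

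To close the identity I would now invoke the two surface conditions. Substituting the kinematic equation $\eta_t = \phi_y\aeta - \eta_x \phi_x\aeta$ into the first term cancels $(\phi_x\phi_y)\aeta$ and combines with the $\tfrac12 \eta_x(\phi_x^2-\phi_y^2)\aeta$ term to produce exactly $-\tfrac12 \eta_x(\phi_x^2+\phi_y^2)\aeta$. The resulting expression factors as
\[
-\eta_x \Bigl[\phi_t + \tfrac12\la\nabla_{x,y}\phi\ra^2 + g\eta\Bigr]_{y=\eta},
\]
and this bracket vanishes thanks to the Bernoulli equation evaluated on the free surface together with the dynamic boundary condition $P\aeta=0$. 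This proves $\partial_t I_1 + \partial_x S_1 = 0$ as claimed.

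I do not anticipate a genuine obstacle here beyond careful bookkeeping; if anything, the small conceptual step is spotting the rewriting $\phi_x\phi_{xx}-\phi_y\phi_{xy}=-\partial_y(\phi_x\phi_y)$, which is what allows one to reduce the bulk integral to boundary data and thereby exploit the flat-bottom and Bernoulli conditions.
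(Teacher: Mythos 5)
Your proof is correct and takes essentially the same route as the paper's: Leibniz rule with the moving upper limit, reduction of the bulk quadratic term $\int_{-h}^\eta(\phi_x\phi_{xx}-\phi_y\phi_{xy})\,dy$ to the single surface term $-(\phi_x\phi_y)\arrowvert_{y=\eta}$ via harmonicity and the flat-bottom condition, and then the kinematic, Bernoulli, and dynamic ($P\arrowvert_{y=\eta}=0$) boundary conditions to close. The only organizational difference is that you compute $\partial_t I_1 + \partial_x S_1$ in one go, so the two bulk $\int\phi_{tx}\,dy$ integrals cancel head-on and Bernoulli is invoked only on the free surface, whereas the paper computes $\partial_t I_1$ alone and uses the bulk Bernoulli equation (as $\partial_t\phi_x = -\partial_x(\tfrac12|\nabla\phi|^2+P)$) earlier in the manipulation before exhibiting the result as $-\partial_x S_1$; this is a cosmetic rearrangement, not a different argument.
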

\begin{proof}
This result follows from the study by Benjamin in \cite{Benjamin1984}.  We give here another proof. 

From now on, given a function $f=f(t,x,y)$, we denote by $\evaluate{f}$ the function 
$$
\evaluate{f}(t,x)=f(t,x,\eta(t,x)).
$$
With this notation, one has
$$
\partial_t I_1=\partial_t \int_{-h}^{\eta}\phi_x\, dy=(\partial_t \eta)\evaluate{\phi_x}+\int_{-h}^\eta \partial_t \phi_x \, dy.
$$
Using the equations for $\eta$ and the velocity $\phi_x$ this gives that
$$
\partial_t I_1=(\evaluate{\phi_y}-\eta_x\evaluate{\phi_x})\evaluate{\phi_x}
-\int_{-h}^{\eta}\partial_x \left(\mez \la\nabla_{x,y}\phi\ra^2+ P\right)\, dy.
$$
We deduce that
$$
\partial_t I_1=\evaluate{\phi_y}\evaluate{\phi_x}+\mez \eta_x\evaluate{\phi_y^2}
-\mez\eta_x\evaluate{\phi_x}^2+\eta_x \evaluate{P}
-\partial_x \int_{-h}^{\eta}\left(\mez \la\nabla_{x,y}\phi\ra^2+ P\right) \, dy.
$$
Using the two equations for the pressure (in the fluid domain and at the free surface), 
we conclude that
$$
\partial_t I_1=\evaluate{\phi_y}\evaluate{\phi_x}+\mez \eta_x\evaluate{\phi_y^2}
-\mez\eta_x\evaluate{\phi_x}^2
+\partial_x \int_{-h}^{\eta}\left(\partial_t \phi+gy\right) \, dy.
$$
Since $\partial_x \int_{-h}^{\eta}gy\, dy=\partial_x(g\eta^2/2)$, to conclude the proof, it remains only to check that
$$
\evaluate{\phi_y}\evaluate{\phi_x}+\mez \eta_x\evaluate{\phi_y^2}
-\mez\eta_x\evaluate{\phi_x}^2=\mez\partial_x \int_{-h}^{\eta}(\phi_y^2-\phi_x^2)\, dy.
$$
This can be verified by a direct computation, noticing that
$$
\int_{-h}^\eta(\phi_x\phi_{yx}-\phi_x\phi_{xx})\, dy=
\int_{-h}^\eta(\phi_x\phi_{yx}+\phi_x\phi_{yy})\, dy=
\int_{-h}^\eta\partial_y (\phi_x\phi_y)\, dy=\evaluate{\phi_x}\evaluate{\phi_y},
$$
where we used the equations for $\phi$ to get $\phi_{xx}=-\phi_{yy}$ 
and $\phi_y(t,x,-h)=0$.
\end{proof}


\begin{lemma}\label{l:I2}
The expression 
\[
I_2(t,x)=\eta(t,x)\psi_x(t,x)
\]
is a density for the momentum, with associated density flux
\[
S_2(t,x)\defn -\eta\psi_t-\frac{g}{2}\eta^2+
\mez \int_{-h}^{\eta(t,x)}(\phi_x^2-\phi_y^2)\, dy.
\]
\end{lemma}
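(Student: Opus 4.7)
The plan is to reduce Lemma \ref{l:I2} to the previously established pair $(I_1, S_1)$ by showing that $I_2$ differs from $I_1$ by a total $x$-derivative. This will automatically handle both the momentum density identity \eqref{density} and the conservation law \eqref{cl}, with the flux $S_2$ emerging directly from $S_1$ modified by the time derivative of the auxiliary potential.

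First I would observe that by the Leibniz rule applied to a variable endpoint,
\[
\int_{-h}^{\eta(t,x)}\phi_x(t,x,y)\,dy = \partial_x\!\int_{-h}^{\eta(t,x)}\phi(t,x,y)\,dy - \eta_x(t,x)\,\evaluate{\phi}(t,x),
\]
and using $\evaluate{\phi}=\psi$ together with $\eta_x\psi + \eta\psi_x = \partial_x(\eta\psi)$, this yields
\[
I_1 - I_2 = \partial_x F, \qquad F(t,x)\defn \int_{-h}^{\eta(t,x)}\phi(t,x,y)\,dy - \eta(t,x)\psi(t,x).
\]
Integrating in $x$ (with the standard decay assumption) gives $\int I_2\,dx = \int I_1\,dx = \mathcal{M}$, so $I_2$ is indeed a momentum density.

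Next, since $\partial_t I_2 = \partial_t I_1 - \partial_x\partial_t F$ and, by the previous lemma, $\partial_t I_1 + \partial_x S_1 = 0$, we obtain
\[
\partial_t I_2 + \partial_x(S_1 + \partial_t F) = 0,
\]
so it suffices to verify that $S_1 + \partial_t F = S_2$. Computing $\partial_t F$ via the Leibniz rule again,
\[
\partial_t F = (\partial_t\eta)\evaluate{\phi} + \int_{-h}^{\eta}\partial_t\phi\,dy - (\partial_t\eta)\psi - \eta\,\partial_t\psi = \int_{-h}^{\eta}\partial_t\phi\,dy - \eta\,\partial_t\psi,
\]
where the crucial cancellation $\evaluate{\phi}=\psi$ eliminates the $\partial_t\eta$ terms. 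Adding this to $S_1$, the $\int_{-h}^{\eta}\partial_t\phi\,dy$ contributions cancel and the remaining terms match exactly the expression given for $S_2$.

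The only subtlety is a bookkeeping one: one must be careful with the distinction between $\psi_t = \partial_t\psi$ and $\evaluate{\phi_t}=\partial_t\phi(t,x,\eta(t,x))$, which differ by $(\partial_t\eta)\evaluate{\phi_y}$. No such issue appears in the computation above, since the step uses only the identity $\evaluate{\phi}=\psi$ and not its time derivative, so the proof is essentially a clean algebraic reduction to the previous lemma with no further PDE input required.
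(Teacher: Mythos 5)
Your proof is correct and follows essentially the same route as the paper: both identify $I_1 - I_2 = \partial_x F$ with $F = \int_{-h}^\eta \phi\,dy - \eta\psi$, then compute $\partial_t F = \int_{-h}^\eta \partial_t\phi\,dy - \eta\psi_t$ (using the cancellation of the $\eta_t$ terms via $\evaluate{\phi}=\psi$), and conclude $S_2 = S_1 + \partial_t F$.
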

\begin{proof}
We write that
\begin{align*}
I_1&=\int_{-h}^{\eta(t,x)}\phi_x(t,x,y)\, dy=\partial_x\int_{-h}^{\eta(t,x)}\phi\, dy -\eta_x\psi\\
&=\partial_x \left(\int_{-h}^{\eta(t,x)}\phi\, dy -\eta\psi\right)+\eta\psi_x\\
&=\partial_x \left(\int_{-h}^{\eta(t,x)}\phi\, dy -\eta\psi\right)+I_2.
\end{align*}
This immediately implies that
$\mathcal{M}=\int I_1\, dx =\int I_2\, dx$ and 
$$
\partial_t I_2=\partial_t I_1-\partial_x \left(\int_{-h}^{\eta(t,x)}\partial_t\phi\, dy -\eta\psi_t\right)=-\partial_x\left(S_1+\int_{-h}^{\eta(t,x)}\partial_t\phi\, dy -\eta\psi_t\right),
$$
so that the wanted expression for $S_2$ can be deduced from the previous lemma.
\end{proof}

To define the third pair we introduce two auxiliary functions as follows (we shall later rigorously justify that these functions are well-defined). The function $q$, 
defined inside the fluid domain, is the stream function, or the harmonic conjugate of $\phi$,
and satisfies
\begin{equation}\label{defi:qconj}
\left\{
\begin{aligned}
& q_x = -\phi_y , \quad \text{in }-h<y<\eta(t,x),\\
& q_y = \phi_x, \qquad \text{in }-h<y<\eta(t,x),\\
& q(t,x,-h) = 0.
\end{aligned}
\right.
\end{equation}
The function $\theta$ is the harmonic extension of $\eta$ with Dirichlet boundary condition on the bottom:
\begin{equation}\label{defi:thetainitial}
\left\{
\begin{aligned}
& \Delta_{x,y}\theta=0\quad \text{in }-h<y<\eta(t,x),\\
&\theta(t,x,\eta(t,x))=\eta(t,x),\\
&\theta(t,x,-h) = 0.
\end{aligned}
\right.
\end{equation}
Now the following lemma states that there is another natural density/flux pair for the momentum. 
\begin{lemma}\label{l:I3}
The expression 
\[
I_3(t,x)= \int_{-h}^\eta 
\nabla \theta(t,x,y) \cdot\nabla q(t,x,y) \, dy
\]
is a density for the momentum, with associated density flux
\[
S_3(t,x)\defn  -\frac{g}{2}\eta^2 - \int_{-h}^{\eta(t,x)} \theta_y \phi_t \, dy   
+  \int_{-h}^{\eta(t,x)} \Big(\mez (\phi_x^2 - \phi_y^2)+ \theta_t \phi_y \Big)\, dy.
\]
\end{lemma}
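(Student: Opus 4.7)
The strategy is to relate the new pair $(I_3,S_3)$ to the pair $(I_2,S_2)$ already established in Lemma~\ref{l:I2}. Specifically, I will show that
\[
I_3 = I_2 - \partial_x R, \qquad R(t,x) \defn \int_{-h}^{\eta(t,x)} \theta\, \phi_y \, dy,
\]
and will then verify the matching flux identity $S_3 = S_2 + \partial_t R$. Granted these two identities, the density statement $\int I_3\, dx = \int I_2\, dx = \mathcal{M}$ is immediate from the $\partial_x$-exactness of $I_3-I_2$, and the local conservation law follows at once from Lemma~\ref{l:I2}:
\[
\partial_t I_3 + \partial_x S_3 = (\partial_t I_2 + \partial_x S_2) + (-\partial_t\partial_x R + \partial_x\partial_t R) = 0.
\]

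\textbf{Step 1: the density identity.} Since $q_{xx}+q_{yy} = -\phi_{yx} + \phi_{xy} = 0$, the stream function $q$ is harmonic, so $\nabla\theta\cdot\nabla q = \nabla_{x,y}\cdot(\theta\nabla q)$. Thus
\[
I_3 = \int_{-h}^{\eta}\partial_x(\theta q_x)\, dy + \int_{-h}^{\eta}\partial_y(\theta q_y)\, dy.
\]
The $y$-integral equals $\eta\, q_y\arrowvert_{y=\eta} - 0 = \eta\, \phi_x\arrowvert_{y=\eta}$ using $\theta(t,x,-h)=0$, $\theta\arrowvert_{y=\eta}=\eta$, and $q_y=\phi_x$. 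Applying Leibniz to the $x$-integral,
\[
\int_{-h}^{\eta}\partial_x(\theta q_x)\, dy = \partial_x\!\int_{-h}^{\eta}\theta q_x\, dy - \eta_x\, (\theta q_x)\arrowvert_{y=\eta},
\]
and using $q_x = -\phi_y$, the boundary correction becomes $\eta_x\eta\, \phi_y\arrowvert_{y=\eta}$. Combining the two boundary contributions via the chain rule gives $\eta\,(\phi_x + \eta_x\phi_y)\arrowvert_{y=\eta} = \eta\psi_x = I_2$, whence $I_3 = I_2 - \partial_x R$.

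\textbf{Step 2: the flux identity.} I differentiate $R$ in time, keeping track of the moving upper boundary:
\[
\partial_t R = \eta_t(\theta\phi_y)\arrowvert_{y=\eta} + \int_{-h}^{\eta}\theta_t\phi_y\, dy + \int_{-h}^{\eta}\theta\phi_{yt}\, dy.
\]
Since $\theta\arrowvert_{y=\eta}=\eta$, the first term equals $\eta\eta_t\phi_y\arrowvert_{y=\eta}$. Integrating the last term by parts in $y$, using $\theta(t,x,-h)=0$, produces $\eta\phi_t\arrowvert_{y=\eta} - \int_{-h}^{\eta}\theta_y\phi_t\, dy$. The two boundary pieces assemble, through the chain rule $\psi_t = \phi_t\arrowvert_{y=\eta} + \eta_t\phi_y\arrowvert_{y=\eta}$, into $\eta\psi_t$, which exactly cancels the term $-\eta\psi_t$ appearing in $S_2$. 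What remains is
\[
S_2 + \partial_t R = -\frac{g}{2}\eta^2 + \mez\!\int_{-h}^{\eta}(\phi_x^2-\phi_y^2)\, dy + \int_{-h}^{\eta}\theta_t\phi_y\, dy - \int_{-h}^{\eta}\theta_y\phi_t\, dy,
\]
which matches the stated expression for $S_3$.

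\textbf{Main obstacle.} No serious analytic difficulty arises at this formal stage: the proof is essentially a bookkeeping exercise in which boundary terms generated by Leibniz differentiation of moving-domain integrals are reorganized through the chain rules for $\psi_x$ and $\psi_t$. The only conceptual observation is that the two densities $I_2$ and $I_3$ differ by an exact $x$-derivative, which reduces the new conservation law to identifying $S_3-S_2$ with $\partial_t R$; once the auxiliary harmonic functions $\theta$ and $q$ are introduced, every subsequent step is forced by integration by parts together with the boundary conditions $\theta(t,x,-h)=q(t,x,-h)=0$ and $\theta\arrowvert_{y=\eta}=\eta$.
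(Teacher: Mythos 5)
Your proof is correct and follows essentially the same route as the paper: both first show that $I_3 = I_2 + \partial_x\int_{-h}^{\eta}\theta q_x\,dy$ (your $-\partial_x R$, since $q_x=-\phi_y$) via the divergence form of $\nabla\theta\cdot\nabla q$, and then obtain the flux by differentiating that correction term in time and reassembling the boundary pieces through the chain rules for $\psi_x$ and $\psi_t$. The only cosmetic difference is that you work with $R=\int\theta\phi_y\,dy$ throughout and explicitly invoke harmonicity of $q$, which the paper leaves implicit.
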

\begin{proof}
We write 
\[
\nabla \theta \cdot\nabla q = \partial_x (\theta q_x) + \partial_y (\theta q_y),
\]
and integrate in $y$,
\[
\begin{aligned}
\int_{-h}^{\eta (x,t)} \nabla \theta \cdot\nabla q \, dy &= 
\int _{-h}^{\eta (x,t)}\big(  \partial_x (\theta q_x) + \partial_y (\theta q_y)\big) 
\, dy \\
&= \partial_x \left(\int_{-h}^{\eta (x,t)} 
 \theta q_x\, dy\right) - \eta_x \evaluate{\theta q_x} + 
 \evaluate{\theta q_y},
\end{aligned}
\]
where we recall that, given $f=f(t,x,y)$, we set 
$\evaluate{f}(t,x)=f(t,x,\eta(t,x))$.

Now we notice that 
$$
-\eta_x \evaluate{q_x} +  \evaluate{q_y}=\eta_x \evaluate{\phi_y} +  \evaluate{\phi_x}=\partial_x \evaluate{\phi}=\psi_x,
$$
so, recalling that $\evaluate{\theta}=\eta$, we conclude that 
\[
I_3 = I_2 + \partial_x \int \theta q_x \, dy.
\]
Hence $I_3$ is also a momentum density. Further, its flux
is
\[
S_3 = S_2 - \partial_t \int_{-h}^{\eta(t,x)} \theta q_x\, dy.
\]
We further expand the last time derivative,
\begin{align*}
 \partial_t \Big(\int_{-h}^{\eta(t,x)} \theta q_x \, dy\Big) &=   \eta_t \evaluate{\theta q_x} + \int_{-h}^{\eta(t,x)} \big(-\theta_t \phi_y + \theta q_{xt}\big) \, dy\\
 &=  -\eta_t \eta \evaluate{\phi_y} 
 + \int_{-h}^{\eta(t,x)} \big(-\theta_t \phi_y - \theta \phi_{yt}\big) \, dy
\\
  &=    - \eta_t \eta \evaluate{\phi_y} - \eta \evaluate{\phi_t}   + \int_{-h}^{\eta(t,x)} \big(-\theta_t \phi_y + \theta_y \phi_{t}\big)\, dy
\\
  &=    -\eta \psi_t    + \int_{-h}^{\eta(t,x)} \big(-\theta_t \phi_y + \theta_y \phi_{t}\big) \, dy.
\end{align*}
The conclusion of the lemma easily follows.
\end{proof}

\subsection{The expressions \texorpdfstring{$\phi_t$}{} and \texorpdfstring{$\theta_t$}{}}
Here we provide a better description of the functions $\theta_t$ and $\phi_t$
arising in the last momentum flux $S_3$. For that we introduce two bounded operators,
$\HD$ and $\HN$, which act on functions on the top and produce their harmonic extension within the fluid domain with zero Dirichlet, respectively Neumann
boundary condition on the bottom (we shall explain 
later on that these operators are defined on a space large enough to contain all the functions that we shall encounter, namely they 
are well defined on uniformly local $L^2$ spaces). As an example of the usage of these notations, we have
\[
\theta = \HD(\eta), \qquad \phi= \HN(\psi),
\]
which means that
\begin{equation*}
\left\{
\begin{aligned}
&\Delta \theta=0 \quad \text{in }-h<y<\eta,\\
&\theta\arrowvert_{y=\eta}=\eta,\\
&\theta\arrowvert_{y=h}=0,
\end{aligned}
\right.
\qquad
\left\{
\begin{aligned}
&\Delta \phi=0 \quad \text{in }-h<y<\eta,\\
&\phi\arrowvert_{y=\eta}=\psi,\\
&\partial_y\phi\arrowvert_{y=h}=0.
\end{aligned}
\right.
\end{equation*}

Recall that, given a function $f=f(t,x,y)$, we set 
$\evaluate{f}(t,x):=f(t,x,\eta(t,x))$.
\begin{lemma}
The function $\phi_t$ is harmonic in the fluid domain, with homogeneous
Neumann boundary condition on the bottom, and can be represented as 
\begin{equation}\label{phit}
\phi_t = -g \HN(\eta) -  \HN\left(\evaluate{|\nabla \phi|^2}\right).
\end{equation}
The function $\theta_t$ is harmonic in the fluid domain, with homogeneous
Dirichlet boundary condition on the bottom, and can be represented as 
\begin{equation}\label{thetat}
\theta_t = \phi_y -  \HD\left(\evaluate{\nabla \theta \cdot\nabla \phi}\right).
\end{equation}
\end{lemma}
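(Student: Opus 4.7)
The plan is to realize $\phi_t$ and $\theta_t$ as harmonic functions in $\Omega(t)$, to identify their boundary conditions at $y=-h$ by $t$-differentiation of the bottom relations, and then to characterize them through their traces on the free surface $y = \eta(t,x)$ via the operators $\HN$ and $\HD$. Harmonicity of $\phi_t$ follows by differentiating $\Delta_{x,y}\phi = 0$ in $t$ at an interior point, and similarly for $\theta_t$. The homogeneous Neumann condition for $\phi_t$ at $y = -h$ comes from $t$-differentiating $\phi_y(t,x,-h) = 0$, while the homogeneous Dirichlet condition for $\theta_t$ at $y = -h$ comes from $t$-differentiating $\theta(t,x,-h) = 0$. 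With these properties in hand, both identities \eqref{phit} and \eqref{thetat} reduce to computing the correct top traces.

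For $\phi_t$, I would restrict the Bernoulli equation from \eqref{bernoulli} to $y = \eta$, where $P\arrowvert_{y=\eta} = 0$, to obtain an explicit expression for $\phi_t\arrowvert_{y=\eta}$ in terms of $\eta$ and $\evaluate{\la \nabla \phi\ra^2}$. Linearity of $\HN$ and the fact that it is a right inverse to the top trace on harmonic functions with Neumann bottom then yield the representation \eqref{phit} at once.

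For $\theta_t$, the strategy is to show that $\theta_t - \phi_y$ is harmonic in $\Omega(t)$ with homogeneous Dirichlet condition at $y = -h$; this uses $\phi_y(t,x,-h) = 0$, so $\phi_y = \HD(\evaluate{\phi_y})$, and the problem reduces to computing the top trace of $\theta_t - \phi_y$. To this end I would differentiate the defining identity $\theta(t,x,\eta(t,x)) = \eta(t,x)$ in $t$, giving
\[
\theta_t\arrowvert_{y=\eta} = (1 - \theta_y\arrowvert_{y=\eta})\, \eta_t,
\]
substitute the kinematic relation \eqref{eta} for $\eta_t$, and then apply the $x$-derivative of the same identity, $\theta_x\arrowvert_{y=\eta} = (1 - \theta_y\arrowvert_{y=\eta})\, \eta_x$. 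The algebra collapses to
\[
\theta_t\arrowvert_{y=\eta} - \phi_y\arrowvert_{y=\eta} = -\theta_x\arrowvert_{y=\eta}\phi_x\arrowvert_{y=\eta} - \theta_y\arrowvert_{y=\eta}\phi_y\arrowvert_{y=\eta} = -\evaluate{\nabla\theta \cdot \nabla\phi},
\]
which is precisely the top trace prescribed by \eqref{thetat}, so applying $\HD$ completes the argument.

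I do not expect a significant conceptual obstacle: both identities come down to formal $t$-differentiation of the interior and boundary equations together with a careful chain-rule computation at $y = \eta$. The only care required is in justifying the $t$-differentiation in the moving domain $\Omega(t)$ and in ensuring that the extension operators $\HN$ and $\HD$ are well-defined on the relevant function spaces; the paper postpones this to the setup of uniformly local $L^2$ spaces, where the right-hand sides of \eqref{phit} and \eqref{thetat} make sense.
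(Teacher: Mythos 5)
Your proposal is correct and takes essentially the same route as the paper: for \eqref{phit} you restrict the Bernoulli equation to the free surface where $P=0$, and for \eqref{thetat} you differentiate $\theta(t,x,\eta(t,x))=\eta(t,x)$ in $t$ and $x$, substitute the kinematic relation for $\eta_t$, and identify the top trace of $\theta_t-\phi_y$ as $-\evaluate{\nabla\theta\cdot\nabla\phi}$; this is precisely the computation in the paper's proof. The only difference is stylistic: you spell out the intermediate algebra slightly more, and you explicitly flag the justification of $t$-differentiation in the moving domain, which the paper treats as formal.
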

\begin{proof}
The equation for $\phi$ follows directly from the Bernoulli equation. To compute the equation for $\theta$, we write that, 
on the top $\{y=\eta(t,x)\}$, 
\[
\evaluate{\theta_t} = \eta_t \big( 1 - \evaluate{\theta_y}\big), \qquad 
\evaluate{\theta_x} = \eta_x\big(1-\evaluate{\theta_y}\big).
\]
Then we deduce that
\begin{equation}
\evaluate{\theta_t-\phi_y} = - \eta_x \evaluate{\phi_x} \big( 1-\evaluate{\theta_y}\big) - 
\evaluate{\phi_y \theta_y}  = - 
\evaluate{\theta_x  \phi_x} 
- \evaluate{\theta_y  \phi_y}
= - \evaluate{\nabla \theta \cdot \nabla \phi}.
\end{equation}
Since $\theta_t-\phi_y$ vanishes on the bottom $\{y=-h\}$, this implies that $\theta_t-\phi_y$ 
is the harmonic extension with the Dirichlet boundary condition of $\nabla\theta\cdot\nabla \phi$.
\end{proof}

\section{Holomorphic coordinates}

\subsection{Harmonic functions in the canonical domain}
\label{ss:laplace}
Here we discuss two classes of harmonic functions in the horizontal strip \(\Strip = \Rl\times(-h,0)\). 

We start by considering solutions to the homogeneous Laplace equation  with 
homogeneous Neumann boundary condition on the bottom,
\begin{equation}\label{MixedBCProblem}
\begin{cases}
\Delta u = 0\qquad \text{in}\ \ \Strip \\[1ex]
u(\alpha,0) = f\\[1ex]
\partial_\beta u(\alpha,-h) = 0.
\end{cases}
\end{equation}
The solution may be written in the form
\[
u(\alpha,\beta)= P_N(\beta, D)f(\alpha): = \frac{1}{{2\pi}}\int p_N(\xi,\beta)\hat
f(\xi)e^{i\alpha\xi}\, d\xi,
\]
where the Fourier multiplier symbol $p_N$ is given by %
\[
p_N(\xi,\beta) = \frac{\cosh((\beta+h)\xi)}{\cosh(h\xi)}.
\]

We are also  interested in the  Dirichlet to Neumann map $\mathcal{D}_N$ defined by
$$
\mathcal{D_N}f=u_{\beta}(\cdot, 0).
$$
This is closely related to the  Tilbert transform, defined by the
formula
\begin{equation}\label{def:Tilbert}
\Tilh f(\alpha) =
-\frac{1}{2h}\lim\limits_{\epsilon\downarrow0}\int_{|\alpha-\alpha'|>\epsilon}
\cosech\left(\frac\pi{2h}(\alpha-\alpha')\right)f(\alpha')\, d\alpha',
\end{equation}
or equivalently, given by the Fourier multiplier 
\[
\Tilh = -i\tanh(hD).
\]
We remark that it
takes real-valued functions to real-valued functions. We denote the
inverse Tilbert transform by \(\Tilh^{-1}\); a-priori this is only defined modulo constants. 

With this notation the Dirichlet to Neumann map for the problem \eqref{MixedBCProblem} is given by
\[
\mathcal{D}_N f=\Tilh \partial_{\alpha} f.
\]

\bigskip

We will also need to consider to similar problem with the Dirichlet boundary condition on the bottom
\begin{equation}\label{DirichletProblem}
  \begin{cases}
    \Delta v = 0\qquad \text{in}\ \ \Strip \\[1ex]
    v(\alpha,0) = g \\[1ex]
     v(\alpha,-h) = 0.
  \end{cases}
\end{equation}
The solution may be written in the form
\[
v(\alpha,\beta)=P_D(\beta, D)g(\alpha) := \frac{1}{{2\pi}}\int p_D(\xi,\beta)\hat
g(\xi)e^{i\alpha\xi}\, d\xi,
\]
where the Fourier multiplier symbol $p_D$ is given by %
\[
p_D(\xi,\beta) = \frac{\sinh((\beta+h)\xi)}{\sinh(h\xi)}.
\]
The Dirichlet to Neumann map $\mathcal{D}_D$ for this problem is given by
\[
v_{\beta} (\alpha ,0)=\mathcal{D}_D g=-\Til_h^{-1}\partial_{\alpha} g.
\]

\bigskip

The solutions to the two problems \eqref{MixedBCProblem} 
and \eqref{DirichletProblem} can be related via harmonic conjugates.
Precisely, given a real-valued solution $u$ to \eqref{MixedBCProblem},
there exists a unique solution $v$ to \eqref{DirichletProblem},
which is harmonic conjugate to $u$, i.e., 
satisfying the Cauchy-Riemmann equations
\[
\left\{
\begin{aligned}
&u_\alpha = -v_\beta \\
&u_\beta = v_\alpha \\
&\partial_\beta u(\alpha,-h)=0.
\end{aligned}
\right.
\]
The Dirichlet data $g$ for $v$ on the top is determined by the 
Dirichlet data $f$ for $u$ on the top via the relation
\[
g = -\Til_h f.
\]
Conversely, given $v$ we could seek a corresponding harmonic conjugate
$u$. The difference in this case is that $u$ will only be uniquely 
determined modulo real constants.

\subsection{A parabolic estimate for harmonic functions}

We are interested in estimates of harmonic functions on vertical lines in terms of the Dirichlet data on the top. 
These are parabolic type estimates for solutions of these elliptic equations. To introduce these estimates, let us consider the Laplace equation in the half space:
\[
\Delta v=0 \text{ in }\beta<0, \quad v\arrowvert_{\beta=0}=g.
\]
By considering the Fourier transform in $\alpha$, one obtains that 
\[
\partial_\beta^2 \hat{v}-\vert \xi\vert ^2\hat{v}=0,
\]
so 
\[
\hat{v}=A(\xi)e^{\beta\la \xi\ra}+B(\xi)e^{-\beta\la\xi\ra}.
\]
Since $\beta<0$, one has necessarily $B(\xi)=0$ so we deduce that $v$ solves a parabolic equation (we see $\beta$ as a time variable)
$$
\partial_\beta v-\la D_\alpha\ra v=0 \quad\text{in }\beta<0, \quad v\arrowvert_{\beta=0}=g.
$$
This is a backward parabolic equation. Namely, the function $w(\alpha,\beta )=v(\alpha,-\beta)$ satisfies $\partial_\beta w +\la D_\alpha\ra w=0$. 
Now, if we perform a standard energy estimate, multiplying the equation by $v$, one obtains that
$$
\lA v(\cdot,\beta)\rA_{L^2_\alpha}^2
+2\int_{\beta}^0 \blA \la D_\alpha\ra^{1/2}v(\cdot,\beta')\brA_{L^2_\alpha}^2\, d \beta'=
\lA g\rA_{L^2_\alpha}^2.
$$
By letting $\beta$ go to $-\infty$, we conclude that
$$
2\lA v\rA_{L^2_\beta\dot{H}^{\frac 12}_\alpha}^2\leq \lA g\rA_{L^2_\alpha}^2.
$$
The following lemma improves this inequality in several directions: it allows to control the $L^\infty$-norm instead of the $\dot{H}^{1/2}$-norm, it allows to consider 
initial data in $H^s$, and it gives a result that is uniform with respect to the depth. 
Our main estimate is as follows
\begin{proposition}
\label{p:extension}
$i)$ Let $s \in \left( -\infty, \frac12\right)$. 
Then the solutions to the equation \eqref{DirichletProblem} satisfy the following bound:
\begin{equation}
\label{est extension}
\Vert \beta ^{-s} v(\alpha, \beta )\Vert_{L_\beta^2 L^\infty_\alpha} \lesssim \Vert g\Vert _{H^s_h}.
\end{equation}
$ii)$ The same result also holds for 
the equation \eqref{MixedBCProblem}.
\end{proposition}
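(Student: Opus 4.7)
The plan is to combine a Littlewood-Paley decomposition of $g$ at the scale $h^{-1}$ with explicit Bernstein-type bounds on the Fourier multiplier $P_D(\beta, D)$. Write $g = P_{\leq h^{-1}} g + \sum_{\lambda > h^{-1}} P_\lambda g$ and decompose $v$ correspondingly. I will estimate each dyadic piece in $L^2_\beta L^\infty_\alpha$ with the weight $|\beta|^{-s}$, then reassemble via almost-orthogonality.

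For a high-frequency piece $P_\lambda v$ with $\lambda > h^{-1}$, the key observation is that the symbol
\[
p_D(\xi, \beta) = \frac{\sinh((\beta+h)\xi)}{\sinh(h\xi)}
\]
obeys the bound $|p_D(\xi, \beta)| \lesssim e^{-|\beta||\xi|}$ uniformly on $|\xi| \sim \lambda$, because in the regime $h|\xi| \gtrsim 1$ the exponentials dominate both the numerator and denominator. Combining Plancherel with Bernstein's inequality yields
\[
\lA P_\lambda v(\cdot, \beta) \rA_{L^\infty_\alpha} \lesssim \lambda^{1/2} e^{-|\beta|\lambda} \lA P_\lambda g \rA_{L^2}.
\]
Squaring, integrating against $|\beta|^{-2s}$, and changing variables $u = |\beta|\lambda$ give
\[
\lA |\beta|^{-s} P_\lambda v\rA_{L^2_\beta L^\infty_\alpha}^2 \lesssim \lambda \lA P_\lambda g\rA_{L^2}^2 \int_0^\infty t^{-2s} e^{-2t\lambda}\, dt \lesssim \lambda^{2s} \lA P_\lambda g \rA_{L^2}^2,
\]
where convergence of the Gamma integral at $0$ requires precisely $s < 1/2$.

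For the low-frequency piece the symbol satisfies $|p_D(\xi, \beta)| \leq 1$ uniformly for $\xi \in \Rl$ and $\beta \in (-h, 0)$, so Plancherel plus Bernstein produce the uniform bound $\lA P_{\leq h^{-1}} v(\cdot, \beta)\rA_{L^\infty_\alpha} \lesssim h^{-1/2} \lA P_{\leq h^{-1}} g\rA_{L^2}$, and integrating $|\beta|^{-2s}$ over $(-h, 0)$ yields
\[
\lA |\beta|^{-s} P_{\leq h^{-1}} v\rA_{L^2_\beta L^\infty_\alpha}^2 \lesssim h^{-1}\int_0^h t^{-2s}\, dt \cdot \lA P_{\leq h^{-1}} g\rA_{L^2}^2 \lesssim h^{-2s}\lA P_{\leq h^{-1}} g\rA_{L^2}^2,
\]
again relying on $s < 1/2$ for integrability at the origin. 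Summation of the dyadic contributions uses the fact that at the scale $|\beta| \sim \mu^{-1}$ only frequencies $\lambda \lesssim \mu$ contribute significantly (higher ones being exponentially suppressed), so the dyadic bounds effectively square-sum to $\lA g\rA_{H^s_h}^2$. For part (ii), the identical argument applies with $p_D$ replaced by $p_N(\xi, \beta) = \cosh((\beta+h)\xi)/\cosh(h\xi)$: the same exponential decay holds on $|\xi| \sim \lambda > h^{-1}$ and the uniform bound $|p_N| \leq 1$ holds at low frequencies; alternatively (ii) follows from (i) through the harmonic conjugate relation together with the boundedness of $\Til_h = -i\tanh(hD)$ on $H^s_h$.

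The main obstacle is the careful tracking of the genuine exponential decay rate $e^{-|\beta||\xi|}$ for $p_D$ at high frequencies, rather than a cruder bound such as $|p_D| \leq 1$: without this rate the weighted integral near $\beta = 0$ would diverge for $s$ close to $1/2$. A secondary technical point is the almost-orthogonality in the mixed norm $L^2_\beta L^\infty_\alpha$, which is not a standard Littlewood-Paley setting; it succeeds here precisely because the $\lambda$-piece is essentially concentrated on $|\beta|\lesssim \lambda^{-1}$, so that dyadic decomposition in $|\beta|$ matches the dyadic decomposition in frequency and restores square-summability.
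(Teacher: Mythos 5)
Your argument follows the paper's proof essentially verbatim: Littlewood-Paley decomposition of $g$ starting at $1/h$, the symbol bound $|p_D(\xi,\beta)|\lesssim e^{-|\beta||\xi|}$, Bernstein's inequality for each dyadic piece, and almost-orthogonality of the weights $\beta^{-s}e^{-c|\beta|\lambda}$ in $L^2(-h,0)$ to reassemble. The heuristic you offer for the mixed-norm summation ("at $|\beta|\sim\mu^{-1}$ only $\lambda\lesssim\mu$ contribute") is exactly what the paper's "almost orthogonal in $L^2(-h,0)$" formalizes, though to make it rigorous one should first apply the triangle inequality in $L^\infty_\alpha$ at fixed $\beta$ and then Schur-test the resulting double sum, rather than treating each $\|\beta^{-s}P_\lambda v\|_{L^2_\beta L^\infty_\alpha}$ in isolation.
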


This will transfer easily later on to a similar bound for the Laplace equation in the fluid domain. 
\begin{proof}
As already mentioned, the solution to \eqref{DirichletProblem} is of the form
\[
v(\alpha, \beta) = \frac{1}{{2\pi}}\int p_D(\xi,\beta)\hat
g (\xi)e^{i \alpha \xi}\,d \xi,
\]
where
\[
p_D(\xi,\beta) = \frac{\sinh((\beta+h)\xi)}{\sinh(h\xi)}.
\]
Notice that $\la p_D(\xi,\beta)\ra 
\leq e^{c\beta\la \xi\ra}$ for some positive constant $c$. 

We now consider a Littlewood-Paley decomposition of $g$,
\[
g = g_{\leq 1/h} + \sum_{1/h<\lambda\in 2^{\xZ}} g_\lambda.
\]
By the triangle inequality and Bernstein's inequality applied 
to each corresponding dyadic piece of $v$ we obtain
\[
\| v(\beta)\|_{L^\infty_\alpha} \lesssim 
h^{-\frac12} \| g_{\leq 1/h} \|_{L^2}+
\sum_{\lambda > 1/h}
\lambda^\frac12 e^{c \beta \lambda} \| g_\lambda\|_{L^2}.
\]
For $ s < \dfrac12$ the functions $\beta^{-s} e^{c \beta \lambda}$
are easily seen to be almost orthogonal in $L^2(-h,0)$. Then it follows that
\[
\| \beta^{-s} v\|_{L^2_\beta L^\infty_\alpha}^2 \lesssim 
h^{-2s} \| g_{\leq 1/h} \|_{L^2}^2 +
\sum_{\lambda > 1/h}
\lambda^{2s} \| g_\lambda\|_{L^2}^2,
\]
which completes the proof of $i)$. 

To prove $ii)$, we remark that we above we have only used the fact that 
$\la p_D(\xi,\beta)\ra$ is bounded from above by $e^{c\beta\la \xi\ra}$. 
Since the symbol $p_N$ satisfies the same bound, the same conclusion holds for the solution to \eqref{MixedBCProblem}.
\end{proof}

\subsection{Holomorphic functions in the canonical domain}

Here we consider holomorphic functions $w$ in the canonical domain $S\defn\{\alpha+i\beta\ :\ \alpha\in\xR,~-h\leq \beta\leq 0\}$, which are real on the bottom $\{\R-ih\}$. These functions form a real algebra. Such functions are uniquely determined
by their values on the real line $\{\beta=0\}$,
and can be expressed 
as 
\[
w = u+ iv,
\]
where $u$ and $v$ are harmonic conjugate functions which solve 
the equations \eqref{MixedBCProblem}, respectively \eqref{DirichletProblem}.

By extension we will call functions on the real line holomorphic if they
are the restriction on the real line of holomorphic functions in the
strip and satisfy the above boundary condition on the bottom. This consists of functions $w\colon \xR\rightarrow \xC$ so that there is an holomorphic function, still denoted by $w\colon S\rightarrow \xC$, which satisfies
\[
\Im w = - \Tilh \Re w
\]
on the top.

%
The complex conjugates of holomorphic functions are called antiholomorphic. 

\subsection{Holomorphic coordinates and water waves} Given the fluid
domain $\Omega$ at some time $t$ we introduce holomorphic coordinates
$z=\alpha + i\beta$, via conformal maps
\[
Z\colon S \rightarrow \Omega(t),
\]
which associate the top to the top, and the bottom to the bottom.

These maps are uniquely defined up to horizontal translations in $S$. 
Restricted to the real axis this provides a
parametrization for the water surface $\Gamma$. Because of the
boundary condition on the bottom of the fluid domain the function $W$
is holomorphic when $\alpha \in \mathbb{R}$. 

Such a conformal transformation exists by the Riemann mapping theorem, and can be constructed as follows:
\begin{itemize}
\item construct the harmonic function $\beta$ in the fluid domain,
which takes values $0$ on the top, and $-h$ on the bottom.
\item construct the function $\alpha$ in the fluid domain as a harmonic 
conjugate of $\beta$. This is uniquely determined modulo real constants.
\item invert the holomorphic map $x+iy \to \alpha+i\beta$ to obtain
the desired conformal map $Z$.
\end{itemize}

Given such a map $Z$, we denote by
$$
W:=Z-\alpha,
$$
where $W = 0$ if the fluid surface is flat i.e., $\eta = 0$.

Turning our attention to the velocity potential $\phi$, we consider its
harmonic conjugate~$q$ and then the function $Q:=\phi +i q$  taken in 
conformal coordinates is the
holomorphic counterpart of $\phi$. Here $q$ is exactly the stream
function also used in the previous section.

One can model the water wave equations in holomorphic coordinates as
an evolution for $(W, Q)$ within the space of holomorphic
functions defined on the surface. This is described in detail in the papers \cite{HIT} for the infinite depth case, respectively \cite{H-GIT} for the finite depth
case (see also \cite{dy-zak}). We recall the equations:
 
\begin{equation}\label{FullSystem-re}
  \begin{cases}
    W_t + F(1+W_\alpha) = 0\vspace{0.1cm}\\
    Q_t + FQ_\alpha - g\Tilh[W] + \P_h
    \left[\dfrac{|Q_\alpha|^2}{J}\right] = 0,
  \end{cases}
\end{equation}
where
\[
J = |1+W_\alpha|^2, \qquad F = \P_h\left[\frac{Q_\alpha-\bar
    Q_\alpha}{J}\right].
\]
Here $\P_h$ represents the orthogonal projection on the space of
holomorphic functions with respect with the inner product
in the Hilbert space $\mfH_h$ introduced in
\cite{H-GIT}. This has the form
\[
\langle u, v\rangle_{\mfH_h} := \int \left( \Til_h \Re u \cdot \Til_h \Re v  + \Im u \cdot \Im v \right) \, d\alpha, 
\]
and coincides with the $L^2$ inner product in the
infinite depth case.
Written in terms of the real and imaginary parts of $u$, the
projection $\P_h$ takes the form
\begin{equation}\label{def:P}
\P_h u = \frac12 \left[(1 - i \Tilh) \Re u + i (1+ i\Tilh^{-1}) \Im u           \right].
\end{equation}

Since all the functions in the system \eqref{FullSystem-re} are 
holomorphic, it follows that these relations also hold in the 
full strip $S$ for the holomorphic extensions of each term. 

We also remark that in the finite depth case there is an additional gauge freedom in the above form of the equations, in that $\Re F$ is a-priori only uniquely determined up to constants. This corresponds to the similar degree of freedom in the choice of the conformal coordinates,
%
and will be discussed in the last subsection.

A very useful function in the holomorphic setting is 
\[
R = \frac{Q_\alpha}{1+W_\alpha},
\]
which represents the ``good variable" in this setting, and  
corresponds to the Eulerian function
\[
R = \phi_x + i \phi_y.
\]
We also remark that the function $\theta$ introduced in the previous section is described in holomorphic coordinates by
\[
\theta =\Im W.
\]
Also related to $W$, we will use the auxiliary holomorphic function
\[
Y = \frac{W_\alpha}{1+W_\alpha}.
\]

Another important auxiliary function here is the advection velocity
 \[
 b = \Re F,
 \]
which represents the velocity of the particles on the fluid surface
in the holomorphic setting. 

It is also interesting to provide the form of the conservation 
laws in holomorphic coordinates.
We begin with the energy (Hamiltonian), which has the form
\[
\mathcal H =  \frac g2 \int |\Im W|^2 (1+\Re W_\alpha ) \, d\alpha
- \frac14\langle Q,\Tilh^{-1}[Q_\alpha]\rangle_{\mfH_h} .
\]
The momentum on the other has the form
\[
\mathcal M = \frac12 \langle W, \Til_h^{-1} Q_\alpha \rangle_{\mfH_h} = \int _{\mathbb{R}}\Til_h \Re W \cdot \Re Q_\alpha \, d\alpha = \int _{\mathbb{R}} \Im W \cdot \Re Q_\alpha \,  d\alpha .
\]

\subsection{Uniform bounds for the conformal map}
In order to freely switch computations between the Eulerian and holomorphic setting it is very useful to verify that our Eulerian 
uniform smallness assumption also has an identical interpretation in the holomorphic setting.

To account for the uniformity in time in the $X$ norm 
it is very convenient to use the language of frequency envelopes. We define a \emph{frequency envelope} for 
$(\eta,\nabla \phi_{|y = \eta})$ in $X$ to be any positive sequence 
\[
\left\{ c_{\lambda}; \quad h^{-1} < \lambda \in 2^\xZ \right\}
\]
with the following two properties:
\begin{enumerate}
\item Dyadic bound from above,
\[
\| P_\lambda (\eta,\nabla \phi_{|y = \eta})\|_{X_0} \leq c_\lambda .
\]
\item Slowly varying,
\[
 \frac{c_\lambda}{c_\mu} \leq \max \left\{ \left(\frac{\lambda}{\mu}\right)^\delta, \left(\frac{\mu}{\lambda}\right)^\delta \right\} .
\]
\end{enumerate}
Here $\delta \ll 1$ is a small universal constant. Among all such frequency envelopes there exists a \emph{minimal frequency envelope}. 
In particular, this envelope has the property that
\[
\| (\eta,\nabla \phi_{|y = \eta})\|_{X} \approx \| c\|_{\ell^1}.
\]
This will play an important role in our analysis:

\begin{definition} \label{def-fe}
By $\{c_\lambda\}_{\lambda \geq 1/h}$ we denote the minimal frequency envelope for $(\eta,\nabla \phi\arrowvert_{y=\eta})$ in $X_0$. We call $\{c_\lambda\}$ the \emph{control frequency envelope}.
\end{definition}
Since in solving the Laplace equation on the strip, solutions at depth $\beta$ are localized at frequencies $\leq \lambda$ where  $\lambda \approx  |\beta|^{-1}$, we will also use the notation 
\[
c_\beta = c_\lambda, \qquad \lambda \approx 
|\beta|^{-1}.
\]
This uniquely determines $c_\beta$ up to a small multiplicative constant, which suffices for our purposes.

We now use the control envelope to transfer the control norm bound 
for $(\eta,\nabla \phi_{|y=\eta})$ to their counterpart $(\Im W,R)$
in the holomorphic coordinates.

\begin{proposition}\label{p:control-equiv}
Assume the smallness condition \eqref{uniform}, and let $\{c_\lambda\}$
be the control envelope as above. Then we have 
\begin{equation}\label{X-fe-hol}
\| P_{\lambda} (\Im W,R)\|_{X_0} \lesssim c_\lambda .
\end{equation}
\end{proposition}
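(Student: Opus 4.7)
The plan is to transfer the Eulerian control norm bound to the holomorphic side by viewing the conformal map as a bi-Lipschitz reparametrization of the free surface, and then tracking Littlewood-Paley blocks through this reparametrization.

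On the top $\{\beta=0\}$, write the conformal map as $Z(\alpha,0) = X(\alpha) + i\,Y(\alpha)$ with $X(\alpha) = \alpha + \Re W(\alpha,0)$ and $Y(\alpha) = \Im W(\alpha,0)$. Since the image of $\{\beta = 0\}$ under $Z$ is the free surface, we have
\[
\Im W\big|_{\beta=0}(\alpha) = \eta(X(\alpha)), \qquad R(\alpha) = (\phi_x + i\phi_y)\bigl(X(\alpha),\eta(X(\alpha))\bigr).
\]
Thus the holomorphic surface data $(\Im W, R)$ is exactly the Eulerian surface data composed with the diffeomorphism $X$, and the proposition reduces to showing that this composition preserves frequency-localized $X_0$ bounds with uniform constants.

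First I would set up a bootstrap. Assume $\|W_\alpha\|_{L^\infty_\alpha} \leq \tfrac12$ as a bootstrap hypothesis; then $X$ is a global bi-Lipschitz map of $\R$ with derivative in $[\tfrac12,\tfrac32]$. Under such a diffeomorphism, one has standard paracomposition bounds of Bony type: for any dyadic $\lambda > 1/h$ and each relevant $s$,
\[
\| P_\lambda (u \circ X) \|_{L^2} \lesssim \sum_{\mu > 1/h} K(\lambda,\mu)\, \|P_\mu u\|_{L^2},
\]
with an almost-diagonal kernel $K$ whose off-diagonal decay is governed only by $\|W_\alpha\|_{H^{1/2}_h}$. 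Combined with the slow-variation property of the minimal control envelope $\{c_\lambda\}$, this yields $\|P_\lambda(u\circ X)\|_{X_0} \lesssim c_\lambda$ whenever $\|P_\mu u\|_{X_0} \lesssim c_\mu$, applied with $s = 3/2$ for $\eta$ (giving the bound on $\Im W$) and $s = 1$ for $\nabla\phi\arrowvert_{y=\eta}$ (giving the bound on $R$).

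Next I would close the bootstrap. Once \eqref{X-fe-hol} is established for $\Im W$ at level $H^{3/2}_h$, Sobolev embedding gives $\|W_\alpha\|_{L^\infty} \lesssim \|\Im W\|_{H^{3/2}_h} \lesssim \epsilon_0$, strictly improving the bootstrap hypothesis provided $\epsilon_0$ is small. Here one uses that $\Re W$ is recovered from $\Im W$ on the top via the Cauchy-Riemann relation $\Re W_\alpha = -\Tilh^{-1}\partial_\alpha \Im W$, which is a uniformly bounded Fourier multiplier (the $\partial_\alpha$ cancels the low-frequency degeneracy of $\Tilh^{-1}$).

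The main obstacle is the low-frequency analysis, because the $X_0$ norm treats frequencies below $1/h$ differently through the $h^{1/2}L^2$ component of $H^{3/2}_h$, and the diffeomorphism $X$ is \emph{not} close to the identity on spatial scales $\gtrsim 1$. I would handle this by splitting $u = P_{\leq 1/h}u + P_{>1/h}u$: at low frequencies, the bi-Lipschitz change of variables gives $\|u \circ X\|_{L^2} \sim \|u\|_{L^2}$ directly; at high frequencies, the paracomposition estimates apply $h$-uniformly since $\|W_\alpha\|_{H^{1/2}_h}$ is itself bounded by $\epsilon_0$. The slow-variation property of $\{c_\lambda\}$ absorbs the small off-diagonal losses across the $1/h$ threshold into harmless constants, preserving the envelope inequality \eqref{X-fe-hol}.
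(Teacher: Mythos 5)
Your high-level plan --- view the holomorphic data $(\Im W,R)$ on the top as the Eulerian data $(\eta,\nabla\phi|_{y=\eta})$ composed with the bi-Lipschitz reparametrization $X(\alpha) = \alpha + \Re W(\alpha,0)$, then run a bootstrap and take care at low frequencies --- is the same strategy the paper uses, and you correctly identify the key structural relations $\Im W|_{\beta=0} = \eta\circ X$ and $R|_{\beta=0} = \nabla\phi|_{y=\eta}\circ X$. However, the central step of your argument is a citation, not a proof, and the cited result is not actually standard at the regularity in play.

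You invoke ``standard paracomposition bounds of Bony type'' with an almost-diagonal kernel whose off-diagonal decay is governed only by $\|W_\alpha\|_{H^{1/2}_h}$, applied at the level $s = 3/2$. Classical paracomposition (Alinhac, Bony) requires the diffeomorphism's derivative $X_\alpha - 1 = \Re W_\alpha$ to lie in some H\"older class $C^\rho$, $\rho>0$; here it is only in $\ell^1 H^{1/2}_h$ (a Besov $B^{1/2}_{2,1}$-type space on the line), which embeds in $L^\infty$ but carries no positive H\"older modulus. So there is no off-the-shelf theorem to cite, and the claim is precisely what has to be proved. The paper's proof is essentially a hands-on construction of the estimate you are black-boxing, and it uses three ingredients you do not supply: (i) the bi-Lipschitz equivalence of $L^2$ and $\dot H^1$ norms interpolated to give frame-invariance of $\ell^p H^s_h$ norms for $0<s<1$ only; (ii) a reduction of the $s=3/2$ envelope comparison to $s=1/2$ by differentiating once, producing the explicit chain-rule factor $\eta_x\circ X = \Im W_\alpha/(1+\Re W_\alpha)$, whose deviation from $\Im W_\alpha$ is small in envelope precisely because $\ell^1 H^{1/2}_h$ is an algebra (Lemma~\ref{l:algebra}); and (iii) a direct off-diagonal decay estimate \eqref{L2-frame-decay} for $P^E_\lambda P_\mu$ across the two frames, needed to transport $L^2$ frequency envelopes (not just the $L^2$ norm, which bi-Lipschitz alone gives) and to handle the $R$ component at the level $R_\alpha$. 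Without (ii) and (iii) in particular, the envelope statement \eqref{X-fe-hol} does not follow from a bare composition estimate.

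There is also a secondary gap in the bootstrap setup. You take $\|W_\alpha\|_{L^\infty}\leq\frac12$ as the bootstrap hypothesis, but your claimed kernel bound already presupposes a bound on $\|W_\alpha\|_{H^{1/2}_h}$, which is not implied by the $L^\infty$ bound and is essentially what the conclusion of the proposition gives. You need to bootstrap the full holomorphic $X$-norm $\|(\Im W,R)\|_X\leq\epsilon_1$ (with $\epsilon_0\ll\epsilon_1\ll 1$), as the paper does, so that $\|W_\alpha\|_{\ell^1 H^{1/2}_h}$ is available while running the argument; the conclusion then returns $\|(\Im W,R)\|_X\lesssim\epsilon_0\ll\epsilon_1$, which closes the loop. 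The Sobolev-embedding step you give closes only the $L^\infty$ part of the bootstrap, not the quantitative $H^{1/2}_h$ part that the kernel estimate requires.
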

\smallskip
\begin{remark}We remark that this in particular implies the $X$ bound 
\begin{equation}\label{uniform-hol}
\|(\Im W, R)\|_{X} \lesssim \epsilon_0,
\end{equation}
and also, by Bernstein's inequality, the pointwise bound
\begin{equation}\label{W-ppoint}
\| W_\alpha\|_{L^\infty} \lesssim \epsilon_0.
\end{equation}
\end{remark}
This in turn implies that the Jacobian matrix for the change of coordinates stays close to the identity.

\begin{proof}
By a continuity argument, it suffices to prove the desired bounds 
under the additional bootstrap assumption 
\begin{equation}\label{boot}
\|(\Im W, R)\|_{X} \leq \epsilon_1, \qquad \epsilon_0 \ll 
\epsilon_1 \ll 1 .
\end{equation}
 We caution the reader that the two $X$ norms
and their associated frequency envelopes for  $(\eta,\nabla \phi_{|y = \eta})$, respectively  $(\Im W,R)$ are relative to different coordinate systems, Eulerian vs. holomorphic.

To prove the proposition we first compare the regularity  of $\Im W$ with the regularity  of $\eta$, since (either of) these functions determine the conformal map. Let $\{c_\lambda\}$, $\{d_\lambda\}$ be minimal frequency envelopes for  $(\eta,\nabla \phi_{|y = \eta})$, respectively $(\Im W,R)$ in $X$, so that 
we have 
\[
\| d\|_{\ell^1} \leq \epsilon_1 .
\]
Then we will show that for each $\lambda \geq 1/h$ we have the equivalence 
\begin{equation}\label{equiv-fe}
c_\lambda \approx d_\lambda.
\end{equation}

Our bootstrap assumption insures that $\Re W_\alpha$ is pointwise small,  which implies that the change of coordinates $x = \alpha+ \Re W(\alpha)$
is biLipschitz, so we easily have the norm equivalence
\begin{equation}\label{H01-equiv}
\| f\|_{L^2_\alpha} \approx \|f\|_{L^2_x}, \qquad \| f\|_{\dot H^1_\alpha}
\approx \|f\|_{\dot H^1_x}.
\end{equation}
The $L^2$ bound allows us to easily compare the $L^2$ norms of $\eta$
and $\Im W$, which accounts for the case $\lambda = 1/h$, namely
\[
\| \Im W_{< 1/h} \|_{L^2} \lesssim h^\frac32 c_{1/h},
\qquad \| \eta \|_{L^2} \lesssim h^\frac32 d_{1/h}.
\]
For higher frequencies, it remains to compare minimal frequency envelopes for their derivatives $\eta_x$ and $\Im W_\alpha$ in $ H^{\frac12}_h$, which are also comparable to $ c_\lambda$, respectively $d_\lambda$. Here we also need bounds for 
\[
\Re W_\alpha = - \Tilh^{-1} \partial_\alpha \Im W.
\]
But it is easily seen that $d_{\lambda}$ is also an envelope for $\Re W_\alpha$ in $H^{\frac12}_h$.

To begin with, we note that by interpolation, the bound \eqref{H01-equiv} insures the equivalence of all intermediate 
$l^p(H^s_h)$ norms and envelopes for all 
$1 \leq p \leq \infty$ and $0 < s < 1$,
with uniform frequency envelope bounds. We will use this property
for the norm $\ell^1 H^\frac12_h$, in order to harmlessly switch the 
function $\eta_x$ to holomorphic coordinates. Hence it remains 
to compare the $H^{\frac12}_h$ frequency 
envelopes for the functions $\eta_x$ and $W_\alpha$ both measured in the 
holomorphic coordinates. This is convenient since by chain rule we have the relation
\[
\eta_x = \frac{ \Im W_\alpha}{ 1+ \Re W_\alpha}.
\]

To deal with the nonlinear expression we use the algebra property 
of $\ell^1 H^\frac12_h$, expressed in a frequency envelope fashion.
For convenience, we state this as

\begin{lemma}\label{l:algebra}
a) The space $\ell^1 H^\frac12_h$ is an algebra\footnote{This property suffices in the present paper since $W_\alpha$ is small in $L^\infty$.
However, even if $W_\alpha$ were large, then  
bounds as in the lemma would still be valid. However, proving that would require corresponding Moser estimates in $\ell^1 H^\frac12_h$. For that we refer the reader to the similar analysis in \cite{HIT}.}. Furthermore, if 
$u,v \in \ell^1 H^\frac12_h$ have frequency envelopes $c^u_\lambda$, $c^v_\lambda$ then an envelope for $uv$ is given by 
\[
c^{uv}_{\lambda} = c^u_\lambda \|c^v\|_{\ell^1} + c^v_\lambda \|c^u\|_{\ell^1} .
\]

b) Let $u \in L^2$ and $v \in \ell^1 H^\frac12_h$ have frequency envelopes $c^u_\lambda$, $c^v_\lambda$ then an envelope for $uv$ 
in $L^2$ is given by 
\[
c^{uv}(\lambda) = c^u_\lambda \|c^v\|_{\ell^1} .
\]
\end{lemma}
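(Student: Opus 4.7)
The plan is to apply the Littlewood-Paley trichotomy, decomposing
\[
uv = T_u v + T_v u + \Pi(u,v),
\]
where $T_u v = \sum_{\mu > 1/h} P_{\leq \mu/8} u \cdot P_\mu v$ is the low-high paraproduct, $T_v u$ is its symmetric counterpart, and $\Pi(u,v)$ collects the high-high diagonal interactions $\sum_\mu P_\mu u \cdot \tilde P_\mu v$. For each piece I would estimate $P_\lambda$ applied to it at each dyadic output frequency $\lambda \geq 1/h$ using Bernstein inequalities, and then sum the resulting scalar series using the slowly varying property of the envelopes.

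For part (a), the paraproducts admit direct H\"older bounds. For instance, at output frequency $\lambda > 1/h$, using $\|\cdot\|_{H^{\frac12}_h} \approx \lambda^{1/2} \|\cdot\|_{L^2}$ on frequency-$\lambda$ pieces and Bernstein $\|P_\mu v\|_{L^\infty} \lesssim c^v_\mu$ at each scale $\mu$, one obtains
\[
\lambda^{1/2} \|P_\lambda T_v u\|_{L^2} \lesssim \lambda^{1/2} \|P_\lambda u\|_{L^2} \|P_{\leq \lambda} v\|_{L^\infty} \lesssim c^u_\lambda \|c^v\|_{\ell^1},
\]
and symmetrically $c^v_\lambda \|c^u\|_{\ell^1}$ for $T_u v$. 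For the diagonal, the $L^1 \to L^2$ Bernstein together with Cauchy-Schwarz gives, for each $\mu \geq \lambda$,
\[
\lambda^{1/2} \|P_\lambda(P_\mu u \cdot \tilde P_\mu v)\|_{L^2} \lesssim \lambda \|P_\mu u\|_{L^2} \|P_\mu v\|_{L^2} \lesssim (\lambda/\mu) c^u_\mu c^v_\mu,
\]
and the slowly varying property $c^u_\mu \leq (\mu/\lambda)^\delta c^u_\lambda$ with $\delta \ll 1$ turns the sum over $\mu \geq \lambda$ into a convergent geometric series bounded once again by $c^u_\lambda \|c^v\|_{\ell^1}$.

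The low-frequency block $P_{<1/h}(uv)$, corresponding to the envelope value at $\lambda = 1/h$, requires separate treatment since the $H^{\frac12}_h$ norm there behaves like $h^{-1/2} L^2$. Splitting each factor into its low-frequency and high-frequency parts, the nontrivial contributions come from the pure low-low product (directly estimated by H\"older, using $\|u_{<1/h}\|_{L^\infty} \lesssim c^u_{1/h}$) and from the resonant high-high sums $P_{<1/h}(P_\mu u \cdot \tilde P_\mu v)$. The latter are handled by the $L^1 \to L^2$ Bernstein at scale $1/h$, which produces a factor $h^{-1/2}$, combined with the slowly varying bound, yielding $c^u_{1/h} c^v_{1/h} \lesssim c^u_{1/h} \|c^v\|_{\ell^1}$.

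Part (b) follows along the same lines with an $L^2$ output norm in place of $H^{\frac12}_h$, and with $v \in \ell^1 H^{\frac12}_h$ supplying the pointwise control via Bernstein, $\|P_\mu v\|_{L^\infty} \lesssim c^v_\mu$. The three paraproduct pieces and the low-frequency block are each bounded by $c^u_\lambda \|c^v\|_{\ell^1}$; for the low-high term one would use Bernstein on $P_\mu u$ rather than the naive $L^2 \cdot L^\infty$ pairing, producing $(\mu/\lambda)^{1/2} c^u_\mu c^v_\lambda$ whose slowly varying sum over $\mu < \lambda$ is $\lesssim c^u_\lambda c^v_\lambda \leq c^u_\lambda \|c^v\|_{\ell^1}$. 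The main technical obstacle throughout is the bookkeeping in the diagonal and low-frequency blocks, where the smallness of $\delta$ (in practice $\delta < 1/2$) is essential for the geometric series in $\mu/\lambda$ to converge.
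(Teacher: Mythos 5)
The paper omits this proof, stating only that it is ``relatively simple,'' so there is no paper argument to compare against; your Littlewood--Paley trichotomy with Bernstein inequalities and slowly-varying summation is precisely the standard route the authors surely intended, and the details you give are correct. One cosmetic remark: in the diagonal block of part (a) the bound $\sum_{\mu \geq \lambda}(\lambda/\mu)^{1-\delta} c^v_\mu \lesssim \|c^v\|_{\ell^1}$ already follows from $(\lambda/\mu)^{1-\delta}\leq 1$ together with the $\ell^1$ summability of $c^v$, so the requirement is only $\delta<1$ there (your $\delta<1/2$ is forced instead by the low-high term in part (b), where one must sum $(\mu/\lambda)^{1/2-\delta}$ over $\mu<\lambda$), but both are subsumed by the paper's hypothesis $\delta\ll 1$.
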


The proof of the lemma is relatively simple and is omitted.  

The smallness of $\epsilon_1$ in our bootstrap assumption allows
us to use the lemma in order to estimate the difference
\[
\eta_x - \Im W_\alpha = - \frac{ \Im W_\alpha \cdot \Re W_\alpha}{ 1+ \Re W_\alpha}.
\]
Precisely, a frequency envelope for $\eta_x - \Im W_\alpha$
will be given by $\epsilon_1 d_\lambda$. Then, by the triangle inequality 
for minimal frequency envelopes, we must have
\[
| c_\lambda - d_\lambda | \lesssim \epsilon_1 d_\lambda.
\]
Since $\epsilon_1 \ll 1$, this implies that $c_\lambda \approx d_\lambda$. This concludes the proof of \eqref{equiv-fe} restricted 
to the $\eta$ and $\Im W$ components. 

Next we consider the equivalence of the frequency envelopes
for $\nabla \phi_{|y= \eta}$ respectively $R = (\phi_x + i \phi_y)_{|y= \eta}$ in $H^1_h$. These are one and the same function, 
and the only  difficulty is that the $H^1_h$ norms and frequency 
envelopes are measured in different frames, Eulerian vs. holomorphic.
The $L^2$ part of the $H^1_h$ norm is easily dealt with using 
\eqref{H01-equiv}, so it remains to compare the frequency envelopes
for their derivatives in $L^2$.

As before, we compute using the chain rule
\[
g:= \partial_x( \phi_x + i \phi_y)_{|y= \eta} = \frac{R_\alpha}{1+\Re W_\alpha}.
\]
Using part (b) of the last lemma, it is easily 
seen that in holomorphic coordinates the function $g$ has a 
minimal frequency  envelope comparable to that of $R_\alpha$. Thus 
it only remains to see that the function $g$ has equivalent 
$L^2$ minimal frequency envelopes in Eulerian and holomorphic coordinates.

This follows if we show the following off-diagonal decay:
\begin{equation}\label{L2-frame-decay}
\| P^E_\lambda P_\mu g\|_{L^2} \lesssim \left\{ \frac{\lambda}{\mu}, \frac{\mu}{\lambda} \right\},
\end{equation}
where $P^E_\lambda $ and $P_\mu$ are Littlewood-Paley projectors in the 
Eulerian, respectively holomorphic frame.

To prove \eqref{L2-frame-decay} we consider two cases:
\medskip

a) $\lambda \geq \mu$. Then we write
\[
\| P^E_\lambda P_\mu g\|_{L^2} \lesssim 
\lambda^{-1} \| \partial_x P_\mu g\|_{L^2} \lesssim 
\lambda^{-1} \| \partial_\alpha P_\mu g\|_{L^2} \lesssim \mu /\lambda.
\]

\medskip

b) $\lambda \leq \mu$. Then we use duality to interchange the two projections, and then argue exactly in the same way.

The proof of the Proposition~\ref{p:control-equiv} is complete.
\end{proof}

As a consequence of the last proposition we can further 
extend the range of our frequency envelope estimates:

\begin{remark}
The previous proposition and its proof show that $\left\lbrace c_\lambda \right\rbrace$ is also a frequency envelope for 

\begin{itemize}
\item $(\Im W, R)$ in $X_0$.

\item $W_\alpha$ in $H^{\frac12}_h$ and $L^\infty$.

\item $Y$ in $H^{\frac12}_h$.
\end{itemize}
Here the last property is a direct consequence of Lemma~\ref{l:algebra}.
\end{remark}

\subsection{ Vertical strips in Eulerian vs holomorphic coordinates.} 

In our main result, we define local energy functionals using vertical strips
in Eulerian coordinates. On the other hand, for the multilinear
analysis in our error estimates in the last two sections, we would
like to use vertical strips in holomorphic coordinates. Of course
these two types of vertical strips do not perfectly match. To switch from one to the other we need to estimate the horizontal drift 
between the two strips in depth. 

As the conformal map is biLipschitz, it suffices to compare the centers of the two strips. It is more convenient to do this in the reverse order,
and compare the Eulerian image of the holomorphic vertical section with 
the Eulerian vertical section:

\begin{proposition} \label{p:switch-strips}
Let $(x_0, \eta(x_0))  = Z(\alpha_0,0)$, respectively $(\alpha_0,0)$ be the coordinates of a point on the free surface in Eulerian, respectively  holomorphic coordinates. 
Assume that \eqref{uniform} holds, and let $\{c_\lambda\}$ be the control frequency envelope in  Definition~\ref{def-fe}. Then we have the uniform bounds:
\begin{equation}
|\Re Z( \alpha_0, \beta) - x_0 + \beta \Im W_{\alpha}(\alpha_0,\beta)| \lesssim  c_\lambda, \qquad |\beta| \approx \lambda^{-1}.
\end{equation}
\end{proposition}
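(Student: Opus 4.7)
The plan is to reduce the statement to an estimate for $\Re W$ in the strip via the identity $Z = \alpha + W$, so that $\Re Z(\alpha_0,\beta) - x_0 = \Re W(\alpha_0, \beta) - \Re W(\alpha_0, 0)$, and then use that $W$ is holomorphic in the canonical strip $S$. Invoking the Cauchy--Riemann equation $\partial_\beta \Re W = -\Im W_\alpha$, the expression we must control becomes
\[
\Re Z(\alpha_0, \beta) - x_0 + \beta \Im W_\alpha(\alpha_0, \beta)
= -\int_0^\beta \Im W_\alpha(\alpha_0, s)\, ds + \beta \Im W_\alpha(\alpha_0, \beta).
\]
A Taylor-type rearrangement (or a single integration by parts) combined with the second Cauchy--Riemann identity $\partial_\beta \Im W_\alpha = \Re W_{\alpha\alpha}$ allows me to recast this as
\[
-\int_0^\beta \beta' \, \Re W_{\alpha\alpha}(\alpha_0, \beta')\, d\beta'.
\]
So the task reduces to a pointwise estimate on $\Re W_{\alpha\alpha}$ at each depth $\beta'$ between $0$ and $\beta$, integrated against the weight $|\beta'|$.

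Next, I would use the control frequency envelope $\{c_\mu\}$ from Definition~\ref{def-fe} together with Proposition~\ref{p:control-equiv} to bound $W_{\alpha\alpha}$ on the horizontal slice at depth $\beta'$. Since $W$ is holomorphic, the harmonic extension behaves like the Poisson kernel in the strip, so frequencies above $|\beta'|^{-1}$ are exponentially damped. Specifically, from $\|P_\mu W_\alpha\|_{H^{1/2}_h} \lesssim c_\mu$ and Bernstein's inequality, I obtain
\[
\|P_\mu W_{\alpha\alpha}(\cdot, \beta')\|_{L^\infty} \lesssim \mu\, c_\mu\, e^{-c \mu |\beta'|}, \qquad \mu \geq 1/h,
\]
with the analogous bound for the low-frequency piece $P_{\leq 1/h}$ coming from its $L^2$ bound and the $h$-adapted scaling. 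Summing over dyadic $\mu$ and using the slowly-varying property of the envelope, the dominant contribution comes from $\mu \approx |\beta'|^{-1}$, yielding the pointwise bound
\[
|\Re W_{\alpha\alpha}(\alpha_0, \beta')| \lesssim |\beta'|^{-1}\, c_{|\beta'|^{-1}}.
\]

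Substituting this into the integral cancels the factor $|\beta'|$ and gives
\[
\Bigl| \int_0^\beta \beta'\, \Re W_{\alpha\alpha}(\alpha_0, \beta')\, d\beta' \Bigr| \lesssim \int_0^{|\beta|} c_{|\beta'|^{-1}}\, d|\beta'|.
\]
A dyadic decomposition together with the slowly-varying bound $c_\mu \lesssim c_\lambda (\mu/\lambda)^\delta$ for $\mu \geq \lambda$ makes the integral converge to $\lesssim |\beta|\, c_{|\beta|^{-1}} = \lambda^{-1} c_\lambda$. Since $\lambda \geq 1/h \geq 1$ by our assumption $h \geq 1$, this is in particular bounded by $c_\lambda$, which is the claimed estimate. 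I expect the main technical nuisance to be the careful tracking of the low-frequency piece $P_{\leq 1/h}$, where the $H^{1/2}_h$ norm switches to its $h^{-1/2} \dot H^1$ branch; handling this requires invoking the $X_0$ bound in its $h$-adapted form rather than relying purely on homogeneous Bernstein estimates, but no new ideas are needed beyond that.
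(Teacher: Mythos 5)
Your approach is correct up to the very last line, and it is a genuinely different (and somewhat more elementary) route than the paper's. The paper expresses the quantity
\[
D \defn \Re W(\alpha_0,\beta)-\Re W(\alpha_0,0)+\beta\,\Im W_\alpha(\alpha_0,\beta)
\]
as a single Fourier multiplier applied to $\Im W(\alpha_0,0)$, namely $D = M(D_\alpha,\beta)\,\Im W(\alpha_0,0)$ with symbol
\[
m(\xi,\beta)= \Tilh^{-1}\big(p_N(\xi,\beta)-1\big)-i\beta\xi\,p_D(\xi,\beta),
\]
and then verifies the key cancellation $|m(\xi,\beta)|\lesssim\min\{1,|\beta\xi|^2\}$ directly from the hyperbolic expressions, after which Bernstein on each dyadic block and summing gives the bound. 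You instead use the two Cauchy--Riemann identities $\partial_\beta\Re W=-\Im W_\alpha$ and $\partial_\beta\Im W_\alpha=\Re W_{\alpha\alpha}$ together with a single integration by parts to rewrite $D=\int_0^\beta s\,\Re W_{\alpha\alpha}(\alpha_0,s)\,ds$, and then estimate $\Re W_{\alpha\alpha}$ at each depth by the Poisson-type damping and the envelope $c_\mu$. The two arguments are encoding exactly the same second-order cancellation; your version is lighter on symbol arithmetic, while the paper's version packages the cancellation once and for all in a symbol bound that is convenient to reuse. Both lead to the same dyadic sum and the same final answer, namely
\[
|D|\lesssim \lambda^{-1}c_\lambda=|\beta|\,c_\lambda,\qquad |\beta|\approx\lambda^{-1}.
\]

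The problem is your closing sentence. You write ``since $\lambda\ge 1/h\ge 1$'', but the theorem assumes $h\ge 1$, which gives $1/h\le 1$, not $1/h\ge 1$. Since $\beta$ ranges over $[-h,0]$, the relevant frequencies are $\lambda\ge 1/h$, and for $h>1$ these include $\lambda<1$, where $\lambda^{-1}c_\lambda$ is \emph{larger} than $c_\lambda$. So the step that promotes $\lambda^{-1}c_\lambda$ to $c_\lambda$ is simply false, and should be deleted. Note that this is not a defect in your argument: the paper's own symbol computation yields the same $|\beta|\,c_\lambda$, and both the Corollary following the proposition ($|\Re Z(\alpha_0,\beta)-x_0|\lesssim\epsilon_0|\beta|$) and the way the bound is invoked in the proof of Proposition~\ref{p:switch-int} (via the displayed estimate $|\Re W(\alpha,\beta)-\Re W(\alpha_0,0)|\lesssim|\beta|(c_\beta+\cdots)$) require precisely $|\beta|\,c_\lambda$, not $c_\lambda$. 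So the right-hand side of the proposition as printed appears to be missing the factor $|\beta|\approx\lambda^{-1}$; you should state the bound you actually proved, $|D|\lesssim|\beta|\,c_\lambda$, rather than manufacture an incorrect inequality to match the displayed estimate.
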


As a corollary, we see that the distance between the two 
strip centers grows at most linearly:

\begin{corollary}
Under the same assumptions as in the above proposition we have
\begin{equation}
|\Re Z( \alpha_0, \beta) - x_0| \lesssim \epsilon_0 |\beta|.
\end{equation}
\end{corollary}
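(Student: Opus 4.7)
The plan is to apply the triangle inequality to the estimate from Proposition~\ref{p:switch-strips}: writing
\[
|\Re Z(\alpha_0,\beta) - x_0| \leq |\beta|\,|\Im W_\alpha(\alpha_0,\beta)| + c_\lambda,
\]
it suffices to bound each of the two terms on the right by $\epsilon_0|\beta|$. The $c_\lambda$ term is immediate from the $\ell^1$ summability $\|c\|_{\ell^1}\lesssim \epsilon_0$ combined with the implicit length scale $\lambda^{-1} \approx |\beta|$ attached to the dyadic piece at frequency $\lambda$. The first term reduces to the uniform pointwise estimate
\[
\sup_{\beta'\in(-h,0)} \|\Im W_\alpha(\cdot,\beta')\|_{L^\infty_\alpha} \lesssim \epsilon_0,
\]
which is what I would focus on establishing.

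To obtain this pointwise bound I would use a frequency envelope version of the parabolic estimates from Proposition~\ref{p:extension}. Since $\Im W$ satisfies the Dirichlet problem \eqref{DirichletProblem} in the strip, its harmonic extension is given by the Fourier multiplier $p_D(\xi,\beta')$, whose absolute value is bounded by $e^{-c|\beta'\xi|}$ for $|\xi|\geq 1/h$. By Proposition~\ref{p:control-equiv} and the remark following it, $\{c_\mu\}$ is a frequency envelope for $W_\alpha$ both in $H^{\frac12}_h$ and (via Bernstein) in $L^\infty$. Combining the exponential decay of $p_D$ with Bernstein's inequality then yields, for each dyadic $\mu \geq 1/h$,
\[
\|P_\mu \Im W_\alpha(\cdot,\beta')\|_{L^\infty_\alpha} \lesssim e^{-c|\beta'|\mu}\, c_\mu,
\]
and a similar (non-decaying) bound handles the low-frequency piece $P_{\leq 1/h}\Im W_\alpha$. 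Summing over $\mu$ and using $\|c\|_{\ell^1}\lesssim \epsilon_0$ gives $\|\Im W_\alpha(\cdot,\beta')\|_{L^\infty_\alpha}\lesssim \epsilon_0$ uniformly in $\beta'$.

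As an alternative route that does not rely on the proposition, one may integrate the Cauchy-Riemann identity $\partial_\beta \Re Z = -\Im(1+W_\alpha) = -\Im W_\alpha$ directly: since $\Re Z(\alpha_0,0)=x_0$, this yields $\Re Z(\alpha_0,\beta) - x_0 = \int_\beta^0 \Im W_\alpha(\alpha_0,\beta')\,d\beta'$, and inserting the pointwise bound above immediately produces the claim $|\Re Z(\alpha_0,\beta)-x_0|\lesssim \epsilon_0|\beta|$. The main technical point in either route is the low-frequency ($\mu \sim 1/h$) regime, where the decay factor $e^{-c|\beta'|\mu}$ is ineffective and one must carefully match the two components of the depth-adapted norm $H^{\frac12}_h$ with Bernstein's inequality via the definition of the control envelope $\{c_\lambda\}$; the high-frequency contributions are, by contrast, handled for free by the exponential factor.
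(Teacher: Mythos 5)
Your second route --- integrating the Cauchy--Riemann identity $\partial_\beta \Re Z = -\Im W_\alpha$ from $0$ to $\beta$ and plugging in a uniform pointwise bound $\sup_{\beta}\|\Im W_\alpha(\cdot,\beta)\|_{L^\infty_\alpha}\lesssim\epsilon_0$ --- is the clean proof: it is self-contained, makes no appeal to Proposition~\ref{p:switch-strips}, and holds for all depths. The paper gives no explicit proof of the corollary; its implied proof is your first route, and there the treatment of the $c_\lambda$ term is a genuine gap rather than an omission. The quantity $c_\lambda$ is a (dimensionless) frequency-envelope entry and is \emph{not} automatically of size $\epsilon_0|\beta|$ when $|\beta|<1$; the phrase ``implicit length scale attached to the dyadic piece'' is not an argument. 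To close this one must either note (a) that the dyadic summation in the proof of the proposition --- Bernstein with $\|P_\mu\Im W\|_{L^2}\lesssim \mu^{-3/2}c_\mu$ combined with the symbol bound $|m(\xi,\beta)|\lesssim\min\{1,|\beta\xi|^2\}$ --- actually yields the stronger estimate $c_\lambda|\beta|$ for the error, from which the corollary is then immediate, or (b) that the corollary is only ever invoked in the regime $|\beta|\geq 1$ (cf.\ \eqref{near}), where $c_\lambda\leq\|c\|_{\ell^1}\lesssim\epsilon_0\leq\epsilon_0|\beta|$ trivially. Finally, your Littlewood--Paley derivation of the pointwise bound on $\Im W_\alpha$ in the strip is correct but unnecessarily heavy: $\Im W_\alpha$ is harmonic with zero Dirichlet data on the bottom and $L^\infty$ data $\lesssim\epsilon_0$ on the top by \eqref{W-ppoint}, so the maximum principle gives $\|\Im W_\alpha\|_{L^\infty(S)}\lesssim\epsilon_0$ directly.
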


\begin{proof}
We consider the expression
\[
D = \Re Z( \alpha, \beta) - x_0 + \beta \Im W_{\alpha}(\alpha_0,\beta) = \Re W(\alpha,\beta) - \Re W(\alpha,0) + \beta \Im W_{\alpha}(\alpha_0,\beta).
\]
We can express this in terms of $\Im W$ on the top as  follows:
\[
\begin{split}
D = & \ (P_N(D,\beta) -1) \Re W(\alpha,0) + \beta \partial_\alpha P_D (D,\beta) \Im W(\alpha,0)
\\
=  & \ \left(\Tilh^{-1}(P_N(D,\beta) -1) - i \beta D P_D(D,\beta)\right)\Im W(\alpha,0).
\end{split}
\]
The symbol for the multiplier
\[
M(D,\beta) = \Tilh^{-1}(P_N(D,\beta) -1)- i \beta D P_D(D,\beta)
\]
is
\[
\begin{split}
m(\xi,\beta) = & \ \frac{i \cosh \left((\beta+h)\xi\right) -\cosh (h\xi)}{\sinh (h\xi)} - \frac{ i \beta \xi \sinh \left((\beta+h)\xi\right)}{\sinh (h\xi)} \\
= & \ \frac{2\sinh \left( \beta \xi/2\right) \sinh \left((h+\beta/2)\xi\right) - \beta \xi \sinh \left((\beta+h)\xi\right)
}{\sinh(h\xi)}.
\end{split}
\]
This is easily seen to be smooth and satisfy the bound
\[
|m(\xi,\beta)| \lesssim \min\{ 1,|\beta \xi|^2\}.
\]
Given this symbol bound, the conclusion of the proposition follows
by applying Bernstein's inequality for each dyadic frequency, and then
summing up.
\end{proof}

\subsection{ The horizontal gauge invariance}

Here we briefly discuss the gauge freedom due to the 
fact that $\Re F$ is a-priori only uniquely determined up to constants. 
In the infinite depth case this gauge freedom is removed by making the assumption $F \in L^2$. In the finite depth case (see \cite{H-GIT}) instead this is more arbitrarily removed by setting $F(\alpha = -\infty) = 0$. 

In the present paper no choice is necessary for our main result,
as well as for most of the proof. However, in the choice of the 
normal form momentum density correction in Section ~\ref{s:nonlinear}
it is convenient to make such a choice, which is discussed next.
This choice is used in the very last step in Section~\ref{s:F}.

Assume first that we have a finite depth.
We start with a point $x_0 \in \R$ where our local energy estimate 
is centered.   Then we resolve the gauge invariance  with respect to horizontal translations by setting $\alpha(x_0) = x_0$,
which corresponds to setting $ \Re W(x_0)= 0$. In dynamical terms, this implies that  the real part of $F$ is uniquely determined by 
\[
0 = \Re W_t(x_0) = \Re ( F(1+W_\alpha))(x_0),
\]
which yields
\[
\Re F(x_0) = \Im F(x,0) \frac{\Im W_\alpha(x_0)}{1+ \Re W_\alpha(x_0)}.
\]

 In the infinite depth case, the canonical choice for $F$ is the one vanishing at infinity. This corresponds to a moving location in the $\alpha$ variable. We can still rectify this following the finite depth model, at the expense of introducing a constant component in both $\Re W$ and in $F$. We will follow this convention in the paper, in order to insure that our infinite depth computation is an exact limit of the finite depth case.

\section{Local energy decay for linear gravity waves}\label{s:linear}

\subsection{Linearized equations in Eulerian coordinates}

In Eulerian coordinates the linearized equations around the zero solution are 
\begin{equation}\label{ww-lin0}
\left\{ 
\begin{array}{l}
\partial_t \eta = \mathcal{D}_N \psi
\cr
\partial_t \psi = -g\eta ,
\end{array}
\right.
\end{equation}
where $\mathcal{D}_N$ is the Dirichlet to Neumann map associated to depth $h>0$, given by 
\[
D_N \psi = \partial_y \phi_{|top} 
= \Tilh \partial_x \psi,
\]
where recall that $\Tilh$ is the Tilbert operator given by \eqref{def:Tilbert} and 
$\phi$ is the harmonic extension of $\psi$ in the flat strip $S=\{(x,y)\in \xR^2\ :\ -h<y<0\}$, so that
\begin{equation*}
\left\{
\begin{aligned}
&\Delta \phi=0 \quad \text{in }S,\\
&\phi\arrowvert_{y=0}=\psi,\\
&\partial_y\phi\arrowvert_{y=-h}=0.
\end{aligned}
\right.
\end{equation*}

For such $\eta$ and $\psi$ we define the (conserved) energy as 
\[
E_{lin}(\eta,\psi) := \frac g2 \| \eta \|_{L^2}^2 + \frac 1 2\langle \Tilh\partial_x \psi,\psi \rangle.
\]
We can express the energy in a more symmetric fashion by using the 
harmonic extension $\phi$ of $\psi$ in the strip $S$ with Neumann boundary condition 
on the bottom. Then
\[
E_{lin} (\eta,\psi) = \frac g2\| \eta \|_{L^2}^2 + \frac 1 2\| \nabla \phi\|_{L^2(S)}^2.
\]
We also introduce higher energies
\[
E_{lin}^s (\eta,\psi) :=  \frac{g^{1-2s}}{2}\| (\Tilh^{-1} \partial_x)^s \eta \|_{L^2}^2 + \frac{g^{-2s}}{2}\|(\Tilh^{-1} \partial_x)^s  \nabla \phi\|_{L^2(S)}^2.
\]
These are homogeneous norms in the infinite depth case, but the
homogeneity is broken in the finite depth case.

The local energy for the linearized equation is given by 
\[
\|(\eta,\psi)\|_{LE}^2 = \|\eta\|_{LE^0}^2 + \| \nabla \phi\|_{LE^{-\frac12}}^2,
\]
where 
\[
\|\eta\|_{LE^0} := \sup_{x_0 \in \R} \|\eta\|_{LE^0_{x_0}}, \qquad
\|\eta\|_{LE^0_{x_0}}^2 = \int_0^T\int \chi(x-x_0) \eta^2 \, dx \, dt
\]
while
\[
\| \nabla \phi\|_{LE^{-\frac12}} := \sup_{x_0 \in \R}\| \nabla \phi\|_{LE^{-\frac12}_{x_0}}, \qquad \| \nabla \phi\|_{LE^{-\frac12}_{x_0}}^2 = \int_0^T\iint_S \chi(x-x_0) |\nabla \phi|^2 \, dx dy dt.
\] 
With these notations, the local energy decay estimate for the 
linearized equation is as follows:

\begin{theorem}
There exists a constant $C$ such that, for all $h\in [1,+\infty)$ and all $T\in (0,+\infty)$, solutions $(\eta,\psi)$ to the above system \eqref{ww-lin0} satisfy the local energy bound
\begin{equation}
\Vert (\eta,\psi)\Vert_{LE} \leq C \left(\Vert (\eta,\psi) (0)\Vert_{ E_{lin}^\frac14 }+ \Vert (\eta,\psi) (T)\Vert_{ E_{lin}^\frac14 }\right).
\end{equation}
\end{theorem}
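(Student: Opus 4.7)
The plan is to run a Morawetz multiplier argument based on the linearizations at zero of the momentum density--flux pairs $(I_2, S_2)$ and $(I_3, S_3)$ introduced in Section \ref{s:Cl}. At the linear level the free surface is flat, so $\phi = \HN \psi$, $\theta = \HD \eta$, and $q$ is the harmonic conjugate of $\phi$, all living in the fixed strip $S$. Using $\psi_t = -g \eta$ together with the linearizations $\phi_t = -g \theta$ and $\theta_t = \phi_y$ obtained from \eqref{phit}--\eqref{thetat}, substitution into Lemmas \ref{l:I2}--\ref{l:I3} gives the simplified fluxes
\[
S_2 = \frac{g}{2}\eta^2 + \mez \int_{-h}^0 (\phi_x^2 - \phi_y^2)\, dy, \qquad S_3 = \mez \int_{-h}^0 |\nabla_{x,y}\phi|^2 \, dy,
\]
where in $S_3$ the potential contribution vanishes since
\[
-\frac{g}{2}\eta^2 + g\int_{-h}^0 \theta_y \theta \, dy = -\frac{g}{2}\eta^2 + \frac{g}{2}\bigl[\theta^2\bigr]_{-h}^0 = 0
\]
by the Dirichlet boundary conditions $\theta(-h)=0$, $\theta(0)=\eta$. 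The key structural observation is that $S_3 \geq 0$ is exactly the local kinetic energy density, while
\[
S_2 + S_3 = \frac{g}{2}\eta^2 + \int_{-h}^0 \phi_x^2 \, dy
\]
is also manifestly nonnegative and controls the potential part of the linear energy; thus combining the two identities recovers the full local energy $\frac{g}{2}\eta^2 + \mez \int_{-h}^0 |\nabla_{x,y}\phi|^2\, dy$ pointwise in $(t,x)$.

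Given a center $x_0 \in \xR$, I would pick a uniformly bounded nondecreasing multiplier $m_{x_0}$ whose derivative satisfies $m_{x_0}'(x) \approx \chi(x-x_0)$ — for instance a primitive of $\chi$, translated so as to be odd around $x_0$. Multiplying $\partial_t I_j + \partial_x S_j = 0$ by $m_{x_0}$ and integrating by parts on $[0,T]\times \xR$ yields
\[
\int_0^T \!\!\int_\xR S_j(t,x)\, m_{x_0}'(x) \, dx\, dt = \int_\xR m_{x_0}(x)\bigl(I_j(0,x) - I_j(T,x)\bigr) dx .
\]
Applying this to $j=3$ and to the combination $j=2+3$, and taking the supremum in $x_0$, the positivity of $S_3$ and $S_2 + S_3$ produces the desired bound on $\Vert (\eta,\psi)\Vert_{LE}^2$, provided the two right-hand sides are controlled by $\Vert(\eta,\psi)(0)\Vert^2_{E_{lin}^{1/4}} + \Vert(\eta,\psi)(T)\Vert^2_{E_{lin}^{1/4}}$. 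For $I_2 = \eta \psi_x$ this boundary bound reduces to a duality pairing
\[
\Bigl|\int_\xR m_{x_0}\, \eta \psi_x \, dx\Bigr| \lesssim \Vert m_{x_0}\, \eta\Vert_{\dot H_h^{-1/4}} \Vert \psi\Vert_{\dot H_h^{3/4}} \lesssim \Vert\eta\Vert_{H_h^{1/4}} \Vert\psi\Vert_{\dot H_h^{3/4}},
\]
where in the last step one uses the uniform boundedness of $m_{x_0}$ as a Fourier multiplier on $\dot H_h^{-1/4}$. The analogous bound for $I_3 = \int_{-h}^0 \nabla \theta \cdot \nabla q\, dy$ follows by Plancherel in $\alpha$, using the explicit Poisson-type symbols $p_D, p_N$ from Section 3 to evaluate the vertical integral, which again balances one quarter derivative on each side.

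The main technical obstacle is keeping the constants uniform in $h \in [1,\infty)$. The linear dispersion relation $\omega^2 = g\xi \tanh(h\xi)$ transitions from the dispersive regime $\omega \sim \sqrt{g|\xi|}$ at $|\xi| \gg 1/h$ to the shallow-water, wave-like regime $\omega \sim \sqrt{gh}\,\xi$ at $|\xi| \ll 1/h$, and correspondingly the Tilbert symbol $-i\tanh(h\xi)$ degenerates like $-ih\xi$ near $\xi = 0$. This is precisely the reason all boundary norms must be measured in the depth-adapted spaces $H_h^{1/4}$ and $\dot H_h^{3/4}$; moreover, in converting integrals over the strip (arising from $S_2$, $S_3$, $I_3$) back to surface norms one must apply the uniform parabolic extension estimate of Proposition \ref{p:extension}, together with Plancherel bounds on the symbols $p_D, p_N$, in an $h$-uniform way. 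The saving grace is that the bulk terms in $S_3$ and $S_2+S_3$ are pointwise nonnegative without any loss of derivatives, so the \emph{same} multiplier $m_{x_0}$ simultaneously captures both frequency regimes, and no separate low-frequency argument is required at the linear level — an observation which will be considerably harder to preserve in the nonlinear analysis of the later sections.
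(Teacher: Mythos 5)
Your proposal collapses precisely at the step that is the main difficulty of this theorem in finite depth, and the collapse comes from a hidden error in the linearization of $\phi_t$.

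You write the linearized flux $S_3$ by substituting $\phi_t = -g\theta$ and then claiming the cancellation
\[
-\frac{g}{2}\eta^2 - \int_{-h}^0 \theta_y \phi_t \, dy = -\frac{g}{2}\eta^2 + g\int_{-h}^0 \theta_y \theta\, dy = -\frac{g}{2}\eta^2 + \frac{g}{2}\bigl[\theta^2\bigr]_{-h}^0 = 0.
\]
But the correct linearization of \eqref{phit} is $\phi_t = -g\,\HN(\eta)$, whereas $\theta = \HD(\eta)$. These are the harmonic extensions of $\eta$ with \emph{Neumann} and \emph{Dirichlet} bottom conditions respectively; they agree only in the limit $h=\infty$. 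In finite depth the two are different functions, so $-\int_{-h}^0 \theta_y\phi_t\,dy = g\int_{-h}^0 \HD(\eta)_y\,\HN(\eta)\,dy$ is \emph{not} a total $y$-derivative and does not telescope to $\frac{g}{2}\eta^2$. Consequently $S_3$ is not pure kinetic energy: it carries an additional depth-dependent bilinear term in $\eta$, which the paper calls $Q_m(\eta)$ and rewrites as $\int m_x\,B^h(\eta,\eta)\,dx$ with an explicit symbol built from $\sinh$ and $\cosh$ of $h\xi$. Without this term your pointwise positivity of $S_3$ and $S_2+S_3$, and hence the whole argument, is unavailable.

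Handling that extra term is essentially the entire content of the paper's proof of this theorem: one must show the a-priori sign-indefinite quantity $\int m_x B^h(\eta,\eta)$ can at worst cost a fraction $c<\frac12$ of the local potential energy (Proposition~\ref{p:B1-bd}). This in turn forces the frequency-localization to scale $\lambda_0$ with $1/h\ll\lambda_0\ll 1$, the reduction to Proposition~\ref{p:positive}, and the delicate positivity of the kernel $K$ (Lemma~\ref{lemma:Kpositive}), proved via the pair of differential inequalities $(\partial_y-\partial_x)K>0$ and $(\partial_y+\partial_x)K<0$ together with asymptotic expansions for large and small $x_0$ and a numerical check in the intermediate range. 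None of this appears in your proposal, and your remark that ``no separate low-frequency argument is required at the linear level'' is exactly backwards — the $h$-uniform low-frequency analysis is the hard part, not a triviality that the strip integration disposes of. The fixed-time bounds on $\int m I_2$ and $\int m I_3$ in your last paragraph are essentially the paper's Proposition~\ref{P:52}(a) and are fine, but they are the easy half.

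Your proposal is thus correct in infinite depth, where $\HN=\HD$ and the cancellation you exploit is genuine, but does not yield a bound uniform in $h\ge 1$, which is the entire point of the statement. You should fix the linearization of $\phi_t$, identify the surviving $Q_m(\eta)$ term, and confront its lack of a fixed sign directly; the paper shows this requires a convex combination $\sigma I_2 + (1-\sigma) I_3$ with $\sigma$ slightly below $\frac12$ precisely so that the manifestly nonnegative $\frac12\int m_x(\phi_x^2-\phi_y^2)$ and $\frac12\int m_x|\nabla\phi|^2$ leave enough room to absorb the $B^h$ contribution.
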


The rest of the section is devoted to the proof of the theorem. By scaling we can and will assume without any loss of generality that $h \gg 1$. Precisely, in the following proof $h$ will play the role of an 
(inverse) semiclassical parameter.

The proof is based on  Morawetz' identities starting from the momentum conservation, and more precisely from the linear counterparts of the momentum densities $I_2$ and $I_3$ in section \ref{s:Cl}.
We define the momentum as 
\[
\mathcal{M} = \int \eta \psi_x \, dx
\]
with $I_2(x) = \eta \psi_x$ as the first momentum density.

For this proof, given a function $f=f(t,x,y)$ with $(x,y)\in \overline{S}$, we set 
$$
\evaluate{f}(t,x):=f(t,x,0).
$$
Now, using the equations for $\eta,\psi$, given a bounded  increasing function $m$, we compute
\[
\partial_t \int m(x) I_2 (t,x) \, dx 
= \int m \evaluate{\phi_y}  \psi_x \, dx - \int gm \eta \eta_x \, dx. 
\]
The second term in the right-hand side gives
\[
\frac g2 \int m_x \eta^2\, dx.
\]
The first term can be written as
\begin{align*}
\int m \evaluate{\phi_y}  \evaluate{\phi_x}
\, dx &= \iint m \partial_y(\phi_{y} \phi_{x})\, dy dx\\
&=\iint m(\phi_{yy}\phi_x+\phi_{y} \phi_{xy}) \, dy dx\\
&=\iint m(-\phi_{xx}\phi_x+\phi_{y} \phi_{xy}) \, dy dx\\
&= \frac12 \iint m_x (\phi_x^2 - \phi_y^2) \, dy dx.
\end{align*}
Thus we conclude that 
\begin{equation}\label{est1}
\partial_t \int m I_2\, dx = \frac g2 \int m_x \eta^2 \, dx + \frac12 \iint m_x (\phi_x^2 - \phi_y^2) \, dy dx.
\end{equation}
The first term on the right is a component of the local energy, whereas the second is nonnegative when $m_x$ is replaced by $1$ 
(see Lemma~\ref{L3.3} in the appendix, applied with $w=1$ and $\eta=0$).

We now continue by using a second momentum density $I_3$, which in addition to the functions $\eta$, $\psi$ and $\phi$, 
depends on the functions $\theta$ and $q$ introduced in the previous sections (see \eqref{defi:qconj} and \eqref{defi:thetainitial}):
\begin{itemize}
\item $\theta$ is the harmonic extension of $\eta$ with Dirichlet boundary condition on the bottom;
\item $q$ is the harmonic conjugate of $\phi$ with Dirichlet boundary condition on the bottom.
\end{itemize}
With these notations, one has
\[
\mathcal{M} = \int I_3\, dx\quad \text{with} \quad I_3=\int_{-h}^0 \nabla \theta \cdot\nabla q \, dy.
\]
Although it is natural to define $I_3$ in terms of $(\theta,q)$, for the computations it is convenient to express $I_3$ in terms of $(\theta,\phi)$. 
It follows immediately from the equations $q_x=-\phi_y$ and $q_y=\phi_x$ that
\[
I_3(t,x)=\int_{-h}^0 \big(\theta_y \phi_x-\theta_x\phi_y)\, dy.
\]
Notice that
$$
\partial_t \theta=\phi_y
$$ 
(this is the simplified version of \eqref{thetat} for the linearized equation). 
As a result, we get for any weight $m$,
\begin{equation}\label{est2}
\frac{d}{dt}  \int m I_3\, dx  =  - \iint m \big(\phi_{yy}\phi_x-\phi_{yx}\phi_y\big)\, dydx 
 + \iint m \big(\theta_y\partial_t \phi_x-\theta_x \partial_t \phi_y \big)\, dydx.
\end{equation}
Since $\phi_{yy}=-\phi_{xx}$, integrating by parts, 
the first term gives the expression 
\[
- \iint m \big(\phi_{yy}\phi_x-\phi_{yx}\phi_y\big)\, dydx = \frac12 \iint m_x  |\nabla \phi|^2 \, dydx,
\]
which is the second part of the local energy. Our second observation is that the second term depends only on $m$ and $\eta$. To see this, we use the operator $H_D$ (respectively $H_N$) introduced in the previous section, which maps a function $f=f(x)$ to its harmonic extension in the strip $S$ with Dirchlet (respectively Neumann) boundary condition on the bottom. 
Then, by definition, one has 
$\theta=H_D(\eta)$. 
On the other hand, 
since $\partial_t\phi\arrowvert_{y=0}=-g\eta$, it follows that $\partial_t\phi=-gH_N(\eta)$. 
Consequently, one has
$$
\iint m \big(\theta_y\partial_t \phi_x-\theta_x \partial_t \phi_y \big)\, dydx
=gQ_m(\eta),
$$
where 
\begin{equation}\label{defi:Qmeta}
Q_m(\eta):=\iint m \big(H_N(\eta)_yH_D(\eta)_x-H_N(\eta)_xH_D(\eta)_y\big)\, dydx.
\end{equation}
Thus, we conclude that
\begin{equation}\label{est3}
\frac{d}{dt}  \int m I_3\, dx  =  \frac12 \iint m_x  |\nabla \phi|^2 \, dy dx  + g \iint m Q_m(\eta) \, dy dx.
\end{equation}

Notice that in the infinite depth case, one has $H_N(\eta)=H_D(\eta)$ so 
$Q_m(\eta)=0$, which greatly simplifies the proof of the theorem.
To prove a result that holds uniformly in the finite depth case, the idea here is now to try to combine the two local energies in a more balanced way. Given a parameter $\sigma\in [0,1]$, we define
\[
\mI_m^\sigma(t) =  \sigma \int m I_2
\, dx + (1-\sigma) \int m I_3\, dx .
\]
Then we have the following:

\begin{proposition}\label{P:52}
Let $h \gg 1$. Then

a) For each $\sigma \in [0,1]$ we have
\begin{equation}\label{mI(0)}
|\mI_m^\sigma(t) | \lesssim \Vert (\eta,\psi) \Vert_{E_{lin}^\frac14}^2.
\end{equation}

b) There exist $\sigma < \frac12 $ close to $\frac12$ and $c < 1$ independent of $h$ so that  \begin{equation}\label{dmI}
\mI_m^\sigma(T) - \mI_m^\sigma(0) \geq  \Vert(\eta,\psi)\Vert_{LE_{x_0=0}}^2 -  c\Vert(\eta,\psi)\Vert_{LE}^2
\end{equation}
holds for all solutions $(\eta,\psi)$ of the equation  \eqref{ww-lin0}.

\end{proposition}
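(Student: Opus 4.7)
The plan is to combine the identities \eqref{est1} and \eqref{est3} with weights $\sigma$ and $1-\sigma$, choosing $m$ to be a bounded increasing antiderivative of $\chi$ centered at $x_0=0$ so that $m_x=\chi$. Integration by parts in $y$ rewrites $Q_m$ as
\[
Q_m(\eta)=-\tfrac{1}{2}\!\int\! m_x\eta^2\,dx+\iint\! m_x\,\HN(\eta)\,\partial_y\HD(\eta)\,dy\,dx,
\]
whence
\[
\tfrac{d}{dt}\mI_m^\sigma=\tfrac{(2\sigma-1)g}{2}\!\int\!\chi\eta^2\,dx+\tfrac{1}{2}\!\iint\!\chi\bigl[\phi_x^2+(1-2\sigma)\phi_y^2\bigr]\,dy\,dx+(1-\sigma)g\!\iint\!\chi\,\HN(\eta)\partial_y\HD(\eta)\,dy\,dx.
\]
After integration over $[0,T]$, these three pieces will yield the positive local energy at $x_0=0$ modulo an absorbable contribution from the last, nonlocal term.

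For part (a), both $\int mI_2\,dx$ and $\int mI_3\,dx$ are bilinear forms with $m$ smooth and bounded. For the first, Plancherel and the fact that multiplication by $m$ preserves the $h$-adapted Sobolev spaces give
\[
\Bigl|\!\int\! m\eta\psi_x\,dx\Bigr|\lesssim\|\eta\|_{\dot H^{1/4}_h}\|\psi_x\|_{\dot H^{-1/4}_h}\lesssim\|(\eta,\psi)\|_{E_{lin}^{1/4}}^2.
\]
For $\int mI_3$, substituting $\theta=\HD(\eta)$ and $q=\HD(-\Tilh\psi)$ and applying Plancherel in $x$ converts the $y$-integral into a bounded Fourier multiplier between the same scale-invariant spaces, giving the same bound.

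For part (b), taking $\sigma$ slightly less than $1/2$ makes the $\phi_x^2$ and $\phi_y^2$ coefficients positive ($\tfrac12$ and $\tfrac{1-2\sigma}{2}$ respectively), so the full $|\nabla\phi|^2$ local energy at $x_0=0$ is captured. The main obstacle, and the crucial technical point, is extracting the $\eta^2$ local energy from the bulk term $(1-\sigma)g\iint\chi\,\HN(\eta)\partial_y\HD(\eta)$. A Plancherel computation in $x$ with $\chi$ replaced by $1$ gives this quadratic form the Fourier symbol $\tfrac{\xi h}{\sinh(2h\xi)}+\tfrac{1}{2}$, positive and interpolating between $1$ at low frequencies $|\xi|h\ll1$ and $\tfrac{1}{2}$ at high frequencies $|\xi|h\gg1$. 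Combined with the boundary $-\tfrac{1}{2}\int\chi\eta^2$ piece, the net coefficient on $\eta^2$ reduces to $g\sigma/2$ in the infinite-depth limit and remains comparable for finite depth. To restore the cutoff $\chi$ I would split $\eta$ dyadically relative to $1/h$: for high-frequency pieces the $\cosh/\sinh$ symbols decay exponentially away from the top, so the contribution is near-diagonal $\sim\int\chi\eta^2$ plus an exponentially small off-diagonal tail; for low-frequency pieces the extensions are nearly constant in $y$, producing a convolution $\int(\chi\ast K_h)\eta^2$ with a kernel $K_h$ of width $O(h)$ and $\|K_h\|_{L^1}$ bounded uniformly in $h$. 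Choosing $\sigma$ sufficiently close to $1/2$ keeps the resulting spread constant times $(1-\sigma)g$ strictly below $1$, yielding \eqref{dmI} with some $c<1$ independent of $h$.
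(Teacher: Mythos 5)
The overall architecture of your proposal matches the paper's: decompose $\mI_m^\sigma$ into a manifestly positive $\phi$-contribution (which requires $\sigma<\tfrac12$), and an $\eta$-contribution given by a bilinear form that must be shown to be dominated, up to a constant $c<1$, by the local potential energy. Your diagonal-symbol calculation is correct: with $\chi\equiv 1$ the symbol of $\iint\chi\,\HN(\eta)\partial_y\HD(\eta)$ is indeed $\tfrac{\xi h}{\sinh(2h\xi)}+\tfrac12$, and after subtracting the $\tfrac12$ boundary term one recovers the paper's diagonal $b^h(\xi,\xi)=\tfrac{\xi h}{\sinh 2\xi h}$. However, there are two genuine gaps.

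In part (a), the bound for $\int m I_3$ cannot be obtained by "applying Plancherel in $x$": the weight $m$ is not a Fourier multiplier, so after Plancherel you face a convolution with $\widehat m$, not a multiplier, and the product structure in $y$ is lost. The paper instead isolates the piece $\iint m_x\,\theta q_x\,dy\,dx$ (after $I_3 = \int mI_2\,dx - \iint m_x\theta q_x\,dy\,dx$) and exploits the fact that $m_x$ is a bump function with unit mass, combined with the parabolic extension estimate (Proposition~\ref{p:extension}) to bound $\theta$ and $\phi_y$ in $L^\infty_x L^2_y$ with the appropriate $|y|^{\mp 1/4}$ weights. Your sketch omits the mechanism that makes the $m_x$ weight harmless.

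In part (b), the key step is not addressed. You correctly identify that the problem reduces to controlling $\int_0^T\int m_x\,B^h(\eta,\eta)\,dx\,dt$, and that the kernel $K_h$ is a rescaling of a fixed kernel $K$ with $\iint K = \tfrac12$. But the argument you invoke — that $\|K_h\|_{L^1}$ being bounded uniformly in $h$ suffices — is circular without knowing the sign of $K$: the $L^1$ norm that actually controls the off-diagonal bilinear form is $\iint|K|$, which could be much larger than $\iint K$ if $K$ oscillates. The paper's decisive ingredient is Lemma~\ref{lemma:Kpositive}, the pointwise positivity of $K$, which is what forces $\iint|K| = \iint K = \tfrac12 < 1$ and, after splitting off a rank-one positive piece, gives $c<\tfrac12$. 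That lemma is nontrivial: the paper proves it by decomposing $b$ as a product of a combination of $\coth/\csch$ factors with $\tfrac{\xi\zeta}{\zeta^2-\xi^2}$, analyzing the directional derivatives $(\partial_y\mp\partial_x)K$, and separating into small/medium/large regimes, with the medium regime verified numerically. Nothing in your dyadic high/low splitting substitutes for this positivity argument — the high-frequency exponential decay justifies discarding high frequencies, but once localized to scale $1/h$ the bilinear form is genuinely non-diagonal and its sign must be determined.
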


The conclusion of the theorem follows  by  taking supremum over all translates  of \eqref{dmI}.  The remainder of the section is devoted to the proof of the proposition.

We begin with part (a). We need to consider the the two momentum densities $I_2$ and $I_3$. The contribution of $I_2$ has the form
\[
\int m\eta\psi_x\, dx.
\]
We estimate this as follows
\[
\left| \int m\eta\psi_x\, dx \right| \lesssim \Vert m\eta \Vert_{H_h^{\frac{1}{4}}} \Vert \psi_x\Vert_{H_h^{-\frac{1}{4}}},
\]
and conclude using the fact that $m$ is a bounded multiplication operator in $H_h^{\frac14}$,
\begin{equation}
\label{w11}
\Vert m\eta \Vert_{H_h^{\frac{1}{4}}}\lesssim \Vert m_x\Vert_{L^{1}} \Vert \eta \Vert_{H_h^{\frac{1}{4}}}.
\end{equation}

Now we consider the contribution of $I_3$. To do so, we integrate by parts to arrive at
\begin{align*}
I_3&=\iint_{S} m(\theta_y\phi_x-\theta_x\phi_y)\, dydx=
\iint_S m \big(\partial_y(\theta \phi_x)-\partial_x(\theta \phi_y)\big)\, dydx\\
&=\int m I_2\, dx-\iint_S m_x\theta q_x\, dydx.
\end{align*}
It remains to estimate the second part for which we will use the $x$-localized $L^2$ bounds for harmonic extensions in Proposition \ref{p:extension}. 
This yields 
$$
\blA  y^{-\frac{1}{4}}\theta\brA_{L^\infty_x(L^2_y)}\leq \blA  y^{-\frac{1}{4}}\theta\brA_{L^2_y(L^\infty_x)}
\lesssim \blA \eta\brA_{H^\frac 14 _h},
$$
and similarly
$$
\blA y^{+\frac{1}{4}}\phi_y\brA_{L^\infty_x(L^2_y)}
\lesssim \blA \phi_y\arrowvert_{y=0}\brA_{H^{-\frac 14} _h}.
$$
Since $m_x$ is a positive function with integral $1$, we 
conclude that
\begin{align*}
\left| \iint_{S} m_x\theta \phi_y\, dydx \right| 
&\leq \Big(\int m_x\, dx\Big) \blA  y^{-\frac{1}{4}}\theta\brA_{L^\infty_x(L^2_y)}
\blA y^{+\frac{1}{4}}\phi_y\brA_{L^\infty_x(L^2_y)}\\
&\lesssim \Vert \eta \Vert_{H_h^{\frac{1}{4}}}
\Vert \phi_y\Vert_{H_h^{-\frac{1}{4}}}.
\end{align*}
Since $\phi_y\arrowvert_{y=0}=\mathcal{D}\psi$, this gives the wanted estimate \eqref{mI(0)}.

We now prove part (b). We have 
\begin{equation}\label{splitting}
\mI^{\sigma}_m(T) - \mI^{\sigma}_m(0) = LE_\phi+LE_\eta,
\end{equation}
where
\[
LE_\phi =  \int _{0}^T  \iint \Big(\frac{1-\sigma}2 m_x  |\nabla \phi|^2  + \frac{\sigma}2 
m_x (\phi_x^2 - \phi_y^2)\Big)\, dydxdt,
\]
and 
\[
LE_\eta = \frac{\sigma g}2 \int_0^T \int m_x \eta^2\, dxdt + (1-\sigma)g\int_0^T  Q_m(\eta) \dt,
\]
where $Q_m(\eta)$ is defined by \eqref{defi:Qmeta}. 
We first observe that the second term in $LE_\phi$ is clearly positive if $\sigma < \frac12$. 
So, to conclude the proof, 
it is sufficient to prove that the 
$LE_\eta$ component controls 
the potential energy. This in turn is 
straightforward in the infinite depth case, since then, $Q_m(\eta)=0$. 
Hence from here on we focus on the finite depth case where the
challenge is in part to gain the uniformity as $h \to \infty$. 

So the goal is to prove that for some $\sigma\in (0,1/2)$, the expression $LE_\eta$  is positive definite, either directly or after taking a supremum over all translations of $m$. For that we need to write it in terms of $\eta$ and $m_x$.

\begin{notation}
Given a complex-valued 
function $b=b(\xi_1,\xi_2)$, we define 
the bilinear Fourier multiplier 
$B$ with symbol $b$ by
\[
B(f,g)(x):=\frac{1}{2\pi}
\iint_{\xR^2}e^{ix(\xi_1+\xi_2)}b(\xi_1,\xi_2)\hat f(\xi_1)\hat g(\xi_2)\, d\xi_1\, d\xi_2.
\]
\end{notation}
\begin{lemma}
The bilinear form $Q_m$ admits the representation
\[
Q_m(\eta)
=\int m_x B^h(\eta,\eta)\, dx,
\]
where $B^h(\eta,\eta)$ is a bilinear Fourier multiplier with symbol
\[
 b^h(\xi,\zeta) =
\frac{ \xi \zeta } { \sinh 2 \xi h \sinh 2 \zeta h}  \frac{
\cosh 2h\xi -\cosh 2h \zeta}{(\xi+\zeta)(\xi-\zeta)}.
\]
\end{lemma}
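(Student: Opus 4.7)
The plan is to compute $Q_m(\eta)$ via the Fourier representation of $H_N(\eta)$ and $H_D(\eta)$ recalled in Section~\ref{ss:laplace}, collapse the bilinear integrand in $y$ using an elementary hyperbolic identity, symmetrize in the two frequency variables, and finally factor out a $(\xi+\zeta)$ so that the resulting Fourier multiplier appears as an $x$-derivative. Integration by parts in $x$ then transfers the derivative from $B^h(\eta,\eta)$ onto $m$, producing the claimed $\int m_x B^h(\eta,\eta)\, dx$ form.

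Concretely, recalling $p_N(\xi,y)=\cosh((y+h)\xi)/\cosh(h\xi)$ and $p_D(\xi,y)=\sinh((y+h)\xi)/\sinh(h\xi)$, one writes each of the four derivatives $H_N(\eta)_x$, $H_N(\eta)_y$, $H_D(\eta)_x$, $H_D(\eta)_y$ as an inverse Fourier transform. The bilinear integrand becomes a double frequency integral against $\hat\eta(\xi)\hat\eta(\zeta)\,e^{ix(\xi+\zeta)}$ with kernel
\[
-i\xi\zeta\,\frac{\cosh((y+h)\xi)\cosh((y+h)\zeta)-\sinh((y+h)\xi)\sinh((y+h)\zeta)}{\cosh(h\xi)\sinh(h\zeta)}
=-i\xi\zeta\,\frac{\cosh((y+h)(\xi-\zeta))}{\cosh(h\xi)\sinh(h\zeta)},
\]
using $\cosh A\cosh B-\sinh A\sinh B=\cosh(A-B)$. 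Integrating in $y\in(-h,0)$ yields $\sinh(h(\xi-\zeta))/(\xi-\zeta)$, so
\[
\int_{-h}^{0}\!\big(H_N(\eta)_yH_D(\eta)_x-H_N(\eta)_xH_D(\eta)_y\big)\,dy
\]
is a bilinear multiplier with kernel $-i\xi\zeta\,\sinh(h(\xi-\zeta))/[(\xi-\zeta)\cosh(h\xi)\sinh(h\zeta)]$.

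Since $\hat\eta(\xi)\hat\eta(\zeta)$ is symmetric, I may replace this kernel by its symmetrization. Common-denominating gives
\[
\tfrac12\!\Big[\tfrac{1}{\cosh(h\xi)\sinh(h\zeta)}+\tfrac{1}{\cosh(h\zeta)\sinh(h\xi)}\Big]
=\frac{\sinh(h(\xi+\zeta))}{2\cosh(h\xi)\cosh(h\zeta)\sinh(h\xi)\sinh(h\zeta)},
\]
and then applying the double-angle identity $2\sinh(hA)\cosh(hA)=\sinh(2hA)$ together with the product-to-sum identity $\sinh(h(\xi-\zeta))\sinh(h(\xi+\zeta))=\tfrac12(\cosh(2h\xi)-\cosh(2h\zeta))$ collapses everything to
\[
\frac{-i\xi\zeta}{\xi-\zeta}\cdot\frac{\cosh(2h\xi)-\cosh(2h\zeta)}{\sinh(2h\xi)\sinh(2h\zeta)}.
\]
Factoring out $-i(\xi+\zeta)$ exhibits this precisely as $-i(\xi+\zeta)\,b^h(\xi,\zeta)$ with the $b^h$ stated in the lemma; by construction this is the symbol of $-\partial_x B^h(\eta,\eta)$, and integration by parts in $x$ against $m$ completes the proof.

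The main technical steps are therefore (i) the hyperbolic collapse $\cosh A\cosh B-\sinh A\sinh B=\cosh(A-B)$, which is essential for the $y$-integration to yield a clean factor $\sinh(h(\xi-\zeta))/(\xi-\zeta)$, and (ii) the algebraic symmetrization that produces the factor $\cosh(2h\xi)-\cosh(2h\zeta)$, which contains exactly the $(\xi-\zeta)$ needed to cancel the denominator and the $(\xi+\zeta)$ needed to rewrite the multiplier as a total $x$-derivative. The obstacle I anticipate is strictly bookkeeping; one has to be careful about the $i$ factors and about verifying that the symmetrized symbol really does have the requisite $(\xi+\zeta)$-divisibility, since otherwise the integration by parts step would be illegitimate.
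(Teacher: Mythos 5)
Your proposal is correct and takes essentially the same route as the paper: write the integrand via the Fourier symbols $p_N$, $p_D$, collapse in $y$ using $\cosh A\cosh B-\sinh A\sinh B=\cosh(A-B)$, symmetrize, and integrate by parts to move the $\partial_x$ onto $m$. The algebra you carried out, including the double-angle and product-to-sum identities and the observation that the $(\xi+\zeta)$-divisibility holds because $\cosh 2h\xi-\cosh 2h\zeta$ vanishes on $\xi+\zeta=0$, is exactly what the paper does (and your intermediate factors are in fact cleaner; the paper's displayed $b^h_1$ has a minor factor-of-$2$ typo that does not affect the final symbol).
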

\begin{proof}Recall that
\[
\widehat{H_D(\eta)}(\xi,y) =  \frac{\sinh \xi (y+h)}{\sinh \xi h}\hat \eta(\xi) , \quad 
\text{resp.}\quad 
\widehat{H_N(\eta)}(\xi,y)
=\frac{\cosh((h+y)\xi)}{\cosh(h\xi)}\hat\eta(\xi).
\]
Consequently, 
\[
\begin{aligned}
\big( &H_N(\eta)_yH_D(\eta)_x-H_N(\eta)_xH_D(\eta)_y\big)
(t,x,y)\\
&=\frac{1}{2\pi}\iint_{\xR^2}e^{ix(\zeta+\xi)}b_0^h(y,\xi,\zeta)\hat\eta(t,\xi)\hat\eta(t,\zeta)\, d\xi\dalpha,
\end{aligned}
\]
where 
\[
\begin{split}
b_0^h(y,\xi,\zeta)= & \ i 
 \xi \zeta  \left(  \frac{\sinh \xi (y+h)}{\sinh \xi h} \frac{\sinh \zeta (y+h)}{\cosh \zeta h} 
-   \frac{\cosh \xi (y+h)}{\sinh \xi h} \frac{\cosh \zeta (y+h)}{\cosh \zeta h} 
    \right)
\\
= & \ 
\frac{- i \xi \zeta } { \sinh \xi h \cosh \zeta h}  \cosh ((y+h)(\xi -\zeta)).
\end{split}
\]
Integrate in $y$ to get 
\[
\int^{0}_{-h} b_0^h(y,\xi,\zeta) \, dy =
 \frac{ -i\xi \zeta } { \sinh \xi h \cosh \zeta h}  \frac{\sinh h(\xi -\zeta)}{\xi-\zeta}.
\]
Notice that for any bilinear Fourier multiplier $B$ with symbol $b$, one has
\[
B(f,f)=B^{sym}(f,f) \quad \text{with}\quad
b^{sym}(\xi_1,\xi_2)=\mez (b(\xi_1,\xi_2)+b(\xi_2,\xi_1)).
$$
By so doing, we obtain that 
$$
\int \big(H_N(\eta)_yH_D(\eta)_x-H_N(\eta)_xH_D(\eta)_y\big)(t,x,y)\, dx 
=B^h_1(\eta,\eta),
\]
where $B^h_1$ is the bilinear Fourier multiplier with symbol 
\[
b^h_1(\xi,\zeta)=\frac{-2 i \xi \zeta } { \sinh 2\xi  \sinh 2\zeta h}  \frac{
\sinh h(\xi+\zeta) \sinh h(\xi -\zeta)}{2(\xi-\zeta)}.
\]
Integrating by parts we obtain 
\[
\int m(x) B^h_1(\eta,\eta) \, dx = \int m_x(x)  B^h(\eta,\eta) \, dx,
\]
where the symbol of $B^h$ is given by 
\[
b^h(\xi,\zeta) = \frac{i}{\xi+\zeta} b^h_1(\xi,\zeta) = \frac{ 2\xi \zeta } 
{ \sinh 2\xi h  \sinh 2\zeta h}  \frac{
\sinh h(\xi+\zeta) \sinh h(\xi -\zeta)}{2(\xi+\zeta)(\xi-\zeta)},
\]
which gives the desired result.
\end{proof}

To conclude the proof of \eqref{dmI}, in light of \eqref{splitting} and the previous lemma, it remains only to prove the following result.

\begin{proposition}\label{p:B1-bd}
For the bilinear form $B^h$ above there exists $c < \frac12$ so that we have
\begin{equation}\label{B1-bd}
\int_0^T\int m_x B^h(\eta,\eta) \, dxdt \geq - c \sup_{x_0 \in \R}\int_0^T \int m_x(x-x_0) \eta^2 \, dxdt.
\end{equation}
\end{proposition}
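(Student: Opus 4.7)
My approach is to rewrite $Q_m(\eta)=\int m_x B^h(\eta,\eta)\,dx$ via Stokes' theorem as a volume integral of a Wronskian, and then estimate this using the structure of the symbol $b^h$ together with the parabolic estimates of Proposition~\ref{p:extension}. Specifically, applying Stokes' theorem to the 1-form $\alpha = m(x)\cdot H_D(\eta)\,dH_N(\eta)$ on $\R\times(-h,0)$, and using the boundary conditions $H_D(\eta)\arrowvert_{y=-h}=0$, $\partial_y H_N(\eta)\arrowvert_{y=-h}=0$, and $H_D(\eta)\arrowvert_{y=0}=H_N(\eta)\arrowvert_{y=0}=\eta$, together with one integration by parts in $x$ along the top, yields
\[
Q_m(\eta) = -\tfrac{1}{2}\iint m_x(x)\,\mathcal W(x,y)\,dx\,dy,\qquad \mathcal W:=H_D(\eta)\partial_y H_N(\eta)-\partial_y H_D(\eta)\,H_N(\eta).
\]
A direct Fourier check confirms that the symmetric symbol of the right-hand bilinear form is $-2b^h(\xi,\zeta)$, consistent with the Lemma. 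Thus the claim \eqref{B1-bd} is equivalent to an upper bound
$\iint m_x\mathcal W\,dx\,dy \leq C\sup_{x_0}\int m_x(\cdot-x_0)\eta^2\,dx$ for some $C<1$.

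To establish this estimate, I would first exploit the factorization
\[
b^h(\xi,\zeta) = \tfrac{1}{2}\,f(2h\xi)\,f(2h\zeta)\,g(h(\xi+\zeta))\,g(h(\xi-\zeta)),\qquad f(x)=\tfrac{x}{\sinh x},\ g(x)=\tfrac{\sinh(x/2)}{x/2},
\]
(obtainable directly from the expression in the Lemma via the identity $\cosh a-\cosh b=2\sinh\tfrac{a+b}{2}\sinh\tfrac{a-b}{2}$), combined with $f\in(0,1]$, $g\geq 1$, and the identity $f(x)g(x)^2=\tanh(x/2)/(x/2)\leq 1$, to derive the pointwise bound $0\leq b^h(\xi,\zeta)\leq 1/2$ uniformly in $h$. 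Then, using the identification $\partial_y H_N(\eta)=H_D(\Tilh\partial_x\eta)$, I would apply Cauchy-Schwarz to the bilinear expression for $\mathcal W$ with weights $y^{\pm 1/4}$, and invoke Proposition~\ref{p:extension} to control each factor. A Littlewood--Paley decomposition of $\eta$ at the scale $1/h$ then distinguishes two regimes: the low-frequency contribution (where $b^h\approx 1/2$) is absorbed into the favorable term $\tfrac{1}{2}\int m_x\eta^2\geq 0$, while the high-frequency contribution picks up the strict decay $b^h<1/2$ away from the origin, with cross interactions handled by Schur's test exploiting the decay of $\widehat{m_x}$ at infinity.

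\textbf{Main obstacle.} The key technical difficulty is to extract the \emph{strict} inequality $C<1$ (equivalently $c<1/2$) \emph{uniformly} in $h\geq 1$. The $L^\infty$ symbol bound $b^h\leq 1/2$ alone saturates at $C=1$: indeed, on the antidiagonal $\zeta=-\xi$ we have $b^h(\xi,-\xi)=h\xi/\sinh(2h\xi)$, which attains $1/2$ at $\xi=0$. The strict improvement must therefore come from the vanishing of $1/2-b^h$ only at the single point $(\xi,\zeta)=(0,0)$, combined with the fact that $\widehat{m_x}$ does not concentrate at zero frequency (since $m_x$ has integral equal to the total variation of $m$). Quantifying this cancellation is the principal challenge; the uniformity in $h$ follows naturally since $b^h$ depends on $h$ only through the universal scaling $b^h(\xi,\zeta)=\tilde b(2h\xi,2h\zeta)$.
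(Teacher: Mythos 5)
Your proposal takes a genuinely different route from the paper, but it has a gap precisely at the place you yourself flag as "the main obstacle," and that gap is not filled by the ideas you propose.

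The paper's argument is \emph{physical-space}: it first reduces, via a frequency truncation at $\lambda_0\in(1/h,1)$ and a smoothing lemma, to the scale-free inequality
\[
\int_0^T B(\eta,\eta)(0)\,dt \geq -c\,\|\eta\|_{L^\infty_x L^2_t}^2,\qquad c<\tfrac12,
\]
and then deduces this from the key Lemma~\ref{lemma:Kpositive}, that the kernel $K$ of $B$ is \emph{strictly positive} on $\R^2$. Positivity gives the bound with $c=\tfrac12$ by Cauchy--Schwarz in time and $\int K=\tfrac12$; the strict gain $c<\tfrac12$ comes from peeling off a rank-one nonnegative piece $K=K_1+L\otimes L$ (with $L\geq 0$, $K_1>0$), whose contribution $\int_0^T\bigl(\int L\,\eta\bigr)^2 dt$ can be discarded, leaving $\int K_1<\tfrac12$. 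The proof of $K>0$ is the hard core and is done by representing $K$ as a convolution of a Wronskian of $\coth$ and $\tanh$ against a wave-equation fundamental solution, and verifying monotonicity along characteristics (with an explicit computation in the intermediate regime).

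Your approach works entirely on the Fourier side: you correctly factor the symbol (up to a small typo, $g$ should be evaluated at $2h(\xi\pm\zeta)$, or equivalently use $\tilde g(x)=\sinh x/x$) and establish $0\leq b^h\leq\tfrac12$. But symbol nonnegativity does \emph{not} transfer to kernel positivity --- $\widehat K=b^h\geq 0$ in no way implies $K\geq 0$ --- so your symbol estimate does not reproduce the paper's Lemma~\ref{lemma:Kpositive}, and by itself it cannot yield even the weak bound $c=\tfrac12$ against the $\sup_{x_0}\int m_x(\cdot-x_0)\eta^2$ norm, let alone the strict improvement. Your proposed remedy (Littlewood--Paley plus Schur's test on $\widehat{m_x}$) would naturally produce bounds against global $L^2_x$-type quantities rather than the local-energy norm on the right of \eqref{B1-bd}, and does not quantify how the vanishing of $\tfrac12-b^h$ only at the origin translates into a uniform-in-$h$ constant below $\tfrac12$. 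In short: you correctly diagnose that the pointwise symbol bound saturates, but the cure the paper uses is the physical-space positivity of $K$, a qualitatively different and substantially harder fact that your argument does not establish. The paper's intermediate reductions (Littlewood--Paley at scale $\lambda_0$ to replace $\eta$ by $\eta_{\leq\lambda_0}$, then replacing the local $L^2$ norm by $\|\cdot\|_{L^\infty_x L^2_t}$ via the nonnegativity of the mollifier kernel) are also missing from your plan, and they are what make the final reduction to a pointwise quadratic form possible.
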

This concludes the proof of the Proposition~\ref{P:52}. 
It now remains to prove this proposition. 
We remark that we have written this proposition as a separate result in order to be able to apply it 
directly also for the nonlinear problem. 

Our first task is to understand the properties of the symbols $B^h$
and of their kernels $K^h$. The first observation concerning the symbols $b^h$ is that they are all obtained by scaling from a single symbol 
\[
b(\xi,\zeta) = \frac{ 2\xi \zeta } 
{ \sinh 2\xi  \cosh 2\zeta}  \frac{
\sinh (\xi+\zeta) \sinh (\xi -\zeta)}{(\xi+\zeta)(\xi-\zeta)},
\]
as follows,
\[
b^h(\xi,\zeta) = b(h\xi,h\zeta).
\]
Then the kernels $K^h$ are related to the kernel $K$ of $B$ by
\[
K_h(x_1,x_2) = h^{-2} K(h^{-1} x_1,h^{-1} x_2).
\]

Concerning the symbol $b$, one easily sees that it has the following
properties:

\begin{itemize}
\item It is real, even and symmetric.

\item It is uniformly smooth.

\item It decays exponentially away from the axes $\xi = 0$, $\zeta = 0$,
\[
|b(\xi,\zeta)| \leq \frac{1}{1+|\xi| +|\zeta|} e^{-c \min\{|\xi|,|\zeta|\}}.
\]

\item Near $\xi = 0$ it has the expansion
\[
b(\xi,\zeta) = \frac{1}{|\zeta|} \frac{2\xi}{\sinh 2\xi} + O(|\zeta|^{-3}), \qquad |\zeta| \to \infty,
\]
and symmetrically near $\zeta = 0$.
\end{itemize}

Next, we consider the kernel $K$ of $B$, which is the inverse Fourier transform of the symbol $b(\xi,\zeta)$:
\[
K(x_1,x_2)=\frac{1}{(2\pi)^2}\iint e^{ix_1 \xi+i x_2 \zeta}b(\xi,\zeta)\, d\xi\dalpha.
\]
From the above properties of $b$ we the corresponding properties of $K$, which for later reference are collected in the following lemma:

\begin{lemma}\label{l:K}
The kernel $K$ has the following properties:
\begin{enumerate}
\item $K$ is real, even in each variable and symmetric.

\item $K$ is smooth and rapidly decreasing away from the axes $x_1=0$, 
$x_2 = 0$.

\item Near  the axes $x_1=0$, 
$x_2 = 0$ we can expand 
\[
K(x_1,x_2) = - \ln |x_1| \sech^2 x_2  - \ln|x_2| \sech^2 x_1 + 
K^{lip}(x_1,x_2)
\]
where $K^{lip}$ is $C^1$ and decays rapidly, together with its derivatives.
\end{enumerate}
\end{lemma}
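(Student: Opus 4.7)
The plan is to prove the three properties in order.

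For (1), the symmetries are inherited from the corresponding symmetries of $b$: the symbol is real for real arguments, and each of the substitutions $\xi \to -\xi$, $\zeta \to -\zeta$, $(\xi,\zeta) \to (\zeta,\xi)$ leaves $b$ invariant since the sign changes in the factors $\xi$, $\sinh 2\xi$, $\sinh^2\xi - \sinh^2\zeta$ and $\xi^2 - \zeta^2$ cancel in pairs. These properties transfer to $K$ via the standard Fourier symmetry rules.

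For (3), which also implies (2), the plan is to split off the leading singular behavior of $b$ along each coordinate axis. Using the identity $\sinh(\xi+\zeta)\sinh(\xi-\zeta) = \sinh^2\xi - \sinh^2\zeta$ and elementary manipulations one obtains, for bounded $\xi$ and $|\zeta|\to\infty$,
\[
b(\xi,\zeta) = \frac{\xi}{\sinh 2\xi}\cdot\frac{\tanh\zeta}{\zeta} + O(e^{-c|\zeta|}),
\]
uniformly on bounded $\xi$-sets, together with the analogous statement after swapping $\xi$ and $\zeta$. Setting $a(\xi) := \xi/\sinh 2\xi$ and $\phi(\zeta) := \tanh\zeta/\zeta$ (both smooth on $\R$, with $a$ Schwartz and $\phi$ bounded with $\phi(\zeta) \sim 1/|\zeta|$ at infinity), I would decompose
\[
b(\xi,\zeta) = a(\xi)\phi(\zeta) + \phi(\xi)\,a(\zeta) + b_{sm}(\xi,\zeta).
\]
The key claim is that $b_{sm}$ is Schwartz on $\R^2$: rapid decay in one variable when the other is bounded comes from the expansion above and its symmetric counterpart; the exponential bound on $b$ when both variables are large (item three above) together with the exponential decay of $a$ handles the off-axis region; and smoothness is inherited from that of $b$, $a$, $\phi$.

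Inverse Fourier transformation then yields
\[
K(x_1,x_2) = A(x_1)\Phi(x_2) + \Phi(x_1)\,A(x_2) + K_{sm}(x_1,x_2),
\]
where $K_{sm} := \mathcal{F}^{-1} b_{sm}$ is Schwartz, $A := \mathcal{F}^{-1}a$ is smooth, even and rapidly decreasing (identifiable with a rescaled $\sech^2$ via the residue formula $\widehat{\sech^2}(\xi) = \pi\xi/\sinh(\pi\xi/2)$), and $\Phi := \mathcal{F}^{-1}\phi$ has the form $-\frac{1}{\pi}\log|x| + \gamma(x)$ with $\gamma \in C^\infty(\R)$ smooth and rapidly decreasing, by the classical Hadamard-regularized identity $\widehat{\log|x|} = -\pi\operatorname{Pf}(1/|\xi|)+ c\delta$ combined with the fact that $\phi(\zeta) - 1/|\zeta|$ has an integrable singular part at $0$ and is Schwartz at infinity. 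Absorbing the $\gamma$-contributions and $K_{sm}$ into $K^{lip}$ produces the decomposition in (3), and (2) follows at once since off the axes all the summands are smooth and rapidly decreasing, together with their derivatives.

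The main technical obstacle is the verification that $b_{sm}$ is genuinely Schwartz with uniform control on all its derivatives. Rapid decay at infinity is delivered by the asymptotic expansion above, but smoothness across the origin depends on the precise cancellation between the two tensor-product subtractions and must be tracked using the explicit hyperbolic form of $b$; matching the normalizing constants so that the $A$ factor appears exactly as $\sech^2 x$ likewise requires careful bookkeeping of the residue calculation against the scaling conventions for $b^h \mapsto b$.
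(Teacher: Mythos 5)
Your plan---splitting off the tensor-product singular parts $a(\xi)\phi(\zeta)+\phi(\xi)a(\zeta)$ and treating the remainder as the Fourier transform of a nicer symbol---is the natural one, and the paper records no proof of this lemma against which to compare. However, the key asymptotic you assert, namely $b(\xi,\zeta) = a(\xi)\phi(\zeta) + O(e^{-c|\zeta|})$ on bounded $\xi$-sets, is false, and with it the claim that $b_{sm}$ is Schwartz. Writing
\[
b(\xi,\zeta) = a(\xi)\cdot\frac{\coth 2\zeta}{\zeta}\cdot\frac{1-\cosh 2\xi\,\sech 2\zeta}{1-\xi^2/\zeta^2},
\]
the first two $\zeta$-dependent factors differ from $\phi(\zeta)$ and from $1$ only by exponentially small amounts, but $(1-\xi^2/\zeta^2)^{-1}=1+\xi^2/\zeta^2+O(\zeta^{-4})$ contributes a genuinely polynomial tail, so
\[
b(\xi,\zeta) - a(\xi)\phi(\zeta) = \frac{\xi^2 a(\xi)}{|\zeta|^{3}} + O(|\zeta|^{-5}) + O(e^{-c|\zeta|}),
\]
which is $\Theta(|\zeta|^{-3})$ for $\xi\neq0$; this is precisely what the paper's listed expansion of $b$ near the axes already records. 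Hence $b_{sm}$ decays only cubically along the axes and is not Schwartz. The lemma's conclusion is still reachable, since what is actually needed is that $b_{sm}$ is smooth with $(1+|\xi|+|\zeta|)\,\partial^\alpha b_{sm}$ integrable for every multi-index $\alpha$ (each $\zeta$-derivative gains one extra power of decay along the $\zeta$-axis, and off-axis decay is exponential), but your argument as written hinges on the stronger, false Schwartz statement and must be adapted to use this weaker, correct property.

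Two smaller points. The claim that $\phi(\zeta)-1/|\zeta|$ has an integrable singular part at $0$ is backwards: $\phi$ is bounded near $0$, so $\phi(\zeta)-1/|\zeta|$ has the non-integrable singularity $-1/|\zeta|$ there; you should either subtract a cut-off version of $1/|\zeta|$ supported away from the origin, or compute directly from $\zeta\phi(\zeta)=\tanh\zeta$, which gives $\Phi'\propto\widehat{\tanh}\propto\csch$ and hence the logarithmic singularity of $\Phi$ at $0$ and its exponential decay at infinity. Finally, since $\Phi$ decays at infinity, $\gamma(x)=\Phi(x)+\frac{1}{\pi}\log|x|$ grows like $\frac{1}{\pi}\log|x|$ rather than decaying rapidly; the same tension is present in the lemma's displayed formula, whose term $-\ln|x_1|\sech^2 x_2$ grows in $|x_1|$, so the decomposition can only be read locally near the axes (or with cutoffs on the logarithms), and your concluding sentence about absorbing the $\gamma$-contributions into $K^{lip}$ should be phrased accordingly.
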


We now use these properties to carry out a preliminary step 
in the proof of the Proposition. This is based on the observation
that $B^h$ is primarily localized at frequency $1/h$, which should 
allow us to discard the high frequencies of $\eta$ from $B^h(\eta,\eta)$. Here to fix the meaning of ``high frequencies" we need to choose
a frequency threshold $\lambda_0$ so that $1/h \ll \lambda_0 \ll 1$. 
Then we seek to replace $\eta$ with $\eta_{\leq \lambda_0} = P_{\leq \lambda_0} \eta$. 

Here rather than choosing a sharp frequency localization
operator $P_{\leq \lambda_0}$, we instead choose a localization operator
with a nonnegative kernel; the price to pay for this is to allow harmless rapidly decreasing tails at higher frequency. Then we claim that

\begin{lemma}
If $1/h \ll \lambda_0 \ll 1$ then 
\[
\int_0^T\int m_x B^h(\eta,\eta) \, dxdt = 
\int_0^T\int m_x B^h(\eta_{\leq \lambda_0},\eta_{\leq \lambda_0}) \, dxdt + O(\frac{1}{\lambda_0 h}) \| \eta\|_{LE^0}^2.
\]
\end{lemma}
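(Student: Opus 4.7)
We decompose $\eta = \eta_{\leq \lambda_0} + \eta_{>\lambda_0}$ with $\eta_{>\lambda_0} := (I-P_{\leq \lambda_0}) \eta$. By the symmetry of $B^h$,
\[
B^h(\eta,\eta) - B^h(\eta_{\leq \lambda_0},\eta_{\leq \lambda_0}) = 2 B^h(\eta_{\leq \lambda_0},\eta_{>\lambda_0}) + B^h(\eta_{>\lambda_0},\eta_{>\lambda_0}),
\]
so it suffices to show that each of these error terms, tested against $m_x$ and integrated in time, contributes at most $O((\lambda_0 h)^{-1})\|\eta\|_{LE^0}^2$. The basic tool is that, for any bilinear Fourier multiplier $\tilde B$ with integrable bilinear kernel $\tilde K$,
\[
\Bigl|\int_0^T\!\!\int m_x \tilde B(f,g)\, dx\, dt\Bigr| \lesssim \|\tilde K\|_{L^1(\R^2)}\|f\|_{LE^0}\|g\|_{LE^0},
\]
obtained by substituting the kernel representation of $\tilde B$, changing variables $u_i = x - y_i$, and applying Cauchy-Schwarz in $x$, using that $m_x \geq 0$ is nonnegative and localized on unit scale.

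For the high-high term, both frequencies satisfy $|\xi|,|\zeta|\gtrsim \lambda_0 \gg 1/h$. From the exponential decay of $b$ away from the axes, together with the scaling $b^h(\xi,\zeta) = b(h\xi, h\zeta)$, we infer $|b^h(\xi,\zeta)| \lesssim e^{-ch\lambda_0}$ on this region. A routine kernel estimate using the smoothness of the symbol then yields an $L^1$ kernel bound of order $e^{-c'h\lambda_0}$, which is much smaller than $(\lambda_0 h)^{-1}$ since $h\lambda_0\gg 1$.

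For the mixed term, the relevant bilinear kernel is obtained from $K^h$ by convolving in the second variable with the kernel of $I-P_{\leq \lambda_0}$; writing the kernel of $P_{\leq\lambda_0}$ as a nonnegative Schwartz bump $\kappa_{\leq \lambda_0}$ on scale $1/\lambda_0$ with unit integral, the key identity, which exploits the mean-zero structure, is
\[
\tilde K^{lh}(z_1,z_2) = \int \bigl(K^h(z_1,z_2)-K^h(z_1,z_2-w)\bigr) \kappa_{\leq \lambda_0}(w)\, dw.
\]
Combined with the $L^1$ modulus of continuity of $K^h$ in the second variable, which follows from the explicit form of $K$ in Lemma \ref{l:K} after the scaling $K^h(\cdot,\cdot) = h^{-2}K(\cdot/h,\cdot/h)$, this yields $\|\tilde K^{lh}\|_{L^1(\R^2)}\lesssim (\lambda_0 h)^{-1}$, and then the baseline bilinear estimate above delivers the claim. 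The main technical subtlety is that the naive modulus-of-continuity argument incurs a logarithmic loss stemming from the log singularities of $K$ on its axes; this can be absorbed into the implicit constant or eliminated more sharply by invoking the refined asymptotic $b^h(\xi,\zeta) = h^{-1}|\zeta|^{-1}\frac{2h\xi}{\sinh 2h\xi}+O((h|\zeta|)^{-3})$ valid for $|\zeta|\gtrsim \lambda_0$, which factors the leading contribution into a product of bounded multipliers amenable to a direct Cauchy-Schwarz estimate in the local energy norm.
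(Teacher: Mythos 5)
The paper proves this by a full dyadic decomposition: for a pair of frequencies $1/h \leq \mu \leq \lambda$ it uses the symbol bound $|b^h(\xi,\zeta)|\lesssim (1+h\lambda)^{-1}e^{-ch\mu}$ (with matching regularity at that dyadic scale) to get
\[
\Bigl|\int_0^T\!\!\int m_x\,B^h(\eta_\mu,\eta_\lambda)\,dx\,dt\Bigr|\lesssim \frac{1}{1+h\lambda}\,e^{-ch\mu}\,\|\eta\|_{LE^0}^2,
\]
and then sums the geometric series over $\mu\geq 1/h$ and $\lambda\geq\lambda_0$ to obtain exactly $O((\lambda_0 h)^{-1})$. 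Your proposal instead performs a single coarse low/high split and tries a global $L^1$ kernel estimate. The high--high piece is handled correctly. The difficulty is the mixed piece, and this is precisely where your argument, as written, has a gap.

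You estimate $\|\tilde K^{lh}\|_{L^1}$ by moving the absolute value inside the $w$-integral in your key identity, i.e.\ by the $L^1$-modulus of continuity of $K^h$ in the second variable. Because $K$ has a $-\ln|z_2|$ singularity on the axis (Lemma~\ref{l:K}), the quantity $\int|K^h(z_1,z_2)-K^h(z_1,z_2-w)|\,dz_1\,dz_2$ is of size $(h\lambda_0)^{-1}\log(h\lambda_0)$, not $(h\lambda_0)^{-1}$; the $w/z_2$-tail of the logarithmic difference contributes a genuine $\log$. You acknowledge this but your two suggested remedies are not both viable. The $\log(h\lambda_0)$ factor cannot be ``absorbed into the implicit constant,'' since $h\lambda_0\gg 1$ is unbounded as $h\to\infty$, and the constant in the lemma must be uniform in $h$. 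The second suggestion --- factoring the leading part of $b^h$ as $h^{-1}|\zeta|^{-1}\cdot\frac{2h\xi}{\sinh 2h\xi}$ and estimating the product multiplier directly against $LE^0$ --- does work, because $h^{-1}|D|^{-1}(I-P_{\leq\lambda_0})$ has an $L^1$ kernel of size $(h\lambda_0)^{-1}$ (the cancellation at $\zeta=0$ removes the logarithm, in contrast to the modulus-of-continuity route which discards that cancellation), and $\frac{2hD}{\sinh 2hD}$ has a Schwartz kernel of unit $L^1$ mass, hence preserves $LE^0$. But this is a different computation from what you actually wrote, and it needs to be carried out, including the $O((h|\zeta|)^{-3})$ remainder. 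As it stands, the step from your key identity to the asserted $L^1$ bound $(h\lambda_0)^{-1}$ is not justified. The paper's dyadic summation is both cleaner and sharper here, since the geometric decay $e^{-ch\mu}$ in the \emph{lower} frequency automatically absorbs what would otherwise become the logarithmic sum over scales.
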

\begin{proof}
Indeed, consider two dyadic frequencies $h^{-1} \leq \mu
\leq \lambda \lesssim 1$. We will estimate the contribution of
$B(\eta_\lambda,\eta_\mu)$ in terms of the local energy of $\eta$.
For $|\xi| \approx \lambda$ and $|\zeta| \approx \mu$ we have 
\[
|b^h(\xi,\eta)| \lesssim \frac{1}{1+ h \lambda} e^{-c h \mu}
\]
with matching regularity on the same dyadic scale.
Then we have 
\[
\left|   \int_0^T\int m_x B^h(\eta_{\mu},\eta_{\lambda}) \, dxdt  \right| \lesssim \frac{1}{1+ h \lambda} e^{-c h \mu} \|\eta_\mu\|_{LE^0}
\| \eta_\lambda \|_{LE^0} \lesssim \frac{1}{1+ h \lambda} e^{-c h \mu} \|\eta\|_{LE^0}^2.
\]
Then the conclusion of the lemma follows after summation over $\mu > 1/h$, $\lambda > \lambda_0$.

\end{proof}

The last Lemma allows us to localize $\eta$ to low frequencies
on the left in \eqref{B1-bd}. We now investigate the effect
of such a change on the right in \eqref{B1-bd}. The idea here 
is that averaging $\eta$ over a large scale allows us to replace 
the local $L^2$ norm in $x$ by the $L^\infty$ norm. Precisely, we 
have 

\begin{lemma}
For $\lambda_0 \leq 1$ we have 
\[
 \| \eta_{\leq \lambda_0}\|_{L^\infty_x L^2_t}^2 \leq (1 + C \lambda_0) \|\eta\|_{LE^0}^2 .
\]
\end{lemma}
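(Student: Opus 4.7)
The plan is to exploit the representation of $\eta_{\leq \lambda_0}$ as the convolution of $\eta$ against a fixed nonnegative kernel $P$ with $\int P = 1$ and characteristic length scale $\lambda_0^{-1} \geq 1$, which is at least as large as the scale on which $\chi$ is supported. First, I would apply Cauchy--Schwarz (equivalently, Jensen's inequality for the probability measure $P(x-y)\,dy$) to obtain the pointwise bound
\[
\eta_{\leq \lambda_0}(x)^2 \;\leq\; \int P(x-y)\,\eta(y)^2\, dy,
\]
and then integrate in $t$ and swap the order of integration to get
\[
\int_0^T \eta_{\leq \lambda_0}(x)^2\, dt \;\leq\; \int P(x-y)\, g(y)\, dy, \qquad g(y) := \int_0^T \eta(y)^2\, dt.
\]

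The next step reduces the right-hand side to local energy. Since $\chi$ is nonnegative, compactly supported and not identically zero, I can select a lattice $\{y_k\}_{k\in\Z}$ of unit spacing such that the translates $\chi(\cdot - y_k)$ satisfy a uniform lower bound $\sum_k \chi(y - y_k) \geq c_0 > 0$; by definition of $LE^0$ we then have $\int \chi(y - y_k) g(y)\, dy \leq \|\eta\|_{LE^0}^2$ for every $k$, hence $\int_{\text{supp}\,\chi(\cdot - y_k)} g\, dy \lesssim \|\eta\|_{LE^0}^2$ uniformly in $k$. Using that $P$ is smooth at scale $\lambda_0^{-1}$, so that on each interval $|y - y_k| \leq R$ (where $R$ is the radius of $\supp\chi$) one has
\[
\sup_{|y-y_k|\leq R} P(x-y) \;\leq\; (1+C\lambda_0)\, P(x-y_k),
\]
I obtain
\[
\int P(x-y)\, g(y)\, dy \;\leq\; (1+C\lambda_0)\,\|\eta\|_{LE^0}^2 \sum_k P(x - y_k).
\]

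The final step is the observation that the Riemann sum $\sum_k P(x-y_k)$ over the unit lattice approximates $\int P = 1$ with relative error $O(\lambda_0)$, once again because $P$ varies on the scale $\lambda_0^{-1}$. Combining the two $(1+C\lambda_0)$ losses into a single factor and taking the supremum over $x$ yields the desired inequality. The only non-routine point is the justification that both the pointwise comparison of $P$ across unit intervals and the Riemann sum approximation lose at most $O(\lambda_0)$, but these follow directly from the fact that $P$ (being a Schwartz mollifier at frequency $\lambda_0$) satisfies $|P(x-y) - P(x-y_k)| \lesssim \lambda_0\,|y-y_k|\, P^*(x-y_k)$ for a slowly varying majorant $P^*$ comparable to $P$ on unit scales; this is the only place in the argument where the smallness of $\lambda_0$ is used, so the main (and only) obstacle is quantitative rather than conceptual.
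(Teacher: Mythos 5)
Your approach is genuinely different from the paper's. The paper works directly with $\eta_{\leq\lambda_0}$: it compares $\int_0^T \eta_{\leq\lambda_0}^2(x_0,t)\,dt$ at a fixed point with the local average $\int m_x(x-x_0)\int_0^T\eta_{\leq\lambda_0}^2\,dt\,dx$ by the fundamental theorem of calculus, estimating the difference by $\|\eta_{\leq\lambda_0}\|_{LE^0}\|\partial_x\eta_{\leq\lambda_0}\|_{LE^0}\lesssim\lambda_0\|\eta\|_{LE^0}^2$; the leading term is then $\|\eta_{\leq\lambda_0}\|_{LE^0}^2\leq\|\eta\|_{LE^0}^2$ by Minkowski (kernel nonnegative with unit mass). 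You instead pass to $\eta^2$ via Jensen, $\eta_{\leq\lambda_0}^2\leq P*\eta^2$, and then try to bound $\int P(x-y)g(y)\,dy$ directly by local energy. Both routes are viable, and Jensen first is arguably the more flexible idea.

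However, the lattice implementation loses a multiplicative constant in the leading term, which is fatal here. The step $\int_{\text{supp}\,\chi(\cdot-y_k)}g\,dy\lesssim\|\eta\|_{LE^0}^2$ carries an implicit constant $c_0^{-1}\geq 1$ coming from the lower bound on $\chi$ (and $\chi$ necessarily vanishes at the edges of its support, so this constant depends on the choice of covering). When you then sum against $\sup_{\text{cell}}P(x-\cdot)$ and use the Riemann sum $\sum_k P(x-y_k)\approx 1$, the bound you actually get is $C_\chi(1+C\lambda_0)\|\eta\|_{LE^0}^2$ with $C_\chi>1$, not $(1+C\lambda_0)\|\eta\|_{LE^0}^2$. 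That matters: in the proof of Proposition \ref{p:B1-bd} this lemma is used to replace a local $L^2$ norm by $L^\infty_xL^2_t$, and the surviving constant must combine with the $c<\frac12$ from Proposition \ref{p:positive} to still give something below $\frac12$; an uncontrolled $C_\chi$ destroys this. The fix is to keep the idea but replace the discrete lattice by the continuous family of translates: since $\int m_x(y-x_0)\,dx_0=1$ for every $y$, Fubini gives
\[
\int P(x-y)g(y)\,dy=\int_\R\left(\int P(x-y)\,m_x(y-x_0)\,g(y)\,dy\right)dx_0,
\]
and on $\mathrm{supp}\,m_x(\cdot-x_0)$ one has $P(x-y)\leq P(x-x_0)+C\lambda_0 P^*(x-x_0)$ with a slowly varying majorant $P^*$ satisfying $\int P^*\lesssim 1$. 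Then the inner integral is $\leq(P(x-x_0)+C\lambda_0P^*(x-x_0))\|\eta\|_{LE^0}^2$, and integrating in $x_0$ yields exactly $(1+C\lambda_0)\|\eta\|_{LE^0}^2$ with no constant loss. You already identified the majorant $P^*$ as the key technical device, so the gap is in the bookkeeping rather than the ideas, but as written the proof does not deliver the stated constant.
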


\begin{proof}
Here we take advantage of the fact that the kernel of $P_{\leq \lambda_0}$ is nonnegative and has integral $1$.
Then by the triangle inequality we have 
\[
\| \eta_{\leq \lambda_0}\|_{LE^0} \leq \| \eta\|_{LE^0}.
\]
On the other hand differentiating yields another $\lambda_0$ factor,
\[
\| \partial_{x} \eta_{\leq \lambda_0}\|_{LE^0} \lesssim \lambda_0
\| \eta\|_{LE^0}.
\]
Then by the fundamental theorem of calculus and by the Cauchy-Schwarz's inequality, we compute
\[
\left| \int_{0}^T \eta_{\leq \lambda_0}^2(x,t)\, dt - \int_{0}^T\int m_x(x) \eta_{\leq \lambda_0}^2(x,t)\, dx dt\right| \lesssim \| \eta_{\leq \lambda_0}\|_{LE^0} \|\partial_x  \eta_{\leq \lambda_0}\|_{LE^0} ,
 \]
which implies that
\[
\begin{split}
\int_{0}^T \eta_{\leq \lambda_0}^2(x,t) dt \leq & \ \|\eta_{\leq \lambda_0}\|_{LE^0_0}^2
+ \| \eta_{\leq \lambda_0} \|_{LE^0} \|\partial_x  \eta_{\leq \lambda_0}\|_{LE^0} 
\\
\leq & \ (1+C \lambda_0) \|\eta\|_{LE^0}^2
\end{split}
\]
as needed.
\end{proof}

As a consequence of the last two lemmas, by choosing $1/h \ll \lambda_0 \ll 1$ and using the fact that $\int m_x\, dx = 1$, we can replace the bound \eqref{B1-bd} with 
\[
\int_0^T \int B^h(\eta_{<\lambda_0},\eta_{<\lambda_0})(0)\dt  
\geq -c g\|\eta_{<\lambda_0} \|_{L^\infty_x L^2_t}^2, \qquad 0 < c < \frac12 .
\]
Now we discard the frequency localization; then $h$ becomes a scaling parameter and we can freely set it to $1$. Hence, we have reduced
Proposition~\ref{p:B1-bd} to the following:

\begin{proposition}\label{p:positive}
 The following bound holds: 
\[
\int_0^T \int B(\eta,\eta) \, dx dt  \geq -c \|\eta \|_{L^\infty_x L^2_t}^2, \qquad 0 < c < \frac12 .
\]
\end{proposition}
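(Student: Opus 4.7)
My plan is a direct Plancherel computation, exploiting that the spatial integration of the bilinear multiplier $B$ collapses to its anti-diagonal. Using the Fourier representation of $B$ together with the identity $\int e^{ix(\xi_1+\xi_2)}\,dx = 2\pi\,\delta(\xi_1+\xi_2)$, and the reality of $\eta$ (so that $\hat\eta(-\xi,t)=\overline{\hat\eta(\xi,t)}$), I would obtain
\[
\int_\R B(\eta,\eta)(x,t)\,dx \;=\; \int_\R b(\xi,-\xi)\,|\hat\eta(\xi,t)|^2\,d\xi,
\]
so that everything is reduced to the sign and size of the symbol on the diagonal.

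The next step is to evaluate $b$ at $\zeta=-\xi$. The explicit formula for $b$ has a removable $0/0$ singularity at $\zeta=-\xi$ coming from the $(\xi+\zeta)$ factor in the denominator, which is resolved via the standard limit $\sinh z / z \to 1$; combined with the remaining clean factor $\sinh(\xi-\zeta)/(\xi-\zeta) \to \sinh 2\xi/(2\xi)$, a short computation collapses the expression to
\[
b(\xi,-\xi) \;=\; \frac{\xi}{\sinh 2\xi},
\]
which is a bounded, non-negative, even function of $\xi$ (equal to $\tfrac12$ at the origin and decaying exponentially at infinity). Substituting back and integrating in time yields
\[
\int_0^T\!\int_\R B(\eta,\eta)(x,t)\,dx\,dt \;=\; \int_0^T\!\int_\R \frac{\xi}{\sinh 2\xi}\,|\hat\eta(\xi,t)|^2\,d\xi\,dt \;\ge\; 0,
\]
from which the claimed lower bound $\ge -c\|\eta\|_{L^\infty_x L^2_t}^2$ follows at once for every positive $c$, and in particular for some $c\in(0,\tfrac12)$.

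Structurally, the proposition is asserting the positivity of the anti-diagonal symbol of $B$, a feature that was arranged by the specific choice of momentum density--flux pair entering its construction. The only technical subtlety is the limiting evaluation at the three removable singularities (at $\xi+\zeta=0$, $\xi-\zeta=0$, and at $\xi=0$ or $\zeta=0$), and it is routine. The constraint $c<\tfrac12$ in the statement is not a sharpness issue at this final step; rather, it is dictated by the upstream balancing with the parameter $\sigma<\tfrac12$ chosen in Proposition~\ref{P:52}, where the role of $B$ is precisely to absorb the potential-energy contribution that complements the kinetic-energy term $\tfrac{1-\sigma}{2}\iint m_x|\nabla\phi|^2$.
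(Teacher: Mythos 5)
Your Plancherel reduction and the resulting evaluation $b(\xi,-\xi)=\xi/\sinh 2\xi \ge 0$ are both correct, but they answer a different question from the one the proposition asks. Despite the notation $\int_0^T\int B(\eta,\eta)\,dx\,dt$, the quantity being bounded is a \emph{pointwise-in-space} expression: the reduction performed just above the proposition uses $\int m_x\,dx=1$ and the low-frequency localization of $\eta$ to replace $\int m_x(x-x_0)B^h(\eta,\eta)(x)\,dx$ by $B^h(\eta,\eta)(0)$, so the bound to be proved is $\int_0^T B(\eta,\eta)(0,t)\,dt \ge -c\,\|\eta\|^2_{L^\infty_x L^2_t}$. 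The $L^\infty_x L^2_t$ norm on the right is only natural for such a pointwise statement; if the left side really integrated over all of $x$, the matching norm would be $L^2_{x,t}$.

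The distinction is not cosmetic. Your spatial integration produces $\delta(\xi_1+\xi_2)$ and confines the two frequencies to the anti-diagonal, where the symbol happens to be nonnegative. Once $x$ is frozen, that cancellation disappears: $B(\eta,\eta)(0)=\tfrac{1}{2\pi}\iint b(\xi_1,\xi_2)\hat\eta(\xi_1)\hat\eta(\xi_2)\,d\xi_1 d\xi_2$ sees all frequency pairs, and the lower bound $\ge 0$ is simply false in general. Your conclusion that the bound holds for \emph{every} $c>0$ should itself be a warning sign: if that were so, the delicate constant-tracking in Proposition~\ref{P:52} and the constraint $c<\tfrac12$ would be vacuous.

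What the pointwise bound actually requires is the physical-space representation $B(\eta,\eta)(0,t)=\iint K(x_1,x_2)\,\eta(-x_1,t)\,\eta(-x_2,t)\,dx_1\,dx_2$. One interchanges the $(x_1,x_2)$-integral with the $t$-integral and applies Cauchy--Schwarz in $t$, giving $\bigl|\int_0^T\eta(-x_1,t)\eta(-x_2,t)\,dt\bigr|\le\|\eta\|^2_{L^\infty_xL^2_t}$; this bound can be pulled through the kernel \emph{only if $K\ge 0$}, and then $\iint K = b(0,0) = \tfrac12$ yields the estimate with $c=\tfrac12$, improved to some $c<\tfrac12$ by splitting off a rank-one piece $L(x_1)L(x_2)$ whose contribution is a perfect square in $t$. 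The whole weight of the proposition therefore rests on Lemma~\ref{lemma:Kpositive}, the \emph{pointwise} positivity of $K$. This is a substantially harder fact than the nonnegativity of the anti-diagonal symbol — it requires explicit contour computations in three ranges of $x_0$ — and your proposal does not engage with it at all.
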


We first observe that $B(0,0) = \dfrac12$. This implies that 
\[
\int K(x_1,x_2) \, dx_1dx_2 = \frac12.
\]
The key step in the proof of the proposition is the following

\begin{lemma}\label{lemma:Kpositive}
The kernel $K$ is positive.
\end{lemma}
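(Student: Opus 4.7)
The plan is to derive an explicit closed-form expression for $K$ that makes its nonnegativity manifest. First, I factor the symbol as
\[
b(\xi,\zeta) = \tfrac12\, A(\xi)\, A(\zeta)\, \frac{\sinh(\xi+\zeta)}{\xi+\zeta}\,\frac{\sinh(\xi-\zeta)}{\xi-\zeta},
\qquad A(\xi) \defn \tfrac{2\xi}{\sinh 2\xi},
\]
and observe that $A$ is the Fourier transform of the positive probability density
$a(x) = \tfrac{\pi}{8}\sech^2(\pi x/4)$,
which extends holomorphically to the strip $|\Im z|<2$. This is the crucial source of positivity. Using $\sinh(w)/w = \tfrac12\int_{-1}^1 e^{ws}\,ds$ twice, followed by the affine change of variables $u=s+t$, $v=s-t$, one obtains
\[
K(x_1,x_2) = \frac{1}{16}\iint_{R} a(x_1-iu)\,a(x_2-iv)\,du\,dv, \qquad R = \{|u|+|v|\leq 2\},
\]
where $a(z)$ denotes the analytic continuation of $a$ (its first poles occur at $z=\pm 2i$, safely outside $R$).

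Since $a$ is even and real on $\R$, the function $r(x,c)\defn\Re[a(x-ic)]$ is even in $c$ and $\Im[a(x-ic)]$ is odd; the symmetry of $R$ under $u\mapsto -u$ eliminates the imaginary cross-term, giving $K = \tfrac{1}{16}\iint_R r(x_1,u)\, r(x_2,v)\,du\,dv$. I then integrate in $v$ first at fixed $u$: the antiderivative $\tfrac12\tanh(\pi z/4)$ of $a$ combined with the identity $|\cosh(w-i\phi)|^2 = \tfrac12(\cosh 2w+\cos 2\phi)$ yields
\[
\int_{-c_0}^{c_0} r(x,v)\,dv = \frac{\sin(\pi c_0/2)}{\cosh(\pi x/2) + \cos(\pi c_0/2)},\qquad c_0\in (0,2),
\]
which is the classical Poisson kernel for the strip $\R\times(0,\pi)$ and is pointwise nonnegative. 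Setting $c_0 = 2-|u|$ and using the explicit formula $r(x,u) = \tfrac{\pi(1+\cosh(\pi x/2)\cos(\pi u/2))}{4(\cosh(\pi x/2)+\cos(\pi u/2))^2}$, the remaining integral reduces after the substitution $t = \cos(\pi u/2)$ to a rational integral in $t$. With $A = \cosh(\pi x_1/2)$ and $B = \cosh(\pi x_2/2)$, partial fractions evaluate this to the closed form
\[
K(x_1,x_2) = \frac{F(A,B)}{16(A+B)^2},\qquad F(A,B)=(1+AB)\ln\frac{(A+1)(B+1)}{(A-1)(B-1)} - 2(A+B).
\]

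It remains to verify $F\geq 0$ for $A,B\geq 1$. On the boundary $A=1$ or $B=1$, $F=+\infty$. Setting $\partial_A F=\partial_B F=0$ forces $A=B$, and on the diagonal $F(A,A) = 2(1+A^2)\ln\frac{A+1}{A-1} - 4A$ has derivative $4A\,\varphi(A)$ with $\varphi(A) = \ln\frac{A+1}{A-1} - \frac{2A}{A^2-1}$. Expanding $\ln\frac{A+1}{A-1}=\sum_{k\geq 0}\frac{2}{(2k+1)A^{2k+1}}$ and $\frac{2A}{A^2-1}=\sum_{k\geq 0}\frac{2}{A^{2k+1}}$ termwise gives $\varphi(A) = -\sum_{k\geq 1}\frac{4k}{(2k+1)A^{2k+1}}<0$, so $F(A,A)$ decreases strictly from $+\infty$ at $A=1^+$ to $0^+$ as $A\to\infty$. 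Hence $F>0$ on $(1,\infty)^2$ and $K\geq 0$ throughout. The delicate point is the observation in the inner $v$-integration that the result is a positive Poisson kernel; once this is in hand the rest of the argument is elementary calculus.
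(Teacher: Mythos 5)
Your approach is correct and genuinely different from the paper's proof — and it replaces the paper's numerical verification in the intermediate range $0.1 < x_0 < 5$ with a fully analytic, closed-form argument. The paper factors $b$ as a product of $C_1 = \coth 2\xi\,\csch 2\zeta - \csch 2\xi\,\coth 2\zeta$ and the wave-type multiplier $D_1 = \xi\zeta/(\zeta^2-\xi^2)$, obtains $K$ as a line integral over an upward light cone, and controls the sign by estimating $(\partial_y\pm\partial_x)K$ asymptotically for small and large arguments, with the middle range checked in MATLAB. You instead factor $b = \tfrac12 A(\xi)A(\zeta)\,\tfrac{\sinh(\xi+\zeta)}{\xi+\zeta}\,\tfrac{\sinh(\xi-\zeta)}{\xi-\zeta}$ with $A(\xi)=2\xi/\sinh 2\xi$ (you correctly read the denominator of $b$ as $\sinh 2\xi\sinh 2\zeta$; the $\cosh 2\zeta$ in the paper's displayed symbol is a typo, as the derivation of $b^h$ just above shows), identify $A$ as the Fourier transform of $a(x)=\tfrac{\pi}{8}\sech^2(\pi x/4)$, and use $\sinh w/w=\tfrac12\int_{-1}^1 e^{ws}\,ds$ to express $K$ as an average of $a(x_1-iu)\,a(x_2-iv)$ over the rhombus $\{|u|+|v|\leq 2\}$. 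The inner $v$-integral then produces a positive strip Poisson kernel — the decisive step, since $r(x,v)=\Re\,a(x-iv)$ is not itself signed — and partial fractions give the closed form $K=F(A,B)/(16(A+B)^2)$ with $A=\cosh(\pi x_1/2)$, $B=\cosh(\pi x_2/2)$. I verified the representation formula, the identity for $r(x,u)$, and the partial-fraction evaluation, and they are all correct.

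One step needs tightening. From ``the only critical points lie on the diagonal, where $F>0$,'' you conclude $F>0$ on $(1,\infty)^2$, but this does not quite follow: the domain is non-compact, so a negative infimum could in principle be approached at infinity without producing a critical point. You could close this by checking that $F(A,B)\to+\infty$ as $A\to\infty$ with $B$ fixed (using $B\ln\tfrac{B+1}{B-1}>2$), but it is cleaner to bypass the critical-point analysis altogether: since $\ln\tfrac{A+1}{A-1}=2\sum_{k\geq 0}\tfrac{A^{-(2k+1)}}{2k+1}>\tfrac{2}{A}$ for $A>1$, one has $F>(1+AB)\bigl(\tfrac{2}{A}+\tfrac{2}{B}\bigr)-2(A+B)=\tfrac{2(A+B)}{AB}>0$ directly. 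Also, your assertion that $\partial_A F=\partial_B F=0$ forces $A=B$ is true but left unjustified; subtracting the two stationarity equations and eliminating the logarithm reduces it to $(1+AB)^2=(A^2-1)(B^2-1)$, i.e.\ $(A+B)^2=0$, which is impossible for $A,B>1$. With those patches, this gives a self-contained, purely analytic proof of the positivity of $K$, which improves on the published argument.
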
 
Before proving this result, let us explain how to conclude the proof of Proposition~\ref{p:positive} with this lemma. 
Firstly, notice that if $K$ is nonnegative, then
\[
\int |K(x_1,x_2)| \, dx_1 dx_2 = \frac12,
\]
and then it is obvious that the proposition holds with $c = \frac12$.
But if $K$ is actually positive, there is a little trick to get a small extra gain. 
Precisely, we can write 
\[
K(x_1,x_2) = K_1(x_1,x_2) + L(x_1) L(x_2),
\]
where $L$ is nonnegative and $K_1$ is still positive. Then the contribution of the $L$ term is nonnegative, while $K_1$ has 
integral $c < \frac12$. Then the conclusion of the proposition follows
for this $c$. We now have to prove the lemma.

\begin{proof}[Proof of Lemma~\ref{lemma:Kpositive}]
  By the symmetries of $K$, it is sufficient to consider the case
  $0 < x \leq y$ (shaded region in the picture).  To compute $K$ we
  view the symbol $b$ as a product of
\[
C_1 = \coth 2\xi \csch 2\zeta -  \csch 2\xi \coth 2\zeta,
\]
and 
\[
D_1 = \frac{\zeta \xi}{\zeta^2 -\xi^2}.
\]
The Fourier transforms of $\coth \xi$ and $\csch \xi$ are $F = \coth x$ respectively  $G = \tanh$, so the 
Fourier transform of $C_1$ is (up to positive constants)
\[
F(x) G(y) - G(x) F(y).
\] 
On the other hand for the Fourier transform of $D_1$ we use the backward 
fundamental solution 
for the wave equation, and then differentiate it in $x$ and $y$. We get
\[
\partial_x \delta_{y+|x| = 0},
\]
which  is supported on  a $\pi/2$ degree angle downward from $0$.
Taking the convolution of the two we get
\[
K(x_0,y_0) = \int_{ y-y_0 = |x-x_0|}  G'(x)F(y) - F'(x) G(y)\, dx ,
\]
where the region of integration $ \circled{1} \cup \circled{2}$ is the upward $\pi/2$ degree angle from $(x_0,y_0)$. (see picture). Here $F$ is singular at $x = 0$, so the second term is 
interpreted in the principal value sense.

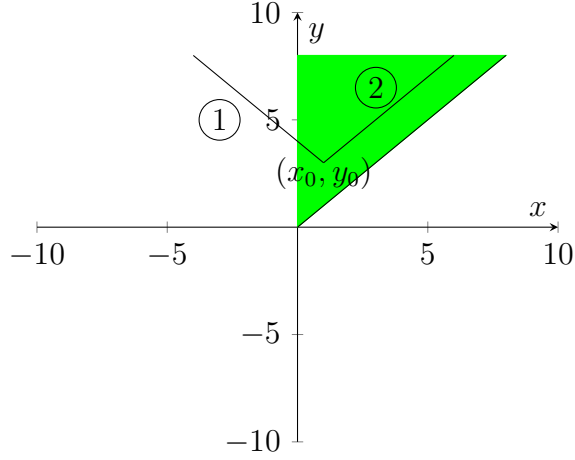
\begin{figure}
  \centering

  \begin{tikzpicture}
    \begin{axis}[ xmin=-10, xmax=10, ymin=-10, ymax=10, axis x
      line = middle, axis y line = middle,  xlabel = $x$, ylabel =$y$, ytick = {}, xtick = {} ]

\filldraw[green, fill=green]  (axis cs:0,0) -- (axis cs:8,8) --  (axis cs:0,8) --  (axis cs:0,0);
 \draw (axis cs:1,3) -- (axis cs:-4,8);
 \node at (axis cs:-3,5)  {\(\circled{1}\)};
 \draw (axis cs:1,3) -- (axis cs:6,8);
 \node at (axis cs:3,6.5)  {\(\circled{2}\)};
 \node at (axis cs:1,2.5) {\((x_0,y_0)\)};
 \draw (axis cs:0,0) -- (axis cs:8,8);
    \end{axis}
  \end{tikzpicture}
  \caption{Integration regions for $K$}
\end{figure}

Based on previous computations, we know that $K$  blows up logarithmically  on the axes and decays exponentially on the diagonals. Then  the positivity of $K$ would be a consequence of the bounds 
\begin{equation}\label{int2}
(\partial_y-\partial_x ) K > 0, \qquad 0 < x_0 \leq y_0,
\end{equation}
within the shaded area of the picture,
respectively 
\begin{equation}\label{int1}
(\partial_y+ \partial_x ) K < 0, \qquad 0 < x_0 = y_0.
\end{equation}

Indeed, we can compute
\[
(\partial_y-\partial_x ) K(x_0,y_0) = \int_{\circled{2}}  F'(y) G'(x) - G'(y)F'(x)\, dx ,
\]
respectively
\[
(\partial_y-\partial_x ) K(x_0,y_0) = \int_{\circled{1}}  F'(y) G'(x) - G'(y)F'(x)\, dx .
\]
Here the first integrand is nonsingular, but the second is again interpreted in the principal value  sense at $x = 0$.

We remark that $G' > 0$, $F' < 0$ and 
\[
\frac{F'(x)}{G'(x)} = - \coth^2(x),
\]
which immediately shows that the above integral over $\circled{2}$ is positive and thus \eqref{int2} holds. Then it remains to establish \eqref{int1} over  the positive half-line $x_0=y_0$.
While the integrand over $\circled{1}$ is also positive pointwise, it has the  distributional $x^{-2}$ type singularity at $x = 0$, which we expect makes the  outcome negative !

To summarize, we need to prove that  the  following integral is negative,
\[
I(x_0) = \int_{-\infty}^{x_0}  F'(y) G'(x) - G'(y)
F'(x)\, dx,  \qquad y = 2x_0 - x.
\]
We separate the analysis into three cases:
\bigskip

\textbf{i) Large $x_0$, $x_0 > 5$.} There $y > 5$, so it is natural
to expand in powers of $e^{-y}$. Since 
$F'(y),G'(y) \approx e^{-2y}$, the leading term in the integrand is $e^{-4 y_0}$ (here we take $x_0 = y_0$).

For $F'$ and $G'$ we have the asymptotic expressions at infinity
\[
F'(x) = - \frac{1}{\sinh^2x} = - \frac{4 e^{-2x}}{(1 - e^{-2x})^2} \approx - 4 e^{-2x} - 8 e^{-4x} ,
\]
\[
G'(x) =  \frac{1}{\cosh^2x} =  \frac{4 e^{-2x}}{(1 + e^{-2x})^2} \approx 4 e^{-2x} - 8 e^{-4x} .
\]
Then for our integral we have the expansion up to $e^{-8y_0}$ terms
\[
\begin{split}
I \approx & \  \int_{-\infty}^{y_0}   \frac{( 4 e^{-2(2y_0-x)} - 8 e^{-4(2y_0-x)})}{\sinh^2 x} 
- \frac{( 4 e^{-2(2y_0-x)} + 8 e^{-4(2y_0-x)})}{\cos^2 x}\, dx
\\
= & \ - 4e^{-4y_0} \int_{-\infty}^{y_0} e^{2x} (\frac{1}{\cosh^2 x} - \frac{1}{\sinh^2 x})\, dx 
- 8 e^{-8y_0}  \int_{-\infty}^{y_0} e^{4x}( \frac{1}{\cosh^2 x} + \frac{1}{\sinh^2 x})\, dx.
\end{split}
\]
By direct computation the first integral gives zero when taken all the way to $+\infty$. Thus, we get
\[
I = 4e^{-4y_0} \int_{y_0}^\infty e^{2x} (\frac{1}{\cosh^2 x} - \frac{1}{\sinh^2 x}) \, dx 
- 8 e^{-8y_0}  \int_{-\infty}^{y_0} e^{4x}( \frac{1}{\cosh^2 x} + \frac{1}{\sinh^2 x}) \, dx + O(e^{-8y_0}).
\]
Now in both integrals the leading contribution comes from $x = y_0$, and has size $e^{-6y_0}$. To compute it we write
\[
\begin{split}
I = &  - 4e^{-4y_0} \int_{y_0}^\infty e^{2x} \frac{1}{\cosh^2 x \sinh^2 x} \, dx 
- 16 e^{-8y_0}  \int_{-\infty}^{y_0} 4 e^{2x}\, dx + O(e^{-8y_0})
\\
= &  - 4e^{-4y_0} \int_{y_0}^\infty 16 e^{-2x} \frac{1}{\cosh^2 x \sinh^2 x}\,  dx 
- 16 e^{-8y_0}  \int_{-\infty}^{y_0} 4 e^{2x}\,dx+ O(e^{-8y_0})
\\
= &  - 32 e^{-6y_0}  - 32 e^{-6y_0}+ O(e^{-8y_0}) = - 64 e^{-6y_0} + O(e^{-8y_0}).
\end{split}
\]

\medskip 

\textbf{i) Small $x_0$, $x_0 < 0.1$.}
In this range we have 
\[
\begin{split}
I = & \  \int_{-1}^{x_0} -  \frac{1}{\sinh^2(2x_0 -x)} + \frac{1}{\sinh^2 x} \, dx  + O(1)
\\
= & \ - 2 \int_{x_0}^1 \frac{1}{\sinh^2 x} \, dx + O(1) = - 2 \coth x_0 + O(1),
\end{split}
\] 
as desired. Here in the first line the expression $\dfrac{1}{\sinh^2 x}$
is interpreted as the distributional derivative $\partial_x (p.v. \cosech x)$.

\medskip 

\textbf{i) Medium $x_0$, $0.1 < x_0 < 5$.}
For the intermediate range we do not have an algebraic proof, but a direct MATLAB computation easily confirms the result. 
\end{proof}

\section{Local energy decay for gravity waves} \label{s:nonlinear}

In this section we  prove our main result in Theorem~\ref{ThmG1}.
We begin by emulating the computation in the previous section for the linear case. We define the functional 
\[
\mI_m^\sigma(t) = \int m(x)( \sigma I_2(x,t) + (1-\sigma) I_3(x,t))\, dx .
\]
Using the density-flux pairs for the momentum, we have 
\[
\partial_t \mI_m^\sigma(t) =  \int m_x ( \sigma S_2(x,t) + (1-\sigma) S_3(x,t)) \, dx.
\]
Hence, in order to prove the theorem we need to establish the following bounds:

\begin{description}
\item[(i)] Fixed time bounds,
\begin{equation}\label{I2-bound}
\left|   \int m(x) I_2 \, dx  \right| \lesssim \| \eta\|_{H^{\frac14}_h} \| \psi_x\|_{H^{-\frac14}_h},
\end{equation}
\begin{equation}\label{I3-bound}
\left|   \int m(x) I_3 \, dx \right| \lesssim \| \eta\|_{H^{\frac14}_h} \| \psi_x\|_{ H^{-\frac14}_h}.
\end{equation}

\item[(ii)] Time integrated bound; for some $\sigma \in (0,1)$  and $c < 1$, we have
\begin{equation} \label{S23-bound}
\int_{0}^T \int m_x ( \sigma S_2(t) + (1-\sigma) S_3(t))\, dx dt \gtrsim LE_0(\eta,\psi) - c LE(\eta,\psi).
\end{equation}
\end{description}

\subsection{ Fixed time bounds} 

The  bound for the contribution of $I_2$ is identical to the one in the linear model. 
For the contribution of $I_3$ there is a slight difference, which is due to the fact that the domain 
of integration is no longer a strip. Hence in order to apply Proposition~\ref{p:extension} we need to switch to 
holomorphic coordinates, and to use Proposition~\ref{p:switch-strips}  in order to relate vertical strips 
in holomorphic vs. euclidean coordinates.

\subsection{ Time integrated bounds} 

As before, here we take $\sigma < \frac12$, but close to
$\frac12$. Using the expressions in Lemmas ~\ref{l:I2},~\ref{l:I3} 
as well as the relations \eqref{phit} and \eqref{thetat}
we write the integral in \eqref{S23-bound} as a combination of two
leading order terms plus error terms
\[
\int_{0}^T \int m_x ( \sigma S_2(t) + (1-\sigma) S_3(t))\, dx dt = LE_{\psi} + g LE_\eta + Err_1 + g Err_2 + Err_3,
\]
where   
\[
LE_\psi := \frac12 \int_0^T \iint m_x [ \sigma(\phi_x^2 - \phi_y^2) + (1-\sigma) |\nabla \phi|^2] \, dx dy dt
\]
\[
LE_\eta := \int_0^T \frac{\sigma}{2}  \int m_x  \eta^2 \, dx - (1-\sigma) \iint m_x \theta_y ( \theta-\HN(\eta))\, dx dy dt  ,
\]
and finally 
\[
\begin{aligned}
&Err_1 :=  \sigma \int_0^T \int  m_x \eta \mN(\eta) \psi \,  dx dt ,\\
&Err_2 := \frac{1-\sigma}{2} \int_0^T \iint m_x \theta_y \HN(|\nabla \phi|^2) \, dx dy dt ,\\
&Err_3 :=  \frac{1-\sigma}{2} \int_0^T \iint m_x \phi_y \HD(\nabla \theta \nabla \phi)\,  dx dy dt .
\end{aligned}
\]

\smallskip

Our strategy in what follows will be to peel off a leading quadratic part, which we interpret using our bounds for the linear equation.
The remaining cubic and higher order expressions will be viewed 
as error terms. All but one of the the cubic error terms will be  estimated perturbatively. 

Finally, the last error term turns out to be unbounded both due to low and to high  high frequencies. For this term we instead apply a partial normal form correction, which replaces it with bounded terms, both time integrated,  and at the endpoints of the time intervals. The latter correspond to a nonlinear normal form modification of the momentum density.
  
For many of the nonlinear estimates it is useful  to switch to holomorphic coordinates.
That greatly facilitates multilinear analysis. There is a price to pay for that, as our $m_x$ cutoff is vertical in the Eulerian frame,
 but not in the holomorphic frame.

For the remainder of this section we reduce the nonlinear estimate to the linear estimates 
in Section~\ref{s:linear}, plus a number of error terms, which need to be estimated perturbatively.
The last two sections are devoted to the proof of the error estimates. In Section~\ref{s:switch}
we show that the Eulerian local energy norms admit equivalent counterparts in the holomorphic setting,
and use this equivalence and multilinear analysis to estimate some of the error terms.
Finally, in Section~\ref{s:F} we deal with the more difficult error terms which involve the function $F$,
and arise out of the normal form analysis.

\subsection{ The \texorpdfstring{$LE_\theta$}{}  term}

Here we need to compare the contribution of $\HN(\eta)$,
\[
I_1 = \int_0^T \iint m_x \theta_y ( \HN(\eta) - \theta)\, dx dy =  \iint m_x \theta ( \theta - \HN(\eta))_y\, dx dy dt,
\]
with the expression
\[
 \int_0^T \int \frac12 m_x \eta^2\, dx dt
\]
from the first term in $LE_\theta$.

We remark that $\HN(\eta)$ and $\theta$ solve the same equation and have the same boundary 
condition on the top, but different boundary conditions on the bottom (Dirichlet, respectively Neumann).
Thus they cancel in the infinite depth case, but not in the finite depth case.

To estimate this  we move to conformal coordinates $ z = \alpha +
i\beta$.  This does not change the equations for $\HN(\eta)$ and
$\theta$.  Precisely, if $\alpha_0$ is the image of $x_0$ in the
conformal setting, then we seek to compare the integral $I_1$ with its
conformal counterpart
\[
I_1^{hol} =  \int_0^T \iint m_\alpha \, (\alpha- \alpha_0)  \theta ( \theta - \HN(\eta))_\beta \, d\alpha d\beta dt.
\]
We will view the difference between the two integrals as an error term,
\[
Err_4 = I_1 - I_1^{hol}
\]
to be estimated later.

The expression $I_1^{hol}$ can be rewritten as
\[
\begin{split}
I_1^{hol} = & \ \int_0^T \iint m_\alpha \, \theta  ( \theta - \HN(\eta))_\beta  \, d\alpha d\beta dt 
\\ = & \   \int_0^T  \iint  m \, \theta_{\alpha} 
( \HN(\eta) - \theta)_\beta  + m \, \theta  ( \HN(\eta) - \theta)_{\alpha \beta}     \, d\alpha d\beta dt
\\
= & \  \int_0^T  \iint  m \,\theta_{\alpha}  ( \HN(\eta) - \theta)_\beta  -  m \, \theta_\beta  ( \HN(\eta) - \theta)_\alpha \,     d\alpha d\beta
\\
= & \ \int_0^T  \iint  m \,(\theta_{\alpha} \HN(\eta)_{\beta}  -  \theta_\beta   \HN(\eta)_{\alpha}) \,    d\alpha d\beta dt .
\end{split}
\]
Recalling that $\theta = \HD(\eta)$,  the above integral becomes
\[
I_1^{hol} = \int_0^T \iint m 
\HD(\eta)_{\alpha} \HN(\eta)_{\beta}  -  \HD(\eta)_\beta   \HN(\eta)_{\alpha})
\,   d\alpha d\beta dt,
\]
which is identical to the corresponding expression obtained in the analysis of the linearized problem in Section~\ref{s:linear}.
Hence, as there, it can be further represented as 
\[
I_1^{hol} = \int_0^T \int m_\alpha(\alpha-\alpha_0) 
 \, B^h(\eta,\eta)  \, d\alpha dt .
\]

On the other hand,  as a consequence of  the bound $|W_\alpha| \lesssim \epsilon$  we obtain the relation
\[
\int_0^T \int \frac12 m_x \eta^2 \, dx = \int_0^T \int \frac12 m_\alpha \eta^2 \, d\alpha dt + O(\epsilon)  \| \eta\|_{LE}^2.
\]
Combining the two terms, we have established that
\[
LE_\theta - Err_4 =  \int_0^T \frac{\sigma}{2}  \int m_\alpha  \eta^2\,  d\alpha - (1-\sigma) \int_0^T \int m_\alpha(\alpha-\alpha_0) 
B^h_2(\eta,\eta)\,   d\alpha   dt  + O(\epsilon)  \| \eta\|_{LE}^2.
\]
We conclude the argument here by showing that for $\sigma > \frac12$
close to $\frac12$ we have the bound
\begin{equation}\label{LE-theta}
LE_\theta - Err_4  \gtrsim \frac12 \| \eta\|^2_{LE_{x_0}} - c \|\eta\|_{LE}^2,
\end{equation}
where $c < \frac12$ is a universal constant. This in turn 
is a consequence of

\begin{proposition}\label{p:B1-bd-nl}
For the bilinear form $B^h_2$ above there exists $c < \frac12$ so that we have
\begin{equation}\label{B1-bd-nl}
\int_0^T\int m_\alpha(\alpha - \alpha_0) B^h_2(\eta,\eta) \, d \alpha dt \geq - c g \sup_{x_1 \in \R}\int_0^T \int m_\alpha(\alpha-\alpha_1) \eta^2 \, d\alpha dt.
\end{equation}
\end{proposition}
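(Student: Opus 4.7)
The key observation is that the bilinear form $B^h_2$ appearing on the left-hand side is, by construction, literally the same object as the bilinear form $B^h$ analyzed in the proof of Proposition~\ref{p:B1-bd}. Both arise from the identical integrand $\HD(\eta)_\alpha \HN(\eta)_\beta - \HD(\eta)_\beta \HN(\eta)_\alpha$, integrated in the fluid strip against a weight depending on $m$; this is precisely the structure that was encountered in the linear analysis when computing $Q_m(\eta)$, and it is the same structure that has been recovered in the nonlinear reduction $I_1^{hol}$ performed above. Consequently, my strategy is a direct reduction to Proposition~\ref{p:B1-bd}.

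To carry this out, I would set $\tilde m(y) := m(y-\alpha_0)$, so that $m_\alpha(\alpha-\alpha_0) = \tilde m_\alpha(\alpha)$. The left-hand side of \eqref{B1-bd-nl} then becomes $\int_0^T \int \tilde m_\alpha B^h(\eta,\eta) \, d\alpha \, dt$, and the supremum on the right becomes a supremum over translates of $\tilde m_\alpha$. This is exactly the statement of Proposition~\ref{p:B1-bd}, up to the multiplicative factor $g$ on the right (which is harmless: the conclusion of Proposition~\ref{p:B1-bd} yields a universal constant strictly less than $1/2$, and multiplying that constant by $g$ only changes bookkeeping, reflecting the factorization $LE_\theta \leftrightarrow g\cdot LE_\eta$ in the nonlinear decomposition).

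For orientation, the ingredients of the linear argument that are being reused are the following: the scaling relation $b^h(\xi,\zeta) = b(h\xi,h\zeta)$ together with the exponential decay of $b$ away from the frequency axes, which localize $\eta$ to frequencies $\lesssim \lambda_0$ for some $1/h \ll \lambda_0 \ll 1$ at the cost of $O((\lambda_0 h)^{-1})\|\eta\|^2_{LE^0}$; the conversion from a local $L^2$ bound to an $L^\infty_x L^2_t$ bound up to a factor $1+O(\lambda_0)$ valid after such low-frequency smoothing; the scaling reduction $h\mapsto 1$; and finally the pointwise positivity of the kernel $K$ of $B$ (Lemma~\ref{lemma:Kpositive}), from which the strict inequality $c<1/2$ is extracted by subtracting a nonnegative rank-one piece $L(x_1)L(x_2)$, leaving a positive residual of integral strictly less than $1/2$.

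The main obstacle — the pointwise positivity of the kernel $K$, handled by the delicate case-by-case asymptotic analysis of Lemma~\ref{lemma:Kpositive} — has already been overcome in the linear section and does not need to be revisited here. No genuinely new estimate is required for Proposition~\ref{p:B1-bd-nl} beyond recognizing the identification $B^h_2 = B^h$ and performing the translation in $\alpha_0$.
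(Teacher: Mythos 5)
Your reduction strategy to Proposition~\ref{p:B1-bd} is the right instinct, and your identification $B^h_2 = B^h$ matches the paper's intent. However, the step where you translate the weight, $\tilde m(y):=m(y-\alpha_0)$, and declare the result to be ``exactly the statement of Proposition~\ref{p:B1-bd}'' contains a genuine gap, and it is precisely the gap the paper flags just before the proof. The holomorphic centers $\alpha_0=\alpha_0(t,x_0)$ and $\alpha_1=\alpha_1(t,x_1)$ are \emph{time-dependent}: they are the images of fixed Eulerian points under a conformal map that moves with the flow. Consequently your $\tilde m$ is time-dependent, whereas Proposition~\ref{p:B1-bd} uses a fixed weight $m$ on the left and takes the supremum on the right over fixed translates $m_x(\cdot-x_0)$, $x_0\in\R$. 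Since the proof of Proposition~\ref{p:B1-bd} is not a pointwise-in-time inequality --- it passes through a low-frequency reduction, an $L^\infty_x L^2_t$ bound, and the positivity of the kernel $K$, all of which are time-integrated --- a time-dependent shift cannot simply be absorbed by relabelling. This is exactly what the paper means by ``Because of this we cannot directly cite Proposition~\ref{p:B1-bd} here.''

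What the paper actually does is change coordinates \emph{back to Eulerian}, where the centers $x_0$, $x_1$ are genuinely constant in time, so that Proposition~\ref{p:B1-bd} applies verbatim. The price of the change of variables consists of three $O(\epsilon)$ perturbations, each controlled by the uniform bound $|W_\alpha|\lesssim\epsilon$: the Jacobian $1+\Re W_\alpha=1+O(\epsilon)$; the weight change $m_\alpha(\alpha-\alpha_0)=m_x(x-x_0)+O(\epsilon)$; and, more delicately, the change in the kernel of $B^h$, for which one uses $\alpha(x_i,t)-\alpha(x,t)=(x_i-x)(1+O(\epsilon))$ together with the regularity and decay properties of $K$ from Lemma~\ref{l:K} to get $|\tilde K_h(x_1,x_2;x)-K^h(x_1,x_2;x)|\lesssim\epsilon h^{-2}(1+h^{-1}(|x-x_1|+|x-x_2|))^{-N}$. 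None of these steps appears in your write-up, and the third in particular requires the kernel estimates you have treated as already ``overcome.'' You should rewrite the proof around the Eulerian change of coordinates and the explicit $O(\epsilon)$ perturbations, rather than a translation in $\alpha$.
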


This is a direct counterpart of Proposition~\ref{p:B1-bd} from the 
linear analysis. The only difference is that on the right, $\alpha_1$ 
is not constant in time but instead we have that $\alpha_1 = \alpha_1(t,x_1)$. Because of this we cannot directly cite 
Proposition~\ref{p:B1-bd} here. However, it will be easy to reduce 
the above proposition to Proposition~\ref{p:B1-bd}.

\begin{proof}
To reduce to Proposition~\ref{p:B1-bd}  we simply change coordinates
back into Eulerian coordinates. The Jacobian is $1+ \Re W_\alpha = 1+O(\epsilon)$ so it only yields negligible $O(\epsilon)$ errors.
The same applies for the changes in the argument of $m$,
\[
m_\alpha(\alpha-\alpha_0) = m_x(x-x_0) + O(\epsilon).
\]
It remains to consider the change in the operator $B^h$. We consider 
this at the level of the kernel $K^h$ of $B^h$. Referring back 
to Section~\ref{s:linear}, the kernel of $K^h$ in the holomorphic coordinates is
\[
K^h(\alpha_1,\alpha_2;\alpha) = K\Big(\frac{\alpha_1 -\alpha}h,
\frac{\alpha_2 -\alpha}h\Big).
\]
After the change of coordinates this becomes
\[
\tilde K_h (x_1,x_2;x) := K\Big(\frac{\alpha(x_1,t) -\alpha(x,t)}h,
\frac{\alpha(x_2,t) -\alpha(x,t)}h\Big).
\]
We would like to replace this with $K^{h}(x_1,x_2,x)$ at the expense 
of $O(\epsilon)$ errors.  For this we use the relations
\[
\alpha(x_i,t) -\alpha(x,t) = (x_i -x)(1+O(\epsilon).
\]
Then we compute using the properties
of $K$ in Lemma~\ref{l:K}:
\[
|\tilde K_h (x_1,x_2;x) -K^{h}_2(x_1,x_2,x)| \lesssim \epsilon h^{-2}( 1 + h^{-1} (|x-x_1| + |x-x_2|)^{-N}).
\]
This easily gives $O(\epsilon)$ errors, and finally allows us to reduce 
 the proposition to Proposition~\ref{p:B1-bd}.
\end{proof}

\subsection{ The error terms} 

At this point we have four error terms to deal with, $Err_1$, $Err_2$, $Err_3$ and $Err_4$.
Three of them will be directly estimated in a perturbative fashion:

\begin{proposition}\label{p:e124}
We have the following estimates:
\begin{equation}\label{e124}
|Err_1| + |Err_2| + |Err_4| \lesssim \epsilon \|(\eta,\psi)\|^2_{LE}.
\end{equation}
\end{proposition}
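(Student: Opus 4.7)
The plan is to treat each of $Err_1$, $Err_2$, $Err_4$ as a trilinear (or higher) expression in $(\eta,\psi)$, extracting one factor of $\epsilon_0$ through the control norm $\|(\eta,\psi)\|_X$ while arranging the remaining quadratic pairing to reproduce $\|(\eta,\psi)\|_{LE}^2$. In each case we rely on Proposition~\ref{p:control-equiv} to freely switch between the Eulerian and holomorphic descriptions, and on the minimal frequency envelope $\{c_\lambda\}$ of Definition~\ref{def-fe} to organize the multilinear bounds across scales.

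For $Err_1$: the nonlinear correction $\mN(\eta)\psi$ appearing in the flux $S_2$ is already bilinear in $(\eta,\nabla \phi\aeta)$. A paraproduct decomposition of $\mN(\eta)\psi$, together with Bernstein's inequality, lets us place the smoothest of the three factors inside the $X$ norm, contributing $\epsilon_0$ via the smallness assumption \eqref{uniform}. The remaining two factors pair via Cauchy--Schwarz in $(t,x)$ against the weight $m_x$ to give $\|\eta\|_{LE^0}\cdot \|\nabla\phi\aeta\|_{L^2_t L^2_{m_x}}$, and a standard trace bound on a vertical strip (cf.\ Proposition~\ref{p:extension}) controls the second factor by the full local energy of $\nabla \phi$.

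For $Err_2$: we integrate by parts in $y$, using $\theta\arrowvert_{y=-h}=0$ and $\partial_y\HN(\,\cdot\,)\arrowvert_{y=-h}=0$. This produces a top boundary term $\int_0^T \int m_x\, \eta\, |\nabla\phi|^2\arrowvert_{y=\eta}\,dx\,dt$ and a bulk term $-\int_0^T\iint m_x\,\theta\,\partial_y\HN(|\nabla\phi|^2)\,dx\,dy\,dt$. The boundary term gains its $\epsilon_0$ directly from $\|\nabla\phi\aeta\|_{L^\infty_{t,x}}\lesssim \epsilon_0$ (by Sobolev embedding applied to the $X$ control), followed by Cauchy--Schwarz against $m_x$. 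The bulk term is treated by Cauchy--Schwarz on the full strip: we bound $\theta$ through Proposition~\ref{p:extension} with exponent $s=1/4$, and $\partial_y\HN(|\nabla\phi|^2)$ with the dual exponent, the smallness coming from the bilinear estimate $\||\nabla\phi|^2\aeta\|_{H^{-1/4}_h}\lesssim \epsilon_0\|\nabla\phi\aeta\|_{H^{1/4}_h}$.

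For $Err_4$: this error measures the mismatch between the Eulerian and holomorphic realizations of the cutoff $m$. By the Corollary of Proposition~\ref{p:switch-strips}, the horizontal shift between the two vertical strips at depth $\beta$ is $O(\epsilon_0|\beta|)$, so a first-order Taylor expansion gives $|m_x(x-x_0)-m_\alpha(\alpha-\alpha_0)|\lesssim \epsilon_0|\beta|\,\|m_{xx}\|_{L^\infty}$ on the support of $m'$. The extra $|\beta|$ is absorbed harmlessly into the parabolic weights of Proposition~\ref{p:extension} (applied in both Dirichlet and Neumann form to $\theta$ and $\HN(\eta)-\theta$), leaving a quadratic expression in $\eta$ that is bounded by $\|\eta\|_{LE^0}^2$ multiplied by $\epsilon_0$.

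The main obstacle is $Err_2$, because the harmonic extension $\HN$ of the pointwise quadratic quantity $|\nabla\phi|^2$ mixes frequency scales, while Proposition~\ref{p:extension} is restricted to exponents $s<1/2$. A careful dyadic decomposition in the depth variable $\beta$, driven by the control envelope $\{c_\lambda\}$, is needed so that the $\epsilon_0$ gain propagates uniformly across frequencies and the bilinear estimate for $|\nabla\phi|^2$ closes within the admissible parabolic range. The other two terms are comparatively routine once the appropriate frequency envelope is set up.
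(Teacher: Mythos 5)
Your proposal identifies the correct general strategy (extract $\epsilon_0$ from the control norm, pair the rest in local energy), but the mechanism you describe for actually closing the estimates on $Err_1$ and $Err_2$ has a genuine gap.

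The problematic step is the Cauchy--Schwarz-plus-trace argument for the terms $\int_0^T\!\int m_x\,\eta\,|\nabla\phi|^2\aeta\,dx\,dt$ (which is $Err_1$, and also the boundary term you generate from $Err_2$ after integrating by parts). After you pull out $\|\nabla\phi\aeta\|_{L^\infty_{t,x}}\lesssim\epsilon_0$, you are left with $\eta$ paired against one remaining copy of $\nabla\phi\aeta$ on the free surface, weighted by $m_x$. You claim a ``standard trace bound on a vertical strip'' controls this second factor by the local energy of $\nabla\phi$, but the local energy only places $\nabla\phi$ in $L^2$ of the bulk. The trace of an $L^2$ harmonic function to the top boundary lives in $H^{-1/2}_{\rm loc}$, \emph{not} in $L^2_{\rm loc}$; this is precisely what the paper records in Lemma~\ref{l:R-LE-top}. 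So $\|\nabla\phi\aeta\|_{L^2_t L^2_{m_x}}$ is not bounded by $\|\nabla\phi\|_{LE^{-1/2}}$, and the Cauchy--Schwarz pairing with $\eta\in LE^0$ does not close. What is really needed is the spacetime bilinear estimate $\||R|^2\|_{LE^0}\lesssim\epsilon M$ of Lemma~\ref{l:R2}(a), which the paper proves via a Littlewood--Paley decomposition in which the $\lambda^{1/2}$ Bernstein loss from placing the low-frequency factor of $R$ in $L^2_t L^2_{\rm loc}$ is offset by the $\lambda^{-1}$ decay of the high-frequency factor measured in the control norm. That dyadic bookkeeping, organized through the frequency envelope, is the real content here and is absent from your argument.

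A related issue appears in your treatment of the bulk term of $Err_2$: you appeal to Proposition~\ref{p:extension} with exponent $s=1/4$, but that is a fixed-time estimate in $H^s_h$, while the needed inputs here are spacetime local-energy norms; the paper's actual tools are the localized Bernstein bounds of Lemmas~\ref{l:bern-loc-hi} and~\ref{l:bern-loc}, and again the dyadic decomposition of $|R|^2$ in Lemma~\ref{l:R2}(b)(c). Finally, for $Err_4$ your first-order Taylor expansion $|m_x(x-x_0)-m_\alpha(\alpha-\alpha_0)|\lesssim\epsilon_0|\beta|\,\|m_{xx}\|_\infty$ is too crude at large depth: the drift between the Eulerian and holomorphic strips can exceed the unit support of $m'$, so Taylor expansion of the cutoff is not valid pointwise there. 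The paper's Proposition~\ref{p:switch-int} instead interpolates continuously between the two bump functions and integrates by parts in $\alpha$, producing a bound in terms of $\sup|W_\alpha|$ and $|\Psi|+|\beta||\Psi_\alpha|$ at each depth; the resulting $\beta$-integral is then controlled using Lemmas~\ref{l:theta-LE-loc} and~\ref{l:theta-D-N}, not by absorbing an extra $|\beta|$ into the parabolic weights of Proposition~\ref{p:extension}.
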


This proposition is proved in the following section.

The difficult term is $Err_3$, which turns out to be unbounded both because of low frequency contributions 
and high frequency contributions. We will  address this difficulty in two steps. The first is to switch to 
the holomorphic coordinates counterpart of $Err_3$. The second is to apply a nonlinear normal form type correction to the momentum density. 

For the first step, the holomorphic counterpart of $Err_3$ is
\[
Err_3^{hol} :=   \int_0^T \iint m_\alpha (\alpha - \alpha_0)  \phi_y \HD(\nabla \theta \nabla \phi)\, d\alpha d\beta dt.
\]
On the top we have $\phi_y = \Im R$, while for $\nabla \theta \nabla \phi$ we compute its value as 
\[
\nabla \theta \nabla \phi = J^{-1} (\nabla_h \theta \nabla_h \phi) = J^{-1} \Im (\bar W_\alpha Q_\alpha) = 
\Im ( \frac{\bar W_\alpha}{1+\bar W_\alpha} R).
\]
Therefore we obtain
\[
Err_3^{hol} =   \int_0^T \iint m_\alpha(\alpha - \alpha_0)  \Im R \HD\left( \frac{\bar W_\alpha}{1+\bar W_\alpha} R
\right) \,  d\alpha d\beta dt .
\]
The transition between $Err_3$ and $Err_3^{hol}$ is harmless:

 \begin{proposition}\label{p:e3-diff}
We have the following estimate:
\begin{equation}
\label{e3:diff}
|Err_3 - Err_3^{hol}| \lesssim \epsilon \|(\eta,\psi)\|_{LE}.
\end{equation}
\end{proposition}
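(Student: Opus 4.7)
The plan is to carry out the change of variables $(x,y) \mapsto (\alpha,\beta)$ via the conformal map $z=Z(w)$ inside the integral for $Err_3$ and to compare the result, term by term, with $Err_3^{hol}$. Under this transformation: the area element becomes $dx\,dy = J\,d\alpha\,d\beta$ with $J = |1+W_\alpha|^2$; by the chain rule $\phi_y$ at $Z(\alpha,\beta)$ corresponds to $\Im R(\alpha,\beta)$; the expression $\nabla\theta\cdot\nabla\phi$ becomes $J^{-1}\Im(\bar W_\alpha Q_\alpha)$, whose top trace equals $\Im(\bar Y R)|_{\beta=0}$; and the operator $\HD$ commutes with the conformal change of coordinates since the associated Dirichlet problem is conformally invariant and the map preserves both top and bottom boundaries. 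The only remaining change is in the argument of the cutoff function: $m_x(x-x_0)$ becomes $m'(\Re Z(\alpha,\beta)-x_0)$.

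After this substitution the difference $Err_3 - Err_3^{hol}$ (ignoring the harmless common constant $\tfrac{1-\sigma}{2}$) takes the form
\begin{equation*}
\int_0^T \!\!\iint_S \mu(\alpha,\beta) \cdot \Im R \cdot \HD\bigl[\Im(\bar Y R)\vert_{\beta=0}\bigr] \, d\alpha\, d\beta\, dt, \qquad \mu := J\, m'(\Re Z - x_0) - m'(\alpha - \alpha_0).
\end{equation*}
Split $\mu = (J-1)\, m'(\Re Z - x_0) + [m'(\Re Z - x_0) - m'(\alpha - \alpha_0)]$. The first piece carries a pointwise factor $|J-1|\lesssim \epsilon$. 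For the second piece, by the mean value theorem together with Proposition~\ref{p:switch-strips} (or its corollary) one obtains the pointwise bound $|m'(\Re Z - x_0) - m'(\alpha - \alpha_0)| \lesssim \epsilon\, (1+|\beta|)$, using the identity $\Re Z(\alpha,\beta) - x_0 - (\alpha-\alpha_0) = \Re W(\alpha,\beta) - \Re W(\alpha_0,0)$ and the $L^\infty$ bound on $W_\alpha$.

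The remaining trilinear integral is then estimated by multilinear analysis in holomorphic coordinates. The factor $\Im R$ and the trace $\Im(\bar Y R)|_{\beta=0}$ are bilinearly controlled by the local energy norm of $R$ via Proposition~\ref{p:control-equiv}, while the harmonic extension $\HD[\cdot]$ is treated with the parabolic-type bound of Proposition~\ref{p:extension} and the symbol estimate $|p_D(\xi,\beta)|\le e^{c\beta|\xi|}$. Bounds for $Y = W_\alpha/(1+W_\alpha)$ in the control frequency envelope follow from Lemma~\ref{l:algebra} and Proposition~\ref{p:control-equiv}. Combining these ingredients yields an estimate of size $O(\epsilon)\|(\eta,\psi)\|_{LE}^2$ for each of the two pieces, which is the form the cubic difference naturally produces.

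The principal obstacle is the linear growth in $|\beta|$ of the cutoff-mismatch bound, which at depths $|\beta|\approx h$ would naively contribute an $O(\epsilon h)$ factor and spoil uniformity in $h$. To overcome this one uses the frequency localization inherited from $\HD[\cdot]$: at depth $|\beta|\approx 1/\lambda$ the dominant contribution of the extension lives at frequencies $\lesssim \lambda$, so that the extra $|\beta|$ factor trades against an inverse $\lambda$ arising from a Littlewood-Paley decomposition of the boundary trace. After dyadic summation using the control frequency envelope $\{c_\lambda\}$ of Definition~\ref{def-fe}, the resulting bound is uniform in $h$, and \eqref{e3:diff} follows.
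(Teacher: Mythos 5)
Your overall strategy is the right one and matches the paper's: change variables to holomorphic coordinates, split the mismatch into a Jacobian piece and a cutoff-mismatch piece, and use the bilinear structure of $\HD(\nabla\theta\nabla\phi)=\HD(\Im(\bar Y R))$. You also correctly identify the central obstacle, namely that the cutoff mismatch naively produces a factor of order $\epsilon|\beta|$ whose dyadic sum over depths scales like $h$. However, your proposed resolution does not close. The mean-value-theorem bound $|m'(\Re Z-x_0)-m'(\alpha-\alpha_0)|\lesssim \epsilon(1+|\beta|)$ degrades the envelope information from Proposition~\ref{p:switch-strips} to a flat constant $\epsilon$, which is not $\ell^1$ summable over dyadic $\beta$-shells; and invoking a "Littlewood--Paley trade of $|\beta|$ against $\lambda^{-1}$" does not by itself dissipate the factor, because even after bounding $|R|$ and the extension $\Psi$ at scale $\lambda\approx|\beta|^{-1}$ the remaining $\alpha$-integral over the (stretched) support and the $\beta$-integral leave a residual $\sum_\lambda \epsilon^2\lambda^{-1}M^2\sim h\epsilon^2 M^2$.

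The missing step, supplied by Proposition~\ref{p:switch-int} in the paper, is an integration by parts in $\alpha$. After writing $m'(d(1,\alpha))-m'(d(0,\alpha))$ as a $k$-integral of $\partial_\alpha m'(d(k,\alpha))\cdot\frac{\Re W(\alpha,\beta)-\Re W(\alpha_0,0)}{1+\Re W_\alpha}$ and integrating by parts against $\Psi$, the factor $m'$ reappears with unit integral — so the $\alpha$-integral is absorbed and one is left with a pure $(\beta,t)$ integral — and the offending factor becomes $|\beta|\,\partial_\alpha\Psi$ together with a prefactor $c_\beta+\sup_{|\alpha-\alpha_0|<|\beta|}|W_\alpha|$. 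Since harmonic extensions at depth $|\beta|$ are localized to frequencies $\lesssim|\beta|^{-1}$, the $|\beta|\,\partial_\alpha$ is bounded at each dyadic scale, while the prefactor is $\ell^1$-summable over scales via the control frequency envelope and the local energy of $W_\alpha$. This is the structural input your sketch lacks. Beyond this, the actual closing of the $D_1$ and $D_2$ estimates relies on the multilinear bounds in Lemma~\ref{l:ImFa}, Corollary~\ref{c:Lp} (imbalancing scales by $L^p$ interpolation), and Lemma~\ref{l:R-LE}, which your proposal does not identify; the needed bounds on $\HD(\Im(\bar Y R))$ are genuinely trilinear and do not follow directly from Proposition~\ref{p:extension} and $\epsilon$-smallness alone.
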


Next we turn our attention to the remaining unbounded error term $Err_3^{hol}$. Here we will borrow 
an idea from normal forms, and rectify this error via a normal form type correction. Since we are trying
to address both low and high frequencies, our correction will be genuinely nonlinear as opposed 
to the traditional cubic one, which would only address the low frequencies.

Our correction is based on the following computation, which uses the equations \eqref{FullSystem-re}:
\[
\begin{split}
\frac{d}{dt} (\Im W  \Re W_\alpha) = 
& \ \partial_\alpha (\Im W \Re W_t) +   \Im (W_t  \bar W_\alpha)
\\
= & \ \partial_\alpha (\Im W \Re W_t)  - \Im ( F(1+W_\alpha)  \bar W_\alpha)
\\
= & \ \partial_\alpha (\Im W \Re W_t)   -  \Im F |W_\alpha|^2
- \Im( F \bar W_\alpha)
\\ 
= & \ \partial_\alpha (\Im W \Re W_t)   -  \Im F (|W_\alpha|^2 + 2 \Re W_\alpha)
+ \Im( F W_\alpha)
\\
= & \ \partial_\alpha (\Im W \Re W_t)   -  \Im Q_\alpha J^{-1} (|W_\alpha|^2 + 2 \Re W_\alpha)
+ \Im( F W_\alpha)
\\
= & \ \partial_\alpha (\Im W \Re W_t)   -  \Im \left( R \frac{\bar W_\alpha}{1+\bar W_\alpha }\right) - \Im (R W_\alpha)
+ \Im( F W_\alpha)
\\
= & \ \partial_\alpha (\Im W \Re W_t)   -  \Im \left( R \frac{\bar W_\alpha}{1+\bar W_\alpha }\right) + \Im ((F-R) W_\alpha).
\end{split}
 \]
This allows us to express $\displaystyle{\Im ( \dfrac{\bar W_\alpha}{1+\bar W_\alpha} R})$ on the top as
\[
2 \Im ( \frac{\bar W_\alpha}{1+\bar W_\alpha} R) = 
- \frac{d}{dt} (\Im W  \Re W_\alpha) + \partial_\alpha (\Im W \Re W_t)   +  \Im( (F-R) W_\alpha).
\]
The first expression on the right will correspond to  our (partial) normal 
form correction to the Morawetz 's identity. The second has an $\alpha$ derivative, and thus better low frequency decay. Finally, the third is the imaginary part of a 
holomorphic function, so it has a trivial holomorphic extension.

Correspondingly, we can write $Err_3^{hol}$ in the form
\begin{equation}\label{err3-dec}
\begin{split}
2Err_3^{hol} =  & \  \left. \iint m_\alpha  \Im R \HD\left( \Im W \Re W_\alpha\right)   d\alpha d\beta \right|_0^T
+ Err_5 + Err_6 + Err_7,
\end{split}
\end{equation}
where 
\[
\begin{aligned}
&Err_5:=   \int_0^T \iint m_\alpha  \Im R_t \,\HD\left( \Im W \Re W_\alpha
\right) \,  d\alpha d\beta dt,\\
&Err_6:=   \int_0^T \iint m_\alpha  \Im R \, \Im( (F-R) W_\alpha)\,
 d\alpha d\beta dt,\\
&Err_7:=   \int_0^T \iint m_\alpha  \Im R\, \partial_\alpha \,\HD\left( \Im W \Re W_t)
\right)\,   d\alpha d\beta dt.
\end{aligned}
\]
The first term in \eqref{err3-dec} can be estimated directly using Proposition~\ref{p:extension},
\[
\begin{split}
 \left| \iint m_\alpha  \Im R\, \HD\left( \Im W \Re W_\alpha\right)  \, d\alpha d\beta \right|
\lesssim  & \ \| R\|_{H^{-\frac14}} \| \Im W \Re W_\alpha\|_{H^\frac14} 
\\ \lesssim & \ \| R\|_{H^{-\frac14}} \| \Im W \|_{H^\frac14} 
\lesssim E^\frac14,
\end{split}
\]
since $\Re W_{\alpha} \in l^1H^{\frac{1}{2}}_h$, due to the multiplicative estimate
\[
\| fg\|_{H^{\frac{1}{4}}_h} \lesssim \| f\|_{H^{\frac{1}{4}}_h} \|g\|_{ l^1H^{\frac{1}{2}}_h}.
\]

Then it remains to estimate the error terms:

\begin{proposition}\label{p:e567}
We have the following estimates:
\begin{equation}
|Err_5| + |Err_6| + |Err_7| \lesssim \epsilon \|(\eta,\psi)\|^2_{LE}.
\end{equation}
\end{proposition}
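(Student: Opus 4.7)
The plan is to bound each of $Err_5$, $Err_6$, $Err_7$ perturbatively by extracting one small factor in the uniform $X$ norm (yielding $\epsilon_0$) and placing the remaining two factors into local energy norms. The common toolkit is: $i)$ the holomorphic water-wave equations \eqref{FullSystem-re} to convert time derivatives into spatial expressions; $ii)$ the parabolic estimate of Proposition~\ref{p:extension} to control the harmonic extension $\HD$ in weighted $L^2_\beta L^\infty_\alpha$ norms; $iii)$ the frequency envelope bounds for $(\Im W, R, W_\alpha, Y)$ coming from Proposition~\ref{p:control-equiv}, combined with the paraproduct-style algebra bound of Lemma~\ref{l:algebra}; and $iv)$ the equivalence of Eulerian and holomorphic local energy norms established in Section~\ref{s:switch}, which lets one freely interchange frames.

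For $Err_5$ I would first replace $R_t$ by its spatial form. Differentiating $R=Q_\alpha/(1+W_\alpha)$ in time and using the equations $W_t=-F(1+W_\alpha)$ and $Q_t=-FQ_\alpha+g\Tilh[W]-\P_h[|Q_\alpha|^2/J]$, one writes $\Im R_t$ as a sum of a linear term (essentially $g\partial_\alpha \Re\Tilh W$) plus higher order terms carrying extra powers of $\epsilon_0$. The linear contribution is paired against $\HD(\Im W \, \Re W_\alpha)$, which by Proposition~\ref{p:extension} and Lemma~\ref{l:algebra} satisfies
\[
\|\HD(\Im W \, \Re W_\alpha)\|_{L^2_\beta L^\infty_\alpha} \lesssim \|\Im W \, \Re W_\alpha\|_{H^{1/4}_h} \lesssim \epsilon_0 \|\Im W\|_{H^{1/4}_h},
\]
the smallness coming from the $X$ control on $\Re W_\alpha$. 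Pairing against $\Im R_t$ in the dual local energy norm then yields a bound of the required form $\epsilon\|(\eta,\psi)\|^2_{LE}$. The nonlinear remainders in $R_t$ carry additional smallness and are therefore even easier.

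For $Err_7$ the same substitution $W_t=-F(1+W_\alpha)$ is used inside the harmonic extension; the outer $\partial_\alpha$ is traded for a negative power of $\beta$ via the symbol bound underlying Proposition~\ref{p:extension}, and then the product $\Im W \, \Re W_t$ is estimated using Lemma~\ref{l:algebra} with one factor going to the $X$ norm and the other to local energy. Finally $Err_6$ is a purely spatial trilinear expression in $(R, F, W_\alpha)$: one isolates $W_\alpha$ for the $X$ smallness and distributes the remaining two factors across the local energy norms, using that the $\beta$-integration is saturated by the exponential decay of the harmonic extensions of $R$ and $F$.

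The principal obstacle lies in the $F$-dependent pieces of $Err_6$ and $Err_7$, since $F$ has the weakest low-frequency control of all the holomorphic variables in play (its real part being only determined up to a constant by the gauge choice fixed at the end of Section~\ref{s:Cl}). A naive estimate leaves a logarithmic loss at low frequencies, so a careful dyadic Littlewood--Paley decomposition is required, in order to exploit the $\alpha$-localization of $m_\alpha$ on a unit window and to recover the missing low-frequency gain through frequency balancing. This delicate low-frequency analysis of the $F$-terms is the content of Section~\ref{s:F}.
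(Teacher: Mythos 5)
Your overall strategy — extract one $\epsilon$ factor from the uniform control norm $X$ and distribute the remaining two factors into local energy norms, after converting the time derivatives $R_t$, $W_t$ into spatial expressions via \eqref{FullSystem-re} — is indeed the paper's philosophy, and you correctly identify the need to move to holomorphic coordinates and to use the parabolic estimate and the frequency envelope machinery. However, the displayed estimate you give for $Err_5$ is the wrong object. The chain
\[
\|\HD(\Im W \, \Re W_\alpha)\|_{L^2_\beta L^\infty_\alpha} \lesssim \|\Im W \, \Re W_\alpha\|_{H^{1/4}_h} \lesssim \epsilon_0 \|\Im W\|_{H^{1/4}_h}
\]
is a \emph{fixed-time} estimate, controlled by the $E^{1/4}$ norm of the solution. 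This is precisely what one uses for the first (boundary) term in \eqref{err3-dec}, i.e. the evaluation at $t=0,T$, which indeed lands on the right-hand side of the Morawetz inequality. But $Err_5$, $Err_6$, $Err_7$ are \emph{time-integrated} quantities, and the required conclusion is $\epsilon M^2$ with $M = \|(\eta,\psi)\|_{LE}$; for this one must put both remaining factors into $L^2_t$-type local energy norms (giving $M^2$) with the third in an $L^\infty_t$ control norm (giving $\epsilon$). There is no way to ``pair against $\Im R_t$ in the dual local energy norm'' starting from the fixed-time $H^{1/4}$ bound and land on $\epsilon M^2$. What the paper actually does is develop \emph{time-integrated, frequency-localized} bilinear estimates for $\Im W \cdot \Re W_\alpha$ (Lemma~\ref{l:W2}) and for $|R|^2$ (Lemma~\ref{l:R2}), analogous to the $Err_2$ estimate, and splits $R_t$ as $- F R_\alpha + (1+W_\alpha)^{-1}(-g\Tilh W_\alpha + \P[|R|^2]_\alpha)$ so as to reduce $Err_5$ to $Err_5^1 + Err_5^2 + Err_5^3$.

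The other, more substantial, gap is in the treatment of the $F$-dependent pieces, which you flag as delicate but do not resolve. In the paper this is the content of Section~\ref{s:F}, built around three trilinear quantities $G_1 = \Im(F R_\alpha)$, $G_2 = \Im(F^{[2]}W_\alpha)$ and $G_3 = \HD(\Im W \, \Re F(1+W_\alpha))$, where the crucial technical structure is that $G_3$ does not admit a single uniform estimate; instead one needs, for each intermediate dyadic frequency $\mu$, a decomposition $G_3 = G_3^{\mu,1} + G_3^{\mu,2}$ (Proposition~\ref{p:nonlin2}) in which the first piece carries the $L^2_t$ local energy gain and the second is merely $L^1_t$ with an $M^2$ bound, and $\mu$ is then matched to the frequency of the $R$ cofactor in $Err_7$. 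In addition, the analysis of $F$ itself is split into three regimes (high frequency, low frequency in $[h^{-1},1]$, and very low frequency where only $\Re(F-R)$ appears and the gauge-fixing constant enters via \eqref{ReF-const}--\eqref{const-bd}); each regime is handled with a different combination of the $F^{[2]}$ decompositions in Lemmas~\ref{l:ImFa},~\ref{l:ImFb}. Calling this ``a careful dyadic Littlewood--Paley decomposition … to recover the missing low-frequency gain through frequency balancing'' names the problem but does not supply the actual mechanism; in particular, without the $\mu$-parametrized decomposition of $G_3$ the estimate of $Err_7$ genuinely fails, because for the low-frequency $R$ cofactor neither the local energy norm nor the control norm alone suffices.
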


All of these errors involve the expression $F$, since in the fluid domain we have 
\[
W_t = F(1+W_\alpha),
\]
for $W$, respectively
\[
\begin{split}
R_t = &  \ \frac{1}{1+W_\alpha}( Q_{\alpha t} - R W_{\alpha t}) 
\\
 = &  \ \frac{1}{1+W_\alpha} ((- FQ_\alpha)_\alpha + R (F(1+W_\alpha))_{\alpha }   + g \Til W_\alpha + P[|R|^2]_\alpha) \\
 = &  \ - F R_\alpha +\frac{1}{1+W_\alpha} (- g \Til W_\alpha + P[|R|^2]_\alpha)
\end{split}
\]
for $R$. Corresponding to the last relation, we split
\[
Err_5 = Err_5^1 + Err_5^2 + Err_5^3 .
\]

Thus, we have proved Theorem~\ref{ThmG1} modulo the results in Propositions~\ref{p:e124}, 
\ref{p:e3-diff} and ~\ref{p:e567}.

\section{Local energy bounds in holomorphic coordinates}
\label{s:switch}

As a first step in the proof of the error estimates needed for our
main theorem, in this section we seek to understand how to transfer
the local energy bounds to the holomorphic setting. Then we will also
consider some bilinear expressions, and use them to estimate the
simpler error terms.

\subsection{Notations}

Our starting point here is represented by the local energy norms
in the Eulerian setting, which, are equivalently defined as 
\[
\|(\eta,\psi)\|_{LE} = \|\eta\|_{LE^0} + \| \nabla \phi\|_{LE^{-\frac12}},
\]
where 
\[
\|\eta\|_{LE^0} := \sup_{x_0 \in \R} \| \eta\|_{L^2(\HS(x_0))},
\qquad
\| \nabla \phi\|_{LE^{-\frac12}} := \sup_{x_0 \in \R} \| \nabla \phi\|_{L^2(\HVS(x_0))}.
\]
Here $\HS(x_0)$, respectively $\HVS(x_0)$ represent the Eulerian strips
\[
\HS(x_0) := \{ [0,T] \times [x_0-1,x_0+1] \}, \qquad
\HVS(x_0) := \{ [0,T] \times [x_0-1,x_0+1] \times [-h,0]\}.
\]

Our first objective will be to prove that these norms are equivalent to their 
counterparts in the holomorphic setting. In holomorphic coordinates the 
functions $\eta$ and $\nabla \phi$ are represented by $\Im W$ and $R$. 
Thus we will seek to replace the above local energy norm with
\[
\|(W,R)\|_{LE} := \|\Im W\|_{LE^0} + \| R \|_{LE^{-\frac12}},
\]
where 
\[
\|\Im W\|_{LE^0} := \sup_{x_0 \in \R} \| \Im W \|_{L^2(\HS_h(x_0))},
\qquad
\| R \|_{LE^{-\frac12}} := 
\sup_{x_0 \in \R} \| R \|_{L^2(\HVS_h(x_0))}.
\]
Here $\HS_h(x_0)$, respectively $\HVS_h(x_0)$, represent the holomorphic strips
\[
\HS_h(x_0) := \{(t,\alpha): t \in [0,T], \ \alpha \in [\alpha_0-1,\alpha_0+1]\}, \qquad
\HVS_h(x_0) := \HS(x_0) \times [-h,0],
\]
where $\alpha_0 = \alpha_0(t,x_0)$ represents the holomorphic coordinate of $x_0$, which in general will depend on $t$. 

We remark that while the strips $\HS_h(x_0)$ on the top roughly correspond 
to the image of $\HS(x_0)$ in holomorphic coordinates, this is no longer the case for the strips $\HVS_h(x_0)$ relative to $\HVS(x_0)$. While these 
are well matched on the top, in depth there may be a horizontal drift, which has been estimated in Proposition~\ref{p:switch-strips}.

The first main outcome of this section will be the equivalence
\begin{proposition}\label{p:equiv}
Assuming the uniform bound \eqref{uniform}, we have the equivalence:
\begin{equation}\label{equiv}
\|(\eta,\psi)\|_{LE} \approx \|(W,R)\|_{LE}.
\end{equation}
\end{proposition}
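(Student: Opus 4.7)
The plan is to exploit the fact that the conformal map $Z: S \to \Omega(t)$ is a small biLipschitz perturbation of the identity under the smallness assumption \eqref{uniform}. From Proposition~\ref{p:control-equiv} we have $\|W_\alpha\|_{L^\infty} \lesssim \epsilon_0$, so the Jacobian $J = |1+W_\alpha|^2 = 1+O(\epsilon_0)$. Moreover, at corresponding points we have $\eta(x) = \Im W(\alpha)$ on the top, and the pointwise identity $|\nabla\phi(x,y)|^2 = |R(\alpha,\beta)|^2$ inside the fluid, so changing variables between Eulerian and holomorphic integrals costs only this Jacobian factor.

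For the surface norms, I would change variables via $x = \alpha + \Re W(t,\alpha)$. The Eulerian strip $[x_0-1, x_0+1]$ pulls back to an $\alpha$-interval centered at $\alpha_0(t,x_0)$ whose endpoints deviate from $\alpha_0 \pm 1$ by $O(\epsilon_0)$. Each such pullback is contained in (and contains) a holomorphic strip of unit radius after fattening (resp.\ shrinking) by $O(\epsilon_0)$. This width discrepancy is harmless upon taking the supremum over $x_0$ (respectively $\alpha_0$), yielding $\|\eta\|_{LE^0} \approx \|\Im W\|_{LE^0}$.

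For the volume norms, the delicate point is that by Proposition~\ref{p:switch-strips} and its corollary, the horizontal drift between Eulerian and holomorphic vertical sections is only bounded by $\epsilon_0|\beta|$, and is thus potentially comparable to $\epsilon_0 h$ near the bottom. My plan is to rewrite $\|R\|^2_{L^2(\HVS_h(\alpha_0))} = \int_{Z(\HVS_h(\alpha_0))} (1+O(\epsilon_0))|\nabla\phi|^2 \, dx\, dy\, dt$ and then decompose the depth range dyadically, $|\beta| \sim L_k = 2^k$ for $k = 0, \ldots, \lfloor \log_2 h \rfloor$. At each depth scale $L$, the Poisson symbols from Section~\ref{ss:laplace} exponentially damp frequencies $|\xi| \gg 1/L$, so both $R(\cdot,\beta)$ at $|\beta| \sim L$ and $\nabla\phi(\cdot,y)$ at $|y| \sim L$ are essentially localized at frequencies $|\xi| \lesssim 1/L$. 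For such functions the natural horizontal variation scale is $L$, so a drift of order $\epsilon_0 L$ amounts to an $O(\epsilon_0)$ relative shift. A Bernstein-type argument then shows the $L^2$ norm over a unit Eulerian strip at the drifted center is equivalent up to $O(\epsilon_0)$ error to the corresponding holomorphic contribution. Summing over the dyadic scales and taking suprema gives the equivalence in both directions.

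The main obstacle is the lowest frequency range $|\xi| \lesssim 1/h$, where the Poisson symbol does not decay in depth and the drift at $|\beta| \sim h$ is of order $\epsilon_0 h$, a priori comparable to or larger than the window size $1$. To handle this, I would exploit the specific structure of the $X$-norm, whose Littlewood-Paley decomposition in \eqref{uniform} begins exactly at frequency $1/h$, together with the control frequency envelope $\{c_\lambda\}$ from Definition~\ref{def-fe}. A frequency envelope argument parallel to the proof of Proposition~\ref{p:control-equiv}, combined with the sharper drift estimate in Proposition~\ref{p:switch-strips}, then allows the low-frequency contribution to be absorbed within the tolerated $O(\epsilon_0)$ loss, closing the estimate.
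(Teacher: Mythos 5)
Your reading of the easy half is correct: the biLipschitz property of $x = \alpha + \Re W(\alpha)$ gives $\|\eta\|_{LE^0} \approx \|\Im W\|_{LE^0}$ directly, and the pointwise identity $|\nabla\phi|^2 = |R|^2$ together with $J = 1+O(\epsilon_0)$ is the right starting point for the volume norms. You also correctly identify the drift $\lesssim \epsilon_0|\beta|$ at depth $\beta$ as the central difficulty.

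The gap is in the step ``a drift of order $\epsilon_0 L$ amounts to an $O(\epsilon_0)$ relative shift [and] a Bernstein-type argument then shows the $L^2$ norm over a unit Eulerian strip at the drifted center is equivalent up to $O(\epsilon_0)$ error.'' This is not true as stated. Even though $R(\cdot,\beta)$ at $|\beta| \sim L$ is localized at frequencies $\lesssim L^{-1}$, the $L^2$ norm over a \emph{unit} interval is a pointwise-in-$\alpha_0$ quantity $G(\alpha_0,\beta) \approx |R(\alpha_0,\beta)|^2$; shifting $\alpha_0$ by $\epsilon_0 L$ can change $G$ by an $O(1)$ factor, since the shift is comparable to the coherence length. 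Taking the supremum over $x_0$ does not fix this, because the drift $\alpha_0(t,\beta)$ depends on $\beta$: one cannot pull the sup over $\alpha_0$ outside the $\beta$-integral and compare $\int_\beta G(\alpha_0(\beta),\beta)$ with $\sup_{\alpha_0} \int_\beta G(\alpha_0,\beta)$. Moreover, even if each dyadic depth contributed an $O(\epsilon_0)$ error, the naive dyadic sum over depths would cost $\epsilon_0 \log h$, breaking uniformity in $h$, which is exactly the feature the theorem insists on.

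What actually closes the estimate in the paper is a different mechanism, formalized in Proposition~\ref{p:switch-int}: rather than comparing integrals over shifted strips, one continuously deforms the cutoff $m'(x-x_0)$ into $m'(\alpha-\alpha_0)$ through the family $m'(d(k,\alpha))$, writes the difference via the fundamental theorem of calculus, and integrates by parts in $\alpha$. This converts the cutoff mismatch into an expression involving $\partial_\alpha\Psi$ and $W_\alpha$, weighted by the sharper drift bound $|\beta|(c_\beta + \sup |W_\alpha|)$ from Proposition~\ref{p:switch-strips}. The $c_\beta$ factor — not a bare $\epsilon_0$ — is what provides $\ell^1$ summability over dyadic depths; combined with the pointwise decay $\sup_{A_\lambda}|R|^2 \lesssim \lambda M^2$ from Lemma~\ref{l:R-LE}, the $\beta$-integral converges to $\epsilon_0 M^2$ with no logarithmic loss. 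You gesture at both of these ingredients (the frequency envelope and Proposition~\ref{p:switch-strips}) in your last paragraph, but you frame them only as a fix for the single very-low-frequency regime $|\xi| \lesssim 1/h$, when in fact the $c_\beta$ summability is needed at every depth scale $|\beta| \gtrsim 1/\epsilon_0$, and the integration-by-parts reformulation is what makes it usable.
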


Here the correspondance between the $LE^0$ norms of $\eta$ and $\Im W$ is straightforward
due to the bilipschitz property of the conformal map. However, the correspondence between the $LE^{-\frac12}$ norms of $\nabla \phi$ and $R$
is less obvious, and is proved in Proposition \ref{p:R-LE} below.

One difference between the norms for $\Im W$ and for $R$ is that they are 
expressed in terms of the size of the function on the top, respectively in depth. For the purpose of multilinear estimates later on we will need access
to both types of norms for both $\Im W$ and for $R$. Since the local energy norms are defined using the unit spatial scale, in order to describe the 
behavior of functions in these spaces we will differentiate between high frequencies and low frequencies. We begin with functions on the top:

\medskip

a) \textbf{ High frequency characterization on top.} Here we will use local norms
on the top, for which we will use the abbreviated notation
\[
\| u \|_{L^2_t H^s_{loc}} := \sup_{x_0 \in \R } \| u\|_{L^2_t H^s_{\alpha}([\alpha_0-1, \alpha_0+1])},
\]
where again $\alpha_0 = \alpha_0(x_0,t)$.

\medskip

b) \textbf{ Low frequency characterization on top.} Here we will use local norms
on the top to describe the frequency $\lambda$ or $\leq \lambda$ part of functions, where $\lambda < 1$ is a dyadic frequency. By the 
uncertainty principle such bounds should be uniform on the $\lambda^{-1}$ 
spatial scale. Then it is natural to use the following norms:
\[
\| u \|_{L^2_t L^\infty_{loc}(B_\lambda)} := \sup_{x_0 \in \R } \| u\|_{L^2_t L^\infty_{\alpha}(B_\lambda(x_0))},
\]
where
\[
B_\lambda(x_0) := \{ \alpha \in \R: \ |\alpha-\alpha_0| \lesssim \lambda^{-1} \}.
\]
We remark that the local norms in $a)$ correspond exactly to the $B_{\lambda}(x_0)$ norms with $\lambda =1$.

\bigskip
Next we consider functions in the strip which are harmonic extensions 
of functions on the top.

a1) \textbf{ High frequency characterization in strip.} Here we will use local norms on regions with depth at most $1$, for which we will use the abbreviated notation
\[
\| u \|_{L^2_t X_{loc}(A_1)} := \sup_{x_0 \in \R } \| u\|_{L^2_t X(A_1(x_0))},
\]
where $X$ will represent various Sobolev norms and
\[
A_1(x_0) := \{(\alpha,\beta): |\beta| \lesssim 1,\ |\alpha-\alpha_0| \lesssim 1 \}.
\]

\medskip

b1) \textbf{ Low frequency characterization in strip.} Here a 
frequency $\lambda < 1$ is associated with depths $|\beta| \approx \lambda^{-1}$. Thus, we define the regions
\[
A_{\lambda}(x_0) = \{ (\alpha,\beta): |\beta| \approx \lambda^{-1},\  |\alpha-\alpha_0| \lesssim \lambda^{-1} \}, \qquad \lambda < 1,
\]
and in these regions we use the uniform norms,
\[
\| u \|_{L^2_t L^\infty_{loc}(A_\lambda)} := \sup_{x_0 \in \R }
\| u\|_{L^2_t L^\infty_{\alpha,\beta}(A_\lambda(x_0))}.
\]

We will also denote 
\[
\begin{aligned}
&\mathbf{B}^1(x_0) := \{ (\alpha,\beta); \ |\alpha - \alpha_0| \leq 1, \ \beta \in [-1,0] \} ,\\
&\mathbf{B}^\lambda (x_0) := \{ (\alpha,\beta); \ |\alpha - \alpha_0| \leq \lambda^{-1}, \ \beta \in [-\lambda^{-1},0] \}, \mbox{ for } \lambda < 1. 
\end{aligned}
\]

To simplify the notations in the following analysis,
we will also denote
\begin{equation}\label{constants}
\| (\eta,\psi)\|_{LE} := M, \qquad \|(\eta,\psi)\|_{X} := \epsilon \leq \epsilon_0 \ll 1.
\end{equation}

Given the equivalence of the $X$ norms in Proposition~\ref{uniform-hol},
as well as the equivalence of the $LE$ norms in the next subsection,
these bounds also transfer to the holomorphic setting as follows:

\begin{equation}\label{constants-hol}
\| (\Im W,R)\|_{LE} \lesssim M, \qquad \|(\Im W,R)\|_{X} \lesssim \epsilon \ll 1.
\end{equation}
Furthermore, we recall that the frequency envelopes $\{ c_\lambda \}$ for $(\eta,\psi)$ in $X$ also transfer to  $(\Im W,R)$ in $X$.

\subsection{Multipliers and Bernstein's inequality in uniform norms}

Here we aim to understand how multipliers act on the uniform spaces
defined above. 

We will work with a multiplier $M_{\lambda_2}(D)$ associated to a dyadic frequency $\lambda_2$. In order to be able to use the bounds in several circumstances,  we make a weak assumption on their (Lipschitz) 
symbols $m_{\lambda_2}(\xi)$:

\begin{equation}\label{m-symbol}
\begin{aligned}
|m_{\lambda_2}(\xi)| \lesssim  \ (1+\lambda_2^{-1}|\xi|)^{-3}, \ 
\mbox{  and  }\ 
| \partial_\xi^{k+1} m_{\lambda_2}(\xi) | \lesssim c_k  \ |\xi|^{-k}(1+\lambda_2^{-1}|\xi|)^{-4}. 
\end{aligned}
\end{equation}
Examples of such symbols include
\begin{itemize}
\item Littlewood-Paley localization operators $P_{\lambda_2}$, $P_{\leq \lambda_2}$.
\item The multipliers $p_D(\beta, D)$ and $p_N(\beta, D)$ in subsection~\ref{ss:laplace}
with $|\beta| \approx \lambda_2^{-1}$.
\end{itemize}

We will separately consider high frequencies, where 
we work with the spaces  $L^2_t L^p_{loc}$, 
and low frequencies, where we work with the spaces  
$L^2_t L^p_{loc}(B_\lambda)$ associated with a dyadic 
frequency $1/h \leq \lambda \leq 1$.

\bigskip

\textbf{A. High frequencies.}
Here we consider a dyadic high frequency $1/h \leq \lambda_2 \leq 1$,
and seek to understand how multipliers $M_{\lambda_2}(D)$
associated to frequency $\lambda_2$ act on the spaces $L^2_t L^p_{loc}$.

\begin{lemma}\label{l:bern-loc-hi}
Let $1/h \leq \lambda_1,\lambda_2 \leq 1$ and $1 \leq p \leq q \leq \infty$.  Then
\begin{equation}
\| M_{\lambda_2}(D) \|_{L^2_t L^p_{loc} \to 
L^2_t L^q_{loc}} \lesssim \lambda_2^{\frac1p-\frac1q}.
\end{equation}
\end{lemma}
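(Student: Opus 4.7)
The plan is to treat $M_{\lambda_2}(D)$ as a convolution operator, extract its scaling structure, and then combine Young's inequality with a decomposition argument adapted to uniformly local spaces.

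First, the symbol bounds \eqref{m-symbol} imply that the kernel $K_{\lambda_2}$ of $M_{\lambda_2}(D)$ admits the scaling representation $K_{\lambda_2}(\alpha) = \lambda_2 K_1(\lambda_2 \alpha)$, where $K_1$ is the Fourier inverse of the rescaled symbol $\tilde m(\xi) := m_{\lambda_2}(\lambda_2 \xi)$. The bounds $|\tilde m(\xi)| \lesssim \langle\xi\rangle^{-3}$ together with the corresponding Lipschitz-type derivative control make $K_1 \in L^1 \cap L^\infty(\R)$ with enough polynomial spatial decay to be integrable. A direct rescaling argument then yields
\[
\|K_{\lambda_2}\|_{L^r(\R)} \approx \lambda_2^{1 - 1/r}, \qquad r \in [1,\infty].
\]

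Second, I would appeal to a Young-type inequality adapted to the uniformly local spaces: for a kernel $K$ with sufficient spatial decay and for $1 \leq p \leq q \leq \infty$,
\[
\|K * f\|_{L^q_{loc}} \lesssim \|K\|_{L^r(\R)} \|f\|_{L^p_{loc}}, \qquad \frac{1}{r} = 1 + \frac{1}{q} - \frac{1}{p}.
\]
Specializing to $K = K_{\lambda_2}$ and applying the scaling identity produces exactly the claimed bound, with the Bernstein exponent $\lambda_2^{1 - 1/r} = \lambda_2^{1/p - 1/q}$. The $L^2_t$ integration commutes with the spatial multiplier and factors through trivially.

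To establish the uniformly local Young estimate, I would decompose $f = \sum_{n\in\Z} f_n$ with $f_n = f\chi_{[n,n+1]}$, so that $\|f_n\|_{L^p} \leq \|f\|_{L^p_{loc}}$. For a fixed target unit interval $I$ centered near $x_0$, Young's inequality on $\R$ yields $\|K_{\lambda_2}* f_n\|_{L^q(\R)} \lesssim \|K_{\lambda_2}\|_{L^r} \|f\|_{L^p_{loc}}$ for each piece, with the desired exponent. Far-range pieces ($|n-x_0| \gg \lambda_2^{-1}$) are controlled via the pointwise decay $|K_{\lambda_2}(\alpha-\alpha')|\lesssim \lambda_2 \langle \lambda_2(\alpha-\alpha')\rangle^{-N}$, producing absolutely summable geometric tails; near-range pieces are aggregated through a Schur-test reformulation using the bounds $\int |K_{\lambda_2}(\alpha - \alpha')|\, d\alpha \approx 1$ and $\int |K_{\lambda_2}(\alpha - \alpha')|\, d\alpha' \approx 1$, uniformly in the free variable.

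The main obstacle is preserving the sharp factor $\lambda_2^{1/p-1/q}$ through the sum over near pieces: a naive triangle inequality over the $\sim \lambda_2^{-1}$ such pieces would produce a spurious loss of $\lambda_2^{-1}$ that would destroy the Bernstein exponent. The Schur-test step compensates precisely for this multiplicity, since the kernel has unit $L^1$ mass concentrated on the scale $\lambda_2^{-1}$, and this is the technical heart of the argument that must be balanced against the Young exponent.
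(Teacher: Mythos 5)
Your plan has the right outline for the \emph{near/far} split, but the step that is supposed to carry the weight — aggregating the near pieces by a Schur test — cannot produce the claimed gain. A Schur bound based on $\int |K_{\lambda_2}(\alpha-\alpha')|\,d\alpha' \approx 1$ and $\int |K_{\lambda_2}(\alpha-\alpha')|\,d\alpha \approx 1$ gives $L^p\to L^p$ boundedness with operator norm $O(1)$; it contains no mechanism for gaining the Bernstein factor $\lambda_2^{1/p-1/q}$ when $p<q$. In fact the intermediate inequality you propose,
\[
\|K*f\|_{L^q_{loc}} \lesssim \|K\|_{L^r(\R)}\,\|f\|_{L^p_{loc}}, \qquad \tfrac1r = 1 + \tfrac1q - \tfrac1p,
\]
is false for wide kernels. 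Take $K = K_{\lambda_2}$ with $\lambda_2 \ll 1$ and $m_{\lambda_2}(0) = 1$ (e.g.\ $P_{\leq \lambda_2}$ or $p_N(\beta,D)$), and $f \equiv 1$. Then $K*f \equiv m_{\lambda_2}(0) = 1$, so $\|K*f\|_{L^q_{loc}} \approx 1$, while $\|K\|_{L^r}\approx \lambda_2^{1-1/r} \to 0$ for $r>1$. So no amount of bookkeeping over the near range can rescue the estimate when $\lambda_2<1$ and $p<q$, and the Schur step does not close the argument as you have set it up.

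What resolves this is not Schur but the fact that in the regime where the lemma is actually proved and used, $\lambda_2 \geq 1$ (the paper reduces the lemma to the case $\lambda_1 = 1$ of Lemma~\ref{l:bern-loc}~(a), which requires $\lambda_1 \leq \lambda_2$, and every application in Section~\ref{s:switch} takes $\lambda \geq 1$). In that regime the kernel is concentrated on scale $\lambda_2^{-1} \leq 1$, so your ``near'' range $|n - x_0| \lesssim \lambda_2^{-1}$ is just $O(1)$ unit pieces; the spurious $\lambda_2^{-1}$ multiplicity never appears. There the plan does collapse to the paper's argument: Young's inequality on $\R$ for the $O(1)$ near pieces gives $\|K_{\lambda_2}\|_{L^r} \approx \lambda_2^{1/p - 1/q} \geq 1$, and the far pieces contribute $\sum_{|j|>2} \lambda_2^{-1} j^{-2} \lesssim \lambda_2^{-1} \leq 1 \leq \lambda_2^{1/p-1/q}$ via the pointwise kernel decay, so they are subsumed. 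The fix is therefore to drop the Schur test entirely, restrict to $\lambda_2 \geq 1$, and observe that the near range is then a single unit block — Young alone is sharp there.
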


\bigskip

\textbf{B. Low frequencies.}
Here we consider two dyadic low frequencies $1/h \leq \lambda_1, \lambda_2 \leq 1$,
and seek to understand how multipliers $M_{\lambda_2}(D)$
associated to frequency $\lambda_2$ act on the spaces $L^2_t L^p_{loc}(B_{\lambda_1})$. For such multipliers we have:

\begin{lemma}\label{l:bern-loc}
Let $1/h \leq \lambda_1,\lambda_2 \leq 1$ and $1 \leq p \leq q \leq \infty$.

a) Assume that $\lambda_1 \leq \lambda_2$. Then
\begin{equation}
\| M_{\lambda_2}(D) \|_{L^2_t L^p_{loc}(B_{\lambda_1}) \to 
L^2_t L^q_{loc}(B_{\lambda_1})} \lesssim \lambda_2^{\frac1p-\frac1q}.
\end{equation}

b) Assume that $ \lambda_2 \leq \lambda_1$. Then
\begin{equation}
\| M_{\lambda_2}(D) \|_{L^2_t L^p_{loc}(B_{\lambda_1}) \to 
L^2_t L^q_{loc}(B_{\lambda_2})} \lesssim \lambda_1^{\frac1p} \lambda_2^{-\frac1q}.
\end{equation}
\end{lemma}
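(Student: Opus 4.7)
The approach proceeds in three stages: first extract a pointwise kernel bound for $M_{\lambda_2}(D)$ from the symbol hypotheses, then derive the global Bernstein inequality via Young's convolution inequality, and finally pass to the local norms using a covering/summation argument that exploits the kernel decay.

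The plan for the kernel bound is to decompose the symbol dyadically, writing $m_{\lambda_2} = \sum_{j\in\xZ} m_{\lambda_2}\psi(2^{-j}\xi)$ with $\psi$ supported in $|\xi|\sim 1$. For each piece at frequency $2^j$, the symbol bound $|m_{\lambda_2}(\xi)|\lesssim (1+\lambda_2^{-1}|\xi|)^{-3}$ controls the amplitude, while the cutoff $\psi(2^{-j}\xi)$ localizes away from the origin so the derivative bound $|\partial^{k+1} m_{\lambda_2}|\lesssim |\xi|^{-k}(1+\lambda_2^{-1}|\xi|)^{-4}$ can be used cleanly for integration by parts (the $|\xi|^{-k}$ factor is harmless at scale $2^j$). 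This gives each dyadic kernel piece the bound $|K_j(x)|\lesssim 2^j (1+2^j|x|)^{-N}(1+\lambda_2^{-1}2^j)^{-3}$, and summing in $j$ yields the effective kernel estimate $|K_{\lambda_2}(x)|\lesssim \lambda_2(1+\lambda_2|x|)^{-N}$ for any fixed large $N$. In particular, $\|K_{\lambda_2}\|_{L^r(\xR)}\lesssim \lambda_2^{1-1/r}$ for all $r\geq 1$.

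The global Bernstein bound $\|M_{\lambda_2}(D) u\|_{L^q(\xR)}\lesssim \lambda_2^{1/p-1/q}\|u\|_{L^p(\xR)}$ then follows from Young's inequality with the exponent relation $1/r = 1+1/q-1/p$. Since $M_{\lambda_2}(D)$ acts only in space, the $L^2_t$ integration passes through. To upgrade to the local norms in Part A, I would partition $u = \sum_k u\chi_k$ on unit intervals $I_k$: applying the global Bernstein inequality together with the kernel decay, the contribution of $u\chi_k$ to the output on $I_0$ is bounded by $\lambda_2^{1/p-1/q}(1+\lambda_2|k|)^{-N}\|u\|_{L^p(I_k)}$, and summing over $k$ against $\sup_k\|u\|_{L^p(I_k)}$ is absorbed by a uniform geometric constant. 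For Part B(a) with $\lambda_1\leq \lambda_2$, the same argument applies verbatim using balls of radius $\lambda_1^{-1}\geq \lambda_2^{-1}$, which comfortably accommodate the effective support of $K_{\lambda_2}$. For Part B(b) with $\lambda_2\leq \lambda_1$ the kernel spreads across approximately $\lambda_1/\lambda_2$ input balls; I would estimate $|M_{\lambda_2}u(x)|$ on an output ball $B_{\lambda_2^{-1}}$ by summing contributions from neighboring input balls, using Hölder on each input ball together with $\|K_{\lambda_2}\|_{L^\infty}\lesssim \lambda_2$ to obtain an $L^\infty$ bound that then gives an $L^q$ bound via the volume factor $\lambda_2^{-1/q}$, producing precisely $\lambda_1^{1/p}\lambda_2^{-1/q}$.

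The main obstacle is establishing the kernel decay under the relatively weak derivative hypothesis, because $|\partial^2 m_{\lambda_2}|$ is permitted to have a non-integrable $|\xi|^{-1}$-type singularity at the origin, which would obstruct a naive second-order integration by parts on $K_{\lambda_2}$ directly. The dyadic decomposition above is precisely designed to circumvent this: on each dyadic piece the relevant frequency scale is bounded below by $2^j$, so the singular factor $|\xi|^{-k}$ contributes only an additional $2^{-jk}$, preserving the expected decay of each $K_j$. The summability of the pieces across $j$, both for small and large $j$ relative to $\lambda_2$, then follows directly from the two-sided control $(1+\lambda_2^{-1}2^j)^{-3}$ in the symbol bound, yielding the required clean kernel estimate without logarithmic losses.
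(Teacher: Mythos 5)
Your proposal follows essentially the same route as the paper's own proof: derive the pointwise kernel decay estimate $|K_{\lambda_2}(\alpha)|\lesssim \lambda_2/(1+\lambda_2^2\alpha^2)$ from the symbol hypotheses, then cover by width-$\lambda_1^{-1}$ strips and sum using Young's inequality (part (a)), reducing to the endpoint $q=\infty$ plus H\"older (part (b)). The one piece of extra detail you supply is the dyadic symbol decomposition justifying the kernel bound, which the paper simply asserts; note that only the quadratic decay rate $N=2$ used in the paper is actually needed (and is safely within reach), whereas your claimed arbitrary polynomial decay in $N$ is more delicate to extract from the permitted $|\xi|^{-k}$ singularity in the symbol derivatives and is not required anywhere in the argument.
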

We remark that part (a) is nothing but the classical Bernstein's
inequality in disguise, as the multiplier $M_{\lambda_2}
$ does not mix
$\lambda_1^{-1}$ intervals. Part (b) is the more interesting one,
where the $\lambda_1^{-1}$ intervals are mixed.
\begin{proof}[Proof of Lemmas~\ref{l:bern-loc-hi},\ref{l:bern-loc}]
We first note that Lemma~\ref{l:bern-loc-hi} can be viewed a a particular
case of Lemma~\ref{l:bern-loc} (a) with $\lambda_1 = 1$. So in what follows we will only prove Lemma~\ref{l:bern-loc}.

A direct consequence of the symbol bounds \eqref{m-symbol}
is the fact that the kernel $K_{\lambda_2}$ of $M_{\lambda_2}(D)$ satisfies 
the bound
\begin{equation}\label{m-kernel}
|K_{\lambda_2}(\alpha)| \lesssim \frac{\lambda_2}{1+ \lambda_2^2 \alpha^2}.
\end{equation}
We will show that \eqref{m-kernel} yields the conclusion of the Lemma.

a)  We fix $x_0 \in \R$ and seek to estimate 
\[
\| M_{\lambda_2}(D) u\|_{L^2_t L^q_{loc}(B_{\lambda_1}(x_0))}.
\]
For that we cover $S \times [0,T]$ with width $\lambda_1^{-1}$ strips,
\[
S \times [0,T]= \bigcup_{j \in \mathbb{Z}} S_{\lambda_1}(x_0 + j \lambda_1^{-1}) .
\]
For $(t,\alpha) \in S_{\lambda_1}(x_0)$ we write 
\[
|M_{\lambda_2}(D) u (t,\alpha)| \lesssim |u| \ast \frac{\lambda_2}{1+ \lambda_2^2 \alpha^2}
\lesssim \sum_j (  1_{S_{\lambda_1}(x_0 + j \lambda_1^{-1})}|u|) \ast \frac{\lambda_2}{1+ \lambda_2^2 \alpha^2}.
\]
Now we consider two cases. If $|j| \leq 2$ then we simply use Young's inequality. This no longer suffices for all $j$ because of the need 
for summation in $j$. However, for such $j$ we can use the kernel
decay instead. If $(t,\alpha_1) \in  S_{\lambda_1}(x_0 + j \lambda_1^{-1}) $
then 
\[
|\alpha - \alpha_1| \approx j \lambda_1^{-1}.
\]
Therefore 
\[
\frac{\lambda_2}{1+ \lambda^2_2 (\alpha-\alpha_1)^2} 
\approx \lambda_2^{-1} j^{-2} \lambda_1^{2}.
\]
Using Young's inequality yields 
\[
\| (  1_{S_{\lambda_1}(x_0 + j \lambda_1^{-1})}|u|) \ast \frac{\lambda_2}{1+ \lambda_2^2 \alpha^2} \|_{L^2_t L^q(B_{\lambda_1}(x_0))} \lesssim
\lambda_2^{\frac1p-\frac1q} \frac{ \lambda_1^2}{\lambda_2^2 j^2}
\| 1_{S_{\lambda_1}(x_0 + j \lambda_1^{-1})}u  \|_{L^2_t L^p(B_{\lambda_1}(x_0+j \lambda_1^{-1})}. 
\]
Now the $j$ summation is straightforward.

b) It suffices to consider the case $q = \infty$, 
and then use H\"older's inequality. Here we seek to estimate 
\[
\| M_{\lambda_2}(D) u\|_{L^2_t L^q_{loc}(B_{\lambda_2}(x_0))}.
\]
We use the same covering as above, and for $(t,\alpha) \in S_{\lambda_1}(x_0)$ we write 
\[
|M_{\lambda_2}(D) u (t,\alpha)| \lesssim |u| \ast \frac{\lambda_2}{1+ \lambda_2^2 \alpha^2}
\lesssim \sum_j (  1_{S_{\lambda_1}(x_0 + j \lambda^{-1})}|u|) \ast \frac{\lambda_2}{1+ \lambda_2^2 \alpha^2}.
\]
This time H\"older's inequality yields
\[
\| (  1_{S_{\lambda_1}(x_0 + j \lambda_1^{-1})}|u|) \ast \frac{\lambda_2}{1+ \lambda_2^2 \alpha^2}\|_{L^2_t L^\infty(B_{\lambda_2}(x_0))}
\lesssim  \frac{\lambda_1^{\frac{1}p-1} \lambda_2}{1+ \lambda_2^2 \lambda_1^{-2} j^2}
\| 1_{S_{\lambda_1}(x_0 + j \lambda_1^{-1})}u  \|_{L^2_t L^p(B_{\lambda_1}(x_0+j \lambda_1^{-1})},
\]
and the result follows again after $j$ summation.
\end{proof}

\subsection{Switching strips}

At several points in our analysis we need to switch local energy 
type integrals from the Euclidean to the holomorphic setting.
Here we compute this transition systematically, establishing bounds
that will be repeatedly used in the sequel.

The set-up is as follows. We consider some smooth function $\Psi$
in the fluid domain, which can be viewed either in the Eulerian or the
holomorphic coordinates. For such a function, we seek to compare the following two integrals:
\[
\mI_{E} := \int_0^T \iint_{\Omega (t)} m'(x-x_0) \Psi(x,y)\,dy dx dt,
\]
respectively
\[
\mI_{H} := \int_0^T \iint_{\Omega (t)} m'(\alpha-\alpha_0) \Psi(\alpha,\beta)\,  d\beta d\alpha dt.
\]
To fix the notations, the Eulerian strip is centered at $x= x_0$, which on the top corresponds to  $\alpha = \alpha_0$. However, in depth the line $x = x_0$ corresponds to a curve $\alpha = \alpha_0(t,\beta)$.
We will need to account for this difference. Our result is as follows:

\begin{proposition}\label{p:switch-int}
We have
\begin{equation}
\begin{split}
|\mI_E - \mI_H| \lesssim & \ \int_0^T \iint_{A_1(x_0)}
(|W_\alpha|+|\Re W(\alpha,\beta) - \Re W(\alpha_0,0)|)\,
|\Psi(\alpha,\beta)|\,
d\beta d\alpha  dt
\\ & \ +
\int_0^T \int_{\beta \in [-h,-1]} ( c_\beta +\sup_{|\alpha - \alpha_0|} 
|W_\alpha|)
\sup_{|\alpha - \alpha_0| < |\beta|} ( |\Psi| + |\beta| |\Psi_\alpha|)\,
 d\beta dt,
\end{split}
\end{equation}
\end{proposition}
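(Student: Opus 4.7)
The plan is to change coordinates in $\mI_E$ from Eulerian to holomorphic, so that both integrals are expressed on the same holomorphic strip, and then to estimate the pointwise difference of integrands in two regions: a shallow region $|\beta|\le 1$ and a deep region $|\beta|\in[1,h]$. Under the conformal map, the area element picks up $J=|1+W_\alpha|^2$, and $x-x_0$ becomes $(\alpha-\alpha_0)+D(\alpha,\beta)$ with $D(\alpha,\beta):=\Re W(\alpha,\beta)-\Re W(\alpha_0,0)$. Subtracting $\mI_H$, I would write
\[
\mI_E-\mI_H=\int_0^T\!\iint\Big[(J-1)\,m'(\alpha-\alpha_0+D)\,\Psi+\bigl(m'(\alpha-\alpha_0+D)-m'(\alpha-\alpha_0)\bigr)\,\Psi\Big]\,d\alpha\,d\beta\,dt,
\]
and then bound the Jacobian piece by $|W_\alpha||\Psi|$ (using $|J-1|\lesssim|W_\alpha|$) and the shift piece by $|D||\Psi|$ via the mean value theorem applied to $m'$. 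The support analysis then dictates where each term lives.

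In the shallow region, Proposition~\ref{p:control-equiv} gives the uniform pointwise bound $\|W_\alpha\|_{L^\infty}\lesssim\epsilon_0$, so the drift $|D|$ stays of size $O(\epsilon_0)$ and hence both $m'(\alpha-\alpha_0)$ and $m'(\alpha-\alpha_0+D)$ are supported inside a fixed $O(1)$-neighborhood of $\alpha_0$, i.e.\ inside $A_1(x_0)$. Integrating the two pointwise bounds directly produces the first error term, i.e.
\[
\int_0^T\!\iint_{A_1(x_0)}(|W_\alpha|+|D|)\,|\Psi|\,d\alpha\,d\beta\,dt.
\]

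The deep region is the main obstacle, because the Corollary following Proposition~\ref{p:switch-strips} only gives $|D|=O(|\beta|)$, so the support in $\alpha$ may now be anywhere within an interval of length $O(|\beta|)$ around $\alpha_0$. The strategy is to split
\[
D(\alpha,\beta)=\bigl[\Re W(\alpha,\beta)-\Re W(\alpha_0,\beta)\bigr]+\bigl[\Re W(\alpha_0,\beta)-\Re W(\alpha_0,0)\bigr],
\]
bound the first summand by $|\alpha-\alpha_0|\cdot\sup|W_\alpha|$, and apply Proposition~\ref{p:switch-strips} to the second to extract the large affine component $-\beta\Im W_\alpha(\alpha_0,\beta)$, which is constant in $\alpha$, leaving only a $c_\beta$ residual. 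Since $m'$ has unit support, the $\alpha$-integration is confined to an $O(1)$-interval positioned somewhere within $|\alpha-\alpha_0|\lesssim|\beta|$, and on that interval a first-order Taylor expansion replaces $\Psi$ by a reference value plus a correction of size $|\beta|\sup|\Psi_\alpha|$ (the affine drift is absorbed into the choice of reference point). Collecting everything one arrives at the second stated bound
\[
\int_0^T\!\int_{-h}^{-1}\bigl(c_\beta+\sup|W_\alpha|\bigr)\sup_{|\alpha-\alpha_0|<|\beta|}\bigl(|\Psi|+|\beta||\Psi_\alpha|\bigr)\,d\beta\,dt.
\]

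The hard part is really the deep region: one must use Proposition~\ref{p:switch-strips} to isolate the $c_\beta$-small component of the drift from its large affine component, and then absorb the affine component into the Taylor expansion of $\Psi$, which is exactly what forces the factor $|\beta|$ in front of $\Psi_\alpha$ and the enlarged supremum region $|\alpha-\alpha_0|<|\beta|$ in the final estimate.
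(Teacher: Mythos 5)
Your setup — change coordinates, peel off the Jacobian term via $|J-1|\lesssim|W_\alpha|$, and compare the two bump functions — agrees with the paper, and your argument for the shallow region $|\beta|\le 1$ (Lipschitz bound on $m'$) is exactly the paper's. The gap is in the deep region.

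For $|\beta|\ge 1$ your idea is to subtract the affine part $A=-\beta\Im W_\alpha(\alpha_0,\beta)$ of the drift via Proposition~\ref{p:switch-strips}, Taylor-expand $\Psi$ about translated reference points, and collect a correction of size $|\beta|\sup|\Psi_\alpha|$. This does not give the stated inequality. Notice that the claim requires the small coefficient $(c_\beta + \sup|W_\alpha|)$ in front of \emph{both} $\sup|\Psi|$ and $|\beta|\sup|\Psi_\alpha|$. Your Taylor remainders are each integrated against a single translated bump of unit support; each such remainder is genuinely of size $O(1)\sup|\Psi_\alpha|$ (or, if you try to push one more Taylor order to exploit the affine shift, you trade it for $\sup|\Psi_{\alpha\alpha}|$, which does not appear in the statement). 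Either way you lose the factor $c_\beta+\sup|W_\alpha|$ on the $|\beta||\Psi_\alpha|$ term, and that factor is essential in the applications (it is precisely what produces the $\epsilon$ smallness in the $Err_4$ bound). Moreover, the naive mean-value estimate $|m'(\cdot+D)-m'(\cdot)|\lesssim|D|$ also fails here: when $|D|\gtrsim 1$ the translated bump's support is disjoint from the original one, MVT degenerates to the trivial bound $\sup|\Psi|$, and no smallness is gained.

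The paper avoids this with a different mechanism. It introduces the homotopy $d(k,\alpha)=\alpha-\alpha_0+k\,(\Re W(\alpha,\beta)-\Re W(\alpha_0,0))$, writes $m'(x-x_0)-m'(\alpha-\alpha_0)=\int_0^1 m''(d(k,\alpha))\,D(\alpha)\,dk$, converts $m''$ to $\partial_\alpha m'(d(k,\alpha))$ divided by a Jacobian factor, and integrates by parts in $\alpha$. After integration by parts the $\alpha$-derivative lands on the product $D\Psi/(1+k\Re W_\alpha)$. The crucial point you miss is that $\partial_\alpha D=\Re W_\alpha(\alpha,\beta)$ is bounded by $\sup|W_\alpha|$ \emph{without} any $|\beta|$ factor; together with $|D|\lesssim|\beta|(c_\beta+\sup|W_\alpha|)$ from Proposition~\ref{p:switch-strips} and $|W_{\alpha\alpha}|\lesssim\epsilon|\beta|^{-1}$, this is exactly what produces $(c_\beta+\sup|W_\alpha|)(|\Psi|+|\beta||\Psi_\alpha|)$. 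In short, the smallness must come from the $\alpha$-derivative of the drift (gained by integration by parts), not from the drift itself, and the Taylor-expansion route does not see this.
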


\begin{proof}
We switch $\mI_E$ to holomorphic coordinates by changing variables.
This yields
\[
\mI_E = \int_0^T \iint_{\Omega (t)} J m'(x-x_0) \Psi (\alpha,\beta) \, d\beta d\alpha dt.
\]
Since $|J-1| \lesssim |W_\alpha|$, we can harmlessly replace $J$ by $1$,
and then we are left with the difference
\[
 \int_0^T \iint_{\Omega (t)} (m'(x-x_0) - m'(\alpha-\alpha_0)) \Psi(\alpha,\beta)\, d\beta d\alpha dt.
\]
Here we have 
\[
x-x_0 = \alpha-\alpha_0 + \Re W(\alpha,\beta) - \Re W(\alpha_0,0).
\]
The function $m'$ is supported in the unit interval, and $W$ has an $\epsilon$ small Lipschitz constant ($\epsilon$ is the control norm defined in the Introduction). Then, within the support of $m'(x-x_0)$ we must 
have 
\begin{equation} \label{near}
|\alpha -\alpha_0| \lesssim \epsilon |\beta| + 1.
\end{equation}
We now divide the analysis in two cases depending on the size of $\beta$.

\bigskip

\textbf{ a) Small depth, $-1 < \beta < 0$.}
Here we simply use the Lipschitz property of $m'$ to get
\[
|m'(x-x_0) - m'(\alpha-\alpha_0)| \lesssim |\Re W(\alpha,\beta) - \Re W(\alpha_0,0)|.
\]

\bigskip

\textbf{ b) Large depth, $-h < \beta < -1$.}
Here we continuously switch between the two bumps 
$m'(x-x_0)$ and $m'(\alpha - \alpha_0)$.
Denoting 
\[
d(k,\alpha) := \alpha-\alpha_0 + k(\Re W(\alpha,\beta) - \Re W(\alpha_0,0)),
\]
we consider the family of bump functions $m'(d(k,\alpha))$ with $k \in [0,1]$.
Within the support of these bump functions we still have
$|\alpha - \alpha_0| \leq |\beta|$, therefore, using also
Proposition~\ref{p:switch-strips}
\begin{equation}\label{dka}
| \Re W(\alpha,\beta) - \Re W(\alpha_0,0)| \lesssim |\beta|(
c_\beta + \sup_{|\alpha - \alpha_0| \leq |\beta|} |W_\alpha|).
\end{equation}

Using the functions $m'(d(k,\alpha))$ we have
\[
\begin{split}
m'(x-x_0) -m'(\alpha - \alpha_0) = & \ \int_0^1 
\frac{d}{dk} m'(d(k,\alpha))\, dk
\\
= & \ \int_0^1 
 m''(d(k,\alpha)) 
(\Re W(\alpha,\beta) - \Re W(\alpha_0,0))\, dk
\\
= & \ \int_0^1 \partial_\alpha  m'(d(k,\alpha)) 
\frac{\Re W(\alpha,\beta) - \Re W(\alpha_0,0)}{1+\Re W_\alpha}\, dk .
\end{split}
\]
Hence, integrating by parts we get
\[
\begin{split}
D(\beta,t):= & \ \int (m'(x-x_0) - m'(\alpha-\alpha_0)) \Psi(\alpha,\beta)\, d\alpha
\\
= & \ - \int_{0}^1 \int  m'(d(k,\alpha)) \partial_\alpha \left[
\frac{\Re W(\alpha,\beta) - \Re W(\alpha_0,0)}{1+\Re W_\alpha} \Psi(\alpha,\beta)\right]\, d\alpha  dk.
\end{split}
\]
Here $m'$ is a bump function with unit integral, so taking absolute values we get
\[
|D(\beta,t)| \lesssim \sup_{|\alpha - \alpha_0| < |\beta|} 
|W_\alpha| |\Psi|  + | \Re W(\alpha,\beta) - \Re W(\alpha_0,0)| ( |\Psi_\alpha| + |W_{\alpha\alpha}| |\Psi|).
\]
In this context we have 
\[
|W_{\alpha\alpha}| \lesssim \epsilon |\beta|^{-1},
\]
so the conclusion follows from \eqref{dka}.

\end{proof}

\subsection{Bounds for \texorpdfstring{$\eta = \Im W$}{}.}

Here we have the straightforward equivalence
\begin{equation}\label{W-eta-le}
\| \eta \|_{LE^0} \approx \| \Im W\|_{LE^0}
\end{equation}
as $\eta$ and $\Im W$ are one and the same function up to a biLipschitz 
change of coordinates. Our first aim will be to understand the bounds for the low frequencies of $\Im W$ on the top:

\begin{lemma}\label{l:W-LE-top}
For each dyadic frequency $\lambda < 1$ we have
\begin{equation}\label{W-LE-top}
\|\Im W_{\leq \lambda} \|_{L^2_t L^\infty_{loc}(B_\lambda)} \lesssim \| \Im W\|_{LE^0}. 
\end{equation}
\end{lemma}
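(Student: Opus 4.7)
The plan is to deduce this estimate as an immediate application of the low-frequency Bernstein-type inequality in Lemma~\ref{l:bern-loc}(b). The key observation is that $\|\Im W\|_{LE^0}$, being the supremum over $x_0$ of the $L^2_t L^2_\alpha$ norm on the unit-scale strips $\HS_h(x_0)$, coincides with the notation $\|\Im W\|_{L^2_t L^2_{loc}}$ from subsection~6.1, or equivalently with $\|\Im W\|_{L^2_t L^2_{loc}(B_1)}$ (the $\lambda_1 = 1$ case of the family of local norms defined on balls $B_{\lambda_1}(x_0)$).

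Next, I would verify that the Littlewood-Paley multiplier $P_{\leq \lambda}$ fits the framework of that lemma. Its symbol is a smooth cutoff $\chi(\xi/\lambda)$, so its convolution kernel is the Schwartz function $\lambda \check\chi(\lambda\, \cdot\,)$, which plainly satisfies the pointwise kernel decay \eqref{m-kernel} at scale $\lambda^{-1}$ used in the proof of Lemma~\ref{l:bern-loc}. With this in hand I would invoke Lemma~\ref{l:bern-loc}(b) with $\lambda_1 = 1$, $\lambda_2 = \lambda$ (admissible since $\lambda < 1$), $p = 2$ and $q = \infty$, obtaining
\[
\|P_{\leq \lambda} \Im W\|_{L^2_t L^\infty_{loc}(B_\lambda)} \lesssim \lambda_1^{1/2}\, \lambda_2^{0}\, \|\Im W\|_{L^2_t L^2_{loc}(B_1)} = \|\Im W\|_{LE^0},
\]
which is exactly \eqref{W-LE-top}.

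There is essentially no real obstacle here, as all of the analytic content has already been absorbed into Lemma~\ref{l:bern-loc}: the low-frequency gain from $L^2$ to $L^\infty$ in passing from $B_1$ to $B_\lambda$ is precisely balanced by the $\lambda^{-1}$ enlargement of the target spatial ball, which is why the implicit constant is independent of $\lambda$ (and of $h$). The only minor point worth emphasizing is that one should apply the lemma directly to the aggregate projector $P_{\leq \lambda}$, whose kernel satisfies \eqref{m-kernel} with the single parameter $\lambda$, rather than summing dyadic pieces $P_\mu$ over $1/h \leq \mu \leq \lambda$, since the latter route would produce an unnecessary logarithmic loss in $\lambda h$ that would destroy the claimed uniformity in the depth $h$.
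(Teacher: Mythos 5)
Your proof is correct and follows exactly the route the paper takes: identify $LE^0$ with $L^2_tL^2_{loc}(B_1)$ and apply Lemma~\ref{l:bern-loc}(b) with $\lambda_1=1$, $\lambda_2=\lambda$, $p=2$, $q=\infty$, noting the resulting constant $\lambda_1^{1/p}\lambda_2^{-1/q}=1$ is uniform in $\lambda$ and $h$. The extra remarks you add (the kernel bound \eqref{m-kernel} for $P_{\leq\lambda}$, and why one should not sum dyadic pieces) are sound and merely spell out what the paper leaves implicit.
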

\begin{proof}
Since $LE^0 = L^2_t L^2_{loc}(B_1)$, this bound is a direct application 
of Lemma~\ref{l:bern-loc} (b).
\end{proof}

On the other hand, for nonlinear estimates, we also need bounds in depth,
precisely over the regions $A_{\lambda}(x_0)$.  There we have

\begin{lemma}\label{l:theta-LE-loc}
For each dyadic frequency $\lambda < 1$ we have
\begin{equation}\label{theta-LE-loc}
\|\Im W \|_{L^2_t L^\infty_{loc}(A_\lambda)} +\lambda^{-1}\|W_{\alpha} \|_{L^2_t L^\infty_{loc}(A_\lambda)} \lesssim \| \Im W\|_{LE^0}. 
\end{equation}
\end{lemma}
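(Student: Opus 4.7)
The plan is to exploit the fact that $\Im W$ is the Dirichlet harmonic extension of its top value in the strip: since $Z$ maps $\beta=-h$ to $y=-h$, we have $\Im W|_{\beta=-h} = 0$, so $\Im W$ solves \eqref{DirichletProblem}. This yields the Poisson-type representation $\Im W(\alpha,\beta) = P_D(\beta, D)[\Im W|_{\beta=0}](\alpha)$ with symbol $p_D(\xi,\beta)$ satisfying $|p_D(\xi,\beta)| \lesssim e^{-c|\beta\xi|}$ (compare the proof of Proposition~\ref{p:extension}). For $|\beta| \approx \lambda^{-1}$ this is exactly the multiplier condition \eqref{m-symbol} at effective frequency $\lambda$, so the kernel $K_\beta$ obeys $|K_\beta(\alpha)| \lesssim \lambda(1+\lambda|\alpha|)^{-N}$ for every $N$.

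To prove the first half of \eqref{theta-LE-loc}, I would write
\[
|\Im W(\alpha, t, \beta)| \leq \int |K_\beta(\alpha - \alpha')| \, |\Im W(\alpha', t, 0)| \, d\alpha',
\]
and take the supremum over $(\alpha,\beta) \in A_\lambda(x_0)$. Since this region has diameter $\lesssim \lambda^{-1}$, the maximal kernel $K^*(\alpha') := \sup_{(\alpha,\beta) \in A_\lambda(x_0)} |K_\beta(\alpha-\alpha')|$ retains the same decay centered at $\alpha_0$, i.e.\ $K^*(\alpha') \lesssim \lambda(1+\lambda|\alpha'-\alpha_0|)^{-N}$. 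Partitioning the line into unit intervals $I_j$ centered at $\alpha_0 + j$, applying Cauchy--Schwarz on each $I_j$, then taking $L^2_t$ and summing in $j$, gives
\[
\|\Im W\|_{L^2_t L^\infty(A_\lambda(x_0))} \leq \sum_j \|K^*\|_{L^2(I_j)} \|\Im W|_{\beta=0}\|_{L^2_{t,\alpha}(I_j)} \lesssim \bigg(\sum_j \|K^*\|_{L^2(I_j)}\bigg) \|\Im W\|_{LE^0}.
\]
A direct check shows $\sum_j \|K^*\|_{L^2(I_j)} \lesssim 1$: the range $|j| \lesssim \lambda^{-1}$ contributes $\approx \lambda \cdot \lambda^{-1}$, and the tail $|j| \gg \lambda^{-1}$ contributes a convergent sum. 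Taking the supremum over $x_0$ then delivers the first estimate.

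For the bound on $W_\alpha$, I would invoke the holomorphy of $W$: the Cauchy--Riemann equations give $W_\alpha = \Im W_\beta + i \Im W_\alpha$, so $|W_\alpha| \leq |(\partial_\alpha \Im W, \partial_\beta \Im W)|$, and both derivatives are obtained from $\Im W|_{\beta=0}$ through the multipliers $\partial_\alpha P_D(\beta,D)$ and $\partial_\beta P_D(\beta,D)$. Since $|\xi p_D(\xi,\beta)|$ and $|\partial_\beta p_D(\xi,\beta)|$ still lie in the class \eqref{m-symbol} at effective frequency $\lambda$ but with an extra amplitude factor of $\lambda$, the associated kernels are majorized pointwise by $\lambda |K_\beta|$. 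Rerunning the Cauchy--Schwarz/summation argument above yields $\|W_\alpha\|_{L^2_t L^\infty(A_\lambda)} \lesssim \lambda \|\Im W\|_{LE^0}$, which is the second term of \eqref{theta-LE-loc} after dividing by $\lambda$.

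The only (minor) bookkeeping task is justifying that the maximal kernel $K^*$ inherits the decay of $K_\beta$, but this is immediate since $A_\lambda(x_0)$ has diameter comparable to the effective kernel scale $\lambda^{-1}$. Alternatively, at each fixed $\beta$ one can apply Lemma~\ref{l:bern-loc}(b) with $\lambda_1 = 1$, $\lambda_2 = \lambda$, $p=2$, $q=\infty$, which already gives $\|\Im W(\cdot,\cdot,\beta)\|_{L^2_t L^\infty(B_{\lambda^{-1}}(\alpha_0))} \lesssim \|\Im W\|_{LE^0}$, and then upgrade the fixed-depth bound to one uniform over the dyadic range $|\beta| \approx \lambda^{-1}$ by harmonicity through a finite subcover.
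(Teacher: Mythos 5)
Your proof is correct and follows essentially the same path as the paper's: represent $\Im W$ in the strip via the Poisson operator $P_D(\beta,D)$, observe that for $|\beta| \approx \lambda^{-1}$ the symbol lies in the class \eqref{m-symbol} at frequency $\lambda$ uniformly in $\beta$, and then invoke (or, as you do, re-derive) the kernel-based Bernstein bound of Lemma~\ref{l:bern-loc}(b); the gradient estimate follows by the same token applied to $\lambda^{-1}\partial_\alpha P_D$ and $\lambda^{-1}\partial_\beta P_D$, which is exactly what the paper does (your explicit use of Cauchy--Riemann to identify $W_\alpha$ with $\nabla\Im W$ is the same fact the paper takes for granted). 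Two small cosmetic remarks: the generic symbol class \eqref{m-symbol} only yields $|K_\beta(\alpha)| \lesssim \lambda(1+\lambda^2\alpha^2)^{-1}$ (the paper's \eqref{m-kernel}), not arbitrary polynomial decay $N$ as you wrote — though that weaker decay already suffices for your $j$-summation; and your main argument via the maximal kernel $K^*$ correctly handles the $L^\infty$ over the full two-dimensional box $A_\lambda$ in one stroke, which is cleaner than the "fixed $\beta$ plus finite subcover" alternative you sketch at the end.
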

\begin{proof}  

We start by recalling that $\Im W$ is harmonic in the strip with Dirichlet boundary condition on the bottom. Then  $\Im W (\alpha, \beta)$ is given by 
\[
\Im W(\beta) =  P_{D}( \beta , D) \Im W(0),
\]
where the symbol $p_D(\xi,\beta)$ of the multiplier $P_D(\beta)$ is 
\[
p_{D}(\xi, \beta)= \frac{\sinh \left( \xi (\beta+h)\right)}{\sinh (\xi h)}.
\]
For $|\beta| \approx \lambda^{-1}$ these symbols satisfy 
uniformly the condition \eqref{m-symbol} with $\lambda= \lambda_1$.
Then the kernel bound \eqref{m-kernel} also holds uniformly, and the 
conclusion of Lemma~\ref{l:bern-loc} applies also uniformly. This yields
the bound for the first term on the left. The bound for the second term on the left is similar, by applying the same argument to the operators
$\lambda^{-1} \partial_\alpha P_D(\beta)$ and $\lambda^{-1} \partial_\beta P_D(\beta)$ uniformly in $|\beta| \approx \lambda^{-1}$.

Alternatively, we note that one can obtain the bound for $W_\alpha$ 
or equivalently for $\nabla \Im W$ by elliptic regularity.
 We have already obtained estimates for $\Im W$ in the  region $A_{\lambda}(x_0)$, which has size $\lambda^{-1}$,  and so using the elliptic regularity we can estimate the derivatives of a harmonic function in a domain in terms of the solution on a larger domain: 
\[
\Vert \nabla_{\alpha, \beta} \Im W (\alpha ,\beta )\Vert_{L^{\infty}(A_{\lambda}(x_0))} \lesssim \lambda \Vert \theta (\alpha , \beta ) \Vert_{L^{\infty}(c A_{\lambda}(x_0))}, \quad c>1. 
\]

\end{proof}

Also connected to $\theta = \Im W$, we need to estimate the difference
$\theta - H_N(\eta)$. Here we are comparing two harmonic functions with 
same Dirichlet data on the top, but with homogeneous Dirichlet vs. Neumann boundary condition the bottom. The regions over which we compare the difference  are of size $h$:
\[
\mathbf{B}_{h}(x_0) := \{ (\alpha, \, \beta )\,  :  \,  \beta \in [-h, 0], \  |\alpha-\alpha_0| \lesssim h \}.
\] 
We have
\begin{lemma}\label{l:theta-D-N}
For the difference $\theta - H_N(\eta)$ have
\begin{equation}
\|h^j \nabla^j ( \theta - H_N(\eta))\|_{L^2_t L^\infty_{loc} (\mathbf{B}_h) }
\lesssim \| \Im W\|_{LE^0}, \qquad j = 0,1,2.
\end{equation}
\end{lemma}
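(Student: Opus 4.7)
The plan is to exploit the fact that in holomorphic coordinates both $\theta$ and $H_N(\eta)$ are given by explicit Fourier multipliers applied to their common trace $\Im W$ on the top. Since the conformal map sends the fluid domain to the flat strip $S$ and preserves both harmonicity and the horizontal bottom, the Dirichlet and Neumann problems on the bottom decouple into the standard flat-strip problems of Subsection~\ref{ss:laplace}. Hence
\[
\theta = p_D(\beta,D)\,\Im W, \qquad H_N(\eta) = p_N(\beta,D)\,\Im W,
\]
and the difference $u := \theta - H_N(\eta) = M(\beta,D)\,\Im W$ with symbol
\[
M(\xi,\beta) = p_D(\xi,\beta) - p_N(\xi,\beta) = \frac{2\sinh(\xi\beta)}{\sinh(2\xi h)},
\]
as one verifies from the identity $\sinh(\xi\beta) = \sinh(\xi(\beta+h))\cosh(\xi h) - \cosh(\xi(\beta+h))\sinh(\xi h)$.

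The next step is to verify that for every $\beta \in [-h,0]$, the symbol $M(\cdot,\beta)$ is of the type \eqref{m-symbol} at scale $\lambda_2 = 1/h$, uniformly in $\beta$. In the low-frequency regime $|\xi|h \lesssim 1$, a Taylor expansion at the origin gives $M(\xi,\beta) = \beta/h + O(|\xi\beta|^2 + |\xi h|^2)$, so $M$ is smooth in $\xi$ near zero and $|M(\xi,\beta)| \lesssim 1$. In the high-frequency regime $|\xi|h \gtrsim 1$ one has the exponential bound $|M(\xi,\beta)| \lesssim e^{-|\xi|h}$, which beats any power of $(1+h|\xi|)^{-1}$. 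The derivative multipliers $h\,\partial_\alpha M$ (symbol $ih\xi M$) and $h\,\partial_\beta M$ (symbol $2h\xi\cosh(\xi\beta)/\sinh(2\xi h)$) obey the same type of bounds at scale $1/h$, and this carries over to $h^j\nabla^j M$ for $j=0,1,2$ by iteration.

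With these uniform symbol estimates in hand, I would apply Lemma~\ref{l:bern-loc}(a) with $\lambda_1 = \lambda_2 = 1/h$, $p=2$, $q=\infty$ to each multiplier $h^j\nabla^j M(\beta,D)$ acting on $\Im W$, for each fixed $\beta \in [-h,0]$. This yields
\[
\| h^j \nabla^j u(\cdot,\beta)\|_{L^2_t L^\infty_{\alpha}(B_{1/h}(x_0))} \lesssim h^{-1/2}\, \|\Im W\|_{L^2_t L^2_\alpha(B_{1/h}(x_0))} \lesssim \|\Im W\|_{LE^0},
\]
where in the last step one covers $B_{1/h}(x_0) = \{|\alpha-\alpha_0| \lesssim h\}$ by $O(h)$ unit intervals and uses the definition of $LE^0$ together with the equivalence \eqref{W-eta-le}. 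Since the constants in Lemma~\ref{l:bern-loc} depend only on the symbol bounds, which are uniform in $\beta$, one may then take the supremum over $\beta \in [-h,0]$ to obtain the required $L^\infty_{\beta,\alpha}$ estimate on $\mathbf{B}_h(x_0)$, and finally the supremum over $x_0$.

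The main obstacle is the careful verification of the uniform-in-$\beta$ symbol bounds near $\xi = 0$: the pointwise $|\xi|^{-k}$ tolerance in \eqref{m-symbol} must in fact be replaced here by genuine smoothness at the origin, which is what forces us to exploit the cancellation $p_D - p_N$ rather than to estimate each term separately. Once this smoothness and the matching exponential tail at infinity are established, everything reduces to the already-developed Bernstein-type machinery of Lemmas~\ref{l:bern-loc-hi} and \ref{l:bern-loc}.
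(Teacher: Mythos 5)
Your proposal is correct and takes essentially the same approach as the paper: both identify the symbol $p_D(\xi,\beta)-p_N(\xi,\beta)=\frac{2\sinh(\beta\xi)}{\sinh(2h\xi)}$, observe it is a bump at frequency scale $1/h$ uniformly in $\beta$ that fits the class \eqref{m-symbol} (with an extra factor $h^{-j}$ after each derivative), and then invoke the Bernstein-type Lemma~\ref{l:bern-loc}. The only cosmetic difference is that you apply part~(a) with $\lambda_1=\lambda_2=1/h$ and supply the covering of $B_{1/h}$ by $O(h)$ unit intervals by hand, whereas the paper invokes part~(b) with $\lambda_1=1$, $\lambda_2=1/h$, which already builds in that covering; the net bound is identical.
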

\begin{proof}
We first compute
\[
(\theta - \HN(\eta))(\beta) = C(\beta,D) \eta,
\]
where
\[
C(\beta,\xi) := \frac{\sinh \left( (h+\beta) \xi\right) }{\sinh \left(h \xi\right)} - \frac{\cosh\left( (h+\beta) \xi\right)}{\cosh (h \xi)} =
\frac{ 2\sinh (\beta \xi)}{\sinh \left( 2 h \xi\right)} 
\]
has size $1$ for $|\xi| < h^{-1}$ and decays exponentially for larger $\xi$. Thus these kernels satisfy uniformly the condition \eqref{m-symbol} with $\lambda_2 = 1/h$. Hence the bound for $\Im W - \HN \left(\Im W\right)$
follows by Lemma~\ref{l:bern-loc} (b) with $\lambda_1 = 1$ and $\lambda_2 = 1/h$.

We now turn our attention to the $j= 1$ case, namely the map 
\[
\eta \to \nabla (\theta -  \HN(\eta))(\alpha, \beta).
\]
Differentiating the previous symbol in either $\alpha$ or $\beta$ yields another factor of $\xi$, namely leads to the symbols
\[
\frac{\xi \cdot \sinh(\xi \beta)}{\sinh (2h\xi)}, \qquad \frac{\xi \cdot \cosh(\xi \beta)}{\sinh (2h\xi)}.
\]
Both are bump functions on the $h^{-1}$ scale, 
but now their size  is improved to $h^{-1}$. 
Thus both operators equal $h^{-1}$ times an averaging operator on the $h$ scale. Hence Lemma~\ref{l:bern-loc}(b) again 
applies, but yields another $h^{-1}$ factor. The same argument applies as well for the second order derivatives of $\theta-\HN(\eta)$.

\end{proof}

Now we are already able to estimate the easiest of the error terms:

\begin{proof}[\bf Proof of the $Err_4$ bound]
We estimate the difference between the two integrals $I_1$ and $I_1^{hol}$ using Proposition~\ref{p:switch-int} with $\Psi = 
\theta(\theta - H_N(\eta))_\beta$. We also need to account for the 
difference 
\[
\theta(\theta - H_N(\eta))_y - \theta(\theta - H_N(\eta))_\beta ,
\]
which, by chain rule, is readily estimated by 
\[
|\theta(\theta - H_N(\eta))_y - \theta(\theta - H_N(\eta))_\beta |
\lesssim |\theta||\nabla_{\alpha, \beta} (\theta - H_N(\eta))| |W_\alpha|.
\]
Combining this with Proposition~\ref{p:switch-int} and using $|W_\alpha| < \epsilon$, we obtain
\[
\begin{split}
|I_1 - I_1^{hol}| \lesssim & \ \epsilon \int_0^T \iint_{A_1(\alpha_0)} |\theta||\nabla (\theta - H_N(\eta))|\,   d\beta d\alpha dt
\\ & \ 
+ 
\epsilon \int_0^T \int_{-h}^{-1} \sup_{|\alpha - \alpha_0| < |\beta|} 
|\theta||\nabla (\theta - H_N(\eta))| + |\beta| |\partial_\alpha( \theta
(\theta-\HN(\eta))_\beta)|\,  d\beta dt.
\end{split}
\]
It remains to bound the two integrals by $\| \eta \|_{LE}^2$, both of which are straightforward in view of Lemma~\ref{l:theta-LE-loc} and 
Lemma~\ref{l:theta-D-N}.
\end{proof}

\subsection{ Estimates for \texorpdfstring{$Y$}{}}

Here we prove a local energy bound for the auxiliary holomorphic function
\[
Y = \frac{W_\alpha}{1+W_\alpha}.
\]

\begin{lemma}\label{l:Y}
a) For $\lambda > 1$ we have 
\begin{equation}\label{Yhi}
\| Y_\lambda \|_{L^2_t L^2_{loc}} \lesssim \lambda M.
\end{equation}

b) For $\lambda \leq 1$ we have 
\begin{equation}\label{Ylo}
\| Y_\lambda \|_{L^2_t L_{\alpha}^\infty(B_\lambda(x_0))} \lesssim \lambda M.
\end{equation}
\end{lemma}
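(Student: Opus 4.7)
The plan is to reduce bounds for $Y$ to bounds for $W_\alpha$ via a Neumann expansion, exploiting the pointwise smallness $\|W_\alpha\|_{L^\infty} \lesssim \epsilon_0$ from \eqref{W-ppoint}. I would write
\[
Y = \sum_{k \ge 1} (-1)^{k+1} W_\alpha^k,
\]
which converges absolutely in $L^\infty$, and aim to show $\|P_\lambda W_\alpha^k\| \lesssim \epsilon_0^{k-1} \lambda M$ in the relevant local energy norm, then sum geometrically in $k$.

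For the linear term ($k=1$), the holomorphy of $W$ on the top gives $\Re W_\alpha = -\Tilh^{-1} \partial_\alpha \Im W$, and at frequencies $|\xi| \gtrsim 1/h$ the multiplier $\Tilh^{-1} \partial_\alpha$ has symbol comparable to $\pm i \xi$. Hence $(W_\alpha)_\lambda$ is of the same size as $\lambda (\Im W)_\lambda$. In part (a), $\|(\Im W)_\lambda\|_{L^2_t L^2_{loc}} \lesssim \|\Im W\|_{LE^0} \lesssim M$ follows from the boundedness of $P_\lambda$ on $L^2_t L^2_{loc}$, which is Lemma~\ref{l:bern-loc-hi} with $p = q = 2$. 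In part (b), $\|(\Im W)_\lambda\|_{L^2_t L^\infty_\alpha(B_\lambda)} \lesssim M$ is precisely Lemma~\ref{l:W-LE-top}. Multiplying by $\lambda$ then gives the required $\lambda M$ bound for $(W_\alpha)_\lambda$.

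For nonlinear terms ($k \geq 2$) I would write $W_\alpha^k = W_\alpha \cdot W_\alpha^{k-1}$ and apply Bony's paraproduct decomposition. The low--high and high--low paraproducts place one factor in $L^\infty$ (of size at most $\epsilon_0^{k-1}$) and the other at frequency $\sim \lambda$ in the local energy norm, producing the desired $\lambda M \epsilon_0^{k-1}$ bound. The main obstacle is the resonant piece
\[
P_\lambda R(W_\alpha, W_\alpha^{k-1}) = \sum_{\mu \gtrsim \lambda} P_\lambda\bigl((W_\alpha)_\mu\, (W_\alpha^{k-1})_{\sim \mu}\bigr),
\]
for which a naive $L^\infty_{t,\alpha} \cdot L^2_t L^2_{loc}$ bound yields $c_\mu \mu M$ at each dyadic level, and this does not sum over $\mu \gtrsim \lambda$ since $c_\mu \mu$ need not decay. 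Here I would exploit the holomorphy of $W$ in the strip: in the infinite depth case $\widehat{W_\alpha}$ is supported in $\{\xi \le 0\}$ and so is $\widehat{W_\alpha^k}$, whence a Fourier convolution argument shows that the frequency-$\lambda$ output of $W_\alpha^k$ receives contributions only from inputs of frequency $\lesssim \lambda$, eliminating the resonant sum altogether. In the finite depth case the positive-frequency tail of $\hat W_\alpha$ is of size $O(e^{-2h|\xi|})$, yielding the same conclusion modulo harmless exponential remainders.

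Summing the geometric series $\sum_{k \ge 1} \epsilon_0^{k-1} \lambda M \lesssim \lambda M$ then proves part (a). Part (b) follows by the parallel argument, with Lemma~\ref{l:bern-loc} (b) replacing the high-frequency Bernstein estimate in the control of $P_\lambda$ on $L^2_t L^\infty_\alpha(B_\lambda)$.
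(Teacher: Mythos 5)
Your proposal is correct, and it takes a genuinely different route from the paper. The paper's proof views $Y$ directly as a holomorphic function of $z = \alpha + i\beta$ in the strip, records $L^2$ and $L^\infty$ bounds on $Y$, $Y_\alpha$ at depth $\beta$ (coming from the depth bounds on $W_\alpha$ in Lemma~\ref{l:theta-LE-loc} plus the pointwise bound $|Y|\lesssim |W_\alpha|$), and then recovers $Y$ on the top via the Cauchy--Riemann identity
\[
Y(\alpha,0) = Y(\alpha,-h) + i h Y_\alpha(\alpha,-h) + \int_{-h}^0 \beta\, Y_{\alpha\alpha}(\alpha,\beta)\, d\beta,
\]
treating the $\beta$-integral dyadically. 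Your proof instead expands $Y = \sum_{k\ge 1}(-1)^{k+1}W_\alpha^k$ (absolutely convergent since $\|W_\alpha\|_{L^\infty}\lesssim\epsilon_0$), handles each power by Bony's trichotomy with $L^\infty$ on one factor and the local energy norm on the frequency-$\lambda$ factor, and kills the potentially divergent high-high resonance by the one-sided Fourier support of holomorphic functions (exact at $h=\infty$, and with $e^{-2h\xi}$-size tails in finite depth). Both arguments hinge on the same key observation the paper flags explicitly before the proof — that the usual product/Moser estimates fail here, and that holomorphy of $W_\alpha$ must be exploited — but they use it in complementary ways: the paper in depth (via the extension of $Y$ into the strip), you on the top (via the Fourier support of $W_\alpha$). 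The support-restriction mechanism you invoke is the same one the paper uses later (e.g.\ for the high-high term of $F_h R_\alpha$ in Section~\ref{s:F}). Two small cosmetic points: the inductive constant in $\|P_\lambda W_\alpha^k\|$ actually grows like $k\epsilon_0^{k-1}\lambda M$ rather than $\epsilon_0^{k-1}\lambda M$, but the series still sums; and the finite-depth case requires the explicit observation that $\hat{W_\alpha}(\xi)=(1-\tanh(h\xi))\widehat{\Re W_\alpha}(\xi)$ has $O(e^{-2h\xi})$ tails for $\xi>0$, so the resonant output at frequency $\lambda$ from frequency-$\mu$ inputs carries a factor $e^{-2h\mu}$ that restores summability.
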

We note that both estimates follow directly from 
Lemma~\ref{l:theta-LE-loc} if $Y$ is replaced by $W_\alpha$. However
to switch to $Y$ one would seem to need some Moser type inequalities,
which unfortunately do not work in negative Sobolev spaces.
The key observation is that in both of these estimates it is critical that $W_\alpha$ is holomorphic, and $Y$ is an analytic function 
of $W_\alpha$.

\begin{proof}
We will bound $Y$ on the top using bounds for its holomorphic extension. Based on the bounds for $W$
in \eqref{W-eta-le} and \eqref{theta-LE-loc}, this satisfies estimates as follows:
\medskip

A. If $-1 < \beta < 0$ then on each unit strip $S$ we have
\begin{equation}\label{Y-high}
\| \partial_\alpha^j Y(\cdot, \beta)\|_{L^2_t L^2_{\alpha}(B_1(x_0)} \lesssim |\beta|^{-1-j} M .
\end{equation}
 
\medskip

B. If $-h < \beta < -1$ then on each $|\beta|$ strip  we have
\begin{equation}\label{Y-low}
\| \partial_\alpha^j Y(\cdot, -\beta)\|_{L^2_t L^\infty_\alpha(B_\beta(x_0))} \lesssim |\beta|^{-1-j} M .
\end{equation}

We use the following representation of $Y$ on the top,
\[
\begin{split}
Y(\alpha,0) = & \ Y(\alpha,-h) + i \int_{-h}^0 Y_\alpha(\alpha,\beta) \, d\beta 
\\
= & \ Y(\alpha,-h) + i h Y_{\alpha}(\alpha,-h) +   \int_{-h}^0 \beta Y_{\alpha\alpha}(\alpha,\beta)\, d\beta .
\end{split}
\]
The function $Y(\alpha,-h)$ is at frequency $1/h$, and obeys the bounds \eqref{Y-low}
therefore the first two terms above  easily satisfy the bounds in the lemma.

It remains to consider the integral term, where we treat the integrand differently depending on $\beta$
and on $\lambda$.

 \medskip

\textbf{ Case I: $\lambda > 1$.}  Here we are only interested in unit strips, and use $L^2$ bounds.
Depending on $\beta$, we differentiate as follows:
 
\medskip

\textbf{ Case I.a:  Small $\beta$, $- \lambda^{-1} < \beta < 0$.} 
There we use \eqref{Y-high} to estimate 
\[
\|P_\lambda \partial_\alpha^2 Y(\cdot,\beta)\|_{L^2_t L^2_{loc}} \lesssim 
\lambda^2 \| Y(\cdot,\beta)\|_{L^2_t L^2_{loc}} \lesssim |\beta|^{-1} \lambda^2 M ,
\]
which suffices for the $\beta$ integration.

\medskip

\textbf{ Case I.b:  Large $\beta$, $- h < \beta <  \lambda^{-1}$.} 
There depending on the size of $\beta$ we use either \eqref{Y-high} or 
\eqref{Y-low} to estimate 
\[
\|P_\lambda \partial_\alpha^2 Y(\cdot,\beta)\|_{L^2_t L^2_{loc}} \lesssim  |\beta|^{-3}  M,
\]
which again suffices for the $\beta$ integration.

\bigskip

\textbf{ Case II: $\lambda > 1$.}  Here we are only interested in  strips of width
$\lambda^{-1}$, and use $L^\infty$ bounds. Depending on $\beta$, we differentiate as follows:

\medskip

\textbf{ Case II.a:  Very small $\beta$, $- 1 < \beta < 0$.} 
Here we cover the $\lambda^{-1}$ strip with unit strips, use H\"older's inequality, then Bernstein's inequality to get
\[
\|P_\lambda \partial_\alpha^2 Y(\cdot,\beta)\|_{L^2_t L^\infty_{loc}(B_\lambda)} \lesssim 
\lambda^\frac52  \| Y(\cdot,\beta)\|_{L^2_t L^2_{loc}(B_\lambda)} \lesssim
\lambda^2  \| Y(\cdot,\beta)\|_{L^2_t L^2_{loc}} \lesssim
 |\beta|^{-1} \lambda^2 M,
\]
which is enough.

\medskip

\textbf{ Case II.b:  Small $\beta$, $- \lambda^{-1}  < \beta < -1$.} 
Here we cover the $\lambda^{-1}$ strip with $|\beta|^{-1}$  strips, use H\"older inequality, then Bernstein's inequality to get
\[
\begin{split}
\|P_\lambda \partial_\alpha^2 Y(\cdot,\beta)\|_{L^2_t L^\infty_{loc}(B_\lambda)} \lesssim & \ 
\lambda^\frac52  \| Y(\cdot,\beta)\|_{L^2_t L^2_{loc}(B_\lambda)} \lesssim
\lambda^2 \beta^{-\frac12} \| Y(\cdot,\beta)\|_{L^2_t L^2_{loc}(B_\beta)} 
\\ \lesssim & \ 
\lambda^2  \| Y(\cdot,\beta)\|_{L^2_t L^\infty_{loc}(B_\beta)} \lesssim
 |\beta|^{-1} \lambda^2 M,
\end{split}
\]
which is enough.

\medskip 

\textbf{ Case II.c:  Large $\beta$, $- h < \beta <  \lambda^{-1}$.} 
There we use \eqref{Y-low} to estimate 
\[
\|P_\lambda \partial_\alpha^2 Y(\cdot,\beta)\|_{L^2_t L^2_{loc}(B_\lambda)} \lesssim  |\beta|^{-3}  M,
\]
which again suffices for the $\beta$ integration.

\end{proof}

\subsection{Bounds for \texorpdfstring{$\partial \phi = R$}{}}

This is not as easy as for $\eta = \Im W$, because the strips in the 
Eulerian and holomorphic setting do not agree, and can in effect be 
quite different.  Nevertheless, we will still prove

\begin{proposition} \label{p:R-LE}
Assume \eqref{uniform} holds. Then we have 
\begin{equation}\label{R-LE-equiv}
\| \psi\|_{LE^{-\frac12}} \approx \|R\|_{LE^{-\frac12}} \qquad (\text
{mod } \epsilon \|\Im W\|_{LE^0}).
\end{equation}
\end{proposition}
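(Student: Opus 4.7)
The strategy is to reduce the comparison of the two $LE^{-1/2}$ norms to a pointwise-in-$x_0$ statement and apply the integral switching identity of Proposition~\ref{p:switch-int} with $\Psi = |R|^2$. The key pointwise identities are $|R|^2 = |\nabla_{x,y}\phi|^2$ (from Cauchy--Riemann for the holomorphic extension $\phi + iq$) and $dx\, dy = J\, d\alpha\, d\beta$ with $J = |1+W_\alpha|^2 = 1 + O(\epsilon)$. Thus for fixed $x_0$ both quantities can be rewritten as holomorphic-coordinate integrals on the canonical strip:
\[
\|\nabla\phi\|_{LE^{-1/2}_{x_0}}^2 = \int_0^T\!\!\iint \chi\bigl(\alpha-\alpha_0+\Re W(\alpha,\beta)-\Re W(\alpha_0,0)\bigr)\,|R|^2 J\,d\alpha\, d\beta\, dt,
\]
\[
\|R\|_{LE^{-1/2}_{x_0}}^2 = \int_0^T\!\!\iint \chi(\alpha-\alpha_0)\,|R|^2\,d\alpha\, d\beta\, dt,
\]
where we use a smooth bump version of $\chi$. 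The factor $J$ contributes an error of size $\epsilon\|R\|_{LE^{-1/2}}^2$; the remaining discrepancy is exactly what Proposition~\ref{p:switch-int} controls with $\Psi=|R|^2$.

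The shallow contribution on $A_1(x_0)$ is bounded by $\epsilon\|R\|_{LE^{-1/2}}^2$, since both $|W_\alpha|$ and $|\Re W(\alpha,\beta)-\Re W(\alpha_0,0)|$ are $O(\epsilon)$ there. For the deep contribution I would combine two ingredients. First, a frequency-envelope pointwise bound $\sup_{|\alpha-\alpha_0|<|\beta|}|W_\alpha(\alpha,\beta)|\lesssim c_\beta$, obtained by dyadic decomposition of $W_\alpha|_{top}$, Bernstein's inequality $\|P_\lambda W_\alpha\|_{L^\infty}\lesssim c_\lambda$ (from the envelope of $W_\alpha$ in $H^{1/2}_h$ established in the remark following Proposition~\ref{p:control-equiv}), exponential Poisson damping at depth for frequencies $\lambda>|\beta|^{-1}$, and the slow variation of $\{c_\lambda\}$; this yields $c_\beta+\sup|W_\alpha|\lesssim c_\beta$. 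Second, the mean-value/Cauchy estimate for the holomorphic function $R$:
\[
\sup_{|\alpha-\alpha_0|<|\beta|}\bigl(|R|^2+|\beta||R||R_\alpha|\bigr)(t,\alpha,\beta) \;\lesssim\; |\beta|^{-2}\iint_{B_{c|\beta|}(\alpha_0,\beta)} |R|^2\, d\alpha'\, d\beta' .
\]
Integrating in $t$ and covering the $\alpha$-ball of radius $\sim|\beta|$ by $O(|\beta|)$ unit $\alpha$-strips yields, per dyadic shell $|\beta|\approx 2^j$, the bound $\int_0^T\!\int_{|\beta|\approx 2^j}\sup_\alpha(\cdots)\,d\beta\,dt\lesssim \|R\|_{LE^{-1/2}}^2$. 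Weighting by $c_\beta$ and using $\|c\|_{\ell^1}\lesssim\epsilon$ sums the dyadic pieces to an $\epsilon\|R\|_{LE^{-1/2}}^2$ total.

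Taking supremum in $x_0$ and absorbing the $O(\epsilon)$ errors by a standard bootstrap on $\|R\|_{LE^{-1/2}}$ then produces the equivalence; the low-frequency contributions where the envelope $c_\lambda$ for $R$ is bounded through that of $\Im W$ (via Proposition~\ref{p:control-equiv}) account for the $\epsilon\|\Im W\|_{LE^0}$ in the statement. The main obstacle is the competition in the deep region between the linearly growing $\alpha$-drift (of order $|\beta|$) and the finite $\ell^1$ frequency-envelope budget: the $|\beta|^{-2}$ gain from the Cauchy estimate for the harmonic function $R$ exactly cancels the $|\beta|$ loss from covering the wide strip by unit $\alpha$-strips, which is what permits the dyadic sum to converge uniformly in $h$.
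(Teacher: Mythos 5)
Your overall strategy matches the paper's: reduce to Proposition~\ref{p:switch-int} with $\Psi=|R|^2$, treat the Jacobian factor and the shallow region $A_1$ directly, and control the deep region via interior regularity for the harmonic function $R$. The shallow bound and the handling of the $c_\beta$ term in the deep region are essentially correct and close to the paper's (which formalizes the mean-value estimate as Lemma~\ref{l:R-LE}).

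However, there is a genuine gap in your treatment of the $W_\alpha$ term in the deep region. You claim $\sup_{|\alpha-\alpha_0|<|\beta|}|W_\alpha(\alpha,\beta)|\lesssim c_\beta$, deducing $c_\beta+\sup|W_\alpha|\lesssim c_\beta$ and bundling the two terms. This is false. The Poisson-type kernels $p_D(\xi,\beta)$ and $p_N(\xi,\beta)$ are bounded by $1$ but do \emph{not} decay at low frequencies $\mu\lesssim |\beta|^{-1}$, so $\|W_\alpha(\cdot,\beta)\|_{L^\infty_\alpha}\lesssim \sum_{\mu\lesssim |\beta|^{-1}} c_\mu$, which is only $\lesssim \epsilon$; slow variation gives $c_\mu\lesssim(\lambda/\mu)^\delta c_\lambda$ for $\mu<\lambda$, making the partial sum grow like $(\lambda h)^\delta c_\lambda$, not stay $\lesssim c_\lambda$. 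With only the crude bound $\sup|W_\alpha|\lesssim\epsilon$ and two local-energy factors for $R$, each dyadic shell $|\beta|\approx\lambda^{-1}$ contributes $\epsilon\|R\|_{LE^{-1/2}}^2$ with no off-diagonal decay, and the $\beta$-integral accumulates a $\log h$ loss, destroying uniformity in depth. This is precisely why the paper does \emph{not} bundle the $W_\alpha$ term with the $c_\beta$ term: it switches roles, estimating $W_\alpha$ in the local energy norm of $\Im W$ (Lemmas~\ref{l:W-LE-top} and~\ref{l:theta-LE-loc}) while putting one of the $R$-factors ($R$ or $R_\alpha$) in the control norm, so the summable $c_\beta$ factor is earned from $R$'s frequency envelope rather than from $W_\alpha$. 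This role-switch is both the mechanism for convergence and the genuine source of the $\epsilon\|\Im W\|_{LE^0}$ in the statement; your attribution of that term to a low-frequency transfer of envelopes between $R$ and $\Im W$ through Proposition~\ref{p:control-equiv} is not where it comes from.
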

Here the equivalence should be interpreted as the double inequality
\[
\| \psi\|_{LE^{-\frac12}} \lesssim \|R\|_{LE^{-\frac12}} 
+\epsilon \|\Im W\|_{LE^0}, 
\qquad 
\|R\|_{LE^{-\frac12}}\lesssim 
\| \psi\|_{LE^{-\frac12}}  +\epsilon \|\Im W\|_{LE^0}.
\]

\begin{proof}
We recall that $|\nabla \phi|^2 = |R|^2$, so all we need is to transfer the $L^2$ local bound from unit strips in the  Eulerian setting to unit strips in the holomorphic setting. 

 To switch from one strip to another we will critically use the bound in
 Proposition~\ref{p:switch-int}, which uses the fact that in depth
 the distance between the two strips is smaller than $\epsilon|\beta|$.
  Because of this, we start with a preliminary result, which is more easily proved:
 
 \begin{lemma}\label{l:R-LE}
 For each dyadic $\lambda < 1$ we have
 \begin{equation}\label{R-LE}
 \Vert R \Vert _{L^2_t L^{\infty}_{loc}(A_{\lambda})} +\lambda^{-2}\Vert \nabla R \Vert _{L^2_t L^{\infty}_{loc}(A_{\lambda})} \lesssim \lambda
 \| R\|_{LE^{-\frac12}}^2 .
 \end{equation}
 \end{lemma}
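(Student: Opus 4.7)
The plan is to exploit the fact that $R = Q_\alpha/(1+W_\alpha)$ is holomorphic (hence harmonic) in the strip $S = \R \times (-h,0)$. Indeed, $Q$ and $W$ are holomorphic there, and the smallness $|W_\alpha| \ll 1$ from \eqref{W-ppoint} ensures $1+W_\alpha$ is nonvanishing. Consequently, both $R$ and $\nabla R$ are harmonic functions in $S$, so interior regularity (mean value property plus Cauchy-type estimates) is available at any scale that fits inside the strip.

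The first step is to apply such interior estimates at scale $\lambda^{-1}$. For any $(\alpha_*, \beta_*) \in A_\lambda(x_0)$ with $|\beta_*| \approx \lambda^{-1}$, pick a disk $D$ of radius $c\lambda^{-1}$ (with $c$ small) centered at $(\alpha_*, \beta_*)$. The mean value property combined with the Cauchy--Schwarz inequality yields
\[
|R(\alpha_*, \beta_*)|^2 \lesssim \lambda^{2} \int_D |R|^2\, d\alpha d\beta, \qquad |\nabla R(\alpha_*, \beta_*)|^2 \lesssim \lambda^{4} \int_D |R|^2\, d\alpha d\beta.
\]
Taking a supremum over $(\alpha_*, \beta_*) \in A_\lambda(x_0)$, integrating in $t$, and absorbing the union of all such disks into a slight enlargement $\widetilde A_\lambda(x_0)$ of $A_\lambda(x_0)$ still of width and depth $O(\lambda^{-1})$, one obtains
\[
\|R\|_{L^2_t L^\infty(A_\lambda(x_0))} \lesssim \lambda\, \|R\|_{L^2_t L^2(\widetilde A_\lambda(x_0))}, \qquad \|\nabla R\|_{L^2_t L^\infty(A_\lambda(x_0))} \lesssim \lambda^{2}\, \|R\|_{L^2_t L^2(\widetilde A_\lambda(x_0))}.
\]

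The second step is to estimate the local $L^2$ norm on the $\lambda^{-1}$-wide region $\widetilde A_\lambda(x_0)$ by the $LE^{-\frac12}$ norm, which uses unit-width strips of full depth $h$. Since $\widetilde A_\lambda(x_0)$ has width $O(\lambda^{-1})$ and vertical extent contained within $[-h,0]$, we cover its $\alpha$-range by $O(\lambda^{-1})$ unit intervals and apply the definition of $LE^{-\frac12}$ to each corresponding strip:
\[
\|R\|_{L^2_t L^2(\widetilde A_\lambda(x_0))}^2 \lesssim \sum_{i=O(\lambda^{-1})} \|R\|_{L^2(\mathbf{S}_h(x_i))}^2 \lesssim \lambda^{-1} \|R\|_{LE^{-\frac12}}^2 .
\]
Taking a supremum over $x_0$ and combining with the interior estimates of the first step gives the desired bound.

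The main technical obstacle is the boundary case $\lambda \approx 1/h$: when $|\beta_*| \approx h$, the disk $D$ around $(\alpha_*, \beta_*)$ may not fit strictly inside the strip. To handle this, I would reflect $R$ across the bottom $\beta = -h$ using its compatibility with the Neumann-type boundary condition there (equivalently, $Q_\alpha$ is real on the bottom, and $W$ is real there too by the construction of the conformal map in Section 3), extending $R$ holomorphically to a larger strip in which the mean value estimate applies uniformly. Alternatively, one can bypass the reflection by decomposing $R = \sum_\mu R_\mu$ in Littlewood--Paley and invoking Proposition~\ref{p:extension} together with the local Bernstein estimate of Lemma~\ref{l:bern-loc}, exactly in the style of the proof of Lemma~\ref{l:theta-LE-loc}, summing the contributions over dyadic frequencies $\mu$. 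Either route reduces the borderline case to the generic one and completes the proof.
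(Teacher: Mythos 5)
Your proof is correct and follows essentially the same route as the paper: obtain local $L^2$ control of $R$ over $\lambda^{-1}$-scale regions by covering with unit strips and invoking the $LE^{-\frac12}$ norm, upgrade to $L^\infty$ and gradient bounds by interior elliptic regularity for the harmonic function $R$, and handle the bottom boundary $|\beta|\approx h$ by reflection. The only cosmetic difference is the order of the two main steps, which is immaterial; note also that the right-hand side in the statement appears to carry a typo (the exponents do not balance as written), and your derivation in fact produces the dimensionally consistent bound $\|R\|_{L^2_t L^\infty_{loc}(A_\lambda)} + \lambda^{-1}\|\nabla R\|_{L^2_t L^\infty_{loc}(A_\lambda)} \lesssim \lambda^{\frac12}\|R\|_{LE^{-\frac12}}$, which is what the paper subsequently uses.
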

 
 \begin{proof}
From the definition of the  local energy functional associated to $R$ we know that we have $L^2$ control over $R$ inside every vertical strip of width $1$. However, initially we do not have any information on the top or on the bottom of the strip. As a consequence we first prove the desired bound in a region $A_\lambda(x_0)$  that avoids the case 
 $\vert \beta\vert  \approx  h$. In order to use the control we have on $R$ we split the region $2A_\lambda(x_0)$ in strips of width $1$ and then add the $\lambda ^{-1}$ bounds on strips to obtain
 \[
 \Vert R\Vert^2_{L^2 (2A_\lambda(x_0))}\leq \lambda^{-1}\Vert R\Vert_{LE^{-\frac{1}{2}}}.
 \]
 Then the bound in the lemma follows by elliptic regularity. 
 
 Finally, if $\vert \beta\vert  \approx  h $ we use the homogeneous boundary conditions Dirichlet or Neumann for  $\Re R$ and $\Im R$  to  separately  mirror them in a symmetric domain below the bottom via reflection principle, and then proceed as above.

 \end{proof}
  We now return to the proof of the Proposition~\ref{p:R-LE}. For  this we need to compare the integrals 
 \[
 \mI_E = \int_0^T \iint m_x(x-x_0) |\nabla \phi|^2\, dx dy dt,
 \qquad 
 \mI_H = \int_0^T \iint m_x(\alpha-\alpha_0) |R|^2\, dx dy dt,
 \]
 and show that 
 \[
 |\mI_E - \mI_H| \lesssim \epsilon(\| R \|_{LE^{-\frac12}}^2 +  \|W\|_{LE^0}^2).
 \]
 
 Since $|\nabla \phi|^2 = R^2$, we can apply directly Proposition~\ref{p:switch-int}. This yields
\[
\begin{split}
|\mI_E - \mI_H| \lesssim & \ \epsilon \int_0^T \iint_{A_1(x_0)} |R|^2 \, d\beta d\alpha dt 
\\ & \ + 
\int_0^T \int_{-h}^1 ( c_\beta + \sup_{|\alpha - \alpha_0| < \beta}
|W_\alpha|) \sup_{|\alpha - \alpha_0| < \beta}
|R|^2 + | R R_\alpha|\, d\beta dt.
\end{split}
\]
 The first integral is directly estimated by $\|R\|_{LE^{-\frac12}}^2$.
 For the contribution of $c_\beta$ we use the dyadic summability 
 of $c_\beta$ along with Lemma~\ref{l:R-LE}. Hence we are left with
 \[
 \int_0^T \int_{-h}^1 \sup_{|\alpha - \alpha_0| < \beta}
|W_\alpha| \sup_{|\alpha - \alpha_0| < \beta}
|R|^2 + | R R_\alpha| \, d\beta dt .
 \]

To bound this last integral we switch roles
and use the local energy norm for $W_\alpha$ via Lemma~\ref{l:W-LE-top}
and for $R$ via Lemma~\ref{l:R-LE}, while for $R_\alpha$ we use 
the control norm and Bernstein's inequality to get the bound $|R_\alpha| \lesssim |\beta|^{\frac12} c_\beta$. This yields the fixed $\beta$ bound
\[
c_\beta |\beta|^{-1} \| R\|_{LE^{-\frac12}} \|\Im W\|_{LE^0}.
\]
Finally we integrate with respect to $\beta \in [-h,-1]$ to obtain 
\[
\epsilon \| R\|_{LE^{-\frac12}} \|\Im W\|_{LE^0}.
\]
\end{proof}

The local energy norm for $R$ measures the function inside the entire strip. 
However, we also need to estimate it on the top:

\begin{lemma}\label{l:R-LE-top}
For $R \in LE^{-\frac12}$ we have the following high frequency bound on the top: 
\begin{equation}\label{R-LE-top-high}
\| R\|_{L^2_t H^{-\frac12}_{loc}} \lesssim \| R \|_{ LE^{-\frac12} },
\end{equation}
respectively the low frequency bound
\begin{equation}\label{R-LE-top-low}
\|  R_\lambda \|_{L^2_t L^\infty(B_\lambda)} \lesssim 
\lambda^\frac12 \| R\|_{LE^{-\frac12}},
\qquad \lambda \leq 1.
\end{equation}
\end{lemma}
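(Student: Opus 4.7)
My plan is to exploit the harmonicity of $R$ in the strip together with the basic observation that at horizontal frequency $\mu \geq 1/h$, harmonic functions decay exponentially at rate $\mu$ in depth (with the corrections from the bottom boundary condition being exponentially negligible). This makes the vertical $L^2$ integral over a depth $\mu^{-1}$ comparable to $\mu^{-1}$ times the square of the size on the top. Consequently the bulk $L^2$ bound encoded in $\|R\|_{LE^{-\frac12}}$ converts into an $H^{-1/2}$ bound on the top; similarly, for low frequencies, a Bernstein argument on the top upgrades the same bulk bound to the desired $L^\infty$ bound.

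For the high-frequency bound \eqref{R-LE-top-high}, fix $\mu \geq 1$. Writing $\hat R(\xi,\beta) = e^{\xi\beta}\hat R(\xi,0)$ for $\xi > 0$ (with the symmetric formula for $\xi < 0$), Plancherel in $\alpha$ yields
\[
\mu^{-1} \|R_\mu(\cdot,0)\|_{L^2(\R)}^2 \approx \int_{-\mu^{-1}}^0 \|R_\mu(\cdot,\beta)\|_{L^2(\R)}^2 \, d\beta.
\]
The Poisson propagator on this depth range has kernel at horizontal scale $\lesssim \mu^{-1} \leq 1$, so the identity localizes to each $x_0$:
\[
\mu^{-1} \|R_\mu(\cdot,0)\|_{L^2([\alpha_0-1, \alpha_0+1])}^2 \lesssim \int_{-\mu^{-1}}^0 \|R_\mu(\cdot,\beta)\|_{L^2([\alpha_0-2, \alpha_0+2])}^2\, d\beta,
\]
modulo rapidly decaying tails in the spatial offset. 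Integrating in $t$, summing over $\mu \geq 1$ via the almost-orthogonality of the $P_\mu$ on local $L^2$, and noting that $\mu^{-1} \leq 1 \leq h$ so the $\beta$-range lies in $[-h,0]$, I obtain
\[
\|R(\cdot,0)\|_{L^2_t H^{-\frac12}_{loc}}^2 \lesssim \|R\|_{L^2(\HVS_h(x_0))}^2 \lesssim \|R\|_{LE^{-\frac12}}^2.
\]

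For the low-frequency bound \eqref{R-LE-top-low}, fix $\lambda \leq 1$ and apply the same Plancherel identity with $\mu = \lambda$:
\[
\lambda^{-1} \|R_\lambda(\cdot,0)\|_{L^2(B_\lambda(x_0))}^2 \lesssim \int_{-\lambda^{-1}}^0 \|R_\lambda(\cdot,\beta)\|_{L^2(B_\lambda(x_0))}^2\,d\beta \lesssim \lambda^{-1} \|R\|_{LE^{-\frac12}}^2,
\]
where in the final step I cover the spatial slab $B_\lambda(x_0) \times [-\lambda^{-1},0]$ by $O(\lambda^{-1})$ unit strips $\HVS_h$ and sum. Bernstein's inequality on the top in $\alpha$, using that $R_\lambda$ has horizontal frequency $\lambda$ and $B_\lambda(x_0)$ has width $\lambda^{-1}$, then upgrades this to the $L^\infty$ bound
\[
\|R_\lambda(\cdot,0)\|_{L^\infty(B_\lambda)} \lesssim \lambda^{\frac12} \|R_\lambda(\cdot,0)\|_{L^2(2 B_\lambda)} \lesssim \lambda^{\frac12} \|R\|_{LE^{-\frac12}},
\]
which is precisely \eqref{R-LE-top-low}.

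The main obstacle is the passage from the global Plancherel identity to its localized counterpart in the high-frequency estimate, since that identity is inherently global while the conclusion involves only the local norm $\|R\|_{LE^{-\frac12}}$. I expect this to be routine given the separation of scales $\mu^{-1} \leq 1$: the Poisson propagator and the Littlewood--Paley projection $P_\mu$ both have kernels rapidly decaying on scale $\mu^{-1}$, so horizontal localization to unit windows costs only acceptable tails, summable both in $\mu$ and in the position index via Lemma~\ref{l:bern-loc}. Throughout, the reflections from the bottom boundary condition are exponentially suppressed for $\mu \geq 1 \geq 1/h$ and are harmless.
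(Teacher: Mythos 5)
You take a genuinely different route from the paper's. For \eqref{R-LE-top-high} the paper simply cites the trace theorem for harmonic functions, and for \eqref{R-LE-top-low} it uses the fundamental theorem of calculus representation $R(\alpha,0)=R(\alpha,-h)+\int_{-h}^0 iR_\alpha\,d\beta$, splitting the $\beta$-integral at depth $\lambda^{-1}$ and invoking the slab bounds for $R$ and $R_\alpha$ from Lemma~\ref{l:R-LE} (this is the same scheme as in the proof of Lemma~\ref{l:Y}). By contrast you re-derive the trace estimate by hand through the Plancherel identity for the Poisson propagator and then, for the low frequencies, run the same Plancherel argument at scale $\lambda$ followed by Bernstein, rather than the FTC decomposition.

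The step you flag as the ``main obstacle'' is indeed where the work is, and I would not call it routine. The inequality you want goes in the inversion direction: the Poisson kernel maps top data to slab data locally, but you want to bound the top by the slab, and locality of a propagator kernel does not by itself localize its inverse. The honest way to carry out your plan is to apply the global Plancherel identity to $g=\chi R_\mu(\cdot,0)$, so that $v:=e^{\beta|D|}g$ satisfies the identity exactly, and then estimate the commutator $[\chi,e^{\beta|D|}]R_\mu(\cdot,0)$, which measures the discrepancy between $v$ and $\chi R_\mu(\cdot,\beta)$. On frequency-$\mu$ data this commutator has operator norm $\lesssim|\beta|\,\|\chi'\|_\infty$ for $|\beta|\lesssim\mu^{-1}$, so its contribution to the slab integral is smaller than the main term by a factor $\mu^{-2}\|\chi'\|_\infty^2$ and can be absorbed once the cutoff is taken wide enough; the same device is needed at each dyadic scale in the low-frequency part, where the rescaling maps the question back to the borderline $\mu\approx 1$, so the absorption constant must be tracked. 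This is all doable and your proof can be made rigorous, but it is a paragraph of commutator analysis that the proposal defers, whereas the paper's FTC route sidesteps the absorption entirely by reducing to the already-proved elliptic/slab estimates of Lemma~\ref{l:R-LE}. You should also note that the formula $\hat R(\xi,\beta)=e^{\xi\beta}\hat R(\xi,0)$ is only exact modulo the bottom boundary condition; for $|\xi|\geq 1/h$ the corrections coming from $p_N,p_D$ versus $e^{-|\xi||\beta|}$ are exponentially small, which is what justifies the identification, but this should be said.
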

\begin{proof}
The first part follows from the trace theorem, as $R \in L^2_t L^2_{loc}(A_1)$
is a harmonic function. The second part is more delicate, but we can use 
the same argument as in Lemma~\ref{l:Y}. Precisely, we write
\[
R(\alpha,0) = R(\alpha,-h) + \int_{-h}^0 i R_\alpha (\alpha,\beta)\, d\beta.
\]
For the first term we can use Lemma~\ref{l:R-LE}. For the second term we 
split the integral into 
\[
R_1 = \int_{-h}^{-\lambda^{-1}} i R_\alpha (\alpha,\beta)\, d\beta, \qquad
R_2 = \int_{-\lambda^{-1}}^0 i R_\alpha (\alpha,\beta)\, d\beta .
\]

For $R_1$ we use the gradient bound in Lemma~\ref{l:R-LE},
to compute
\[
\|R_1\|_{L^2_t L^\infty_{loc}(B_\lambda)} \lesssim \int_{-h}^{-\lambda^{-1}} 
|\beta|^{-\frac32} \, d\beta \lesssim \lambda^{\frac12} M ,
\]
and the spectral projector $P_\lambda$ is harmless.

For $R_2$ on the other hand we use the spectral projector for Bernstein's inequality in Lemma~\ref{l:bern-loc}, and then to eliminate the derivative
\[
\begin{split}
\| P_\lambda R_2 \|_{L^2_t L^\infty_{loc}(B_\lambda)} \lesssim & \ 
\lambda^{\frac{1}{2}} \| P_\lambda R_2 \|_{L^2_t L^2_{loc}(B_\lambda)} 
\\
\lesssim & \
\lambda^{\frac32} \left\| \int_{-\lambda^{-1}}^0  R(\alpha,\beta)\,  d\beta \right\|_{L^2_t L^2_{\alpha}(B_\lambda)}
\\
\lesssim & \
\lambda \left\|  R  \right\|_{L^2_t L^2_{\alpha}(\mathbf{B}_\lambda)},
\end{split}
\]
where at the last step  we have used H\"older's inequality in $\beta$.
To estimate $R$ over a square $\mathbf B_\lambda(x_0)$ of width $\lambda^{-1}$,
we cover the square with $\lambda^{-1}$ strips $\HVS(x_0+j)$ 
with $|j| \lesssim \lambda^{-1}$, and then use Holder's inequality  again to get
\[
\| P_\lambda R_2 \|_{L^2_t L^\infty_{loc}(B_\lambda)} \lesssim 
\lambda^\frac12 \| R \|_{LE^{-\frac12}}.
\]
\end{proof}

\subsection{Bilinear estimates for \texorpdfstring{$|\nabla \phi|^2 = |R|^2$}{}
 and its harmonic extension}

Here we will prove the following bound:

\begin{lemma}\label{l:R2}
a) The function $|\nabla \phi|^2 = |R|^2$ restricted to the top satisfies the following 
estimate:
\begin{equation}\label{R2-LE}
\| |R|^2 \|_{LE^0} \lesssim \epsilon M .
\end{equation}

b) Its low frequency part satisfies
\begin{equation}\label{R2-LE-low}
\|P_\lambda |R|^2\|_{{L^2_t L^\infty_{loc}(B_\lambda (x_0))}} \lesssim c_\lambda M .
\end{equation}

c) In addition, for each $\lambda < 1$ there is a decomposition
\[
 P_{< \lambda} |R|^2 = G_{\lambda}^1 + G_{\lambda}^2,
\]
where
\begin{equation}\label{R2-G1}
\sup_{\lambda} \| G_{\lambda}^1 \|_{L^1 L^\infty(B_\lambda(x_0))} \lesssim \lambda  M^2,
\end{equation}
while
\begin{equation}\label{R2-G2}
 \| G_{\lambda}^2 \|_{L^2 L^\infty(B_\lambda(x_0))} \lesssim c_\lambda M.
\end{equation}
\end{lemma}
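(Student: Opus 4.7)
The plan is based on a Littlewood--Paley decomposition $R = \sum_\mu R_\mu$, combined with three dyadic ingredients: the pointwise control bound $\|R_\mu\|_{L^\infty_{t,\alpha}} \lesssim \mu^{-1/2} c_\mu$ (from Proposition~\ref{p:control-equiv} and Bernstein), the high-frequency local energy trace $\|R_\mu\|_{L^2_t L^2_{loc}(\mathrm{top})} \lesssim \mu^{1/2} M$ for $\mu \geq 1$ (from Lemma~\ref{l:R-LE-top} via Bernstein in $H^{-1/2}_{loc}$), and the low-frequency bound $\|R_\mu\|_{L^2_t L^\infty(B_\mu)} \lesssim \mu^{1/2} M$ for $\mu \leq 1$ (Lemma~\ref{l:R-LE-top}). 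The guiding principle is an \emph{asymmetric pairing}: in a product $R_\mu \bar R_\nu$ with $\mu \leq \nu$, place the $L^\infty$ bound on the higher-frequency factor and the local $L^2$-type bound on the lower-frequency factor, yielding
\[
\|R_\mu \bar R_\nu\|_{L^2_t L^2_{loc}} \lesssim \nu^{-1/2} c_\nu \cdot \mu^{1/2} M = (\mu/\nu)^{1/2} c_\nu M,
\]
whose geometric decay in $\mu/\nu$ makes the inner dyadic sum over $\mu \leq \nu$ convergent.

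For part (a) I expand $|R|^2 = \sum_{\mu,\nu} R_\mu \bar R_\nu$ and apply the above in each of the three regimes (low-low, mixed $\mu \leq 1 \leq \nu$ using $B_1 \subset B_\mu$, and high-high). The inner dyadic sum over $\mu \leq \nu$ produces $c_\nu M$ for each $\nu$, and the outer sum over $\nu$ gives $\sum_\nu c_\nu M \lesssim \epsilon M$ by the $\ell^1$-summability of the control envelope. For part (b) the projection $P_\lambda$ with $\lambda \leq 1$ restricts the dyadic pairs $(\mu,\nu)$ to two types. The \emph{high-low} pairs with $\max(\mu,\nu) \sim \lambda$ fall directly under the pairing above, now with $L^2_t L^\infty(B_\lambda)$ replacing $L^2_t L^2_{loc}$ for the lower factor (using $B_\lambda \subset B_\mu$ for $\mu \leq \lambda$), and yield the target $c_\lambda M$. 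The \emph{high-high} pairs $\mu \sim \nu \geq \lambda$ require exploiting the projection via spatial Bernstein $\|P_\lambda g\|_{L^\infty(B_\lambda)} \lesssim \lambda^{1/2}\|g\|_{L^2(\tilde B_\lambda)}$ applied to $g = R_\mu \bar R_\nu$, followed by a covering of $\tilde B_\lambda$ by $\lambda^{-1}$ unit strips to bound $\|R_\mu \bar R_\nu\|_{L^2 L^2(\tilde B_\lambda)}$ via the $L^\infty \times L^2 L^2_{loc}$ pairing.

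For part (c) the natural splitting is
\[
G^1_\lambda = P_{<\lambda}|R_{\geq \lambda}|^2, \qquad G^2_\lambda = P_{<\lambda}\!\left[|R_{<\lambda}|^2 + 2\Re\bigl(R_{<\lambda}\bar R_{\geq \lambda}\bigr)\right].
\]
The bound on $G^2_\lambda$ reduces to the high-low regime of part (b) (only low-low and low-high interactions contribute) and inherits the $c_\lambda M$ estimate. For $G^1_\lambda$ I would use the dual Bernstein $\|P_{<\lambda} f\|_{L^\infty(B_\lambda)} \lesssim \lambda \|f\|_{L^1(\tilde B_\lambda)}$ with $f = |R_{\geq \lambda}|^2$, reducing the problem to an $L^1_t L^1_x$ estimate of $|R_{\geq \lambda}|^2 = \|R_{\geq \lambda}\|_{L^2 L^2(\tilde B_\lambda)}^2$, which is controlled via the exponential decay of the high-frequency harmonic extension into the strip: this trades the top-trace divergence of $\sum_{\mu \geq \lambda} \|R_\mu\|_{L^2 L^2(\mathrm{top})}^2$ for the finite strip bound $\|R\|_{L^2 L^2(\tilde B_\lambda \times [-h,0])}^2 \lesssim \lambda^{-1} M^2$, after which the $\lambda$ from Bernstein combines to give $\lambda M^2$.

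The main technical obstacle throughout is the high-high dyadic interactions, for which naive $L^2 L^2_{loc} \times L^2 L^2_{loc}$ pairings produce divergent dyadic sums; the resolution in part (b) is the spatial Bernstein cancellation at the output frequency $\lambda$, while in part (c) the coarser $L^1 L^\infty$ norm for $G^1_\lambda$ absorbs the non-oscillatory DC component by switching the estimate from the top trace to the finite volume of the strip, using that high frequencies are exponentially suppressed at depth $\gtrsim \lambda^{-1}$.
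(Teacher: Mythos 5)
Your part (a) is essentially correct and tracks the paper's proof, with the mild organizational difference that you pair the high-frequency factor pointwise in $L^\infty_{t,\alpha}$ whereas the paper pairs it in $L^\infty_t L^2_\alpha$ and then applies Bernstein from $L^1_{loc}$ to $L^2_{loc}$ on the output. For part (a) both accountings give the same final sum, since only the $\ell^1$ total $\epsilon M$ is required.

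For part (b), however, the distinction matters and your high-high argument has a genuine gap. Your pairing yields $\|P_\lambda(R_\mu\bar R_\nu)\|_{L^2_tL^\infty(B_\lambda)} \lesssim \lambda^{1/2}\cdot\lambda^{-1/2}\mu^{1/2}M\cdot\nu^{-1/2}c_\nu \sim c_\mu M$ for $\mu\sim\nu>\lambda$, with no off-diagonal factor in $\lambda/\mu$; summing over $\mu>\lambda$ only gives $\epsilon M$, not the asserted $c_\lambda M$. The fix, as in the paper, is to use the stronger $L^1(B_\lambda)\to L^\infty(B_\lambda)$ Bernstein bound (gaining a full $\lambda$ rather than $\lambda^{1/2}$) together with H\"older's pairing $L^2_tL^2(B_\lambda)\times L^\infty_tL^2\to L^2_tL^1(B_\lambda)$, keeping the control-norm factor in $L^\infty_t L^2$ (which decays like $\mu^{-1}$) instead of Berstein-ing it prematurely to $L^\infty_{t,\alpha}$ (which only decays like $\mu^{-1/2}$). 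This produces the extra $(\lambda/\mu)^{1/2}$ decay that localizes the sum at $\mu\sim\lambda$ and yields $c_\lambda M$.

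In part (c) your choice of splitting is reversed relative to the one that works. The paper places the low-low interactions ($\mu,\nu<\lambda$) in $G^1_\lambda$: pairing both factors in $L^2_tL^\infty(B_\lambda)$ via H\"older in time costs one copy of $M$ each, producing exactly the scale-invariant $\lambda M^2$ bound, with no control-norm factor available or needed. The high-high interactions go in $G^2_\lambda$, estimated exactly as in the corrected part (b), giving $c_\lambda M$. Your proposed $G^1_\lambda = P_{<\lambda}|R_{\geq\lambda}|^2$ cannot be bounded the way you suggest: $R$ on the top lies in $L^2_t H^{-1/2}_{loc}$, not in $L^2_t L^2_{loc}$, so $\|R_{\geq\lambda}\|_{L^2_tL^2(\tilde B_\lambda,\,\mathrm{top})}^2$ is not controlled by $M^2$ (by orthogonality the dyadic pieces contribute $\sum_{\mu\geq\lambda}\lambda^{-1}\mu M^2$, which diverges), and replacing the top trace by the depth integral $\|R\|^2_{L^2(\tilde B_\lambda\times[-h,0])}\lesssim\lambda^{-1}M^2$ is not legitimate since the Bernstein step was already performed on the top trace; even if it were, $\lambda\cdot\lambda^{-1}M^2 = M^2$ falls short of $\lambda M^2$ by a factor of $\lambda$. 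Conversely, your $G^2_\lambda$ containing $|R_{<\lambda}|^2$ cannot meet the $c_\lambda M$ target: the natural asymmetric pairing produces $\sum_{\nu<\lambda}c_\nu M\lesssim\epsilon M$, not $c_\lambda M$, precisely because the control-envelope contribution is smeared over all low frequencies instead of being localized at $\lambda$.
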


\begin{proof}
a) We restate this as a bound for $R$,
\begin{equation}
\label{inqR}
\| |R|^2 \|_{LE^0} \lesssim \| R \|_{LE^{-\frac12}}  \| R \|_{\ell^1 L^\infty _t H^1_h},
\end{equation}
where the $\ell^1$ summability is measured using the control frequency envelope $\{c_\lambda\}$. For this we use a Littlewood-Paley decomposition
\[
R\bar{R}=\sum_{\lambda} (R_{<\lambda}\bar{R}_{\lambda}
R_{\lambda}\bar{R}_{<\lambda} )+ \sum_{\lambda} R_{\lambda}\bar{R}_{\lambda},
\]
and  analyze each component separately. We discuss two cases: first when $\lambda \geq 1$ and the second is when $\lambda <1$. For now we discuss the first case, i.e., $\lambda \geq 1$. To bound $R$  we will use either the control norm $X$, or the local energy norm $LE^{-\frac{1}{2}}$.
Correspondingly, we have the following bounds for the dyadic pieces
\[
\Vert R_{< \lambda}\Vert_{L^2_t L^2_{loc}} \lesssim \lambda^{\frac{1}{2}}\Vert R_{< \lambda}\Vert_{L^2_t H^{-\frac{1}{2}}_{h,loc}} \lesssim
\lambda^{\frac{1}{2}} M,
\]
respectively
\[
\quad \Vert R_{\lambda}\Vert_{L^\infty L^2} \leq \lambda^{-1}\Vert R_{\lambda}\Vert_{L^\infty H_h^1} \lesssim \lambda^{-1} 
c_\lambda.
\]
We begin with the low-high frequency term where we compute
using Bernstein's inequality in Lemma~\ref{l:bern-loc-hi}
\[
\Vert R_{\lambda}\bar{R}_{< \lambda}\Vert_{L^2_t L^2_{loc}} \lesssim 
\lambda^\frac12 \Vert R_{\lambda}\bar{R}_{<\lambda}\Vert_{L^2_t L^1_{loc}} \lesssim \lambda^\frac12 \Vert R_{\lambda} \Vert_{L^\infty_t L^2}
\Vert \bar{R}_{<\lambda}\Vert_{L^2_t L^2_{loc}} \lesssim c_\lambda M.
\]
Here we can sum up with respect to dyadic $\lambda$ as needed.

For $\sum_{\lambda} R_{\lambda}\bar{R}_{\lambda}$ we perform a similar analysis, and consider the product's output at frequency $\nu$, where $\nu \lesssim \lambda $. Here, $\nu$ can be $\geq1 $ or $<1$. We assume first that $\nu \geq 1$, and return to the other case later in the proof.  From  Bernstein's inequality in Lemma~\ref{l:bern-loc}
\[
\Vert  P_{\nu}\left( R_{\lambda}\bar{R}_{\lambda}\right)\Vert_{L^2_t L^2_{loc}} \lesssim  \nu^{\frac{1}{2}}\Vert P_{\nu}\left( R_{\lambda}\bar{R}_{\lambda}\right)\Vert_{L^2_t L^1_{loc}},
\]
and further, by Cauchy's inequality,  we get 
\[
\begin{aligned}
\Vert  P_{\nu}\left( R_{\lambda}\bar{R}_{\lambda}\right) \Vert_{L^2_t L^2_{loc}} \lesssim  \nu^{\frac{1}{2}}\Vert  R_{\lambda}\Vert_{L^2_t L^2_{loc}}\Vert \bar{R}_{\lambda}\Vert_{L^2_t L^2}\lesssim \left( \frac{\nu}{\lambda}\right)^{\frac{1}{2}} \| R_{\lambda} \|_{L^2_t H^{-\frac{1}{2}}_{h,loc}}  \|\bar{ R}_{\lambda} \|_{L^\infty_t H^1_h}
\lesssim \left( \frac{\nu}{\lambda}\right)^{\frac{1}{2}} c_\lambda M.
\end{aligned}
\]
The $\lambda$ summation is again straightforward.

Therefore  \eqref{inqR} holds for the high frequency $( \geq 1)$
part of the output. The remaining case in \eqref{inqR} corresponds to low frequency output and will follow from the proof of part (b) below.

b) The goal here is to prove the following estimate 
\begin{equation}
\label{bound<1}
\sum_{\lambda < 1} \|P_\lambda |R|^2\|_{{L^2_t L^\infty_{loc}(B_\lambda)}} \lesssim \| R \|_{LE^{-\frac12}}  \| R \|_{\ell^1 L^\infty _tH^1_h},
\end{equation}
which in particular suffices to finish the proof of part $a)$ of the proposition. Again we use the control frequency envelope $\{c_\lambda\}$
to measure the $\ell^1$ summation in the second factor on the right, and will show that
\begin{equation}
\label{bound<1a}
 \|P_\lambda |R|^2\|_{{L^2_t L^\infty_{loc}(B_\lambda)}} \lesssim c_\lambda M.
 \end{equation}

We need to consider the expressions $P_\lambda \left( R_{\nu} \bar{R}_{\mu}\right)$, where by Littlewood-Paley  trichotomy, we have several cases to discuss:
\begin{flushleft}
\textbf{i.) Case $\nu\approx \mu $,  $\mu >\lambda$ and $\mu >1$.} 
\end{flushleft}
In this case both input frequencies are comparable and larger than $1$ but the output frequency is $\lambda <1$. We use Bernstein's inequality and H\"older's inequality in both space and time to obtain
\[
\Vert P_{\lambda} \left(R_{\mu} \bar{R}_{\mu}\right)\Vert_{L^2_t L^{\infty}_{loc}(B_\lambda)}\lesssim \lambda  \Vert R_{\mu} \bar{R}_{\mu} \Vert_{L^2_t L^1_{loc}(B_\lambda)}
\lesssim \lambda  \Vert R_{\mu} \Vert_{L^{2}_t L^2_{loc}(B_\lambda) }\Vert \bar{R}_{\mu} \Vert_{L^\infty_t L^2}.
\]
Since the input frequencies are  higher than $1$, we estimate the first factor using Lemma~\eqref{l:R2} adapted for the dyadic pieces, together with the fact that in an interval of size $\lambda^{-1}$ we have about $\lambda^{-1}$ size $1$ subintervals. For the second factor we use the control envelope $c_\lambda$. This yields 
\[
\begin{aligned}
\lambda  \Vert R_{\mu} \Vert_{L^{2}_t L^2_{loc}(B_\lambda) }\Vert \bar{R}_{\mu} \Vert_{L^\infty_t L^2(B_\lambda)}
&\lesssim \lambda \mu^{\frac12} \lambda^{-\frac{1}{2}}\Vert R_{\mu}\Vert_{L^2_t H^{-\frac12}_{loc}} \Vert R_{\mu}\Vert_{L^{\infty}_t L^2_{\alpha}}\\
&\lesssim \lambda^{\frac12} \mu^{-\frac12}\Vert R\Vert_{LE^{-\frac12}}\Vert R_{\mu}\Vert_{L^{\infty}_t H^1_{\alpha}} 
\\
& \lesssim \lambda^{\frac12} \mu^{-\frac12} c_\mu M.
\end{aligned}
\]
Now the $\mu$ summation is straightforward due to the off-diagonal decay.

\medskip

\begin{flushleft}
\textbf{ii.) Case $\nu\approx \mu $,  $\mu >\lambda$ and $\mu <1$.} 
\end{flushleft}
This case is a harder one because we deal with different scale localizations. More explicitly the input frequencies are on the scale $\mu^{-1}$ which is less  than the output frequency which lives on the scale $\lambda^{-1}$. Thus, we first use Bernstein's inequality in Lemma~\ref{l:bern-loc}, followed by  H\"older's inequality in both space and time:
\begin{equation}
\label{eq1}
\Vert P_{\lambda} \left(R_{\mu} \bar{R}_{\mu}\right)\Vert_{L^2_t L^{\infty}_{loc}(B_\lambda)}\lesssim \lambda  \Vert R_{\mu} \bar{R}_{\mu} \Vert_{L^2_t L^1_{loc}(B_\lambda)}
\lesssim \lambda  \Vert R_{\mu} \Vert_{L^{\infty}_t L^2_{loc}(B_\lambda) }\Vert \bar{R}_{\mu} \Vert_{L^2_t L^2_{loc}(B_\lambda)},
\end{equation}
and then we use the control envelope $c_\mu$ to arrive to
\begin{equation}
\label{eq2}
\begin{split}
 \lambda  \Vert R_{\mu} \Vert_{L^{\infty}_t L^2_{loc}(B_\lambda) }\Vert \bar{R}_{\mu} \Vert_{L^2_t L^2_{loc}(B_\lambda)}\lesssim & \   \lambda \mu^{-1}  \Vert R_{\mu} \Vert_{L^{\infty}_t H^1_{h, loc}(B_\lambda) }\Vert \bar{R}_{\mu} \Vert_{L^2_t L^2_{loc}(B_\lambda)} 
 \\ \lesssim & \  \lambda \mu^{-1}  c_\mu\Vert \bar{R}_{\mu} \Vert_{L^2_t L^2_{loc}(B_\lambda)} .
  \end{split}
 \end{equation}

In the second term on the right we switch from $\lambda^{-1}$ width strips to $\mu^{-1}$ wide strips using Holder's inequality, followed by H\"older's inequality
again and then Lemma~\eqref{l:R-LE} to obtain
\begin{equation}
\label{eq3}
\Vert \bar{R}_{\mu} \Vert^2_{L^2_t L^2_{loc}(B_\lambda)} 
\lesssim \left(\frac{\mu}{\lambda}\right)^\frac12\Vert \bar{R}_{\mu} \Vert^2_{L^2_t L^2_{loc}(B_\mu)} \lesssim \frac{\mu}{\lambda^\frac12}\Vert \bar{R}_{\mu} \Vert^2_{L^2_t L^\infty_{loc}(B_\mu)}   
\lesssim \left(\frac{\mu}{\lambda}\right)^\frac12 M.
\end{equation}
Using this in \eqref{eq2} we have proved that
\begin{equation}\label{eq4}
\Vert P_{\lambda} \left(R_{\mu} \bar{R}_{\mu}\right)\Vert_{L^2_t L^{\infty}_{loc}(B_\lambda)} \lesssim \lambda^{\frac12} \mu^{-\frac12} c_\mu M.
\end{equation}
The $\mu$ summation is again straightforward.

\medskip

\begin{flushleft}
\textbf{iii.) Case $\nu <\lambda  $ and $\mu \approx \lambda$.}\end{flushleft}  Here  we observe that we can drop the projection $P_{\lambda}$, and then we can use Lemma~\ref{l:R-LE} for the first factor and Bernstein's inequality for the second one
\begin{equation}\label{eq5}
\begin{aligned}
\Vert R_{\nu}\bar{R}_{\mu}\Vert_{L^2_tL^{\infty}_{loc}(B_{\lambda})}&\lesssim \Vert R_{\nu}\Vert_{L^2_tL^{\infty}_{loc}(B_{\lambda})} \Vert R_{\mu}\Vert_{L^{\infty}_t L^{2}_{\alpha}}\lesssim \nu^{\frac{1}{2}} \mu^{-\frac{1}{2}} c_\mu M.
\end{aligned}
\end{equation}
We do have  off-diagonal decay  since $\nu <\mu $, and summing over such $\nu$ yields a bound of $c_\lambda M$ as desired. 

c) We observe that we only need to place low-low interactions
in $G^1$ and high-high interactions in $G^2$. In this context by low-low we mean that both  input frequencies are smaller than $\lambda$, and then their output is also smaller than $\lambda$, and by high-high interaction we refer to larger than $\lambda$ input frequencies that give rise to a smaller than $\lambda$ output frequency. 

 We begin with the  input frequencies $\mu$ and $\nu$ both smaller than $\lambda$, and by H\"older's inequality in time we get that
\[
\Vert R_{\mu}R_{\nu}\Vert_{L^1_tL^{\infty}_{loc}(B_{\lambda})} \lesssim \Vert R_{\mu}\Vert_{L^2_tL^{\infty}_{loc}(B_{\lambda})}\Vert R_{\nu}\Vert_{L^2_tL^{\infty}_{loc}(B_{\lambda})}.
\]
Since both $\mu$ and $\nu$ are smaller than $\lambda$ we can apply Lemma ~\ref{l:R-LE} and get
\[
 \Vert R_{\mu}\Vert_{L^2_tL^{\infty}_{loc}(B_{\lambda})}\Vert R_{\nu}\Vert_{L^2_tL^{\infty}_{loc}(B_{\lambda})}\lesssim \mu^{\frac{1}{2}} \nu^{\frac{1}{2}}\Vert  R\Vert^2_{LE^{-\frac{1}{2}}}.
\]
Summing over both $\mu , \nu < \lambda$ we get that indeed
\[
\sum_{\lambda}\Vert R_{\mu}R_{\nu}\Vert_{L^1_tL^{\infty}_{loc}(B_{\lambda})} \lesssim \lambda \Vert  R\Vert^2_{LE^{-\frac{1}{2}}},
\]
which finishes the proof of \eqref{R2-G1}.

For the high-high case the analysis in part (i) and (ii) applies together with the summation over $ \lambda$ and $\mu$. The bound for $G_2$ follows.

\end{proof}

Using the $|R|^2$ bound, we are able to estimate two more of the error terms:

\begin{proof}[\bf The estimate for $Err_1$ in Proposition~\ref{p:e124}]
We recall that
\[
Err_1  := \int_0^T \int  \sigma m_x \eta \mN(\eta) \psi \, dx dt .
\]
Since $\mN(\eta) \psi = |\nabla \phi|^2$ on the top, 
this is a direct consequence of Lemma~\ref{l:R2} (a).

\bigskip

\end{proof}

\begin{proof}[\bf Proof of the $Err_2$ estimate]

We recall that the expression for $Err_2$ is given  by
\[
Err_2 := \int_0^T \iint m_x(x-x_0) \theta_y \HN(|\nabla \phi|^2)\, dx dy dt.
\]
We first recast it in holomorphic coordinates,
\[
Err_2 = \int_0^T \iint m_x(x-x_0) (\Re W_\alpha+|W_\alpha|^2)  \HN(|R|^2)\, d\alpha d\beta dt.
\]
To estimate it  we will combine the bounds in Lemma~\ref{l:theta-LE-loc} with those in Lemma~\ref{l:R2}. We exploit these bound in two steps. 
\medskip

\emph{ 1. High frequency bounds.}
Here we consider the contributions where at least one of the $W_\alpha$ and $\HN(|R|^2)$ factors is at high frequency 
$( \geq 1)$. In this case the corresponding harmonic extension decays exponentially  in $\beta$  
on the unit scale, therefore the bound for the corresponding part of $Err_2$ is localized both in $\alpha$ and 
in $\beta$ on the unit scale. On this scale, by elliptic regularity, we have local bounds
\[
W_\alpha \in L^2_t (L^2_\beta H^{-\frac12}_{\alpha})_{loc}, \qquad 
\HN(|R|^2)  \in L^2_t (L^2_\beta H^{\frac12}_{\alpha})_{loc}
\]
in terms of the $LE^0$ norms for $\theta$ and $| R|^2$ on the top. These are dual spaces. Furthermore,
the remaining $W_\alpha$ factors are harmless since from the $X$ bound we have
\[
W_\alpha \in L^\infty_t L^\infty_\beta (\ell^1 H^\frac12_{\alpha}).
\]
 
\medskip

\emph{2. Low frequency bounds.} Here we use the decomposition in part (c) of the last lemma,
where $\lambda$ is matched to the depth $|\beta| \approx \lambda^{-1}$.
 
For $G^1_\lambda$ we combine \eqref{R2-G1}  with the trivial $L^\infty$ bound for $W_\alpha$ derived
fom the $X$ norm, where the latter comes with $\ell^1$ summability.

For  $G^1_\lambda$ instead we combine \eqref{R2-G1}  with the bound  \eqref{l:theta-LE-loc} for $W_\alpha$.
\end{proof}

\subsection{Bilinear estimates for 
\texorpdfstring{$  \Im W \cdot \Re W_\alpha $}{} and its 
harmonic extension}

This expression appears in the normal form correction part of the 
proof of our nonlinear Morawetz inequality. Here we will prove the following bound:

\begin{lemma}\label{l:W2}
a) The function $\Im W \cdot \Re W_\alpha$ restricted to the top satisfies the following 
high frequency estimate:
\begin{equation}\label{WW=high}
\| \Im W \cdot \Re W_\alpha \|_{LE^0} \lesssim \epsilon \| \Im W\|_{LE^0}. 
\end{equation}

b) Its low frequencies  satisfy the additional bound
\begin{equation}\label{WW-low}
 \|  P_\lambda (\Im W \cdot \Re W_\alpha) \|_{L^2_tL^\infty_{loc}(B_\lambda(x_0))} \lesssim 
c_\lambda  \| \Im W\|_{LE^0}. 
\end{equation}

c) In addition, for each $\lambda < 1$ there is a decomposition
\[
 P_{< \lambda}(\Im W \cdot \Re W_\alpha)  = G_{\lambda}^1 + G_{\lambda}^2,
\]
where
\begin{equation}\label{WW-G1}
\sup_{\lambda} \| G_{\lambda}^1 \|_{L^1_t L^\infty_{loc}(B_\lambda(x_0))} \lesssim \lambda  M^2,
\end{equation}
while
\begin{equation}\label{WW-G2}
 \| G_{\lambda}^2 \|_{L^2_t L^\infty_{loc}(B_\lambda(x_0))} \lesssim  c_\lambda  M.
\end{equation}

\end{lemma}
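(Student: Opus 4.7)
The argument parallels Lemma~\ref{l:R2}, with $(\Im W,\Re W_\alpha)$ taking the place of $(\bar R,R)$. The inputs are the control envelope $\{c_\lambda\}$ from the Remark after Proposition~\ref{p:control-equiv} (which bounds $\Im W$ in $H^{3/2}_h$ and $W_\alpha$ in $H^{1/2}_h\cap L^\infty$ with envelope $c_\lambda$), the relation $\Re W_\alpha=-\Tilh^{-1}\partial_\alpha\Im W$ (so at any dyadic frequency $\mu\ge 1/h$ one has $\|(\Re W_\alpha)_\mu\|_{L^p}\simeq \mu\|(\Im W)_\mu\|_{L^p}$, making the two factors scale exactly as $R,\bar R$ do up to one shared derivative), and the local energy bound $\|\Im W\|_{LE^0}\le M$, which in view of the previous observation also gives $\|(\Re W_\alpha)_\mu\|_{LE^0}\lesssim \mu M$. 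Part (a) is then immediate from the pointwise bound $\|W_\alpha\|_{L^\infty}\lesssim \epsilon$ recorded in \eqref{W-ppoint}: by H\"older,
\[
\|\Im W\cdot \Re W_\alpha\|_{LE^0}\le \|\Re W_\alpha\|_{L^\infty_{t,\alpha}}\|\Im W\|_{LE^0}\lesssim \epsilon M.
\]

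For part (b), I would perform a Littlewood-Paley trichotomy $\Im W\cdot\Re W_\alpha=\sum_{\mu,\nu}(\Im W)_\mu (\Re W_\alpha)_\nu$ and retain only the dyadic pairs whose output at frequency $\lambda<1$ is nonzero, falling into the same three cases as in Lemma~\ref{l:R2}(b). In the high-high case $\mu\sim\nu>\lambda$, I would apply Bernstein for $P_\lambda:L^1_{loc}(B_\lambda)\to L^\infty_{loc}(B_\lambda)$ with gain $\lambda$ from Lemma~\ref{l:bern-loc}(a), then Cauchy-Schwarz to split the product, using $\|(\Re W_\alpha)_\mu\|_{L^\infty_t L^2}\lesssim \mu^{-1/2}c_\mu$ and $\|(\Im W)_\mu\|_{L^2_tL^2_{loc}(B_\lambda)}\lesssim \lambda^{-1/2}M$ (obtained by covering $B_\lambda$ with $\lambda^{-1}$ unit intervals); the per-dyadic bound $\lambda^{1/2}\mu^{-1/2}c_\mu M$ sums to $c_\lambda M$ via the slowly varying envelope property and a geometric series. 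In the low-high case $\mu<\nu\sim\lambda$, I would combine $\|(\Re W_\alpha)_\nu\|_{L^\infty}\lesssim c_\nu\approx c_\lambda$ with the Bernstein bound $\|(\Im W)_\mu\|_{L^2_tL^\infty_{loc}(B_\lambda)}\lesssim \mu^{1/2}\lambda^{-1/2}M$ from Lemma~\ref{l:bern-loc}(a) applied to the $LE^0$-controlled dyadic piece, and sum $\mu^{1/2}\lambda^{-1/2}c_\lambda M$ over $\mu<\lambda$ geometrically. In the symmetric high-low case $\nu<\mu\sim\lambda$, I would use $\|(\Im W)_\mu\|_{L^\infty}\lesssim \mu^{-1}c_\mu$ together with $\|(\Re W_\alpha)_\nu\|_{L^2_tL^\infty_{loc}(B_\lambda)}\lesssim \nu^{3/2}\lambda^{-1/2}M$ (obtained by Bernstein on the frequency-$\nu$ function combined with $\|(\Re W_\alpha)_\nu\|_{LE^0}\lesssim \nu M$); the per-dyadic bound $\nu^{3/2}\lambda^{-3/2}c_\lambda M$ sums over $\nu<\lambda$ to $c_\lambda M$.

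For part (c), I would split $P_{<\lambda}(\Im W\cdot \Re W_\alpha)=G^1_\lambda+G^2_\lambda$ by assigning the low-low interactions ($\mu,\nu<\lambda$) to $G^1_\lambda$ and the balanced high-high contributions to $G^2_\lambda$; the bound \eqref{WW-G2} for $G^2_\lambda$ then reduces to the high-high analysis of part (b). For $G^1_\lambda$ I would use Cauchy-Schwarz in time,
\[
\|(\Im W)_\mu (\Re W_\alpha)_\nu\|_{L^1_tL^\infty_{loc}(B_\lambda)}\le \|(\Im W)_\mu\|_{L^2_tL^\infty_{loc}(B_\lambda)}\|(\Re W_\alpha)_\nu\|_{L^2_tL^\infty_{loc}(B_\lambda)}\lesssim \mu^{1/2}\nu^{3/2}\lambda^{-1}M^2,
\]
and sum over dyadic $\mu,\nu<\lambda$ via two geometric series to obtain the required $\lambda M^2$ bound. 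The main obstacle, already encountered in Lemma~\ref{l:R2}, is that the naive pointwise $L^\infty$ bound on $(\Im W)_\mu$ at low frequency (of size $\mu^{-1}c_\mu$, which can be as large as $h$) is too weak for the off-diagonal dyadic sums; summability is produced instead by extracting the Bernstein gain $\mu^{1/2}$ (or $\nu^{3/2}$ for the analogously scaled $\Re W_\alpha$) from the frequency-localized $LE^0$ norm using Lemma~\ref{l:bern-loc}, after which the slowly varying envelope $\{c_\lambda\}$ converts the geometric dyadic series into the target bound $c_\lambda M$.
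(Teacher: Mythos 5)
There is a genuine gap in your argument, and it occurs exactly at the spot the paper flags as the ``potentially troublesome'' summation.

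In your low-high case of part (b), namely $\Im W$ at frequency $\mu$ and $\Re W_\alpha$ at frequency $\nu\sim\lambda$ with $\mu<\lambda$, you claim the per-dyadic Bernstein bound $\|(\Im W)_\mu\|_{L^2_tL^\infty_{loc}(B_\lambda)}\lesssim \mu^{1/2}\lambda^{-1/2}M$. That gain is not available. Since $\mu<\lambda$, $B_\lambda(x_0)\subset B_\mu(x_0)$, and the only estimate furnished by Lemma~\ref{l:bern-loc} together with the $LE^0$ bound is $\|(\Im W)_\mu\|_{L^2_tL^\infty_{loc}(B_\lambda)}\le\|(\Im W)_\mu\|_{L^2_tL^\infty_{loc}(B_\mu)}\lesssim M$, \emph{with no dependence on $\mu$}. (This is exactly what Lemma~\ref{l:W-LE-top} asserts for the bundled low-frequency projection $\Im W_{\le\mu}$.) You appear to have transported the $\nu^{1/2}$ low-frequency gain from Lemma~\ref{l:R-LE-top}, but that gain is specific to $R$, whose local energy norm $LE^{-1/2}$ integrates in depth; the $LE^0$ norm controlling $\Im W$ does not produce an analogous decay. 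Consequently the correct per-dyadic contribution in your low-high case is $c_\lambda M$, and the dyadic sum over $\mu\in[1/h,\lambda]$ diverges logarithmically like $\log(\lambda h)$. The same issue reappears in your part (c) estimate of $G^1_\lambda$: after correcting $\|(\Im W)_\mu\|_{L^2_tL^\infty_{loc}(B_\lambda)}\lesssim M$, the $\mu$-sum again diverges (the $\nu$-sum is fine, since $\|(\Re W_\alpha)_\nu\|_{L^2_tL^\infty_{loc}(B_\lambda)}\lesssim\nu M$ holds genuinely and is geometrically summable).

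The fix, which the paper applies explicitly and calls out as the one nontrivial point, is not to decompose $\Im W$ into dyadic pieces at all in this frequency regime: group all low frequencies together and write
\[
\|\Im W_{<\mu}\cdot(\Re W_\alpha)_\mu\|_{L^2_tL^\infty_{loc}(B_\mu)}\lesssim \|\Im W_{<\mu}\|_{L^2_tL^\infty_{loc}(B_\mu)}\cdot \|(\Re W_\alpha)_\mu\|_{L^\infty}\lesssim M\,c_\mu,
\]
using Lemma~\ref{l:W-LE-top} for the first factor (no dyadic sum needed) and the control envelope for the second; taking $\mu\sim\lambda$ gives \eqref{WW-low} directly, and the same grouping gives \eqref{WW-G1} in part (c). Your part (a) and high-high case are fine. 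Your high-low case also survives, but for a slightly different reason than you give: the genuine gain there comes from $\|(\Re W_\alpha)_\nu\|_{L^2_tL^\infty_{loc}(B_\lambda)}\lesssim \nu M$ (because $\Re W_\alpha=-\Tilh^{-1}\partial_\alpha\Im W$ costs a factor $\nu$ at frequency $\nu$), not from the spurious $(\nu/\lambda)^{1/2}$ Bernstein factor; that correct per-dyadic bound $\lambda^{-1}c_\lambda\cdot\nu M$ still sums geometrically over $\nu<\lambda$ to $c_\lambda M$.
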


\begin{proof}
 a) Here we use the fact that $W_{\alpha}$ is bounded in $L^{\infty}$ 
\[
\begin{aligned}
\| \Im W \cdot \Re W_\alpha \|_{LE^0} &= \| \Im W \cdot \Re W_\alpha \|_{L^2_t L^2_{loc}(B_{\lambda}(x_0))} \\
& \lesssim \Vert \Im W \Vert _{L^2_t L^2_{loc}(B_{\lambda} (x_0))} \|\Re W_\alpha \|_L^{\infty}\\
 &\lesssim  \epsilon  \| \Im W\|_{LE^0} .
 \end{aligned}
\]
\medskip

b) The proof is  exactly as in Lemma~ \eqref{l:R2} with the corresponding adjustments that come from the fact that  $\Im W$  and $\Re W_{\alpha}$ are differently balanced in comparison with  $R$: one is $1/2$ derivative  less than $R$ and one is $1/2$ derivative above $R$, respectively. 

The only  slight technical difference that arises, is when one considers the case of low-high interactions, with the high frequency on $\Re W_{\alpha}$. In this case, instead of looking separately at the norms 
\[
\| \Im W_{\nu}\cdot \Re W_{\mu,\alpha}\|_{L^2_t L^{\infty}_{loc}(B_{\lambda}(x_0))}, \qquad \nu <\mu \lesssim 1,
\]
and then sum over  $\nu$ with $\nu <\mu$, we 
group terms and analyze directly  
\[
 \| \Im W_{<\mu} \cdot \Re W_{\mu,\alpha}\|_{L^2_t L^{\infty}_{loc}(B_{\lambda}(x_0))}.
\]
By doing so we avoid the potentially troublesome $\nu$ summation.

Thus, we proceed as follows
\[
 \| (\Im W)_{<\mu} \cdot (\Re W_{\alpha})_{\mu}\|_{L^2_tL^{\infty}_{loc}(B_{\mu})}\lesssim  \| (\Im W)_{<\mu} \|_{L^2_tL^{\infty}_{loc}(B_{\mu})}
\|  (\Re W_{\alpha})_{\mu}\|_{L^{\infty}} \lesssim c_\mu M,
\]
where for the first factor we have used Lemma~\ref{l:W-LE-top}, while the dyadic bound for $\Re W_{\alpha}$ follows from Proposition~\ref{p:control-equiv}. This suffices for both parts (b)
and (c) of the lemma.

\end{proof}

\begin{proof}[\bf Proof of the $Err_5^2$ estimate.]
Here we consider the bound for the second term in $Err_5$, namely
\[
Err_5^2 := g \int_0^T \iint \Im \left( \frac{1}{1+W_\alpha} \Til W_\alpha \right)  H_D (\Im W \Re W_\alpha) 
\, d\alpha d\beta dt .
\]
The same proof as for $Err_2$ applies,  using Lemma~\ref{l:theta-LE-loc}, which now 
 is combined with Lemma~\ref{l:W2} instead of Lemma~\ref{l:R2}.
\end{proof}

\bigskip

\begin{proof}[\bf Proof of the $Err_5^3$ estimate]
Here we consider the third term in $Err_5$, namely
\[
Err_5^3 = \int_0^T \iint \frac{1}{1+W_\alpha} \Im P[|R|^2]_\alpha \, H_D (\Im W \cdot\Re W_\alpha)\,  d\alpha d\beta dt.
\]
This is again the same proof as for $Err_2$, using Lemma~\ref{l:R2} and Lemma~\ref{l:W2}.
\end{proof}

\section{Bounds involving \texorpdfstring{$F$}{}}
\label{s:F}

The aim of this section is to prove the error estimates involving
$F$. These are all tied to the normal form correction we use to deal
with the unbounded error term $Err_3$.
We recall that 
\[
F = P\left[ \frac{2i \Im Q_\alpha}J\right] = R + P \left [ 2 i \Im \left( R \bar Y \right)\right]  :=  R+ F^{[2]},
 \]
where we have separated the linear part $F$ and the quadratic and 
higher order part $F^{[2]}$. The imaginary part of $F^{[2]}$ is explicit on the top:
\[
\Im F^{[2]} = \Im \left ( R \bar Y\right).
\]
Thus in the fluid domain we can write
\[
\Im F^{[2]} =  H_D \left( \Im \left ( R \bar Y \right) \right).
\]
In Eulerian coordinates, the expression $H_D \left(\Im \left ( R \bar Y \right)\right)$ arises as the nonlinear component of 
$\theta_t$, see \eqref{thetat}. Indeed, in holomorphic coordinates, we compute on the top
\[
\nabla \phi \nabla \psi = \Im \left( R \frac{\bar W_\alpha}{1+\bar W_\alpha} \right) = \Im \left( F - \frac{Q_\alpha}{1+W_\alpha} \right) = \Im (F -R).
\]

Understanding $\Re F^{[2]}$, on the other hand, is a slightly more delicate matter, since a-priori it is only determined modulo constants. In our setting, the constant in $\Re F$ is determined
by
\begin{equation}\label{ReF-const}
\begin{split}
\Re F^{[2]}(\alpha_0,0) = & \  
\left(\frac{\Im F \Im W_\alpha}{1+\Re W_\alpha} - \Re R\right) (\alpha_0,0)
\\ = & \
 \left(\frac{\Im (R \bar Y) \Im W_\alpha}{J(1+\Re W_\alpha)}-\Re R\right) (\alpha_0,0).
\end{split}
\end{equation}
We will not use the full expression in the sequel, but merely the bound
\begin{equation}\label{const-bd}
|\Re F^{[2]}(\alpha_0,0)| \lesssim |R(\alpha_0,0)|.
\end{equation}

In what follows we will first establish direct bounds for $\Im F^{[2]}$, which has a bilinear structure as described above. The real part will satisfy similar bounds except at very low frequencies $\leq 1/h$.

\subsection{Bilinear estimate for \texorpdfstring{$\Im F^{[2]} =
\HD(\nabla \phi \nabla \psi) $}{}}
For this expression we will prove the following bounds,
which  are needed in order to switch from 
$Err_3$ to $Err_3^{hol}$ and prove Proposition~\ref{p:e3-diff}:

\begin{lemma}\label{l:ImF}
a) The function $F^{[2]}$ restricted to the top satisfies the following high frequency estimate with $\lambda \geq 1$.
\begin{equation} \label{ImF-hi}
\|P_\lambda F^{[2]} \|_{LE}  \lesssim \lambda^\frac12 \epsilon M, \qquad \lambda \geq 1.
\end{equation}

b) It also satisfies the  low frequency bound
\begin{equation}\label{ImF-low}
 \| P_\lambda F^{[2]}  \|_{L^2_t L^\infty_{loc}(B_\lambda)} \lesssim \lambda^\frac12 \epsilon M, \qquad 1/h < \lambda < 1.
\end{equation}

c) Finally, at very low frequencies we have:
\begin{equation}\label{ImF-vlow}
 \| P_{<1/h} \Im F^{[2]}  \|_{L^2_t L^\infty_{loc}(B_{1/h})} \lesssim h^{-\frac12} \epsilon M.
\end{equation}

\end{lemma}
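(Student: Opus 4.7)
The plan is to reduce all three bounds to bilinear estimates for the product $R \bar Y$ on the top, since $\Im F^{[2]} = \Im(R\bar Y)$ by the calculation preceding the lemma, and the full $F^{[2]}$ differs from $2i\Im(R\bar Y)$ only by the holomorphic projection $P$, which is a bounded Fourier multiplier at frequencies $\geq 1/h$. Accordingly, I would carry out a Littlewood--Paley analysis of $R\bar Y$, using the dyadic bounds for $R$ from Lemma~\ref{l:R-LE-top} and Lemma~\ref{l:R-LE}, the dyadic bounds for $Y$ from Lemma~\ref{l:Y}, and the uniform $L^\infty$ bound $\|Y_{<\lambda}\|_{L^\infty} \lesssim \epsilon$ and $\|R\|_{L^\infty} \lesssim \epsilon$ obtained from the control envelope $\{c_\lambda\}$ via Proposition~\ref{p:control-equiv} and Bernstein.

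For part~(a), $\lambda \geq 1$, I would split $P_\lambda(R\bar Y)$ into high-low, low-high, and high-high interactions. The high-low piece $P_\lambda(R_\lambda \bar Y_{<\lambda})$ is estimated by Bernstein as $\|R_\lambda\|_{L^2_t L^2_{loc}} \lesssim \lambda^{\frac12} M$ times $\|Y_{<\lambda}\|_{L^\infty} \lesssim \epsilon$. The low-high piece $P_\lambda(R_{<\lambda} \bar Y_\lambda)$ is estimated via $\|R_{<\lambda}\|_{L^\infty} \lesssim \epsilon$ paired with $\|Y_\lambda\|_{L^2_t L^2_{loc}} \lesssim \lambda^\frac12 c_\lambda$ derived from the control envelope (using that $Y_\lambda \in L^\infty H^\frac12_h$). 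The high-high piece $P_\lambda(R_\mu \bar Y_\mu)$ with $\mu \geq \lambda$ uses Bernstein at the output scale $\lambda$, yielding $\lambda^\frac12 \|R_\mu\|_{L^2 L^2_{loc}} \|Y_\mu\|_{L^\infty L^2}$; both factors decay in $\mu$ well enough for the $\ell^1$ envelope sum to close with bound $\lambda^\frac12 \epsilon M$.

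For part~(b), $1/h < \lambda < 1$, I would use the same trichotomy but with the low-frequency $L^\infty$ bounds in Lemma~\ref{l:R-LE} and Lemma~\ref{l:Y}, namely $\|R_\lambda\|_{L^2 L^\infty(B_\lambda)} \lesssim \lambda^\frac12 M$ and $\|Y_\lambda\|_{L^2 L^\infty(B_\lambda)} \lesssim \lambda M$. The localization of the output to the window $B_\lambda$ is handled by the uniform Bernstein Lemma~\ref{l:bern-loc}, taking care when the input and output scales disagree to switch between appropriate spatial windows and cover the larger region by smaller ones. Part~(c) is analogous but at the boundary scale $\lambda = 1/h$: the main low-low term $R_{\leq 1/h} Y_{\leq 1/h}$ is bounded by $\|R_{\leq 1/h}\|_{L^2 L^\infty(B_{1/h})} \cdot \|Y_{\leq 1/h}\|_{L^\infty} \lesssim h^{-\frac12} M \epsilon$, while high-high contributions from $R_\mu \bar Y_\mu$ with $\mu \geq 1/h$ are bounded by Bernstein at scale $1/h$ (which costs $h^{-1}$ in the output versus $L^1_{loc}$), balanced by H\"older and summed using the envelope.

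The main obstacle will be part~(c). The reason only $\Im F^{[2]}$ is controlled there, not $F^{[2]}$ itself, is that the gauge freedom in $\Re F$ makes the holomorphic projection $P$ uncontrolled at frequencies $\leq 1/h$, so one must work directly with the explicit bilinear identity $\Im F^{[2]} = \Im R \cdot \Re Y - \Re R \cdot \Im Y$ on the top and extend by $\HD$. The delicate summation is in the high-high regime: the local $L^2$ bounds for $R_\mu$ and $Y_\mu$ grow with $\mu$, so the gain of $h^{-1}$ from Bernstein at scale $1/h$ must be balanced carefully against this growth and against the envelope summation, with any mismatch breaking the uniformity as $h \to \infty$.
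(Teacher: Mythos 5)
Your overall strategy (Littlewood--Paley trichotomy, pairing the local energy norm with the control envelope, and using the uniform Bernstein lemmas to handle windows at mismatched scales) is the same as the paper's, and your high-high estimate in part~(a) is correct: $\lambda^{\frac12}\|R_\mu\|_{L^2_tL^2_{loc}}\|Y_\mu\|_{L^\infty_tL^2} \lesssim \lambda^{\frac12} c_\mu M$ with $\ell^1_\mu$ summability.

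However, your low-high term in part~(a) has a genuine error. You pair $\|R_{<\lambda}\|_{L^\infty}\lesssim \epsilon$ with the claimed bound $\|Y_\lambda\|_{L^2_tL^2_{loc}}\lesssim \lambda^{\frac12}c_\lambda$, but this $L^2_t$ bound cannot come from the control envelope: the envelope $\{c_\lambda\}$ controls $L^\infty_t$ norms, and converting $L^\infty_t$ to $L^2_t$ costs a factor $T^{1/2}$, which destroys the uniformity in $T$ that the Morawetz estimate demands. The only available uniform-in-$T$ bound for $Y_\lambda$ in $L^2_tL^2_{loc}$ is the local energy bound from Lemma~\ref{l:Y}(a), which reads $\|Y_\lambda\|_{L^2_tL^2_{loc}}\lesssim \lambda M$ (not $\lambda^{\frac12}c_\lambda$, and in particular with the full power $\lambda$ since $Y\approx W_\alpha$ carries a derivative). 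With this corrected bound your pairing produces $\epsilon\,\lambda\,M$, which misses the target $\lambda^{\frac12}\epsilon M$ by a factor of $\lambda^{\frac12}$. The fix is to reverse the roles: use local energy on the low-frequency $R$ factor, $\|R_{<\lambda}\|_{L^2_tL^2_{loc}}\lesssim \lambda^{\frac12}\|R\|_{L^2_tH^{-\frac12}_{loc}}\lesssim \lambda^{\frac12}M$, and the control norm on $Y_\lambda$, $\|Y_\lambda\|_{L^\infty_{t,\alpha}}\lesssim c_\lambda\le\epsilon$, which gives $\lambda^{\frac12}c_\lambda M$ exactly as the paper obtains. The same care about which factor gets the local-energy bound and which gets the control-norm bound is needed in parts~(b) and~(c); as written your proposal does not spell this out there, but the part~(a) slip suggests it is not a typo. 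Your observation about the gauge ambiguity in $\Re F$ being the reason part~(c) is stated only for $\Im F^{[2]}$ is correct and consistent with the paper.
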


We also list some straightforward consequences of the above Lemma:

\begin{corollary}
The low frequency part of $\Im F^{[2]}$ satisfies on the top
\begin{equation}\label{ImF-hi+}
\|P_{\leq 1} \Im F^{[2]}  \|_{LE}  \lesssim  \epsilon M.
\end{equation}
Its harmonic extension satisfies the  bound
\begin{equation}\label{ImF-low-H}
 \| \HD(\nabla \theta \nabla \phi)  \|_{L^2_t L^\infty(A_\lambda)} \lesssim  \lambda^\frac12 \epsilon M.
\end{equation}
\end{corollary}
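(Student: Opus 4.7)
\emph{Proof plan.} Both bounds will follow from Lemma~\ref{l:ImF} by dyadic summation. For the first bound, I would note that when $\lambda \leq 1$ we have $B_1(x_0) \subseteq B_\lambda(x_0)$, so H\"older's inequality on the unit ball gives $\|P_\lambda f\|_{L^2_{loc}} \lesssim \|P_\lambda f\|_{L^\infty_{loc}(B_\lambda)}$. Invoking parts (b) and (c) of Lemma~\ref{l:ImF} then yields
\begin{align*}
\|P_{\leq 1} \Im F^{[2]}\|_{LE^0}
&\lesssim \sum_{1/h < \lambda \leq 1} \|P_\lambda \Im F^{[2]}\|_{L^2_t L^\infty_{loc}(B_\lambda)} + \|P_{<1/h} \Im F^{[2]}\|_{L^2_t L^\infty_{loc}(B_{1/h})} \\
&\lesssim \sum_{1/h < \lambda \leq 1} \lambda^{1/2} \epsilon M + h^{-1/2} \epsilon M \lesssim \epsilon M.
\end{align*}

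For the second bound, the first step is to identify $\HD(\nabla\theta\cdot\nabla\phi) = \Im F^{[2]}$ inside the strip in holomorphic coordinates. This follows since $F^{[2]} = F - R$ is holomorphic, our gauge choice makes $F$ real on the bottom, and $\Im R = \phi_y$ vanishes on the bottom by the Neumann condition; hence $\Im F^{[2]}$ vanishes there. On the top, $\Im F^{[2]} = \Im(R\bar Y) = \nabla\theta\cdot\nabla\phi$, as noted in the computation preceding $Err_3^{hol}$. By uniqueness of the harmonic extensions with vanishing Dirichlet data on the bottom, the two sides agree throughout the strip. I would then decompose dyadically on the top and extend harmonically: $\Im F^{[2]}(\alpha,\beta) = \sum_\mu P_D(\beta, D) P_\mu \Im F^{[2]}$. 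For $|\beta| \sim \lambda^{-1}$, the Poisson symbol $p_D(\xi,\beta)$ is $O(1)$ for $|\xi| \lesssim \lambda$ and decays like $e^{-c\mu/\lambda}$ for $|\xi| \sim \mu > \lambda$. For $1/h < \mu \leq \lambda$, Lemma~\ref{l:bern-loc}(a) combined with the inclusion $B_\lambda \subseteq B_\mu$ will transfer the top bound of Lemma~\ref{l:ImF}(b), yielding a contribution of $\mu^{1/2}\epsilon M$; summing over such $\mu$ gives $\lambda^{1/2}\epsilon M$, and the very low frequency piece $\mu < 1/h$ is handled analogously via Lemma~\ref{l:ImF}(c).

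For the high-frequency tails $\mu > \lambda$ the Poisson decay will dominate. The delicate range is $\mu \geq 1$, where one converts the $LE^0$ bound of Lemma~\ref{l:ImF}(a) to an $L^\infty_{loc}$ bound via Bernstein at a cost of $\mu^{1/2}$; however, the restriction $\lambda < 1$ built into the definition of $A_\lambda$ forces $e^{-c\mu/\lambda} \leq e^{-c/\lambda}$, which decays faster than any polynomial in $\lambda$ and thus comfortably absorbs the Bernstein loss. The main technical subtlety will be precisely this balance between the Bernstein loss and the Poisson decay in the high-frequency regime; without the restriction $\lambda < 1$ the argument would not close as stated.
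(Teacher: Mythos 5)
Your argument is correct, and it takes the approach the paper evidently intends (the paper lists this corollary without proof, as a "straightforward consequence" of Lemma~\ref{l:ImF}). The structure — identify $\Im F^{[2]}$ in the interior with $\HD(\nabla\theta\cdot\nabla\phi)$, decompose the top trace dyadically, transfer the Poisson multiplier via Lemma~\ref{l:bern-loc}, and let the exponential decay of $p_D(\xi,\beta)$ crush everything at frequencies above the depth scale — is exactly what is wanted, and your explicit remark that the $\lambda<1$ restriction built into $A_\lambda$ is what renders the Bernstein loss harmless is the right observation.

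Two small points worth tightening. First, the fact that $F$ is real on the bottom has nothing to do with the gauge choice: $F$ is, by construction, a holomorphic function in the paper's sense (output of $\P_h$), and these are by definition real on $\{\beta = -h\}$. The gauge freedom concerns only the additive constant in $\Re F$, which does not affect $\Im F^{[2]}$ and hence plays no role in this corollary. Second, in the high-frequency case $\mu > \lambda$ the transfer from $L^\infty_{loc}(B_\mu)$ to $L^\infty_{loc}(B_\lambda)$ is not for free: covering $B_\lambda(x_0)$ by $\approx \mu/\lambda$ translates of $B_\mu$ and passing the $\sup$ inside the $L^2_t$ norm incurs an additional $(\mu/\lambda)^{1/2}$ factor on top of the $\mu^{1/2}$ Bernstein loss. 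Your conclusion still holds because $e^{-c\mu/\lambda}$ beats any power of $\mu/\lambda$, but the intermediate bookkeeping should be recorded so that the reader can confirm the geometric sum over $\mu$ converges with a $\lambda^{1/2}$ gain.
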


The estimates in part a) are not entirely satisfactory because the
$\ell^1$ summation with respect to $\lambda$ is missing for $\lambda >
1$.  Similarly  the $\ell^1$ summation with respect to $\lambda < 1$ is missing in part (b).  To compensate for that, we complement the above
result as follows:

\begin{lemma}\label{l:ImFa}
a) The function $F^{[2]}_h = F^{[2]}_{\geq 1}$ restricted to the top admits the following high frequency
decomposition
\[
 F^{[2]}_h = F^{[2],1}_{h}+ F^{[2],1}_{h},
\]
where the dyadic pieces of $F^{[2],1}_{h}$ satisfy
\begin{equation}\label{ImF-g1}
\| F^{[2],1}_{\lambda} \|_{L^2_t H^{-\frac12}_{loc}}  \lesssim  M c_\lambda,
\end{equation}
while  the dyadic pieces of $F^{[2],2}_{h}$ satisfy
\begin{equation}\label{ImF-g2}
\| F^{[2],2}_{\lambda} \|_{L^\infty_t H^{1}}  \lesssim  \epsilon c_\lambda.
\end{equation}

\end{lemma}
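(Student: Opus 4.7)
The plan is to decompose $F^{[2]}_h$ by paraproduct-splitting the bilinear expression defining $F^{[2]}$. Recall that on the top $\Im F^{[2]} = \Im(R\bar Y)$, and $\Re F^{[2]}$ is recovered modulo a constant from $\Im F^{[2]}$ by the Tilbert transform. At dyadic frequencies $\lambda \geq 1$ both the Tilbert transform and the holomorphic projection $P$ act as bounded Fourier multipliers on every Sobolev norm of interest, so it suffices to produce the decomposition for the real bilinear expression $\Im(R\bar Y)$ and then apply $P$.

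Writing
\[
R\bar Y = T_R\bar Y + T_{\bar Y} R + \pi(R,\bar Y),
\]
with $T_f g = \sum_{\mu \ll \nu} f_\mu g_\nu$ the standard paraproduct and $\pi$ the high-high interaction, the plan is to place the low-$R$/high-$Y$ term $T_R\bar Y$ into $F^{[2],1}_h$ and the remaining two pieces $T_{\bar Y}R + \pi(R,\bar Y)$ into $F^{[2],2}_h$.

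For $F^{[2],1}_h$ at output frequency $\lambda \geq 1$, combining Lemma~\ref{l:R-LE-top} with Proposition~\ref{p:control-equiv} yields, for $\mu \ll \lambda$,
\[
\|R_\mu \bar Y_\lambda\|_{L^2_t L^2_{loc}} \lesssim \|R_\mu\|_{L^2_t L^2_{loc}} \|Y_\lambda\|_{L^\infty_t L^\infty} \lesssim \mu^{\frac12} M\, c_\lambda,
\]
and the dyadic sum in $\mu$ is geometric and dominated by its top term $\mu \sim \lambda$, giving $\lambda^{\frac12} Mc_\lambda$ in $L^2_t L^2_{loc}$, i.e.\ $Mc_\lambda$ in $L^2_t H^{-\frac12}_{loc}$. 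For $F^{[2],2}_h$ in $L^\infty_t H^1$, the high-low piece $Y_\nu R_\lambda$ at output $\lambda$ is bounded directly by $\|Y_\nu R_\lambda\|_{L^\infty_t H^1} \lesssim \lambda \cdot c_\nu \cdot \lambda^{-1} c_\lambda = c_\nu c_\lambda$, which sums to $\epsilon c_\lambda$ via $\|c\|_{\ell^1}\lesssim \epsilon$. For the high-high piece at output $\kappa$ from frequency $\lambda \geq \kappa$, Bernstein's inequality together with $\|R_\lambda\|_{L^\infty_t L^2}\lesssim \lambda^{-1}c_\lambda$ and $\|Y_\lambda\|_{L^\infty_t L^2}\lesssim \lambda^{-\frac12}c_\lambda$ yields
\[
\|P_\kappa(R_\lambda\bar Y_\lambda)\|_{L^\infty_t L^2} \lesssim \kappa^{\frac12}\|R_\lambda\|_{L^\infty L^2}\|Y_\lambda\|_{L^\infty L^2} \lesssim \kappa^{\frac12}\lambda^{-\frac32}c_\lambda^2,
\]
and summing $\lambda \geq \kappa$ via slow variation of $c_\lambda$ produces $\kappa^{-1}c_\kappa^2$ in $L^\infty_t L^2$, hence $c_\kappa^2$ in $L^\infty_t H^1$.

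The main obstacle will be the high-high interaction: a naive $L^2_t H^{-\frac12}_{loc}$ estimate of $\pi(R,\bar Y)$ summed over $\lambda \geq \kappa$ produces only a constant $M\epsilon$ envelope rather than the desired $c_\kappa$ concentration at the output scale. The resolution is precisely to switch to the smoother $L^\infty_t H^1$ norm, where the Bernstein gain $\kappa^{\frac12}\lambda^{-\frac32}$ makes the series geometrically summable at its head $\lambda \approx \kappa$, and then to trade a factor of $c_\kappa$ for $\epsilon$ via $c_\kappa \leq \|c\|_{\ell^1}\lesssim \epsilon$, so that the resulting $c_\kappa^2 \lesssim \epsilon c_\kappa$ bound fits cleanly inside the $F^{[2],2}_h$ envelope.
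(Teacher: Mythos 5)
Your proof is correct and follows essentially the same strategy as the paper: paraproduct-split $\Im(R\bar Y)$, use local energy plus the control envelope for the unbalanced pieces, and place the balanced (high-high) piece in $F^{[2],2}_h$ to be measured in $L^\infty_t H^1$, where the Bernstein gain $\kappa^{1/2}\lambda^{-3/2}$ supplies the off-diagonal decay that is missing in $L^2_t H^{-1/2}_{loc}$.

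The one place you diverge from the paper is in the treatment of the high-$R$/low-$Y$ paraproduct $T_{\bar Y}R$: you put it in $F^{[2],2}_h$ and measure it in $L^\infty_t H^1$, whereas the paper places \emph{both} unbalanced interactions $R_\lambda\bar Y_{<\lambda}$ and $R_{<\lambda}\bar Y_\lambda$ in $F^{[2],1}_h$ and proves for each the $L^2_t H^{-1/2}_{loc}$ envelope bound, reserving $F^{[2],2}_h$ solely for the balanced interactions. Both placements are valid (the high-$R$/low-$Y$ piece satisfies both norm bounds), so your argument proves the lemma. Bear in mind, though, that the specific \emph{structure} of $F^{[2],1}_h$ and $F^{[2],2}_h$, not just the stated norm bounds, is exploited downstream (e.g.\ in the $G_1$, $G_3$ estimates in Section~\ref{s:F}, where $F^{[2],1}_h$ is re-expanded as a trilinear form containing only unbalanced $R$--$Y$ interactions). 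If you adopt your decomposition those later arguments would need minor adjustments, since your $F^{[2],2}_h$ now also contains an unbalanced piece.

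Two minor points of care that you treat implicitly but are worth flagging: (i) when you write $\|R_\mu\|_{L^2_t L^2_{loc}}\lesssim \mu^{1/2}M$ for $\mu<1$, this requires going through Lemma~\ref{l:R-LE-top} and observing that the unit-interval $L^2_{loc}$ norm is dominated by the $L^\infty(B_\mu)$ norm; (ii) the $L^\infty$ bound $\|Y_\lambda\|_{L^\infty}\lesssim c_\lambda$ follows from the $H^{1/2}_h$ envelope for $Y$ via Bernstein's inequality, not from a stated $L^\infty$ envelope for $Y$ directly. Neither is a gap, but making them explicit would tighten the write-up.
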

As a consequence of the previous lemma and interpolation (or by a similar direct proof),
we have

\begin{corollary}\label{c:Lp}
The function $F^{[2],2}_{h}$ in the last lemma also satisfies the interpolated bounds
 \begin{equation}\label{ImF-g2+}
\| F^{[2],2}_{h}\|_{L^p_t H^{s}_{loc}}  \lesssim        \epsilon^{2-\frac{2}{p}} M^{\frac{2}{p}},
\end{equation}
where
\[
2 < p < \infty, \qquad \frac{1}p = \frac{1-s}{3}.
\]
\end{corollary}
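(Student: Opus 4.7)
The plan is to derive the interpolated family by Riesz--Thorin (or complex) interpolation between two endpoint estimates for $F^{[2],2}_h$: a high-regularity bound $\|F^{[2],2}_h\|_{L^\infty_t H^1_{loc}} \lesssim \epsilon^2$ at $p=\infty$, and a low-regularity bound $\|F^{[2],2}_h\|_{L^2_t H^{-1/2}_{loc}} \lesssim \epsilon M$ at $p=2$. With interpolation parameter $\theta = 2/p$, the Sobolev exponent reads $s = (1-\theta)\cdot 1 + \theta \cdot(-1/2) = 1 - 3/p$, matching the relation in the corollary, while the norm scales as $\epsilon^{2(1-\theta)}(\epsilon M)^{\theta} = \epsilon^{2-2/p} M^{2/p}$, as claimed.

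The $L^\infty_t H^1_{loc}$ endpoint is immediate from Lemma~\ref{l:ImFa}(b): by the triangle inequality and Littlewood--Paley summation,
\[
\|F^{[2],2}_h\|_{L^\infty_t H^1_{loc}} \leq \sum_{\lambda \geq 1} \|F^{[2],2}_\lambda\|_{L^\infty_t H^1} \lesssim \epsilon \sum_\lambda c_\lambda \lesssim \epsilon^2,
\]
using $\|c\|_{\ell^1} \lesssim \epsilon$.

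For the $L^2_t H^{-1/2}_{loc}$ endpoint, I would work dyadically and establish frequency-envelope bounds $\|F^{[2],2}_\lambda\|_{L^2_t H^{-1/2}_{loc}} \lesssim c_\lambda M$, then sum over $\lambda$ against $\sum c_\lambda \lesssim \epsilon$. Writing $F^{[2],2}_\lambda = F^{[2]}_\lambda - F^{[2],1}_\lambda$, the $F^{[2],1}_\lambda$ piece is directly controlled by Lemma~\ref{l:ImFa}(a). The $F^{[2]}_\lambda$ piece requires a bilinear analysis of $F^{[2]} = P[2i\Im(R\bar Y)]$ at frequency $\lambda$, running a Littlewood--Paley trichotomy (low-high, high-low, high-high) in the same spirit as the analysis of $|R|^2$ in Lemma~\ref{l:R2}, and using the envelope bounds from Proposition~\ref{p:control-equiv} together with Lemma~\ref{l:Y} for the factors $R$ and $Y$.

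As the alternative direct route hinted at in the corollary, the same interpolation may be performed at each dyadic level: from $\|F^{[2],2}_\lambda\|_{L^\infty_t H^1} \lesssim \epsilon c_\lambda$ and $\|F^{[2],2}_\lambda\|_{L^2_t H^{-1/2}_{loc}} \lesssim M c_\lambda$ one extracts $\|F^{[2],2}_\lambda\|_{L^p_t H^s_{loc}} \lesssim \epsilon^{1-2/p} M^{2/p} c_\lambda$, and a final summation in $\lambda$ supplies the additional factor of $\epsilon$. The principal obstacle throughout is extracting the $c_\lambda$-envelope form of the $L^2_t H^{-1/2}_{loc}$ bound on $F^{[2]}_\lambda$ itself; the uniform bound $\epsilon M$ implicit in Lemma~\ref{l:ImF}(a) is not dyadically summable and must be refined via a direct bilinear argument exploiting the asymmetric regularity of $R$ (which lives in the local energy space) and $Y$ (which is $1/2$ derivative smoother on the $\ell^1 H^{1/2}_h$ scale, by Lemma~\ref{l:algebra}).
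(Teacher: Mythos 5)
Your $p=\infty$ endpoint $\|F^{[2],2}_h\|_{L^\infty_t H^1} \lesssim \epsilon^2$ is indeed correct by $\ell^1$ summation of \eqref{ImF-g2}. The gap is at the $p=2$ endpoint $\|F^{[2],2}_h\|_{L^2_t H^{-1/2}_{loc}} \lesssim \epsilon M$: this bound is not available, and the route you propose cannot produce it. Both your main interpolation and your ``alternative direct route'' hinge on the envelope bound $\|F^{[2],2}_\lambda\|_{L^2_t H^{-1/2}_{loc}} \lesssim c_\lambda M$, which you yourself flag as ``the principal obstacle'' and hope to extract ``via a direct bilinear argument exploiting the asymmetric regularity of $R$ and $Y$.'' But this bound genuinely fails for $F^{[2],2}$, and no asymmetry can save it: $F^{[2],2}_h$ is built precisely from the balanced high-high pieces $P_\lambda(R_\mu\bar Y_\mu)$ with $\mu\ge\lambda$, where both factors live at the \emph{same} frequency $\mu$. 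The proof of Lemma~\ref{l:ImFa} notes explicitly that the $L^2_t L^2_{loc}$ bound $\lesssim \lambda^{1/2} c_\mu M$ for these terms ``yields no $\lambda$ summation due to a lack of off-diagonal decay''---that obstruction is the entire reason the split $F^{[2]}_h = F^{[2],1}_h + F^{[2],2}_h$ is introduced. The absence of $\lambda$-decay also defeats the $\ell^2$ (square-function) summation built into $H^{-1/2}_{loc}$, so even the un-enveloped claim $\|F^{[2],2}_h\|_{L^2_t H^{-1/2}_{loc}} \lesssim \epsilon M$ does not hold in general.

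The correct argument interpolates \emph{before} summing, at the level of each dyadic pair $(\lambda,\mu)$ with $\lambda\le\mu$. From the proof of Lemma~\ref{l:ImFa} one has the fixed-pair bounds
\[
\|P_\lambda \Im(R_\mu \bar Y_\mu)\|_{L^2_t H^{-1/2}_{loc}} \lesssim c_\mu M,
\qquad
\|P_\lambda \Im(R_\mu \bar Y_\mu)\|_{L^\infty_t H^1} \lesssim \Big(\tfrac{\lambda}{\mu}\Big)^{3/2}\epsilon\, c_\mu,
\]
and the essential point is that the off-diagonal factor $(\lambda/\mu)^{3/2}$ is present only at the $L^\infty_t H^1$ endpoint. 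Interpolating with $\theta=2/p$ yields
\[
\|P_\lambda \Im(R_\mu \bar Y_\mu)\|_{L^p_t H^s_{loc}} \lesssim \Big(\tfrac{\lambda}{\mu}\Big)^{\frac{3(1-\theta)}{2}} M^\theta\epsilon^{1-\theta}c_\mu,
\]
with $s = 1 - 3/p$ as required. Since $\theta<1$, i.e.\ $p>2$, the surviving partial off-diagonal decay $(\lambda/\mu)^{3(1-\theta)/2}$ is strictly positive and closes the $\lambda\le\mu$ sum; the subsequent $\mu$-sum then supplies $\sum_\mu c_\mu\lesssim\epsilon$ and produces $M^\theta\epsilon^{2-\theta} = M^{2/p}\epsilon^{2-2/p}$. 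This also explains why the corollary must exclude $p=2$: at $\theta=1$ the interpolated off-diagonal gain vanishes and one is back in exactly the non-summable situation you encountered.
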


Similarly, to account for the lack of summability 
in the low frequency bound \eqref{ImF-low}, we have the following:

\begin{lemma}\label{l:ImFb}
We can decompose $F^{[2]}_{l}:= F^{[2]}_{[1/h,1]}$ into 
\[
F^{[2]}_{l} = F_l^{[2],1} + F_l^{[2],2},
\]
where the dyadic pieces of $F_l^{[2],1}$ satisfy the dyadic bounds
\begin{equation}\label{Fl1-le}
 \lambda^{-\frac12} \|F_\lambda^{[2],1}\|_{L^2_t L^\infty_{loc}(B_\lambda)} \lesssim c_\lambda M ,
\end{equation}
while  the dyadic pieces of $F_l^{[2]}$ satisfy the weaker bound
\begin{equation}\label{Fl2-le}
\sup_{\lambda} \lambda^{-\frac12}\|F_\lambda^{[2],1}\|_{L^2_t L^\infty_{loc}(B_\lambda)}  \lesssim \epsilon M,
\end{equation}
as well as the uniform bound 
\begin{equation}\label{Fl2-e}
 \lambda^{\frac12} \|F_\lambda^{[2],2}\|_{L^\infty}  \lesssim c_\lambda \epsilon . 
\end{equation}
\end{lemma}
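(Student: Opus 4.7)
The plan is to reduce the claim to a bilinear analysis for $\Im F^{[2]}$ and then carry out a Littlewood--Paley trichotomy at each output frequency $\lambda \in [1/h,1]$, in the spirit of Lemma~\ref{l:R2}(c). On the top we have the explicit identity $\Im F^{[2]} = \Im(R \bar Y)$, while $\Re F^{[2]} = -\Tilh \Im F^{[2]} + C(t)$ with $C(t) := \Re F^{[2]}(\alpha_0,0)$, a quantity controlled by \eqref{const-bd}. The constant $C(t)$ sits at the zero frequency, so $P_\lambda C = 0$ for every $\lambda \geq 1/h$; furthermore the Tilbert transform $\Tilh$ has symbol $-i\tanh(h\xi)$ which satisfies \eqref{m-symbol} uniformly in $h$, so any decomposition of $P_\lambda \Im F^{[2]}$ transfers by applying $-\Tilh$ to a decomposition of $P_\lambda \Re F^{[2]}$ obeying identical bounds. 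It therefore suffices to build the splitting at the level of $\Im(R \bar Y)$.

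For each dyadic $\lambda \in [1/h,1]$ I would split
\[
P_\lambda \Im(R\bar Y) = \sum_{\mu,\nu} P_\lambda \Im(R_\mu \bar Y_\nu) =: A_\lambda + B_\lambda,
\]
where $A_\lambda$ collects the contributions in which the larger of $\mu,\nu$ is comparable to $\lambda$ (balanced and low-high cases) while $B_\lambda$ collects the high-high interactions $\mu \approx \nu \gg \lambda$; then I would set $F_\lambda^{[2],1} := A_\lambda$ and $F_\lambda^{[2],2} := B_\lambda$. For $A_\lambda$, the factor at frequency $\approx \lambda$ is estimated using the envelope bounds $\|R_\lambda\|_{L^\infty} \lesssim \lambda^{-1/2} c_\lambda$ and $\|Y_\lambda\|_{L^\infty} \lesssim c_\lambda$ (Bernstein in the $X$ norm via Proposition~\ref{p:control-equiv}), while the remaining low-frequency factor is estimated in $L^2_t L^\infty_{loc}(B_\lambda)$ via Lemma~\ref{l:R-LE-top} or Lemma~\ref{l:Y}. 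Summing over the low frequency produces an extra factor of $\lambda^{1/2}$, which combined with the envelope factor delivers the target $\|A_\lambda\|_{L^2_t L^\infty_{loc}(B_\lambda)} \lesssim \lambda^{1/2} c_\lambda M$, i.e.\ \eqref{Fl1-le}.

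For $B_\lambda$, fixing $\mu \gtrsim \lambda$, I would apply Bernstein from Lemma~\ref{l:bern-loc}(a) with $\lambda_1 = \lambda_2 = \lambda$ to pass from $L^1(B_\lambda)$ to $L^\infty(B_\lambda)$ at a cost of $\lambda$, then Cauchy--Schwarz pairs the $L^2_t L^2_{loc}(B_\lambda)$ norm of $R_\mu$ (Lemma~\ref{l:R-LE-top} plus Bernstein) with the $L^\infty_t L^2$ envelope norm of $Y_\mu$, producing $\lambda^{1/2} c_\mu M$ per dyadic piece; summing over $\mu \gtrsim \lambda$ yields $\sum c_\mu \lesssim \epsilon$ rather than $c_\lambda$, giving only the uniform-in-$\lambda$ bound \eqref{Fl2-le}. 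For the complementary $L^\infty$ bound \eqref{Fl2-e} I would combine the pointwise estimates $\|R_\mu\|_{L^\infty} \lesssim c_\mu \mu^{-1/2}$ and $\|Y_\mu\|_{L^\infty} \lesssim c_\mu$, summing $\|R_\mu \bar Y_\mu\|_{L^\infty} \lesssim c_\mu^2 \mu^{-1/2}$ over $\mu \gtrsim \lambda$ with the help of the slow variation $c_\mu \leq c_\lambda(\mu/\lambda)^\delta$ (with $\delta < 1/2$) to arrive at $\lambda^{-1/2} c_\lambda \epsilon$. The main difficulty is exactly this last summation: the high-high-to-low channel receives contributions from all dyadic frequencies $\mu \gtrsim \lambda$ superposed at the same low output $\lambda$, and only the slow variation of the envelope (together with the exponential tail of the Littlewood--Paley kernels) averts an otherwise divergent logarithmic sum.
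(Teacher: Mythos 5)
Your proposal follows essentially the same route as the paper: a Littlewood--Paley trichotomy at each output frequency $\lambda \in [1/h,1]$ applied to the explicit bilinear form $\Im(R\bar Y)$, with unbalanced (high-low / low-high) interactions going into $F_l^{[2],1}$ (estimated by pairing the envelope pointwise bounds $\|R_\lambda\|_{L^\infty}\lesssim \lambda^{-1/2}c_\lambda$, $\|Y_\lambda\|_{L^\infty}\lesssim c_\lambda$ with the local-energy bounds for the low-frequency factor), and balanced high-high interactions $\mu\approx\nu\gtrsim\lambda$ going into $F_l^{[2],2}$, where the $\lambda$-summability is lost exactly as you describe and is traded for the uniform $L^\infty$ estimate via the slow variation of $\{c_\lambda\}$. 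This matches the paper's argument.

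One minor slip worth correcting: for a holomorphic function the trace convention in this paper is $\Im w = -\Tilh \Re w$, so the reconstruction should be $\Re F^{[2]} = -\Tilh^{-1}\Im F^{[2]} + C(t)$, not $-\Tilh \Im F^{[2]}$; and neither $\Tilh$ nor $\Tilh^{-1}$ satisfies the multiplier condition \eqref{m-symbol} as a global operator (the symbol $\tanh(h\xi)$ has no high-frequency decay). What actually makes your reduction legitimate is that for $\lambda \geq 1/h$ the frequency-localized pieces $P_\lambda\Tilh^{\pm1}$ do satisfy \eqref{m-symbol} uniformly, since $|\tanh(h\xi)| \gtrsim 1$ on the dyadic annulus $|\xi|\approx\lambda \geq 1/h$. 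This does not change the substance of your argument, which coincides with the paper's treatment of $\Re F$ via the holomorphic projection $\P_h$.
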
 

The bounds in Lemma~\ref{l:ImF}  will be used in order to estimate
trilinear terms. For quartic terms on one hand we have more
flexibility, and  Lemmas~\ref{l:ImFa}, \ref{l:ImFb} are more useful.

We now successively prove the above lemmas. 

\begin{proof}[Proof of Lemmas~\ref{l:ImF},~\ref{l:ImFa},~\ref{l:ImFb}]
Here we use the dyadic local energy bounds for $R$ in Lemma~\ref{l:R-LE}
as well as the local energy bounds for $Y$ in Lemma~\ref{l:Y}. On the other hand, in terms of the 
control norm, we have the bounds:
\[
\|R_\lambda\|_{L^2} \lesssim \lambda^{-1} c_\lambda, \qquad \| Y_\lambda\|_{L^2}
\lesssim \lambda^{-\frac12} c_\lambda.
\]

As usual we consider the Littlewood-Paley decomposition of $F^{[2]}_\lambda$, 
\[
F^{[2]}_\lambda = \P [ \Im ( R_\lambda \bar Y_{<\lambda}) +
\Im ( R_{< \lambda} \bar Y_{\lambda})] + \P  \sum_{\mu \geq \lambda}
P_\lambda  \Im ( R_{\mu} \bar Y_{\mu}) .
\]

The first two terms in this decomposition, namely the high-low and the low-high interaction,
are estimated in the same manner as in Lemmas~\ref{l:R2},\ref{l:W2}
to obtain the high frequency  bounds
\[
\| \P \Im ( R_\lambda \bar Y_{<\lambda}) \|_{L^2_t L^2_{loc}} +
\|\P \Im ( R_{< \lambda} \bar Y_{\lambda})]\|_{L^2_t L^2_{loc}}  \lesssim c_\lambda M \lambda^\frac12 ,
\]
respectively the low frequency bounds
\[
\| \P \Im ( R_\lambda \bar Y_{<\lambda}) \|_{L^2_t L^\infty_{loc}(B_\lambda)} +
\|\P \Im ( R_{< \lambda} \bar Y_{\lambda})]\|_{L^2_tL^\infty_{loc}(B_\lambda)}  \lesssim c_\lambda M \lambda^\frac12.
\]
These both suffice for Lemma~\ref{l:ImF}, and show that these contributions can be placed in $F^{[2],1}_{h}$ for Lemma~\ref{l:ImFa}, respectively in $F^{[2],1}_l$ for Lemma~\ref{l:ImFb}.

Thus it remains to consider the case of high-high interactions, $P_\lambda  \Im ( R_{\mu} \bar Y_{\mu})$.
Here we separate the analysis into low and high frequencies.

\medskip

\textbf{ A. High frequencies $1 \leq \lambda \leq \mu$.} 
Here we estimate again as in Lemmas~\ref{l:R2},\ref{l:W2},
\[
\| P_\lambda  \Im ( R_{\mu} \bar Y_{\mu})\|_{L^2_t L^2_{loc}} 
\lesssim \lambda^\frac12 \| \Im ( R_{\mu} \bar Y_{\mu})\|_{L^2_t L^1_{loc}}
\lesssim \lambda^\frac12 \| R_{\mu}\|_{L^2_t L^2_{loc}} \| \bar Y_{\mu}\|_{L^2_t L^2}
\lesssim \lambda^\frac12 c_\mu M.
\]
This suffices for the $\mu$ summation, which yields the conclusion of Lemma~\ref{l:ImF}(a),
but yields no $\lambda$ summation due to a lack of off-diagonal decay. Because of this,
for Lemma~\ref{l:ImFa} we place this term in $F^{[2],2}_{h}$ and estimate it  by
\[
\| P_\lambda  \Im ( R_{\mu} \bar Y_{\mu})\|_{L^\infty_t L^2} \lesssim 
 \lambda^\frac12 \|   \Im ( R_{\mu} \bar Y_{\mu})\|_{L^\infty_t L^1}
\lesssim  \lambda^\frac12 \|  R_{\mu}\|_{L^\infty L^2} \| Y_{\mu}\|_{L^\infty_t L^2}
 \lesssim  \lambda^\frac12 \mu^{-\frac32} c_\mu^2,
\]
where we have off-diagonal decay,
\[
\sum_{\mu > \lambda}\| P_\lambda  \Im ( R_{\mu} \bar Y_{\mu})\|_{L^\infty_t L^2}  \lesssim  \lambda^{-1} c_\lambda,
\]
as desired.

\medskip

\textbf{ B. Low frequencies $1 \leq \lambda \leq \mu$.} 
Here we should also consider two cases, $\mu \leq 1$ and $\mu > 1$. The latter case is 
similar but simpler, so it is omitted. Assuming $\lambda\leq \mu \leq 1$ we compute 
\[
\begin{split}
\| P_\lambda  \Im ( R_{\mu} \bar Y_{\mu})\|_{L^2_t L^\infty_{loc}(B_\lambda)} 
\lesssim & \ \lambda \| \Im ( R_{\mu} \bar Y_{\mu})\|_{L^2_t L^1_{loc}(B_\lambda)}
\lesssim \lambda \| R_{\mu}\|_{L^2_t L^2_{loc}(B_\lambda)} \| \bar Y_{\mu}\|_{L^\infty_t L^2}
\\ \lesssim & \  \mu^\frac12 \lambda^\frac12 \| R_{\mu}\|_{L^2_t L^2_{loc}(B_\mu)} \| \bar Y_{\mu}\|_{L^\infty_t L^2}
\lesssim  \lambda^\frac12 c_\mu M.
\end{split}
\]

This is again good enough for Lemma~\ref{l:ImF}(c), but there is no $\lambda$ summation.
Hence, for Lemma~\ref{l:ImFb} we place these contributions in $F_\lambda^{[2],2}$, and estimate them 
exactly as in the high frequency case.
\end{proof}

\begin{proof}[\bf The bound for $Err_3 - Err_3^{hol}$: proof of Proposition~\ref{p:e3-diff}]

 The expression $Err_3$ is given by
\[
Err_3 = \int_0^T \iint m_x(x-x_0) \Im R \,  \HD(\nabla \theta \nabla \phi)\, dx dy dt ,
\]
while its holomorphic counterpart is 
\[
Err_3^{hol} = \int_0^T \iint m_\alpha(\alpha-\alpha_0) \Im R \, \HD(\nabla \theta \nabla \phi)\, dx dy dt.
\]
In their difference we obtain errors due to (i) Jacobian terms and (ii) the switch between Eulerian vertical strips and the vertical strips in holomorphic coordinates. We estimate the difference using Proposition~\ref{p:switch-int}, which yields
\[
| Err_3 - Err_3^{hol} | \lesssim D_1 + D_2,
\]
where 
\[
D_1 := \int_0^T \iint_{A_1} (| W_\alpha| + | \Re W(\ - \Re W(\alpha_0,0)|) |R||\HD(\nabla \theta \nabla \phi)|\,  d\alpha d\beta dt.\,
\] 
\[
D_2 := \int_{0}^T \int_{-h}^{-1} 
( c_\beta + \sup_{|\alpha - \alpha_0| < |\beta|} |W_\alpha|)
\sup_{|\alpha - \alpha_0| < |\beta|} |R| |\HD(\nabla \theta \nabla \phi)| + |\beta| | \partial_\alpha [R\HD(\nabla \theta \nabla \phi)]|
\, d\beta dt.
\]

\medskip

{\bf A. The estimate for $D_1$.}
Here  we use the decomposition in Lemma~\ref{l:ImFa}.  The
harmonic extension of (the high frequencies of) $F^{[2],1}_{h}$ belongs to
$L^2_t L^2_{loc}$ by elliptic regularity, which is combined with the similar bound for $R$,
and suffices. To deal with $F^{[2],2}_{h}$ we imbalance a bit the scales using Corollary~\ref{c:Lp}. Working with $s \in (-\frac12,\frac12)$ we
obtain that its harmonic extension satisfies
\[
\| H_N(F^{[2],2}_{h})\|_{L^{p}_t L^q_{loc}} \lesssim M^{\frac{2}{p}} \epsilon^{2-\frac{2}{p}}, \qquad 2 < q < \infty, \qquad \frac{3}{p} - \frac{2}{q} = \frac12.
\]

Consider first the $W_\alpha$ term, which we also imbalance,  interpolating in a similar manner
between the energy and the local energy bound. This yields
\[
\| W_\alpha\|_{L^{p_1}_t L^{q_1}_{loc}} \lesssim M^{\frac{1}{p_1}} \epsilon^{1-\frac{1}{p_1}}, 
\qquad 2 \leq  q_1 \leq  \infty, \qquad \frac{3}{p_1} - \frac{2}{q_1} = 0.
\]
 We choose exponents appropriately so that 
\[
\frac{1}p + \frac{1}{p_1} = \frac{1}{1}+\frac{1}{q_1} = \frac12.
\]
Then we multiply, combining with the $L^2$ local energy bound for $R$ and using H\"older's inequality. 

Next we consider the $|\Re W(\alpha,\beta) - \Re W(\alpha_0,0)|$ term. 
To argue as for the previous difference we simply estimate it by the Fundamental Theorem of Calculus
\[
\| \Re W(\alpha,\beta) - \Re W(\alpha_0,0)\|_{L^{p_1}_t L^{q_1}_{loc}}
\lesssim \| W_\alpha \|_{L^{p_1}_t L^{q_1}_{loc}},
\]
and conclude in the same manner.
\medskip

{\bf B. The estimate for $D_2$.}
Consider a dyadic frequency $\lambda < 1$. Then in the corresponding regions $A_\lambda$ we have by \eqref{ImF-low}
\[
\|  \HD(\nabla \theta \nabla \phi)\|_{L^2_t L^\infty(A_\lambda)} 
\lesssim \lambda^{-1} \| \nabla \HD(\nabla \theta \nabla \phi)\|_{L^2_t L^\infty(A_\lambda)} 
\lesssim \epsilon M \lambda^{\frac12}.
\]
Combined with the bound for $R$ in Lemma~\ref{l:R-LE}
this suffices for the $c_\beta$ term. It remains to consider the 
$W_\alpha$ term.  For that it suffices to match the above bound with a corresponding bound for $R W_\alpha$,
\begin{equation}\label{RWa}
\| \sup_{A_\lambda} |R|  \sup_{A_\lambda} |W_\alpha| \|_{L^2_t}
 \lesssim c_\lambda M \lambda^{\frac12}.
\end{equation}
It remains to prove \eqref{RWa}. Harmlessly neglecting the 
exponentially decaying tails at higher frequencies, we write in $A_\lambda$
\[
R  = \sum_{\mu \leq \lambda} R_\mu, \quad   W_\alpha =\sum_{\nu \leq \lambda}  W_{\nu,\alpha}.
\]
For $\mu < \nu$ we write
\[
\| \sup_{A_\lambda} |R_{\mu}|  \sup_{A_\lambda} |W_{\nu,\alpha}| \|_{L^2_t}
\lesssim \| R_\mu\|_{L^2_t L^\infty_{loc}(B_\lambda)} 
\| W_{\nu,\alpha}\|_{ L^\infty_tL^{\infty}_{\alpha}} \lesssim \mu^{\frac12} c_\nu M,
\]
while for $\nu < \mu$ we have
\[
\| \sup_{A_\lambda} |R_{\mu}|  \sup_{A_\lambda} |W_{\nu,\alpha}| \|_{L^2_t}
 \lesssim \| R_\mu\|_{ L^{\infty}_t L^{\infty}_{\alpha} }
\| W_{\nu,\alpha}\|_{ L^2_t L^\infty_{loc}(B_\lambda)} \lesssim \mu^{-\frac12} \nu  c_\nu M,
\]
and \eqref{RWa} follows in both cases after $\mu,\nu$ summation.

The proof of Proposition~\ref{p:e3-diff} is concluded.

\end{proof}

\subsection{Estimates involving  \texorpdfstring{$F$}{}}

There are three error terms which involve the full $F$, namely $Err_5$, $Err_6$ and $Err_7$. In this section we will estimate these terms. We need to deal with $F$ in the following combinations: 

\begin{enumerate}
\item The harmonic function 
\begin{equation}\label{G1}
G_1 := \Im(F R_\alpha).
\end{equation}

\item The harmonic function 
\begin{equation}\label{G2}
G_2 := \Im(F^{[2]} W_\alpha).
\end{equation}

\item The harmonic extension
\begin{equation}\label{G3}
G_3 := \HD( \Im W \Re F(1+W_\alpha)).
\end{equation}
\end{enumerate}

We will state our main bounds directly in terms of these expressions, rather than in terms of $F$. This is because the bounds for $G_1$, $G_2$ and $G_3$ are better viewed as trilinear bounds, rather than more directly as iterated bilinear bounds. We begin with $G_1$ and $G_2$, where the results
are easier to state:

\begin{proposition}\label{p:nonlin}
a) High frequency bounds. The functions $G_1$, and $G_2$ have the following
regularity in the fluid domain:

\begin{equation}\label{G1-high}
\|G_1\|_{L^2_t L^2_\beta H^{-\frac12}_\alpha(A^1(x_0))} 
\lesssim \epsilon M,
\end{equation}

\begin{equation}\label{G2-high}
\| G_2\|_{L^2_t L^2(A_1(x_0))} \lesssim \epsilon^2 M.
\end{equation}.

b) Low frequency bounds:
\begin{equation}\label{G1-low}
\|G_1\|_{L^2_t L^\infty(A_\lambda(x_0))}  \lesssim  \lambda c_\lambda M ,
\end{equation}
\begin{equation}\label{G2-low}
\||G_2\|_{L^2_t L_{\alpha}^\infty(A_\lambda(x_0))}   \lesssim  \lambda^{\frac12} c_\lambda M .
\end{equation}
\end{proposition}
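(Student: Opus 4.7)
The plan is to reduce the trilinear bounds on $G_1$ and $G_2$ to a combination of the bilinear estimates for $F^{[2]}$ already established in Lemma~\ref{l:ImF} and Lemma~\ref{l:ImFa}, together with the control bounds for $R$, $W_\alpha$ and $Y$ coming from the $X$ norm and the frequency envelope $\{c_\lambda\}$. For both functions we extend the estimates from the top to the strip via the harmonic extension bounds of Proposition~\ref{p:extension}, and we split the low‑frequency bounds from the high‑frequency ones exactly as in Section~\ref{s:switch}.

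For $G_1 = \Im(F R_\alpha)$ we first write $F = R + F^{[2]}$, so that
\[
G_1 = \Im(R R_\alpha) + \Im(F^{[2]} R_\alpha) = \tfrac12 \partial_\alpha \Im(R^2) + \Im(F^{[2]} R_\alpha).
\]
The first piece is an exact $\alpha$ derivative, so its $L^2_t L^2_\beta H^{-\frac12}_\alpha$ norm reduces to the $L^2_t L^2_\beta H^{\frac12}_\alpha$ norm of $\Im(R^2)$. This is a bilinear expression in $R$ whose harmonic extension is controlled by iterating the dyadic analysis of Lemma~\ref{l:R2}(a) (the argument is identical, with $\bar R$ replaced by $R$, and the extension in $\beta$ is handled by the exponential symbol bounds from Proposition~\ref{p:extension}). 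The second piece is estimated by a three‑scale Littlewood–Paley decomposition: splitting $F^{[2]}$ via Lemma~\ref{l:ImFa} into the piece in $L^2_t H^{-\frac12}_{loc}$ (which pairs with $R_\alpha$ in $L^\infty_t H^{\frac12}_h$ via the $X$ norm) and the piece in $L^\infty_t H^1$ (which pairs with $R_\alpha$ in $L^2_t H^{-\frac12}_{loc}$ coming from $\|R\|_{LE^{-\frac12}}$). In each case we obtain an $\epsilon M$ bound.

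For $G_2 = \Im(F^{[2]} W_\alpha)$ the strategy is to gain an extra $\epsilon$ from $W_\alpha$. We do a Littlewood–Paley decomposition
\[
G_2 = \sum_{\lambda_1,\lambda_2} \Im(F^{[2]}_{\lambda_1} W_{\alpha,\lambda_2}),
\]
and in each interaction case combine a bilinear bound for $F^{[2]}_{\lambda_1}$ from Lemma~\ref{l:ImF} (which already carries the factor $\epsilon M$) with the bound $\|W_{\alpha,\lambda_2}\|_{L^\infty_t L^\infty} \lesssim \epsilon c_{\lambda_2}$ from Proposition~\ref{p:control-equiv} in the low-high case, or the $\ell^1 H^{\frac12}_h$ algebra bound on $W_\alpha$ in the high-low and high-high cases. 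The $\ell^1$ summability of the envelope $c_\lambda$ furnishes the dyadic summation, producing the claimed $\epsilon^2 M$ estimate. The harmonic extension into the unit region $A_1(x_0)$ is again handled by Proposition~\ref{p:extension}, so the resulting high‑frequency bound is in $L^2_t L^2$.

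For the low frequency bounds in the region $A_\lambda(x_0)$ with $\lambda < 1$, the strategy is the Bernstein/dyadic covering argument used throughout Section~\ref{s:switch}: cover a $\lambda^{-1}$-sized ball by unit balls, pass from $L^2_{loc}$ bounds to $L^\infty_{loc}$ bounds losing $\lambda^{1/2}$, and use the slowly‑varying envelope to retain dyadic summability. For $G_1$ the $R R_\alpha$ part is handled via the derivative and the low-frequency bound of Lemma~\ref{l:R2}(b,c) applied to $R^2$, while the $F^{[2]} R_\alpha$ part uses the low-frequency bound of Lemma~\ref{l:ImF}(b)–(c) combined with dyadic envelope bounds on $R_\alpha$; the $\lambda c_\lambda M$ size arises because $R_\alpha$ costs a full $\lambda$. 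For $G_2$ the analogous argument with $W_\alpha$ instead of $R_\alpha$ only costs $\lambda^{1/2}$ because $W_\alpha \in L^\infty$ pointwise on the scale $\lambda^{-1}$.

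The main obstacle is the careful frequency bookkeeping in the $\Im(F^{[2]} R_\alpha)$ and $\Im(F^{[2]} W_\alpha)$ terms, since $F^{[2]}$ is itself already bilinear, making the full expression genuinely trilinear with four independent dyadic scales (two from $F^{[2]}$, one from $R_\alpha$ or $W_\alpha$, and one from the output). The delicate point is ensuring that every frequency interaction produces summable dyadic contributions without losing the $\epsilon$ smallness factors or the dependence on the envelope $\{c_\lambda\}$; the decompositions in Lemmas~\ref{l:ImFa} and~\ref{l:ImFb} are exactly engineered for this, so the proof largely amounts to invoking them in the correct combinations rather than repeating multilinear computations from scratch.
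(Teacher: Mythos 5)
Your outline captures the right building blocks — Littlewood--Paley trichotomy, the decompositions of $F^{[2]}$ from Lemmas~\ref{l:ImFa} and \ref{l:ImFb}, the envelope bounds from the $X$ norm, and the local energy bounds for $R$ and $Y$ — but there is a genuine omission: you never address the very low frequencies $\leq 1/h$ of $\Re F$ (equivalently $\Re F^{[2]}$), nor the additive constant $c(t) = \Re F^{[2]}(\alpha_0,0)$ that arises from the gauge fixing in the holomorphic parametrization. Lemmas~\ref{l:ImF}(c), \ref{l:ImFa}, \ref{l:ImFb} only control the imaginary part below frequency $1/h$ and the frequencies in $[1/h,1]$ and $\geq 1$, respectively; the real part of $F^{[2]}$ at frequencies $< 1/h$ is a-priori only determined modulo constants and is \emph{not} covered by those lemmas. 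Since $G_1 = \Im(F R_\alpha)$ involves $\Re F \cdot \Im R_\alpha$ and $G_2 = \Im(F^{[2]} W_\alpha)$ involves $\Re F^{[2]} \cdot \Im W_\alpha$, this contribution does not cancel and must be bounded. The paper treats this in two dedicated steps: the ``very low frequencies'' regime, using the gradient bound \eqref{Fvl1-le} and the resulting pointwise growth estimate \eqref{Fvl} for $F_{vl} - F_{vl}(\alpha_0)$, and the ``constant in $F$'' step, using the estimate $|c(t)| \lesssim |R(\alpha_0,0)|$ from \eqref{const-bd} together with a decomposition of $c$ into a local-energy part and a control-norm part. Without these steps your argument is incomplete, and the needed inputs (Proposition~\ref{p:switch-strips} and the explicit gauge choice in the subsection on horizontal gauge invariance) are never invoked.

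A smaller point: your treatment of the linear-in-$F$ piece $\Im(R R_\alpha) = \tfrac12 \partial_\alpha \Im(R^2)$ is a genuinely different route from the paper. The paper does not use this derivative identity; it groups the contribution of $R_\lambda$ together with $F^{[2],1}_\lambda$ and expands them jointly as a trilinear form, splitting into low-frequency, local-energy, and control-norm pairings. Your derivative trick is valid and arguably cleaner, though your appeal to Lemma~\ref{l:R2} (which is about $|R|^2 = R\bar R$, a holomorphic-times-antiholomorphic product) for $R^2$ (a holomorphic-times-holomorphic product) is not quite the same argument: for $R^2$, high-high to low interactions are exponentially negligible rather than requiring a paraproduct estimate, which actually simplifies the low-frequency analysis. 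You should say so explicitly rather than claiming the argument is ``identical.'' Finally, for $G_2$ your description of which norms pair with which in the high-low and high-high cases is too vague to confirm that the dyadic sums actually close with the factor $\lambda^{1/2}$ coming out of \eqref{ImF-hi}; the paper avoids the potential divergence by further splitting $F^{[2]}_h$ into $F^{[2],1}_h$ (which sums against the control norm of $W_\alpha$) and $F^{[2],2}_h$ (which sums against the local energy of $\Im W$) --- you should exhibit the analogous splitting.
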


We postpone the proof of the proposition, in order to complete the proof of the 
$Err_5^1$ and $Err_6$ bounds.

\bigskip

\begin{proof}[\bf Proof of the bound for $Err_5^1$.]
We recall that
\[
Err_5^1:=   \int_0^T \iint m_\alpha  \Im (FR_\alpha) \HD\left( \Im W \Re W_\alpha
\right) \,  d\alpha d\beta dt.
\]
At high frequency this is estimated combining \eqref{G1-high} and \eqref{WW=high}. 
At low frequency instead we combine  \eqref{G1-low} and \eqref{WW-low}. 
\end{proof}

\bigskip

\begin{proof}[\bf Proof of the bound for $Err_6$.]
We recall that
\[
Err_6=   \int_0^T \iint m_\alpha  \Im R  \Im( (F-R) W_\alpha)
\, d\alpha d\beta dt.
\]
This we can estimate using Lemma~\ref{l:R-LE} for $R$, and \eqref{G2-high},   \eqref{G2-low} for the second factor.
\end{proof}

Next we consider the bounds for $G_3$, which are summarized in the following:

\begin{proposition}\label{p:nonlin2}
For each $1/h < \mu < 1$, the function $G_3$ admits a decomposition
\[
G_3 = G_3^{\mu,1} + G_3^{\mu,2},
\]
where the two components satisfy estimates as follows:

a) High frequency bounds.
\begin{equation}\label{G31-high}
 \| \partial_\alpha G_3^{\mu,1}\|_{L^2_t L^2(A_1(\alpha_0))} \lesssim \mu^{-\frac12} c_\mu   M,
\end{equation}
\begin{equation}\label{G32-high}
\sup_\mu \mu^{-\frac12} \| \partial_\alpha G_3^{\mu,2}\|_{L^1 L^2(A_1(\alpha_0))} \lesssim  M^2.
\end{equation}

b) Low frequency bounds: 
\begin{equation}\label{F-G31}
    \mu^\frac12  \| \partial_\alpha G_3^{\mu,1}\|_{L^2_t L^\infty(A_\mu(\alpha_0))}
  \lesssim  c_\mu  M ,
\end{equation}
respectively 
\begin{equation}\label{F-G32}
  \sup_\mu  \mu^{-\frac12}  \| \partial_\alpha G_3^{\mu,2}\|_{L^1_t L^\infty (A_\mu(\alpha_0))}  \lesssim   M^2 .
\end{equation}
\end{proposition}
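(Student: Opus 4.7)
The plan is to exploit the algebraic splitting $F = R + F^{[2]}$, so that
\[
\Re[F(1+W_\alpha)] = \Re R + \Re(RW_\alpha) + \Re[F^{[2]}(1+W_\alpha)],
\]
which gives the decomposition
\[
\Im W \cdot \Re[F(1+W_\alpha)] = \Im W \cdot \Re R + \Im W \cdot \Re(RW_\alpha) + \Im W \cdot \Re[F^{[2]}(1+W_\alpha)].
\]
The first term is a genuinely bilinear expression in $(\Im W, R)$ of exactly the same type as $|R|^2$ in Lemma~\ref{l:R2} or $\Im W \cdot \Re W_\alpha$ in Lemma~\ref{l:W2}, while the remaining two terms are trilinear-and-higher and each carry an additional $\epsilon$ factor coming from the control norm bound on $W_\alpha$. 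The decomposition $G_3 = G_3^{\mu,1} + G_3^{\mu,2}$ will then mirror part (c) of those two lemmas, with the role of $\lambda$ played by $\mu$.

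For the bilinear piece $\Im W \cdot \Re R$, I perform a Littlewood-Paley decomposition in both factors and split the dyadic outputs according to whether at least one input frequency exceeds $\mu$ or both input frequencies lie below $\mu$. The former contributions go into $G_3^{\mu,1}$: they are estimated in $L^2_t$ by pairing the local energy bound for $R$ from Lemma~\ref{l:R-LE} with either the dyadic $L^\infty_{loc}(B_\lambda)$ bound for low-frequency pieces of $\Im W$ from Lemma~\ref{l:W-LE-top} or the control envelope $c_\lambda$ at higher frequency, producing \eqref{G31-high} and \eqref{F-G31} after $\mu$-summation and accounting for the Bernstein gain in the regions $A_\mu(\alpha_0)$. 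The low-low contributions with both input frequencies below $\mu$ are placed in $G_3^{\mu,2}$ and bounded in $L^1_t L^\infty_{loc}$ via H\"older's inequality in time applied to two $L^2_t L^\infty_{loc}(B_\mu)$ norms, yielding the quadratic $M^2$ estimates \eqref{G32-high} and \eqref{F-G32} for this component.

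For the trilinear-and-higher contributions, I use the control-norm smallness $\|W_\alpha\|_{L^\infty} \lesssim \epsilon$ together with the decomposition of $F^{[2]}$ from Lemmas~\ref{l:ImFa} and~\ref{l:ImFb}; crucially, the intrinsic $\lambda^{1/2}$ weight carried by the dyadic pieces of $F^{[2]}$ precisely matches the $\mu^{\pm 1/2}$ weights appearing in the proposition. Since each such term gains an extra factor of $\epsilon$, it is placed entirely in $G_3^{\mu,2}$ and bounded in $L^1_t L^\infty$ by H\"older, with $\epsilon \lesssim 1$ used harmlessly. The harmonic extension $\HD$ is finally handled at high frequencies by Proposition~\ref{p:extension} on vertical strips, and at low frequencies by applying the multiplier bounds of Lemma~\ref{l:bern-loc} to $\partial_\alpha P_D(\beta, D)$ at depth $|\beta| \approx \mu^{-1}$; the derivative $\partial_\alpha$ commutes with $\HD$ and can be absorbed at the top before extension. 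The main obstacle is the bookkeeping needed to simultaneously match the $\mu^{\pm 1/2}$ weights across all four estimates, but this proceeds routinely once the algebraic expansion above is in hand and the parallel with Lemmas~\ref{l:R2}--\ref{l:W2} is identified.
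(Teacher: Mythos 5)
Your decomposition of $\Re[F(1+W_\alpha)]$ and the plan to split the pure bilinear piece $\Im W\cdot\Re R$ at frequency $\mu$ is broadly in the spirit of the paper, but the treatment of the trilinear-and-higher pieces has a genuine gap.

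The claim that the trilinear terms "carry an additional $\epsilon$ factor" and can therefore be "placed entirely in $G_3^{\mu,2}$ and bounded in $L^1_t L^\infty$ by H\"older" does not hold up. The $G_3^{\mu,2}$ estimates \eqref{G32-high} and \eqref{F-G32} require the $L^1_t$ norms to be $\lesssim \mu^{1/2} M^2$, \emph{uniformly} in $\mu$ down to $\mu\sim 1/h$. A factor of $\epsilon$ does not supply the required $\mu^{1/2}$ decay: for $\mu$ near $1/h$ with $h$ large, $\mu^{1/2}$ can be far smaller than $\epsilon$, so a bound of the form $\epsilon M^2$ fails the required estimate. Concretely, the worst trilinear contribution is of schematic type $\Im W_{\lambda_1}\,R_{\lambda_2}\,\bar Y_\lambda$ with $\lambda_1,\lambda_2 < \lambda \leq 1$; if, say, $\lambda_2 \in (\mu,\lambda)$, then pairing two local energy norms (for $\Im W$ and $R$) and the control norm for $Y$ yields, after $\partial_\alpha$, the harmonic extension, and dyadic summation, a contribution of order $\epsilon M^2$ (or $c_\lambda c_{\lambda_2} M^2$ summed over $\lambda,\lambda_2 > \mu$), with no $\mu^{1/2}$ factor. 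This is exactly the regime the paper's proof identifies as the single place where the $\mu$-dependent decomposition is required, and the resolution there is to further split the trilinear form: terms with both $W$- and $R$-frequencies below $\mu$ go into $G_3^{\mu,2}$ (yielding the $L^1_t$ bound with the correct $\mu^{1/2}$ behavior), while terms with at least one such frequency above $\mu$ must go into $G_3^{\mu,1}$ and be estimated in $L^2_t$ with the control norm on the high-frequency factor. Your proposal omits this finer split, and the missing case is precisely where the argument would fail.

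A secondary point: you split the bilinear $\Im W\cdot\Re R$ at $\mu$, while the paper places it entirely in the $\mu$-independent component $G_3^{1,1}$. Your split is not incorrect per se, but it is extra work and obscures the fact that the $\mu$-dependence is only needed for the single trilinear term above. Finally, note that the proposition's proof in the paper also handles the very-low-frequency part $\Re(F-R)_{<1/h}$ and the gauge constant in $\Re F$ separately; these do not fit the "trilinear with an $\epsilon$ factor" template and should be addressed explicitly via the pointwise bound from the fundamental theorem of calculus and the bound $|\Re F^{[2]}(\alpha_0,0)|\lesssim |R(\alpha_0,0)|$.
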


We now use this Proposition to estimate the remaining error:

\bigskip

\begin{proof}[\bf Proof of the bound for $Err_7$.]
We recall that
\[
Err_7:=   \int_0^T \iint m_\alpha  \Im R \partial_\alpha G_3 \, d\alpha d\beta dt.
\]
We first estimate the bound for $R_{\geq 1}$, using the decomposition
above with $\mu = 1$. The $G_3^{1,1}$ contribution is easy to bound
using the local energy for $R$. The $G_3^{1,2}$ contribution is also
easy to bound using the uniform $H^1_h$ control norm for $R$.

Then we estimate the contribution of $R_{\mu}$, where we use the above
decomposition associated  exactly to the frequency $\mu$. Precisely, 
we match the $G_3^{\mu,1}$ bound with the local energy estimate for $R_\mu$,
while on the other hand we match the  $G_3^{\mu,2}$ bound with the uniform bound for $R$
using the control norm.
\end{proof}

The remainder of the section is devoted to the proof of
Propositions~\ref{p:nonlin},\ref{p:nonlin2}. In estimating the
contributions of $F$ we will separately consider three regimes:

\begin{description}
\item[I. High frequencies] $\geq 1$. Here the real and imaginary part of $F$ satisfy similar estimates.

\item[II. Low frequencies] $\in [h^{-1},1]$. Again the real and
  imaginary part of $F$ satisfy similar estimates. We also include
  here the very low frequencies of $\Im F$ and $R$.

\item[III. Very low frequencies] $\leq h^{-1}$ for $\Re (F-R)$. It is
  here that the difference between $\Re F$ and $\Im F$ comes into
  play, along with the assignment of constants as discussed in the
  beginning of the section.

\end{description}

\bigskip

\textbf{I. The high frequencies of $F$.} For the $R_h$ component of $F_h$ we simply use Lemma~ \ref{l:R-LE}. For the high frequencies $F^{[2]}_h$ of $F^{[2]}$ we instead rely on 
Lemmas~\ref{l:ImF}, \ref{l:ImFa}. Only in a few cases we need to backtrack further
and use the structure of $F^{[2],1}_{h}$ and $F^{[2],2}_{h}$.

\bigskip

\textbf{ I.a. The contribution of $F_h$ to  $G_1$.}  We use
the bilinear Littlewood-Paley expansion
\begin{equation}\label{Fh-Ralpha}
P_\lambda (F_h R_\alpha) = F_{h,\lambda}  R_{<\lambda,\alpha} +  F_{h,<\lambda}  R_{\lambda,\alpha} +
\sum_{\mu \geq \lambda}  P_\lambda(F_\mu  R_{\mu,\alpha}).
\end{equation}
For $F_h$ we use the expansion
\[
F_h = R_h + F_h^{[2],1} + F_h^{[2],2},
\]
where the last two terms are as in Lemma~\ref{l:ImFa}. We successively consider the three terms in \eqref{Fh-Ralpha}. 

For the first term in \eqref{Fh-Ralpha} it is easy to bound the output of the 
$F^{[2],2}_\lambda$ component. Indeed, using the local energy norm for $R$ and 
Lemma~\ref{l:R-LE-top}, we have
\[
\| R_{\alpha}\|_{L^2_t H^{-\frac32}_{loc}} \lesssim M,
\]
 which can be in turn easily combined with \eqref{ImF-g2}. 

The output of the $R_\lambda+ F^{[2],1}_\lambda$ component is more difficult to estimate. We recall that $F^{[2],1}_\lambda$ arises from unbalanced frequency interactions,
so we expand it as 
\[
F^{[2],1}_\lambda = \P [ \Im (R_\lambda (\bar Y_{< \lambda}+2)) + \Im ( R_{<\lambda} \bar Y_{\lambda})] .
\]
Multiplying this by $R_{<\lambda,\alpha}$ we obtain a trilinear form,
for which we need to balance the three input frequencies. There are
two terms to consider, and we only consider the worst one,
\[
\P( Y_{\lambda} \bar R_{< \lambda}) R_{<\lambda,\alpha}.
\]
This is estimated using H\"older's inequality and Bernstein's inequality as follows:
\[
\begin{aligned}
\| \P( Y_{\lambda} \bar R_{< \lambda}) R_{<\lambda,\alpha}\|_{L^2_t H^{-1}_{loc}}&\lesssim \lambda^{-1}\|  Y_{\lambda} \bar R_{< \lambda} R_{<\lambda,\alpha}\|_{L^2_t L^2_{loc}}\\
&\lesssim \lambda^{-1} \Vert Y_{\lambda} \Vert_{L^{\infty}_t L^2}\Vert \bar{R}_{<\lambda}\Vert_{L^2_t L^{\infty}} \Vert R_{<\lambda, \alpha}\Vert_{L^{\infty}_t L^{\infty}} \\
&\lesssim \lambda^{-1} \Vert Y_{\lambda} \Vert_{L^{\infty}_t L^2} \, \lambda^{\frac{1}{2}}\,\Vert \bar{R}_{<\lambda}\Vert_{L^2_t L^{2}} \, \lambda^{\frac{1}{2}}\, \Vert R_{<\lambda, \alpha}\Vert_{L^{\infty}_t L^2} \\
&\lesssim \| Y_\lambda \|_{L^\infty_t \dot H^\frac12} \| R\|_{L^2_t H^{-\frac12}_{loc}} \| R_{\alpha} \|_{L^\infty_t L^2}\\
&\lesssim c_\lambda M \epsilon .
\end{aligned}
\]
 Here the three factors on the right are estimated using the uniform control norm, local energy,
respectively the uniform control  norm.

For the second term in \eqref{Fh-Ralpha} it is easy to bound the output of the $R$ and the 
$F^{[2],1}$ component, using the uniform control norm for $R$.

We are left with the $F^{[2],2}$ component, which arises from balanced interactions of $R$ and $Y$. Thus 
we obtain again a trilinear form. Precisely, we  need to bound in  $L^\infty_t L^2_{loc}$ the expression
\[
\sum_{\mu_1 \lesssim \mu,\lambda} P_{\mu_1} (R_{\mu} \bar Y_{\mu}) R_{\lambda,\alpha} = \sum_{\mu < \lambda} (R_{\mu} \bar Y_{\mu}) R_{\lambda,\alpha}+
\sum_{\mu \geq \lambda} P_{< \lambda} (R_{\mu} \bar Y_{\mu}) R_{\lambda,\alpha}.
\]
Here for the first sum where $\mu < \lambda$ we use H\"older's inequality followed by Bernstein's inequality and arrive at the bound
\begin{equation}\label{RmuYmu-low}
 \left\|  (R_{\mu} \bar Y_{\mu}) R_{\lambda,\alpha}\right\|_{L^2_t H^{-1}_{loc}} \lesssim \mu 
\| R\|_{L^2_t H^{-\frac12}_{loc}}
 \|Y\|_{L^\infty_t L^{\infty}} \|R_{\lambda,\alpha} \|_{L^\infty L^2} \lesssim \mu c_\lambda \epsilon M,
\end{equation}
which suffices after $\mu$ summation. Here we used the local energy for $R$ and  the uniform control norm  for the remaining factors.

On the other hand for the second sum which corresponds to the
range $\mu \geq \lambda$  we estimate
\begin{equation}\label{RmuYmu-high}
 \left\|   (R_{\mu} \bar Y_{\mu}) R_{\lambda,\alpha}  \right\|_{L^2 _t H^{-1}_{loc}} \lesssim \lambda^2 \mu^{-1} \| R_\mu\|_{L^\infty_t H^1}
 \|Y\|_{L^\infty_t L^{\infty}} \|R_\alpha \|_{L^2_t H^{-\frac32}_{loc}} \lesssim \mu^{-1} \lambda^2  c_\mu \epsilon M,
\end{equation}
which again suffices. This corresponds to using local energy for $R_\alpha$ and the uniform 
control norm for the remaining factors.

Finally, the third term in \eqref{Fh-Ralpha} is negligible since we are multiplying two holomorphic functions, 
 so the output at low   frequency is exponentially small.

\bigskip

\textbf{ I.b. The contribution of $F_h$ to  $G_2$.}
This is given by $F^{[2]}_h W_\alpha$. We use again the Littlewood-Paley trichotomy,
\begin{equation}\label{FWalpha}
P_\lambda (F^{[2]}_h W_\alpha) = F^{[2]}_{h,\lambda}  W_{<\lambda,\alpha} +  F^{[2]}_{h,<\lambda}  W_{\lambda,\alpha} +
\sum_{\mu > \lambda}  P_\lambda(F^{[2]}_{h,\mu}  W_{\mu,\alpha}).
\end{equation}
The first term is easy for the $F^{[2],2}_h$ component, where we use the 
local energy norm for $\Im W$.

For the $F^{[2],1}_h$ component  we again expand $F^{[2],1}_h$ as a bilinear form in $R$ and $Y$
which contains only high-low interactions, obtaining a trilinear form. As before we have two contributions, 
of which we describe the worst, namely 
\[
\sum_{\lambda > 1} \P(\bar R_{<\lambda}Y_{\lambda}) W_{<\lambda,\alpha}.
\] 
This is estimated  by
\begin{equation}\label{RmuYmu-low+}
 \left\| \P((\bar R_{<\lambda}Y_{\lambda}) W_{<\lambda,\alpha} \right\|_{L^2_t H^{-\frac12}_{loc}} 
\lesssim \| R\|_{L^2_t H^{-\frac12}_{loc}}  \|Y_{\lambda}\|_{L^\infty_t L^{\infty}_{\alpha}} \|W_\alpha \|_{L^\infty_tL^{\infty}_{\alpha}} \lesssim c_\lambda \epsilon M.
\end{equation}

Consider now the second term in \eqref{FWalpha}.
The  bound is easy for the $F^{[2],1}_h$ component. For the $F^{[2],2}_h$ 
component we need to consider the sum
\[
\sum_{\mu_1 < \mu,\lambda} P_{\mu_1} ( R_\mu \bar Y_{\mu}) W_{\alpha,\lambda}
= \sum_{\mu < \lambda} ( R_\mu \bar Y_{\mu}) W_{\alpha,\lambda}+\sum_{\lambda \leq \mu} P_{<\lambda} ( R_\mu \bar Y_{\mu}) W_{\alpha,\lambda}.
\]
Again we estimate the two terms differently. For the first sum where 
$\mu < \lambda$ we compute by H\"older and Bernstein's inequalities at fixed time
\[
\left\| \sum_{1 \leq \mu < \lambda}  R_\mu \bar Y_{\mu} W_{\alpha,\lambda} \right\|_{L^2_t H^{-\frac12}_{loc}} 
\lesssim \lambda^{-\frac12} \mu^\frac12 
\| R  \|_{L^2_t \dot H^{-\frac12}_{loc}} \|Y\|_{L^\infty_tL^{\infty}_{\alpha}} \| W_{\lambda,\alpha}\|_{L^\infty _tH^\frac12_h} \lesssim
  \lambda^{-\frac12} \mu^\frac12 c_\lambda \epsilon M,
\]
using the local energy bound for $R$ and the uniform control norm for
the other two factors. Similarly, for the second sum, where $\mu \gtrsim \lambda$,  we have the fixed
time bound
\[
\left\| P_{<\lambda} ( R_\mu \bar Y_{\mu}) W_{\alpha,\lambda} \right\|_{L^2_t H^{-\frac12}_{loc}} 
\lesssim \lambda \mu^{-1} \| R_\mu \|_{L^\infty_t H^1_h} \|Y\|_{L^\infty_t L^{\infty}_{\alpha}} \| W\|_{L^2_t L^2_{loc}} \lesssim
  \lambda \mu^{-1} c_\mu \epsilon M,
\]
which again suffices.

Finally, the third term in \eqref{FWalpha} is negligible as the two factors are 
holomorphic.

\bigskip

\textbf{ I.c. The contribution of $F_h$ to  $G_3$.}
Here we will estimate directly the product $F_h \Im W$, as  $F_h W_\alpha$ is easily seen to satisfy the same bounds as $F_h$. 
We use again the Littlewood-Paley trichotomy,
\begin{equation}\label{FimW}
P_\lambda (F_h W) = F_{h,\lambda}  \Im W_{<\lambda} +  F_{h,<\lambda}  W_{\lambda} +
\sum_{\mu > \lambda}  P_\lambda(F_{h,\mu}  W_{\mu}).
\end{equation}

The contribution of $R$ to the first term is easy to bound in $L^2_t
H^{-\frac12}_{loc}$, and so can be included in $G_3^{1,1}$. Consider now
the expression
\[
\sum_{\lambda \geq 1} F^{[2]}_{h,\lambda} \Im W_{<\lambda}.
\]
Here it is easy to estimate the contribution of $F^{[2],2}_h$,
using the local energy bound for $\Im W$.
Hence we consider the contribution of $F^{[2],1}_h$, which contains 
the high-low interactions of $R$ and $Y$ in $F^{[2]}$ . We expand this as a trilinear
form, obtaining two terms depending on whether $R$ or $Y$ is at high frequency.
The better term is
\[
R_\lambda Y_{< \lambda} W_{<\lambda},
\]
where the second factor is harmless so this is no different than the corresponding 
contribution of $R$.

The worst term is
\[
Y_{\lambda} \bar R_{<\lambda} \Im W_{<\lambda}.
\]
To bound it we consider several cases depending on the frequencies 
of $R$ and $\Im W$:
\medskip

(i) Both frequencies $\geq 1$. Then we have the fixed time estimate
\[
\begin{split}
\| Y_{\lambda}  \bar R_{[1,\lambda)} \Im W_{[1,\lambda)} \|_{ L^2_t H^\frac12_{loc}} 
\lesssim & \  \|Y_\lambda \|_{L^\infty_t H^\frac12_h}
 ( \| R\|_{L^\infty_t \dot H^1_h} \|\Im W\|_{L^2_t L^2_{loc}} + \| R\|_{L^2_t H^{-\frac12}_{loc}} \|\Im W\|_{L^\infty_t H_{\alpha}^\frac32})
\\
\lesssim & \ c_\lambda \epsilon M,
\end{split}
\]
where we balance norms depending on which of the frequencies of $R$ and $W$
is larger. This contribution is 
included in $G_3^{1,1}$.

\medskip 

(ii) One frequency $\geq 1$, and one $\leq 1$. Here the same argument as above
applies, where we bound the low frequency factor in $L^2_t L^\infty_{loc}$.
Again here we use $G_3^{1,1}$.

\medskip

(iii) Both frequencies $\leq 1$. This is the more difficult term, 
where we need the parameter $\mu$ and the $G_3^{\mu,2}$ component.
 This is where we differentiate depending on the 
frequency of $R$. If the frequency of $R$ is less than $\mu$ then we use 
the local energy bound for $R$, and add that contribution to $G_3^{\mu,2}$,
If the frequency of $R$ is larger than $\mu$ then we use 
the energy bound for $R$, and add that contribution to $G_3^{\mu,1}$.

The low-high case, i.e. the second term in \eqref{FimW}, is 
similar to the like one for $G_1$, and goes into $G_3^{1,1}$.

Unlike in the case of $G_1$ or $G_2$, here the high-high to low case
is also nontrivial. We consider it next. For the two components of $F^{[2]}_h$ we estimate 
for $1 < \lambda \lesssim \mu$
\[
\| P_{\lambda} (F^{[2],1}_{h,\mu} \Im W_{\mu})\|_{L^2_t H^\frac12_{loc}} \lesssim \lambda^\frac12 \mu^{-\frac12} 
\| F_{h,\mu}^{[2],1} \|_{L^2_t H^{-\frac12}_{loc}} 
\| \Im W_\mu \|_{L^\infty_t H^\frac32_h} \lesssim \lambda^\frac12 \mu^{-\frac12} c_\mu M,
\]
respectively 
\[
\| P_{\lambda} (F^{[2],2}_{h,\mu} \Im W_{\mu}) \|_{L^2_t H^\frac12_{loc}} \lesssim\lambda^\frac12 \mu^{-\frac12}  \| F_{h,\mu}^{[2],2} \|_{L^\infty_t H^{1}_h}
 \| \Im W\|_{L^2_t L^2_{loc}} \lesssim \lambda^\frac12 \mu^{-\frac12} c_\mu M,
\]
both of which suffice after $\mu$ summation.  Both of these components go into $G_3^{1,1}$.

The same estimates also apply for $\lambda = 1$ when $P_1$ is replaced by $P_{\leq 1}$.
This addresses the low frequency bounds $\leq 1$ in  $G_3^{1,1}$ .

\bigskip

\textbf{II. The low frequencies of $F$.} Here we consider the low frequencies
 \[
F_l: = F_{[h^{-1},1]}.
\]
Our main tool  will be the decomposition for $F^{[2]}_l$ provided
by Lemma~\ref{l:ImFb}. One consequence of  Lemma~\ref{l:ImFb} is the bound
\begin{equation}\label{Fl-full}
\| F_l \|_{L^2_t L^\infty_{loc}} \lesssim M,
\end{equation}
which will be used to handle with the contribution of $F_l$ to the high frequencies of $G_1$, $G_2$ and $G_3$.

\medskip

\textbf{ II.a. The contribution of $F_l$ to $G_1$.} 
The contribution of $R$ is easy to estimate using Lemma~\ref{l:R-LE}.
The high frequencies $\geq 1$ are in turn directly estimated using \eqref{Fl-full}.

Here it remains to bound the expression
\[
\sum_{\lambda_1, \lambda_2 \leq 1} F^{[2]}_{l,\lambda_1} R_{\lambda_2,\alpha}
\]
in $L^2_t L^\infty_{loc}(A_\lambda)$.  Restricting to $A_\lambda$ limits the frequencies
$\lambda_1,\lambda_2$ to $[1/h,\lambda]$ with only  exponentially decaying tails at higher 
frequencies. We consider two cases:

If $\lambda_1 \leq \lambda_2$ then  we can use \eqref{Fl1-le}  and \eqref{Fl2-le}, and combine this with 
the uniform control bound for $R$ and Bernstein's inequality,
\[
\| F^{[2]}_{l,\lambda_1} R_{\lambda_2,\alpha}\|_{L^2_t L^\infty_{loc}(A_\lambda)} \lesssim 
\|  F^{[2]}_{l,\lambda_1}\|_{L^2_t L^\infty_{loc}(A_\lambda)} \|  R_{\lambda_2,\alpha}\|_{L^\infty_t L^\infty_{\alpha}}
\lesssim \lambda_1^\frac12 \lambda_2^{\frac12} c_{\lambda_2} M,
\]
which suffices after $\lambda_1,\lambda_2$ summation.

If $\lambda_1 > \lambda_2$ we can still estimate the contribution
of $F_l^{[2],2}$ using  \eqref{Fl2-e} combined
with  the pointwise bound for $ R_{\lambda_2,\alpha}$ derived from local energy
in Lemma~\ref{l:R-LE},
\[
\| F^{[2]}_{l,\lambda_1} R_{\lambda_2,\alpha}\|_{L^2_t L^\infty_{loc}(A_\lambda)} \lesssim 
\|  F^{[2]}_{l,\lambda_1}\|_{L^\infty_t L^{\infty}_{\alpha}} \|  R_{\lambda_2,\alpha}\|_{L^2_t L^\infty_{loc}(A_\lambda)}
\lesssim \lambda_1^{-\frac12} \lambda_2^{\frac32} c_{\lambda_1} M.
\]

This leaves us  only with the contribution of $F_l^{[2],1}$, which we expand to a trilinear
expression, arriving at  an expression of the form
\[
\sum_{\lambda_2, \lambda_3  \leq \lambda_1 \leq \lambda}  R_{\lambda_3} Y_{\lambda_1} R_{\lambda_2,\alpha}
+  R_{\lambda_1} Y_{\lambda_3} R_{\lambda_2,\alpha}.
\]
Here we apply the local energy bound for the factor with the lowest frequency
$\lambda_{\min}$, and use the uniform control norm for the two highest frequencies. 
Estimating as above this yields a bound 
\[
\| R_{\lambda_3} Y_{\lambda_1} R_{\lambda_2,\alpha}
+  R_{\lambda_1} Y_{\lambda_3} R_{\lambda_2,\alpha}\|_{L^2 _tL^\infty_{loc}(A_\lambda)}
\lesssim    \lambda_{min}^{\frac12} \lambda_1^{\frac12} c_{\lambda_1} M,
\]
where we have off-diagonal decay for the  summation.

\medskip

\textbf{ II.b. The contribution of $F_l$ to $G_2$}. 
The contribution of $R$ is easy to estimate using Lemma~\ref{l:R-LE}.
The high frequencies $\geq 1$ are in turn directly estimated using \eqref{Fl-full}.

It remains to estimate the low frequency contribution of $F^{[2]}_l$,
for which we consider the decomposition in Lemma~\ref{l:ImFb}. This
time the contribution of $F_l^{[2],1}$ is easy to bound, using the pointwise
estimate for $W_\alpha$,
\[
\| F_{l,<\lambda}^{[2],1} W_{<\lambda,\alpha}\|_{L^2_tL^\infty(A_\lambda)} \lesssim \lambda^{\frac12} c_\lambda M.
\]

 This leaves us with the contribution of $F_l^{[2],2}$, i.e., with terms of the form
\[
\sum_{\lambda_1, \lambda_2 < \lambda} F_{l,\lambda_1}^{[2],2} W_{\lambda_2,\alpha}.
\]
Here we consider two cases. 

\medskip
a) If $\lambda_1< \lambda_2$ then we use \eqref{Fl2-le} for the first
factor combined with the pointwise bound derived from the control norm  for the second, which yields 
\[
\| F_{l,\lambda_1}^{[2],2} W_{\lambda_2,\alpha}\|_
{L^2_tL^\infty_{\alpha}(A_\lambda)} \lesssim \lambda_1^\frac12 \epsilon
c_{\lambda_2} M,
\]
with off-diagonal decay which insures the summation with respect to $\lambda_1,\lambda_2 < \lambda$.

\medskip

b) If $\lambda_1\geq  \lambda_2$ then we use \eqref{Fl2-e} for the first
factor combined with local energy for the second, which yields 
\[
\| F_{\lambda_1}^{[2],2} W_{\lambda_2,\alpha}\|_
{L^2_tL^\infty_{\alpha}(A_\lambda)} \lesssim \lambda_1^{-\frac12} \lambda_2  \epsilon
c_{\lambda_1} M.
\]
This again suffices.

\medskip

\textbf{ II.c. The contribution of $F_l$ to $G_3$.} 
For all terms except a single one, it suffices to use only $ G_3^{1,1}$.
We consider first the contribution of $R$, which is
\[
\HD(\Im W\cdot \Re R_l).
\]
Here we use the Littlewood-Paley trichotomy, combining a
local energy bound for the low frequency factor with the uniform control norm bound
for the high frequency factor. The estimates follow from H\"older's and Bernstein's inequalities. We briefly describe the estimates:
\medskip

a) In the high-low case $\lambda > \mu$ we have
\[
\| \Im W_\lambda \Re R_\mu\|_{L^2_t L^\infty_{loc}(B_\lambda)} \lesssim \|\Im W_\lambda\|_{L^\infty_tL^{\infty}_{\alpha}(B_\lambda)} 
\|  \Re R_\mu\|_{L^2_t L^\infty_{loc}(B_\lambda)} \lesssim \mu^\frac12 \lambda^{-1} c_\lambda M. 
\]
\medskip

b) In the low-high case  we have
\[
\| \Im W_{<\lambda} \Re R_\lambda\|_{L^2_t L^\infty_{loc}(B_\lambda)} \lesssim \|W_{<\lambda}\|_{L^2_t L^\infty_{\alpha}(B_\lambda)} 
\|  \Re R_\lambda\|_{L^\infty_tL^{\infty}_{\alpha}(B_\lambda)} \lesssim \lambda^{-\frac12} c_\lambda M. 
\]

\medskip 
c) In the high-high case we have
\[
\begin{split}
\| P_\lambda ( \Im W_\mu \Re R_\mu) \|_{L^2_t L^\infty_{loc}(B_\lambda)}
\lesssim & \ \lambda \| \Im W_\mu \Re R_\mu \|_{L^2_t L^1_{loc}(B_\lambda)}
\\
\lesssim  & \  \lambda \| \Im W_\mu\|_{L^\infty_t L^2_{\alpha}} \| \Re R_\mu \|_{L^2_t L^2_{loc}(B_\lambda)}
\\
\lesssim  & \ \lambda^{\frac12} \mu^{-1} c_\mu M.
\end{split}
\]

Next we consider the contribution of $F^{[2]}_l$ using the Littlewood-Paley trichotomy:

a) The expression $W_{<\lambda} F^{[2]}_{l,\lambda}$. This is the most delicate case. We can easily dispense 
with $F^{[2],2}_{l,\lambda}$ via \eqref{Fl2-e} combined with the local energy bound for $W$ 
in Lemma~\ref{l:theta-LE-loc}. It remains to consider the contribution of  $F^{[2],1}_{l,\lambda }$, which, we recall,
is produced from unbalanced interactions of $R$ and $Y$. Then we are left with trilinear 
expressions of two types. 
\medskip

a.1) The trilinear form
\[
W_{<\lambda} R_\lambda \bar Y_{< \lambda}.
\]
Here we use local energy for $W$ and the uniform control norm 
 for the remaining two factors; this is identical to case (b) before since $Y$ is bounded.

\medskip

a.2)  The trilinear form 
\[
W_{< \lambda} R_{< \lambda} \bar Y_{\lambda},
\]
where using one local energy bound does not seem to suffice.  
It is only here that the decomposition $G^{\mu,1}_3+ G_3^{\mu,2}$ is needed. 
We consider three cases depending on how the two low 
frequencies $\lambda_1,\lambda_2$ compare with $\mu$.

\medskip

a.2.i)  $ \lambda_1,\lambda_2 < \mu $.
Then we group terms as
\[
R_{<\mu} W_{<\mu} \bar Y_\lambda,
\]
and use the pointwise bound derived from local energy for the  first two factors to obtain
\[
\| R_{<\mu} W_{<\mu} \bar Y_\lambda\|_{L^1 _tL^\infty_{loc}(B_\lambda)}    \lesssim \mu^{-1} M^2 c_\lambda  .
\]
 This term is placed in  $G_3^{\mu,2}$.

a.2.ii) $\lambda_1 > \mu $.  Then also $\lambda > \mu$.
We group terms as
\[
R_{\lambda_1}  W_{<\lambda} \bar Y_\lambda.
\]
Then we use the local energy bound for $W$ and the control norm for $R$
to get a bound of 
\[
\|  R_{\lambda_1}  W_{<\lambda} \bar Y_\lambda\|_{L^2_t L^\infty_{loc}(B_\lambda)} \lesssim  
\lambda_1^{-\frac12} c_\lambda c_{\lambda_1}M,
\]
 where we use the summation for both $\lambda_1$  and $\lambda$. This term is placed in  $ G_3^{\mu,1}$.

\medskip

a.2.iii) $\lambda_1 <  \mu  <  \lambda_2$. Now we switch roles and 
use local energy for $R$ and the control norm for $W$. The estimate is similar to the previous case but better. This term is also placed in 
$ G_3^{\mu,1}$.

\medskip

b) The expression $W_{\lambda} F^{[2]}_{l,<\lambda}$. This is easier, using the control norm for $W_\lambda$ 
and local energy \eqref{Fl1-le} \eqref{Fl2-le} for  $F^{[2]}$. 

\medskip

c) The expression $P_\lambda(W_{\nu} F^{[2]}_{l,\nu})$  is similar to the above,
using either local energy or the control norm for $W$ corresponding to the 
two components of $F^{[2]}$. This term is also placed in 
$ G_3^{1,1}$.

\bigskip

\textbf{III. The very low frequencies of $F$.} Here we consider the very low frequencies
 \[
F_{vl} = \Re (F-R)_{< \frac{1}{h}}.
\]
We freely omit the imaginary part of $F$, as well as $R$, which fit
within the purview of the analysis in the low frequency case.

The size of $F_{vl}$ depends on the choice of the constants, but  its derivative does not, so we 
estimate that first:

\begin{lemma}
The function $\nabla F_{vl}$   satisfies the bound
\begin{equation}\label{Fvl1-le}
 \| \nabla F_{vl} \|_{L^2_t L^\infty_{loc}(A_{1/h})} \lesssim  h^{-\frac32} M \epsilon .
\end{equation}
\end{lemma}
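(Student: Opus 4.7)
The plan is to exploit the holomorphic structure of $F^{[2]} = F - R$ together with the very-low-frequency bound \eqref{ImF-vlow} for $\Im F^{[2]}$. The essential observation is that while $\Tilh^{-1}$ is unbounded near zero frequency, the composition $\Tilh^{-1}\partial_\alpha$ has a smooth symbol whose size at frequencies $\lesssim h^{-1}$ is exactly $h^{-1}$, and this is precisely the extra factor needed to go from $h^{-1/2}$ in \eqref{ImF-vlow} to $h^{-3/2}$ in the target. Since $F^{[2]}$ is holomorphic in the strip and satisfies the real-on-bottom condition, $\Re F^{[2]}$ is harmonic with homogeneous Neumann condition on the bottom, so $\Re F^{[2]} = P_N(\beta,D) \Re F^{[2]}|_{top}$ modulo a constant, while $\Im F^{[2]}$ has Dirichlet zero on the bottom and extends via $P_D(\beta,D)$. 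The constant ambiguity in $\Re F^{[2]}$ is harmlessly killed by taking a derivative, so \eqref{ReF-const} plays no direct role.

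The key identity on the top (where $\Im F^{[2]} = \Im(R\bar Y)$) is
\[
\partial_\alpha \Re F^{[2]} = -\Tilh^{-1}\partial_\alpha\, \Im F^{[2]} = M_h(D)\,\Im F^{[2]},
\]
where $M_h$ has Fourier symbol $\xi \coth(h\xi)$. A Taylor expansion gives $M_h(\xi) = h^{-1} + O(h\xi^2)$ near $\xi = 0$, so $M_h$ is smooth, satisfies $|M_h(\xi)| \lesssim h^{-1}$ uniformly in $|\xi|\lesssim h^{-1}$, and (after pulling out the prefactor $h^{-1}$) belongs to the symbol class \eqref{m-symbol} with $\lambda_2 = h^{-1}$.

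Combining this with the $L^2_t L^\infty$ bound \eqref{ImF-vlow} through Lemma~\ref{l:bern-loc}(a) would yield, on the top,
\[
\|\partial_\alpha \Re F^{[2]}_{<1/h}\|_{L^2_t L^\infty_{loc}(B_{1/h})} \lesssim h^{-1}\cdot h^{-1/2}\epsilon M = h^{-3/2}\epsilon M.
\]
Since the extension symbol $p_N(\xi,\beta) = \cosh((\beta+h)\xi)/\cosh(h\xi)$ is uniformly bounded for $|\beta|\leq h$ and lies in the class \eqref{m-symbol} with $\lambda_2 = h^{-1}$, another application of Lemma~\ref{l:bern-loc}(a) propagates this bound from the top to the region $A_{1/h}$. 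For $\partial_\beta F_{vl}$, I would use the Cauchy-Riemann relation $\partial_\beta \Re F^{[2]} = -\partial_\alpha \Im F^{[2]}$, which reduces matters to Bernstein's inequality at frequency $h^{-1}$ applied to $\Im F^{[2]}_{<1/h}$, again using \eqref{ImF-vlow} together with $P_D(\beta,D)$ to extend to $A_{1/h}$.

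The main (rather mild) subtlety is verifying that the effective $h^{-1}$ gain from $\Tilh^{-1}\partial_\alpha$ holds \emph{uniformly} on $|\xi|\lesssim h^{-1}$ rather than only asymptotically as $\xi \to 0$, and to check that all the associated multiplier derivatives scale correctly so that $M_h$ fits the class \eqref{m-symbol}; both points follow from an elementary inspection of the explicit analytic form of $\xi\coth(h\xi)$. Once this is in place, the estimate becomes essentially a direct combination of \eqref{ImF-vlow}, Lemma~\ref{l:bern-loc}(a), and Cauchy-Riemann.
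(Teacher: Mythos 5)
Your proof is correct and captures what the paper's omitted argument must be. The key insight — that $\Tilh^{-1}$ alone is unbounded at frequency $0$, but $\Tilh^{-1}\partial_\alpha$ has symbol $\xi\coth(h\xi) \sim h^{-1}$ on $|\xi| \lesssim h^{-1}$, so that differentiating $\Re F^{[2]} = -\Tilh^{-1}\Im F^{[2]}$ yields a bounded (indeed, size-$h^{-1}$) multiplier applied to $\Im F^{[2]}$ — is exactly the structural observation needed, and the rest is routine. You feed in the top bound from \eqref{ImF-vlow} and propagate via $P_N$/$P_D$ and Lemma~\ref{l:bern-loc}(a); the Cauchy--Riemann reduction for $\partial_\beta$ is the right move and avoids any separate analysis. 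Note the paper attributes the lemma to Lemma~\ref{l:ImFb} (which governs $\lambda \in [h^{-1},1]$) whereas you invoke \eqref{ImF-vlow} from Lemma~\ref{l:ImF}(c) (which governs $\lambda < h^{-1}$); the latter is the precise input you need, and the paper's citation is best read as pointing to the same circle of estimates, all proved in the same place. Two small imprecisions worth tidying: (i) $\xi\coth(h\xi)$ grows linearly for $|\xi| \gg h^{-1}$ and so does \emph{not} literally fit the symbol class \eqref{m-symbol} — you should replace $M_h$ by $M_h\chi_{\lesssim h^{-1}}(D)$ using the frequency localization of the input, after which the class membership is genuine; (ii) $p_N(\cdot,\beta)$ only belongs to the frequency-$h^{-1}$ class \eqref{m-symbol} when $|\beta| \approx h$ (for $\beta$ near $0$ it is identically $1$), but that is exactly the range relevant to $A_{1/h}$, so the application is valid. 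Neither affects the conclusion.
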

This is an immediate consequence of Lemma~\ref{l:ImFb} and the proof
is omitted. This estimate allows us to estimate the contribution of
$F_{vl} - F_{vl}(\alpha_0)$ to Proposition~\ref{p:nonlin}. By direct
integration, this function satisfies
\begin{equation}\label{Fvl}
| F_{vl} (z) -  F_{vl}(\alpha_0)| \lesssim g(t) |z - \alpha_0|, \qquad \| g\|_{L^2_t} \lesssim  h^{-\frac32} M \epsilon .
\end{equation}
We now consider its effect on Propositions~\ref{p:nonlin}, \ref{p:nonlin2}.

\bigskip

\textbf{ III.a. The contribution of $F_{vl}$ to $G_1$.} 
This is easily estimated combining the pointwise estimate \eqref{Fvl}
with the pointwise bound for $R_\alpha$ derived from the control norm.

\bigskip

\textbf{ III.b. The contribution of $F_{vl}$ to $G_2$.} 
This is also straightforward using the pointwise estimate \eqref{Fvl}
together with the pointwise bound for $W_\alpha$ derived from the control norm.

\bigskip

\textbf{ III.c. The contribution of $F_{vl}$ to $G_3$.} Here 
we need to consider the expression
\[
G_{3,vl} = (F_{vl} (\alpha) -  F_{vl}(\alpha_0)) \Im W (1+W_\alpha).
\]
To estimate $G_3$ at frequency $\lambda$ 
we use Bernstein's inequality to bound $\Im W$ and $W_\alpha$
in $L^\infty$ in terms of the control norm. To do this we take into
account the fact that the lowest frequency must be at least $h^{-1}$, the highest frequency must be at least
$\lambda$, as well as the fact that from \eqref{Fvl} we get a
factor of $\lambda^{-1} h^{-\frac32}$ in the region $B_\lambda(x_0)$.
The worst case scenario is when $\nu < \lambda$ and we estimate in $A_\lambda(\alpha_0)$
\[
| P_\lambda   [(F_{vl} (\alpha) -  F_{vl}(\alpha_0)) \Im W_{\nu} W_{\lambda,\alpha})]    | \lesssim f(t)\lambda^{-1} h^{-\frac32}
\mu^{-1} \lesssim f(t) \lambda^{-\frac12},
\]
with trivial $\ell^1$ summation. Thus this contribution is directly placed in 
$G_3^1$.

Here $\mu$ is limited below by $1/h$ because we use the inhomogeneous norms in $X$.

\bigskip

\textbf{IV. The constant in $F$.} 
We denote the constant by $c(t)$ which we will simply estimate 
via \eqref{const-bd}, which we recall here:
\[
|c(t)| \lesssim |R(\alpha_0)|.
\]
To evaluate the contribution of $c$ we will use the following 

\begin{lemma}
For each $\lambda > h^{-1}$ we have a decomposition
\[
c = c_\lambda^1 + c_\lambda^2,
\]
where
\begin{equation}\label{c1}
\| c_\lambda^1\|_{L^2_t} \lesssim \lambda^\frac12 M ,
\end{equation}
respectively 
\begin{equation}\label{c2}
\| c_\lambda^2\|_{L^\infty_t} \lesssim \lambda^{-\frac12} c_\lambda, 
\end{equation}
with additional $\ell^1$ summability at low frequency in the last bound.
\end{lemma}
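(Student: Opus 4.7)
The plan is to start from the pointwise bound \eqref{const-bd}, namely $|c(t)| \lesssim |R(\alpha_0, 0, t)|$, and build the decomposition $c = c_\lambda^1 + c_\lambda^2$ directly from a Littlewood-Paley splitting of $R$ evaluated at the fixed point $\alpha_0$. For each dyadic $\mu > h^{-1}$, I would control the number $R_\mu(\alpha_0, 0, t)$ in two complementary ways. By Bernstein's inequality applied at fixed time combined with the control-norm bound \eqref{constants-hol}, one has
\[
\|R_\mu(\alpha_0)\|_{L^\infty_t} \lesssim \mu^{1/2} \|R_\mu\|_{L^\infty_t L^2_\alpha} \lesssim \mu^{-1/2} c_\mu.
\]
On the other hand, for $\mu \leq 1$, Lemma~\ref{l:R-LE-top} (specifically \eqref{R-LE-top-low}) yields
\[
\|R_\mu(\alpha_0)\|_{L^2_t} \leq \|R_\mu\|_{L^2_t L^\infty(B_\mu(\alpha_0))} \lesssim \mu^{1/2} M.
\]

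Given $\lambda > h^{-1}$, for $\lambda \leq 1$ I would set
\[
c_\lambda^1 := -\Re R_{<\lambda}(\alpha_0), \qquad c_\lambda^2 := c - c_\lambda^1,
\]
while for $\lambda > 1$ one can simply take $c_\lambda^1 := c_1^1$, $c_\lambda^2 := c_1^2$, since the $\lambda^{1/2} M$ and $\lambda^{-1/2} c_\lambda$ thresholds are more permissive there. The bound on $c_\lambda^1$ is then immediate from the local energy estimate above:
\[
\|c_\lambda^1\|_{L^2_t} \leq \sum_{h^{-1} < \mu < \lambda} \|R_\mu(\alpha_0)\|_{L^2_t} \lesssim \sum_{h^{-1} < \mu < \lambda} \mu^{1/2} M \lesssim \lambda^{1/2} M.
\]

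For $c_\lambda^2$ I decompose $c_\lambda^2 = -\Re R_{\geq \lambda}(\alpha_0) + (\text{cubic correction from } \eqref{ReF-const})$. The linear piece is a dyadic sum $\sum_{\mu \geq \lambda} R_\mu(\alpha_0)$ whose dyadic pieces are controlled in $L^\infty_t$ by $\mu^{-1/2} c_\mu$; summing this against the slow variation of the frequency envelope gives the desired $\|c_\lambda^2\|_{L^\infty_t} \lesssim \lambda^{-1/2} c_\lambda$. The $\ell^1$ summability at low frequency referenced in the statement is precisely inherited from this dyadic structure: the pieces $\{R_\mu(\alpha_0)\}$ form an $\ell^1$ summable family in $L^\infty_t$ with summation envelope $\mu^{-1/2} c_\mu$, which is itself $\ell^1$ summable in $\mu$ by the control norm assumption.

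The main obstacle is verifying that the cubic correction term $\Im(R\bar Y)\Im W_\alpha / (J(1+\Re W_\alpha))$ evaluated at $\alpha_0$ admits the same dyadic decomposition structure. This is managed by noting that $Y, W_\alpha$ are uniformly $\epsilon$-small in $L^\infty$ with frequency envelopes given by $\{c_\mu\}$, so the cubic expression is of the form $(\text{bounded factor}) \cdot R$ pointwise and Littlewood-Paley trichotomy yields a decomposition of the same type as for $R(\alpha_0)$, but with an $\epsilon^2$-gain that is easily absorbed. A secondary technical point is the slow-variation summation $\sum_{\mu \geq \lambda} \mu^{-1/2} c_\mu \lesssim \lambda^{-1/2} c_\lambda$, which uses $c_\mu \leq c_\lambda (\mu/\lambda)^\delta$ with $\delta < 1/2$.
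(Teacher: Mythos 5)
Your core approach — split $R$ at frequency $\lambda$, bound the low part in $L^2_t$ via the local energy estimate \eqref{R-LE-top-low} and the high part in $L^\infty_t$ via the control norm, and sum using slow variation — is exactly the intended argument; the paper's proof is a single sentence saying precisely this. Where you diverge is that you try to build the decomposition \emph{literally} from the formula \eqref{ReF-const}, setting $c_\lambda^1 := -\Re R_{<\lambda}(\alpha_0)$ and then absorbing the cubic correction into $c_\lambda^2$. This creates the obstacle you flag but don't actually resolve: the cubic term $\Im(R\bar Y)\Im W_\alpha / (J(1+\Re W_\alpha))$ at $\alpha_0$ is \emph{not} bounded in $L^\infty_t$ by $\lambda^{-1/2}c_\lambda$, because $\|R(\alpha_0)\|_{L^\infty_t}$ itself is not — the low frequencies $\mu < \lambda$ contribute $\sum_\mu \mu^{-1/2}c_\mu$, which is dominated by the smallest $\mu \approx h^{-1}$ and can be as large as $h^{1/2+\delta}c_\lambda$. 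So the cubic correction would itself need to be re-split into an $L^2_t$ part (absorbed into $c_\lambda^1$) and an $L^\infty_t$ part, not deposited wholesale in $c_\lambda^2$. This entire headache is avoided if one simply uses the pointwise bound \eqref{const-bd}, $|c(t)| \lesssim |R(\alpha_0,t)| \leq |R_{<\lambda}(\alpha_0,t)| + |R_{\geq\lambda}(\alpha_0,t)|$, and then defines $c_\lambda^1(t)$ and $c_\lambda^2(t)$ by deciding pointwise in $t$ which of the two terms dominates (or equivalently by the convex splitting $c \cdot \frac{|R_{<\lambda}|}{|R_{<\lambda}|+|R_{\geq\lambda}|}$ and its complement). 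Then the two stated bounds follow directly from the dyadic estimates you already have, with no need to touch the cubic structure of $\Re F^{[2]}$.

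A second, sharper error: your shortcut for $\lambda > 1$ ("take $c_\lambda^1 := c_1^1$, $c_\lambda^2 := c_1^2$, since both thresholds are more permissive there") is false for the $L^\infty_t$ bound. Since $\delta < 1/2$, slow variation gives $\lambda^{-1/2}c_\lambda \leq c_1 \lambda^{-1/2+\delta} < c_1$ for $\lambda > 1$, so the target $\lambda^{-1/2}c_\lambda$ \emph{shrinks} as $\lambda$ grows — the $L^\infty_t$ threshold becomes strictly harder to meet, not easier. Only the $\lambda^{1/2}M$ threshold relaxes. You would need to push more frequencies of $R(\alpha_0)$ into $c_\lambda^1$ as $\lambda$ grows, not reuse the $\lambda = 1$ splitting. (Whether the $\lambda > 1$ regime is ever actually invoked is a separate question, but the inequality as you wrote it is simply backwards.)
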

\begin{proof}
This corresponds to the decomposition 
\[
R = R_{\leq \lambda} + R_{\geq \lambda},
\]
where for the first term we use the local energy bounds and for the second the $X$ bound.
\end{proof}
We now evaluate the effect of $c$ in Propositions~\ref{p:nonlin}, \ref{p:nonlin2}.

\bigskip

\textbf{ IV.ab. The contribution of $F_{vl}$ to $G_1, G_2$.}
For $G_1$ and $G_2$
we use the above decomposition to estimate $F R_\lambda$, respectively $F W_{\lambda,\alpha}$.
For the $c^1_\lambda$ term we use the control norm for its co-factor, and for 
the $c^1_\lambda$ term we use local energy for its co-factor. 

\bigskip

\textbf{ IV.c. The contribution of $F_{vl}$ to $G_1, G_2$.}
The same idea as above applies 
the $c \Im W$ component of $G_3$. 
Finally, for the term 
\[
c \Im W \cdot W_\alpha
\]
we apply  a similar argument, but splitting $c$ 
depending on the lowest of the two frequencies.

\appendix 
\section{Nonlinear computations}\label{appendix:nonlinear}

In this appendix, we prove another Morawetz's inequality which holds under a very mild smallness assumption on the free surface elevation $\eta$ (and without restriction on $\psi$). 
The proof is entirely different. 
It exploits the positivity of the pressure to deduce through a virial type argument a control of the kinetic energy. As a result, we obtain a bound of the local energy, which is a quadratic quantity, in terms of the momentum density $I_1$, which contains a linear term. However, by so doing, we loose  the uniformity in the depth $h$ as well as the control of the low-frequency component of the velocity potential.

\begin{theorem}\label{ThmG}
Let $g\in (0,+\infty)$. 
Let $s>5/2$ and $T$ be an arbitrary positive real number. 
Consider any solution $(\eta,\psi)\in C^0([0,T];H^{s}(\xR)\times H^s(\xR))$ 
of the water-wave system \eqref{systemT}. Given $\eps>0$ and $r>1/2$, set 
$$
m(x)=\int_0^x \frac{\dsigma}{(1+\eps^2\sigma^2)^r}.
$$
Assume that
\begin{alignat*}{2}
&(i)\qquad &&\inf_{(t,x)\in [0,T]\times\xR}\eta(t,x)\ge -\frac{h}{2},\\
&(ii) &&\sup_{(t,x)\in [0,T]\times \xR} \la \eta_x(t,x)\ra\leq \frac{1}{3},\\
& (iii) && \eps r\Big(h+\lA \eta\rA_{L^\infty}\Big)\leq \frac{1}{42}.
\end{alignat*}
Then there holds
\begin{equation}\label{nMg-alt}
\begin{aligned}
&\int_0^T\biggl\{\int m_x(x) \eta^2(t,x)\,dx +\iint_{\Omega(t)} m_x(x)\la\nabla_{x,y}\phi(t,x,y)\ra^2\,dydx\biggr\}\dt\\
&\qquad\qquad\qquad\qquad\leq 14\int_\xR m(x) I_1(t,x) \,dx\ST +2\int m(x)I_2(t,x)\,dx \ST,
\end{aligned}
\end{equation}
where
$$
I_1(t,x)=\int_{-h}^{\eta(t,x)}\phi_x(t,y)\,dy,\quad I_2(t,x)=\eta(t,x)\psi_x(t,x),
$$
and where we used the notation 
$\int f(t,x)\dx \ST=\int f(T,x)\,dx-\int f(0,x)\,dx$.
\end{theorem}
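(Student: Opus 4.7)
The plan is to combine three ingredients: positivity of the pressure under (i)--(ii), the momentum conservation laws for $I_1$ and $I_2$ already derived in Section~\ref{s:Cl}, and a virial-type identity obtained by multiplying $\Delta\phi=0$ by $m_x(x)\phi$ on $\Omega(t)$.

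I would first establish $P\ge 0$ in $\Omega(t)$. From Bernoulli, $P=-\phi_t-\tfrac12|\nabla\phi|^2-gy$ satisfies $\Delta P=-|\nabla^2\phi|^2\le 0$, so $P$ is superharmonic, while $P|_{y=\eta}=0$ and, using $\phi_y|_{y=-h}=0$ (whence also $\phi_{xy}|_{y=-h}=\phi_{ty}|_{y=-h}=0$), a direct computation gives $\partial_y P|_{y=-h}=-g$. The maximum principle puts the minimum of $P$ on $\partial\Omega$; Hopf's lemma rules out a strict negative minimum on the flat bottom, since such a minimum would require $\partial_y P|_{y=-h}>0$, contradicting the computed value. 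Combined with the hydrostatic behaviour $P\to -gy\ge 0$ at $x=\pm\infty$ this yields $P\ge 0$. Next, integrating $\partial_t I_j+\partial_x S_j=0$ against $m(x)$ and using Bernoulli to simplify $S_1$ (taking into account the finite limits of $m$ and the fact that $S_1\to 0$ at infinity, the added constant $-gh^2/2$ exactly balancing the hydrostatic tail of $\int_{-h}^\eta P\,dy$), I obtain
\begin{equation*}
\int m\,I_1\,dx\Big|_0^T=\int_0^T\!\!\int m_x\Bigl[\int_{-h}^\eta\phi_x^2\,dy+\int_{-h}^\eta P\,dy-\tfrac{gh^2}{2}\Bigr]dx\,dt,
\end{equation*}
together with the parallel identity
\begin{equation*}
\int m\,I_2\,dx\Big|_0^T=\int_0^T\!\!\int m_x\Bigl[\tfrac{g}{2}\eta^2+\tfrac12\!\int_{-h}^\eta(\phi_x^2-\phi_y^2)\,dy+\mathcal{T}\Bigr]dx\,dt,
\end{equation*}
where the surface trace corrections $\mathcal{T}$ are controlled by $\tfrac{2|\eta|}{3}|\nabla\phi|^2|_{y=\eta}$ thanks to (ii). With the aid of $P\ge 0$, the first identity provides control of $\int_0^T\!\int m_x\int\phi_x^2$ and $\int_0^T\!\int m_x\int P$, while the second provides the potential-energy term $\int m_x\eta^2$.

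To recover the $\phi_y^2$-contribution to the local kinetic energy I would apply the virial identity. Using $\nabla\!\cdot(m_x\phi\nabla\phi)=m_x|\nabla\phi|^2+m_{xx}\phi\phi_x$ together with $\phi\Delta\phi=0$, the divergence theorem (Neumann on the bottom; $\partial_n\phi\,d\sigma=\eta_t\,dx$ on the top) yields
\begin{equation*}
\iint_{\Omega(t)}m_x|\nabla\phi|^2\,dx\,dy=\int m_x\psi\eta_t\,dx-\iint_{\Omega(t)}m_{xx}\phi\phi_x\,dx\,dy.
\end{equation*}
Integrating in $t$ and expanding $\eta_t\psi=\partial_t(\eta\psi)-\eta\psi_t$, with the Zakharov equation $\psi_t=-g\eta+\mathrm{NL}$, converts the first term on the right into $g\int_0^T\!\int m_x\eta^2+\int m_x\eta\psi\,dx|_0^T$ up to cubic errors. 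The endpoint $\int m_x\eta\psi\,dx|_0^T$ is then re-expressed by one integration by parts in $x$ as a combination of $\int m\,I_1|_0^T$ and $\int m\,I_2|_0^T$.

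The desired estimate then follows by taking the specific combination $14\cdot (I_1\text{-identity})+2\cdot (I_2\text{-identity})$ together with the virial identity. In the course of the computation the hydrostatic constant $-gh^2/2$ from the $I_1$ identity is cancelled by the $g(h^2-\eta^2)/2$ contribution to $\int P\,dy$, the trace corrections $\mathcal{T}$ from the $I_2$ identity are absorbed using $|\eta_x|\le 1/3$ from (ii), and the commutator $\iint m_{xx}\phi\phi_x$ from the virial identity is absorbed using (iii), which gives the pointwise bound $|m_{xx}|\le 2\epsilon r\,m_x$ and, combined with the Poincar\'e-type control $|\phi|\lesssim (h+\|\eta\|_\infty)\|\nabla\phi\|_{L^\infty_y}$, an error bounded by $\mathrm{const}\cdot\epsilon r(h+\|\eta\|_\infty)$ times the local energy---precisely the quantity constrained by $1/42$ in (iii). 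The main obstacle is the careful bookkeeping required to produce the explicit coefficients $14$ and $2$: it involves simultaneously cancelling the hydrostatic constant, handling the $\phi_t$ term (integration by parts in $t$), and absorbing the commutator and the trace within the narrow margin allowed by (ii)--(iii).
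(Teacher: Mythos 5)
Your route through a virial identity $\iint m_x|\nabla\phi|^2 = \int m_x\psi\eta_t\,dx - \iint m_{xx}\phi\phi_x$ is genuinely different from the paper's argument, which never passes through such an identity. Unfortunately the virial step contains a gap that I do not see how to close. The commutator $\iint m_{xx}\phi\phi_x\,dydx$ involves the velocity potential $\phi$ itself, not just its gradient, and the Poincar\'e-type bound $|\phi|\lesssim(h+\|\eta\|_\infty)\|\nabla\phi\|_{L^\infty_y}$ you invoke to absorb it is false: $\phi$ is only determined up to an additive constant, and more to the point $\phi(x,-h)$ is not controlled by $\|\nabla\phi\|_{L^\infty_y}$ (it can accumulate a contribution of order $|x|\cdot\|\phi_x\|_{L^\infty}$ along the bottom). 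Since the width of the weight $m$ is $O(1/\eps)$, the commutator term cannot be made small by choosing $\eps$ small --- the growth of $\phi$ across the support exactly compensates the $\eps$ gained from $|m_{xx}|\leq r\eps\,m_x$. This is precisely the difficulty that Lemma~\ref{L3.3} is designed to circumvent: by manipulating $u=-w(y-\eta)\phi_y^2$ it produces an identity for $\iint w(\phi_x^2-\phi_y^2)$ in which only first-order quantities $\nabla\phi$ appear, never $\phi$ itself.

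A second gap concerns your treatment of the nonlinear correction $\mathcal{T}=\eta\,N(\eta)\psi$ in the $I_2$ identity. You bound it pointwise by a constant times $|\eta|\,|\nabla\phi|^2|_{y=\eta}$, but this is a surface trace and is not controlled by the bulk local energy $\iint m_x|\nabla\phi|^2\,dy\,dx$ (there is a half-derivative mismatch). The paper handles this via Lemma~\ref{L3.2}, which converts $\int\mu N(\eta)\psi\,dx$ into the volume integral $-\iint\mu_x\phi_x\phi_y\,dy\,dx+\tfrac12\int\mu\phi_x^2|_{y=-h}\,dx$; applied with $\mu=m_x\eta$ and combined once more with Lemma~\ref{L3.3}, this produces the decomposition \eqref{n211}, where everything can be absorbed using (ii) and (iii). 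Your framework lacks these two identities, so the coefficients $14$ and $2$ cannot be produced by the bookkeeping you describe. Your observations that $P\geq 0$ follows from superharmonicity and $\partial_yP|_{y=-h}=-g$, and that one must track the hydrostatic constant in $S_1$ (a point the appendix is in fact slightly cavalier about), are both correct and useful; but they are not the obstruction. The essential missing ingredient is the pair of divergence identities in Lemmas~\ref{L3.2}--\ref{L3.3}, without which one cannot avoid the ill-behaved bulk term in $\phi$ or trade the surface trace of $|\nabla\phi|^2$ for an interior quantity.
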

\begin{proof}

The proof is in two different steps. We first 
estimate the local kinetic energy by using the momentum density $I_1$ and the positivity of the pressure. Then we 
estimate the local potential energy by using the momentum density of $I_2$. 

\noindent\textbf{Step 1: kinetic energy.} 
We begin by proving that
\begin{equation}\label{nMg-kinetic}
\int_0^T\iint_{\Omega(t)} m_x(x)\la\nabla_{x,y}\phi(t,x,y)\ra^2\,dydxdt\leq 
7\int_\xR m(x) I_1(t,x) \,dx\ST.
\end{equation}
To do so we use the local conservation law
$\partial_t I_1+\partial_x S_1=0$ where recall that 
$$
S_1(t,x)\defn -\int_{-h}^{\eta(t,x)}(\partial_t \phi+gy)\,dy+\mez \int_{-h}^{\eta(t,x)}\big(\phi_x^2-\phi_y^2\big)\,dy.
$$
By multiplying the equation $\partial_t I_1+\partial_x S_1=0$ by $m=m(x)$ and integrating by parts, one obtains that
$$
\iint_{Q_T} S_1(t,x) m_x \,dxdt =\int m I_1\,dx \ST,
$$
where $Q_T=[0,T]\times \xR$. 
We will prove a stronger result than \eqref{nMg-kinetic}. Namely, we will prove that
\begin{equation}\label{n17}
\begin{aligned}
\iint_{Q_T} S_1(t,x) m_x \,dxdt &\ge \uq \int_0^T \iint_{\Omega(t)} m_x \la \nabla_{x,y}\phi\ra^2\,dydxdt\\
&\quad +\int_0^T \iint_{\Omega(t)} m_x P \,dydxdt\\
&\quad +\frac{h}{2}\int_0^T\int m_x   \phi_x^2(x,-h)\,dxdt.
\end{aligned}
\end{equation}
This will imply \eqref{nMg-kinetic} since the third term in the right-hand side of \eqref{n17}
is obviously positive and since the second one also since $P\ge 0$ 
(this classical result follows from the maximum principle, the fact that 
$P$ is sub-harmonic and the boundary condition on the bottom; see Lannes~\cite{LannesJAMS}).

To obtain \eqref{n17}, we start from
$$
\partial_{t} \phi +\mez \la \nabla_{x,y}\phi\ra^2 +P +g y = 0,
$$
which allows us to write $S_1$ under the form
$$
S_1(t,x)\defn \mez \int_{-h}^{\eta(t,x)}\Big(\la\nabla_{x,y}\phi\ra^2 +P\Big)\,dy
+\mez \int_{-h}^{\eta(t,x)}(\phi_x^2-\phi_y^2)\,dy.
$$
Then, to obtain \eqref{n17}, the key point is to prove that 
$$
\iint_{\Omega(t)}(\phi_x^2-\phi_y^2)\,dydx
$$
can be written as the sum of a positive term and a remainder term. This will be deduced from the following identity.

\begin{notation}
From  now on we use the shorthand notations
$$
\iint f\dxdt=\iint_{Q_T} f(t,x)\,dxdt, ~~ \iiint f\dy\dxdt=\int_0^T\iint_{\Omega(t)}f(t,x,y)\,dy dxdt.
$$
\end{notation}
\begin{lemma}\label{L3.3}
For any function $w=w(x)$ we have
\begin{multline}\label{n100}
\iint w(\phi_x^2-\phi_y^2)\,dydx=
\int w(h+\eta) \phi_x^2(x,-h)\,dx\\
\quad -2\iint w\eta_x \phi_x \phi_y\,dydx+2\iint w_x(y-\eta)\phi_x\phi_y\,dydx.
\end{multline}
\end{lemma}
\begin{proof}
This identity is proved in \cite{Boundary} when $w=1$. The time variable is seen as a parameter and we skip it. Set
$$
u(x,y)=-w(x)(y-\eta(x))\phi_y(x,y)^2.
$$
Then $u(x,\eta(x))=0$ and $u(x,-h)=0$ so $\int_{-h}^{\eta(x)} \py u\, dy=0$. 
On the other hand
$$
\py u=-2w (y-\eta)\phi_y \phi_{yy}-w\phi_y^2,
$$
so integrating on $y\in [-h,\eta(x)]$ and then on $x$ we obtain, 
remembering that $\phi_{yy}=-\phi_{xx}$,
$$
0=\iint u_y=-\iint w\phi_y^2 +2\iint w(y-\eta)\phi_y \phi_{xx}.
$$
Since $\phi_y=0$ on $y=-h$, 
by integrating by parts we infer that
$$
0=-\iint w\phi_y^2 -\iint w(y-\eta)\py \phi_x^2
+2\iint w \eta_x \phi_x\phi_y-2\iint w_x(y-\eta)\phi_x\phi_y.
$$
Thus
\begin{align*}
0&=-\iint w\phi_y^2 -\iint \py\big( w(y-\eta)\phi_x^2\big)\\
&\quad+\iint w \phi_x^2+2\iint w\eta_x \phi_x\phi_y-2\iint w_x(y-\eta)\phi_x\phi_y.
\end{align*}
Since
$$
\int_\xR\int_{-h}^\eta \py\big( w(y-\eta)\phi_x^2\big)\dydx=\int_\xR w(h+\eta)\phi_x^2(x,-h)\,dx,
$$
this proves the desired result.
\end{proof}

Set
$$
\Sigma\defn \mez \iiint m_x \la \nabla_{x,y}\phi\ra^2\dydxdt +\mez\iiint m_x\big(\phi_x^2-\phi_y^2\big)\,dydxdt.
$$
It follows from the previous lemma that
$\Sigma =\Sigma_1+\Sigma_2$ with 
\begin{align*}
\Sigma_1&=\iiint \Big(\frac{m_x}{2}-m_x\eta_x+m_{xx}(y-\eta)\Big) \la \nabla_{x,y}\phi\ra^2\,dydxdt,\\
\Sigma_2&=\iint m_x(h+\eta) \phi_x^2(x,-h)\,dxdt.
\end{align*}
Now we assume that $\eta\ge -h/2$. Then 
$$
\Sigma_2\ge \frac{h}{2}\iint m_x \phi_x^2(x,-h)\,dxdt.
$$
Now recall that by definition,
$$
m(x)=\int_0^x \frac{\dsigma}{(1+\eps^2\sigma^2)^r},
$$
with $r>1/2$ and where $\eps$ has to be chosen. Then
$$
m_{xx}(x)=-r\frac{2\eps^2 x}{(1+\eps^2 x^2)^{r+1}}=\eps C(\eps,x)m_x(x)\quad \text{with}\quad 
C(\eps,x)=-2r\frac{\eps x}{1+\eps^2 x^2}.
$$
Since $\la C(\eps,x)\ra\leq r$, we obtain that $\la m_{xx}(x)\ra \leq \eps r m_x(x)$. 
As a result,
$$
\la m_{xx}(y-\eta)\ra\leq \eps r(h+\lA \eta\rA_{L^\infty})m_x.
$$
Since, on the other hand, one has $\la \eta_x\ra\leq \frac{1}{3}$ by assumption, we conclude that
$$
\frac{m_x}{2}-m_x\eta_x+m_{xx}(y-\eta)\ge \Big(\frac{1}{6}-\eps r(h+\lA \eta\rA_{L^\infty})\Big)m_x.
$$
Then, assuming that
$$
\eps r\Big(h+\lA \eta\rA_{L^\infty}\Big)\leq \frac{1}{42},
$$
we conclude that
$$
\Sigma_1\ge \frac{1}{7}\iiint m_x \la \nabla_{x,y}\phi\ra^2\dydxdt,
$$
which completes the proof of \eqref{n17} and hence the proof of \eqref{nMg-kinetic}.

\bigbreak

\noindent\textbf{Step 2: estimate of the potential energy.} In light of \eqref{nMg-kinetic}, to prove Theorem~\ref{ThmG}, it is sufficient to 
prove the following estimate about the potential energy:
\begin{equation}\label{nMg2}
\begin{aligned}
\int_0^T \int gm_x(x) \eta^2(t,x)\,dx \dt&\leq \iint_{\Omega(t)} m_x(x)\la\nabla_{x,y}\phi(t,x,y)\ra^2\, dydxdt\\
&\quad+2\int m(x)I_2(t,x)\,dx \ST,
\end{aligned}
\end{equation}
where recall that $I_2(t,x)=\eta(t,x)\psi_x(t,x)$.

We now work with the density momentum $I_2$ and the associated flux force $S_2$. 
Recall that
$$
S_2= -\eta\psi_t-\frac{g}{2}\eta^2+\mez \int_{-h}^{\eta}(\phi_x^2-\phi_y^2)\,dy,
$$
Again, it follows from the local conservation law 
$\partial_t I_2+\partial_x S_2=0$ that, for any weight 
$m=m(x)$ and any time $T$, one has
\begin{equation}\label{n210}
\iint_{Q_T} S_2(t,x) m_x \,dxdt =\int_\xR m(x)I_2(T,x)\dx-\int_\xR m(x)I_2(0,x)\,dx,
\end{equation}
where $Q_T=[0,T]\times \xR$.

Let us introduce a notation. Set 
\[
N(\eta)\psi=\mez \psi_x^2-\mez \frac{(G(\eta)\psi+\eta_x\psi_x)^2}{1+\eta_x^2},
\]
so that the Bernouilli equation reads
\[
\partial_t \psi+g\eta +N(\eta)\psi=0.
\]

We begin by reporting the expression for $\partial_t\psi$ given by \eqref{systemT} to obtain
$$
S_2=\frac{g}{2}\eta^2+\eta N(\eta)\psi
+\mez \int_{-h}^{\eta(t,x)}(\phi_x^2-\phi_y^2)\,dy.
$$

Let us recall a lemma from \cite{Boundary} which allows to handle 
the integral involving $N(\eta)\psi$.

\begin{lemma}\label{L3.2}
For any function $\mu=\mu(x)$ there holds
\begin{equation}\label{C6-bis}
\int_\xR \mu N(\eta)\psi\dx
=-\iint_{\Omega} \mu_x\phi_x\phi_y \,dydx +\mez \int \mu \phi_x^2\arrowvert_{y=-h}\,dx.
\end{equation}
\end{lemma}
\begin{proof}
One can check that
\[
N(\eta)\psi=\mathcal{N} \big\arrowvert_{y=\eta}
\] with
\[\mathcal{N}=\mez\phi_x^2-\mez \phi_y^2+\eta_x \phi_x \phi_y\label{t90}.
\]
The proof then relies on the following identity
$$
\partial_y \big( \phi_y^2-\phi_x^2\big)+2\partial_x \big(\phi_x\phi_y\big)=
2\phi_y \Delta_{x,y}\phi,
$$
which implies that, since $\phi$ is harmonic and $\partial_y \mu=0$,
$$
\partial_y \big( \mu \phi_y^2-\mu \phi_x^2\big)+2\partial_x \big(\mu \phi_x\phi_y\big)=
2\mu_x\phi_x\phi_y.
$$
We deduce that the vector field $X\colon \Omega\rightarrow \xR^2$ defined by $X=(-\mu \phi_x\phi_y;\frac{\mu}{2}\phi_x^2-\frac{\mu}{2}\phi_y^2)$ 
satisfies
$\cn_{x,y} \big(X\big)=-\mu_x\phi_x\phi_y$. 
Since $\nabla_{x,y}\phi$ belongs to $C^1(\overline{\Omega})$ and since one has 
the boundary conditions
$$
\phi_y\arrowvert_{y=-h}=0,
$$
an application of the divergence theorem gives that
\begin{align*}
-\iint_\Omega \mu_x\phi_x\phi_y \,dydx&=\iint_\Omega \cn_{x,y}X\,dydx\\
&=\int_{\partial\Omega}X\cdot n \dsigma= \int \mu \mathcal{N} \big\arrowvert_{y=\eta} \,dx
-\mez\int \mu\phi_x^2\arrowvert_{y=-h}\,dx.
\end{align*}
This completes the proof.
\end{proof}

By combining this result with Lemma~\ref{L3.3}, we conclude that
\begin{equation}\label{n211}
\begin{aligned}
\iint m_x S_2\, dxdt&=\iint \frac{g}{2}m_x \eta^2\, dxdt\\
&\quad +\iint m_x \left(\frac{h}{2}+\eta\right)\phi_x^2\arrowvert_{y=-h}\,dxdt\\
&\quad +\iiint \left(m_{xx}y-2\eta m_{xx}-2m_x\eta_x\right)\phi_x\phi_y\,dydxdt.
\end{aligned}
\end{equation}
Now, by assumptions, one has
$$
\frac{h}{2}+\eta\ge 0,\quad \la m_{xx}\ra\leq \eps r \la m_x\ra,\quad 
\sup \la \eta_x\ra\leq \frac{1}{3}.
$$
Consequently,
\begin{align*}
\la m_{xx}y-2\eta m_{xx}-2m_x\eta_x\ra 
&\leq \left( \eps r (h+\lA \eta\rA_{L^\infty})+2 \eps r \lA \eta\rA_{L^\infty}+\frac{2}{3}\right)\la m_x\ra\\
&\leq \left(3\eps r(h+\lA \eta\rA_{L^\infty})+\frac{2}{3}\right)\la m_x\ra\\
&\leq \left(\frac{3}{42}+\frac{2}{3}\right)\la m_x\ra\leq \la m_x\ra.
\end{align*}
So, \eqref{n211} implies that
$$
\iint \frac{g}{2}m_x \eta^2\,dxdt\leq  \iint m_x S_2\,dxdt 
+\mez \iiint m_x \la \nabla_{x,y}\phi\ra^2\,dydxdt.
$$
The desired result \eqref{nMg2} then follows from \eqref{nMg-kinetic} and \eqref{n210}. 

This completes the proof of Theorem~\ref{ThmG}.
\end{proof}

\vspace{10mm}

\noindent\textbf{Thomas Alazard}\\
\noindent CNRS and CMLA, \'Ecole Normale Sup{\'e}rieure de Paris-Saclay, Cachan, France

\vspace{3mm}

\noindent\textbf{Mihaela Ifrim}\\
\noindent Department of Mathematics, University of 
Wisconsin--Madison, USA

\vspace{3mm}

\noindent\textbf{Daniel Tataru}\\
\noindent Department of Mathematics, University  of California, Berkeley, USA

\end{document}